 \def\namedlabel#1#2{\begingroup
     #2%
     \def\@currentlabel{#2}%
     \phantomsection\label{#1}\endgroup
 }  
\numberwithin{equation}{section}
\theoremstyle{definition}
\newtheorem{defn}{Definition}[subsection]
\theoremstyle{plain}
\newtheorem{thm}[defn]{Theorem}
\newtheorem{propn}[defn]{Proposition}
\newtheorem{lem}[defn]{Lemma}
\newtheorem{cor}[defn]{Corollary}
\theoremstyle{remark}
\newtheorem{rmk}[defn]{Remark}
\newtheorem{eg}[defn]{Example}
\DeclareMathOperator{\GL}{GL}
\DeclareMathOperator{\SL}{SL}
\DeclareMathOperator{\Sp}{Sp}
\DeclareMathOperator{\SO}{SO}
\DeclareMathOperator{\proj}{Proj}
\DeclareMathOperator{\spec}{Spec}
\DeclareMathOperator{\Hom}{Hom}
\DeclareMathOperator{\sym}{Sym}
\DeclareMathOperator{\stab}{Stab}
\DeclareMathOperator{\id}{id}
\DeclareMathOperator{\pr}{pr}
\DeclareMathOperator{\ev}{ev}
\DeclareMathOperator{\Pic}{Pic}
\newcommand{\nocontentsline}[3]{}
\newcommand{\tocless}[2]{\bgroup\let\addcontentsline=\nocontentsline#1{#2}\egroup}
\newcommand{\act}{\curvearrowright}
\newcommand{\fns}{\footnotesize}
\newcommand{\CC}{\mathbb{C}}
\newcommand{\PP}{\mathbb{P}}
\newcommand{\A}{\mathbb{A}}
\newcommand{\GGm}{\mathbb{C}^{\times}}
\newcommand{\GG}{\mathbb{G}}
\newcommand{\UU}{\mathbb{U}}
\newcommand{\OO}{\mathcal{O}}
\newcommand{\ten}{\otimes}
\newcommand{\mc}{\mathcal}
\newcommand{\mf}{\mathfrak}
\newcommand{\mb}{\mathbb}
\newcommand{\ol}{\overline}
\newcommand{\pt}{\mathrm{pt}}
\newcommand{\kk}{\Bbbk}
\newcommand{\symdot}{\sym^{\bullet}}
\newcommand{\dblslash}{/\! \!/}
\newcommand{\env}{\!
\mathbin{\text{\rotatebox[origin=c]{70}{\scalebox{1.2}{$\approx$}}}} \!}
\newcommand{\nss}{\mathrm{nss}}
\newcommand{\ssfg}{\mathrm{ss,fg}}
\newcommand{\ssUfg}{\mathrm{ss,} H_u-\mathrm{fg}}
\newcommand{\rms}{\mathrm{s}}
\newcommand{\rmss}{\mathrm{ss}}
\newcommand{\aff}{\mathrm{aff}}
\newcommand{\dashfg}{\mathrm{-fg}}
\newcommand{\olGUX}{\overline{G \times^{H_u} X}}
\newcommand{\GUX}{G\times^{H_u} X} 
\newcommand{\GHX}{G\times^{H} X}
\title{Constructing quotients of algebraic varieties by linear algebraic group
  actions}  
\author{Gergely B\'{e}rczi, Thomas Hawes, Frances Kirwan \thanks{Much of the
    work behind the writing of this article was supported by 
    funding from the Engineering and Physical Sciences Research Council.} \\
  {\fns Mathematical 
    Institute, University of Oxford, Andrew Wiles Building, Oxford. OX2
    6GG. UK.} \\
{\fns Emails: berczi@maths.ox.ac.uk, thomas.hawes01@gmail.com, kirwan@maths.ox.ac.uk} 
  \and Brent Doran\thanks{Brent Doran was partially supported by Swiss National
    Science Foundation Award 200021\_138071.} \\ 
{\fns Departement Mathematik,
    Eidgen\"{o}ssische Technische Hochschule, 8092 Z\"{u}rich, Switzerland.} \\
{\fns Email: brent.doran@math.ethz.ch}}
\date{}
\begin{document}

\setcounter{secnumdepth}{3}
\setcounter{tocdepth}{3}

\maketitle                  


\begin{abstract}

In this article we review the question of constructing geometric quotients of
actions of linear algebraic groups on irreducible varieties over algebraically
closed fields of characteristic zero, in the spirit of Mumford's geometric
invariant theory (GIT). The article surveys some recent work on geometric
invariant theory and quotients of varieties by linear algebraic group actions,
as well as background material on linear algebraic groups, Mumford's GIT and
some of the challenges that the non-reductive setting presents. The earlier work
of two of the authors in the setting of unipotent group actions is extended to
deal with actions of any linear algebraic group. Given the data of a
linearisation for an action of a linear algebraic group $H$ on an irreducible
variety $X$, an open subset of stable points $X^{\rms}$ is defined which admits a
geometric quotient variety $X^{\rms}/H$. We construct projective completions of the
quotient $X^{\rms}/H$ by considering a suitable extension of the group action to an
action of a reductive group on a reductive envelope and using Mumford's GIT. In
good cases one can also compute the stable locus $X^{\rms}$ in terms of
stability (in 
the sense of Mumford for reductive groups) for the reductive envelope. 

\end{abstract}

\tableofcontents


\section{Introduction}
\label{sec:introduction}

Group actions are ubiquitous within algebraic geometry. Many spaces that one
might want to understand arise naturally as the quotient of a variety by a group
action, with moduli spaces giving some of the most prominent examples
\cite{mfk94,new78,gie83,KPT01}. Given a variety $X$ over an algebraically closed
field $\kk$ of characteristic zero\footnote{We assume characteristic zero throughout this article, although Mumford's GIT has been extended to algebraically closed fields of arbitrary characteristic; cf. \cite{mfk94} Appendices A and C.}  and a linear algebraic group $H$ acting on
$X$, a basic question to ask therefore is how one
can construct the quotient $X/H$ and study it. By
`quotient' here we mean more precisely a \emph{geometric quotient}, in the sense
of \cite{mfk94}: this is a variety
$X/H$ with an $H$-invariant morphism $X \to X/H$ that, amongst other properties,
is universal with respect to $H$-invariant morphisms from
$X$, and whose fibres are the orbits of the action on $X$. As is well known, there are 
lots of cases of
interest where a geometric quotient for an action cannot possibly exist in the
category of varieties. One way to address this is to enlarge one's category and
work with more general geometric objects, such as algebraic spaces
\cite{art71,knu71} or even stacks \cite{dm69,lmb00}. Another approach is to instead 
look
for nonempty open subsets of $X$ that admit a geometric quotient variety; such
open subsets are guaranteed to exist by a theorem of Rosenlicht \cite{ros63}. It
is this second approach with which we will be concerned in this article.  

In the case where $H=G$ is a reductive linear algebraic group this second
approach was studied by Mumford in the
first edition of \cite{mfk94}, resulting in his \emph{geometric invariant
  theory} (GIT). (In this context see also the work of Seshadri
\cite{ses62,ses72} and also \cite{ses77} which is valid over an arbitrary base.) Mumford's GIT works particularly well in the case
where $X$ is projective. Given the additional choice of an ample linearisation $L \to
X$ (that is, an ample line bundle $L \to X$ with a lift of the action of $G$ to
$L$) Mumford defines a $G$-invariant open subset of stable points $X^{\rms}$ in $X$,
which has a 
geometric quotient variety $X^{\rms}/G$. This is contained in the $G$-invariant
open subset of semistable 
points $X^{\rmss}$ in $X$ and there is a natural surjective $G$-invariant map from
$X^{\rmss}$ onto a projective variety $X \dblslash G$ (canonical to the choice of
$L$) which can be described as $\proj(S^G)$, where $S=\bigoplus_{r \geq 0}
H^0(X,L^{\ten r})$ and $S^G$ is the ring of invariant sections of non-negative tensor 
powers of $L\to X$. Thus (for projective $X$ and ample $L$) there is a diagram
\[
\begin{diagram}
X^{\rms} & \rEmpty~\subseteq & X^{\rmss} & \rEmpty~\subseteq & X  \\
\dTo^{geo} & & \dOnto & & \dDashto\\
X^{\rms}/G & \rEmpty~\subseteq & X \dblslash G & \rEmpty~\subseteq &
\proj(S^G)\\
\end{diagram}
\]
The variety $X \dblslash G$, which is often referred to as the \emph{GIT
  quotient}, provides a natural projective completion of the quotient of the stable
set. Tools abound for studying the spaces in this 
diagram. In \cite{mfk94} Mumford gave numerical criteria for computing the sets
of stable and semistable points in terms of the actions of one-parameter
subgroups of $G$. When $X$ is smooth the local geometry of the orbits in $X^{\rmss}$ 
can be studied
with the slice theorem of Luna \cite{lun73} and, in the case where the ground field is 
the complex numbers
$\CC$, links with symplectic geometry yield ways to compute the (rational
intersection) cohomology of $X \dblslash G$
\cite{kir84,kir85,kir86,kir87,jk95,jkkw03}. Moreover, as studied in the work of
`variation of GIT' (VGIT) by
Thaddeus \cite{tha96}, Dolgachev and Hu \cite{dh98} and Ressayre \cite{res00},
the GIT quotients undergo birational transformations when the linearisation
varies, which can be described explicitly as certain kinds of flips. 

Various authors have considered the question of finding open subsets of
`stable' points that admit geometric quotients in the case where $H$ is not
reductive. This problem is very much more challenging, in essence because the
representation theory of a non-reductive group is not as complete or
well-behaved as in the reductive case. Fauntleroy \cite{fau83,fau85} and Dixmier
and Raynaud \cite{dr81} give geometric 
descriptions of open subsets that admit geometric quotients, but these
are typically difficult to find in practice (requiring knowledge of which points
are separated by invariant functions) and often some extra condition on $X$, such as
normality or quasi-factoriality, needs to be imposed. More algorithmic
approaches have been taken in \cite{gp93,gp98,vde93,san00}, though here
the geometric picture is somewhat more obscure in favour of computation. Other 
progress, this time in the algebraic side of the subject, involves the search
for \emph{separating sets of invariants} to construct quotient morphisms of
affine varieties $X$, made
popular by Derksen and Kemper in \cite{dk02} and pursued recently in the work of
Dufresne and others, \cite{duf13,des14,dj15} and \cite[Section 4]{dk15}. (The use of
separating sets of (rational) invariants seems to in fact go back to Rosenlicht
\cite{ros56} and is used in the proof of his aforementioned theorem; see \cite{vp94}.) 
A key ingredient here is the observation that one can find a \emph{finite} set of
invariants $\mc{S} \subseteq \OO(X)^H$ such that two points in $X$ get
separated by the natural map $X \to \spec(\OO(X)^H)$ if, and only if, there is
an element of $\mc{S}$ that separates the points. Therefore one does not need to
find a full generating set for $\OO(X)^H$ to describe quotient maps.

Any linear algebraic group $H$ has a
canonical normal unipotent subgroup $H_u$, called the unipotent radical of $H$, such 
that $H/H_u$ is reductive, thus constructing quotients for $H$ can, in
principle, be reduced to studying the actions of unipotent groups. The case where $H$ 
is a
unipotent group acting on an irreducible projective variety $X$ with ample
linearisation $L \to X$ was studied in \cite{dk07}, building on the work in
\cite{fau83,fau85,gp93,gp98,win03}. The overarching idea in that paper was to
consider various notions of `stability',  
`semistability' and `quotient' that are intrinsic to the linearisation $L \to X$, and 
relate
these to the GIT of \cite{mfk94} of certain
reductive linearisations associated to $L \to X$. The main appeal of this
approach is that it gives ways to use the  
tools available in the reductive setting to study quotients of unipotent group
actions on (open subsets of) projective varieties. A summary of the main results
and definitions from \cite{dk07} will be given in the upcoming Section
\ref{sec:background}. These techniques have been used in practice in a number of
settings where unipotent actions arise naturally
\cite{ad07,ad08,ad09,ki09, ki11, wdgc13,dg14,dh15,dfs13,dfs14}. 

The purpose of the present article is to generalise the
material of \cite{dk07} to the case where $H$ is any
linear algebraic group, not necessarily unipotent or reductive. Thus we develop a 
theoretical
framework for studying non-reductive group actions that is in the same spirit as
Mumford's GIT, with the basic guiding goal of obtaining
results that are 
as close as possible to the earlier diagram relating the stable and semistable sets 
and GIT
quotient in the reductive setting. Indeed, our constructions reduce to Mumford's 
theory when
$H$ is a reductive group and $L \to X$ is an ample linearisation over a
projective variety. The way we extend the work in \cite{dk07} is to make use of
natural residual actions of the reductive group $H_r:=H/H_u$ and take quotients in
stages---first by $H_u$, then by $H_r$.        

Let us now give a summary of the contents of the rest of this article. We begin in
Section \ref{sec:background} by recalling background material on linear
algebraic groups and their quotients. We discuss various notions of `quotient' in
the category of algebraic varieties and recall the concept of a linearisation, as
introduced by Mumford in the first edition of \cite{mfk94}. Some
of the main differences between actions of reductive groups and unipotent
groups are also highlighted. We then recall the main theorems of GIT for
reductive groups in \cite{mfk94}, paying particular attention to
the case of ample linearisations over projective varieties. A summary
of the work on GIT for unipotent groups in \cite{dk07} is given, 
describing more fully the various 
notions of `stability' and `semistability' considered there, as well as the
definition of the \emph{enveloping quotient} and \emph{enveloped
quotient} and the construction of \emph{reductive envelopes}. 

In Section \ref{sec:gitNonReductive} we begin the work of extending the
theory of \cite{dk07} to more general linear algebraic groups $H$, focussing on
constructing objects from the data of a linearisation $L \to X$. Unlike in
\cite{dk07}, we do not assume $X$ is projective or irreducible, or that the
linearisation $L$ is ample. We start by considering the
natural $H$-invariant rational map  
\[
q:X \dashrightarrow \proj(S^H)
\]
to a scheme $\proj(S^H)$ which is not necessarily noetherian. The maximal
domain of definition contains the open subsets $X_f$ given by the non-vanishing of 
invariant
sections $f$ of positive tensor powers of $L \to X$, and imposing various conditions
on the sections $f$ yields different $H$-invariant open subsets of $X$. In this
way, a subset $X^{\ssfg}$ of \emph{finitely generated semistable} points of $X$ is
defined in Definition \ref{def:GiFi1} such that $q$ maps $X^{\ssfg}$ into an
open subscheme $X \env H$ of $\proj(S^H)$, locally of
finite type over $\kk$, called the
\emph{enveloping quotient} (Definition \ref{def:GiFi3}). While this
looks similar to Mumford's GIT quotient in the reductive setting, in general
there are two key differences to note. Firstly, the enveloping quotient $X \env
H$ is not a quasi-projective variety in 
general, although when $X$ is projective, $L \to X$ is ample and $S^H$ is
finitely generated then $X \env H =
\proj(S^H)$ is in fact the projective variety associated to the graded algebra
$S^H$. Secondly, the map $q:X^{\ssfg} \to X \env H$
is not surjective in general; instead the image $q(X^{\ssfg})$ is a dense
constructible subset of $X \env H$ called the \emph{enveloped quotient}. To
address the fact that the enveloping quotient 
is only a scheme \emph{locally} of finite type in general, we introduce
\emph{inner enveloping quotients} in Definition \ref{def:GiFi3.1} as
quasi-compact open subschemes of 
the enveloping quotient that contain the enveloped quotient
$q(X^{\ssfg})$. Inner enveloping quotients are not canonical to the
linearisation, but have the advantage of being quasi-projective varieties; this is
shown in Proposition \ref{prop:GiFi3.2}. A way in which the collection of
inner enveloping quotients can be thought of as `universal' with respect to
$H$-invariant morphisms from the finitely generated semistable locus is
discussed. We also compare the framework developed here with Mumford's reductive
GIT in the case where $H=G$ is reductive. Building on the notion of stability in
\cite{dk07}, we define the
\emph{stable locus} $X^{\rms}$ for a general linearisation of a linear algebraic group 
over
an irreducible variety in Definition \ref{def:GiSt1.1}. When the
group $H$ is reductive or unipotent our definition reduces to that of Mumford
or \cite{dk07}, respectively. For any choice of inner enveloping quotient
$\mc{U} \subseteq X \env H$, we show that the natural map $q:X^{\ssfg} \to \mc{U}$ 
restricts to
define a geometric quotient on the stable locus, thus obtaining a
diagram 
\[
\begin{diagram}
X^{\rms} & \rEmpty~\subseteq & X^{\rmss} & \rEmpty~\subseteq & X  \\
\dTo^{geo} & & \dOnto & & \dDashto\\
X^{\rms}/G & \rEmpty~\subseteq & X \dblslash G & \rEmpty~\subseteq &
\proj(S^G)\\
\end{diagram}
\]
(see Theorem \ref{thm:GiSu2}). This is analogous to Mumford's in the reductive
case, but in contrast the map 
$q:X^{\ssfg} \to \mc{U}$ is not necessarily surjective, while there are 
many choices of inner
enveloping quotient $\mc{U}$ containing $X^{\rms}/H$ and such a $\mc{U}$ is not
necessarily a projective variety. 

The possible lack of projectivity of an inner enveloping quotient $\mc{U}$
naturally motivates the construction of their projective completions
$\ol{\mc{U}}$. Any such completion contains the enveloped quotient
$q(X^{\ssfg})$ as a dense constructible subset, so we refer to $\ol{\mc{U}}$ as
a \emph{projective completion of the enveloped quotient} (Definition
\ref{def:Co10}). In Section
\ref{sec:compactifications1} we extend the theory of reductive envelopes from
\cite{dk07} to give ways of constructing projective completions of the enveloped 
quotient. We consider the formation of fibre spaces $\GUX$ defined by homomorphisms $H 
\to
G$, with $G$ a reductive group, which restrict to give embeddings of the unipotent 
radical $H_u
\hookrightarrow G$. Such homomorphisms give a diagonal action of the reductive
group $H_r=H/H_u$ on $\GUX$ that commutes with the $G$-action, so $\GUX$ is
a $G \times H_r$-variety that comes equipped with a canonical $G \times
H_r$-linearisation. Various kinds of \emph{reductive
  envelope} $(\olGUX,L^{\prime})$ are defined in Definitions \ref{def:Co1Re5} and
\ref{def:Co1St1}, where $\olGUX$ is an equivariant
completion of $\GUX$ and $L^{\prime} \to \olGUX$ is an extension
of the $G \times H_r$-linearisation over $\GUX$, by requiring that invariant
sections of certain choices of linear systems over $X$ extend to linear
systems over the reductive envelope, satisfying assumptions of varying
strength. In Theorem \ref{thm:Co1Re11} we show that when the line bundle
$L^{\prime} \to \olGUX$ in the
reductive envelope is ample, then the reductive GIT quotient $\olGUX
\dblslash_{L^{\prime}} (G \times H_r)$ gives a projective completion of the
enveloped quotient, and there is a chain of inclusions 
\[
X \cap (\ol{G \times^U X})^{\rms(L^{\prime})} \subseteq X^{\rms} \subseteq
X^{\ssfg} \subseteq X \cap (\ol{G \times^U X})^{\rmss(L^{\prime})}.
\]
When the reductive envelope $(\olGUX,L^{\prime})$ is \emph{strong} (Definition
\ref{def:Co1St1}) then in Proposition \ref{prop:Co1St2} we
obtain equalities
\[
X^{\rms} = X \cap (\ol{G \times^U X})^{\rms(L^{\prime})}, \quad  X^{\ssfg}= X
\cap (\ol{G \times^U X})^{\rmss(L^{\prime})},
\]
which thus provides a way to compute the
intrinsically defined stable locus and 
finitely generated semistable locus using methods from reductive GIT. The
existence of strong reductive envelopes with ample $L^{\prime}$ 
is therefore especially good for the purposes of computation in our non-reductive
geometric invariant theory. In relation to this, we show that when the $H$-
linearisation over $X$ extends to one of the reductive
group $G \times H_r$ in an appropriate way, the arguments in \cite{dk07} can be 
extended to reduce the 
construction of strong reductive envelopes with ample $L^{\prime}$ to a study of
the homogeneous space $G/H_u$. (Such homogeneous spaces were first considered by 
\cite{bbhm63} and studied by
Grosshans in \cite{gro73,gro97}.) This set-up 
works out particularly well when $H_u$ is a \emph{Grosshans subgroup} of $G$,
for then $S^H$ is a finitely generated algebra and
an explicit descriptions of $X^{\rms}$, $X^{\ssfg}$ and $X \env H=\proj(S^H)$
can be obtained in terms of the reductive GIT of
$L^{\prime} \to \olGUX$ (Corollary \ref{cor:Co1St7}). 

In Section \ref{sec:example} we study the space of $n$ unordered points
on $\PP^1$ under the action of a Borel subgroup in $\SL(2,\kk)$. This serves
both to illustrate the use of strong reductive envelopes for performing 
computations and to give an informal look at the 
potential for studying the variation of non-reductive quotients in certain good
cases. The final section contains a brief outline of ongoing research applying the theory developed in the article to moduli spaces occurring naturally in algebraic geometry.             

To supplement the main text, and for the convenience of the reader, we have also
included a proof of a well-known result concerning the GIT 
quotient of a product of reductive groups in Appendix
\ref{subsec:ApLiProducts}.  
                                    
We would like to thank the referee for some very helpful suggestions (including the inclusion of an index) and corrections, and to apologise for any remaining errors, misprints and sources of confusion which are entirely our responsibility. 

\subsection{Notation and Conventions}
\label{subsec:notation}

We work over a ground field $\kk$ that is algebraically closed and of
characteristic zero. By 
`variety' we mean a reduced, separated scheme of finite type over $\kk$; note 
that we do not assume varieties are irreducible unless otherwise
stated, but do insist they are separated. By a `point' 
in a scheme we will always mean a closed $\kk$-valued 
point. A projective completion $X \hookrightarrow \ol{X}$ of a variety $X$ is
a dominant open immersion into a projective variety $\ol{X}$.  If a topological space satisfies
the condition that every cover of it by open sets admits a finite subcover then
we say it is `quasi-compact'. A scheme is quasi-compact if and only if it is the union of finitely many open affine subschemes; any scheme which is complete or quasi-projective is quasi-compact. 

When we talk about actions of groups on varieties or vector spaces, we always
mean a \emph{left action}, unless stated otherwise.   

When talking about line bundles we will usually be referring to the
total space of an invertible sheaf of modules; the sheaf of
sections of a line bundle $L$ is denoted by $\underline{L}$, so that
$L=\mathrm{\mathbf{Spec}}(\underline{L}^*)$. An exception to
this is when talking about twisting sheaves $\OO(n)$ on varieties---here we
don't make any notational distinction between the sheaf and its total
space. Given a linearisation $L \to X$ of a group $H$ and a character $\chi$ of
$H$, the \emph{twist} $L^{(\chi)} \to X$ of $L \to X$ by $\chi$ is the
linearisation obtained by multiplying the fibres of the linearisation $L \to X$
by the character $\chi^{-1}$; this will also be emphasised in
Section \ref{subsec:BgGrLinearisations} of the main text. 

If $\phi:X \to Y$ is a morphism of schemes, then the natural pullback morphism
of structure sheaves is denoted $\phi^{\#}:\OO_Y \to \phi_*\OO_X$. On the other
hand, if $L \to Y$ is a line bundle then we use the notation 
$\phi^*$ to denote pull-back $\underline{L} \to \phi_*(\underline{\phi^* L})$. Given 
an
$\OO_X$-module $\mc{F}$ and an $\OO_Y$-module  
$\mc{G}$, then $\mc{F} \boxtimes \mc{G}:=(\pr_X^*\mc{F}) \ten_{\OO_{X \times Y}}
(\pr_Y^*\mc{G})$, where $\pr_X:X \times Y \to X$ and $\pr_Y:X \times Y \to Y$
are the projections.

Unless indicated otherwise, graded rings $R$ are always non-negatively $\mb{Z}$-
graded,
i.e. if $R$ is a graded ring then the degree $d \in \mb{Z}$ piece $S_d$ is trivial 
whenever
$d<0$. If $f \in R$ a non-zero homogeneous element then $R_{(f)}$ is
the subring of the localisation $R_f$ consisting of degree 0
elements. Similarly, if $M$ a graded $R$-module and $f \in R$ a
non-zero homogeneous element, then $M_{(f)}$ is
the $R_{(f)}$-submodule of the localisation $M_f$ consisting of degree 0
elements. If $r \in \mb{Z}$ is positive then $R^{(r)}$ denotes
the Veronese subring of $R$ whose degree $m$ piece is $(R^{(r)})_m=R_{mr}$.
 
Associated to a vector space $V$
we understand the projective space $\PP(V)$ to be the space whose
points correspond to one-dimensional subspaces of $V$. Another way to say this
is that $\PP(V) = \proj (\sym^{\bullet}(V^*))$, where $\symdot(V^*)$ is the
symmetric algebra $\bigoplus_{m\geq 0} \sym^m(V^*)$. With these conventions, if
$L \to X$ is 
a line bundle on a scheme $X$ with a basepoint-free linear system $V
\subseteq H^0(X,L)$, then there is a canonical morphism $X \to \PP(V^*)$.  

Finally, for basic facts in algebraic geometry we refer the reader to \cite{har77}
and \cite{stacks-project}. The latter is particularly useful for results
regarding schemes that are not necessarily noetherian.


\section{Background: Quotients of Varieties and Geometric Invariant Theory}
\label{sec:background}

In this section we recall some background material that will be used in
subsequent sections. We begin in Section \ref{sec:LiBasic} by recalling basic
definitions concerning linear algebraic groups, then discuss various kinds of quotient 
in
the category of varieties and review the concept of a linearisation of an
action. We also recall the definitions of reductive groups and unipotent groups
and compare them from the point of view of the geometry of their actions and
their invariant theory. In Section \ref{sec:BgReductive} we give a summary of
the main results from Mumford's GIT for reductive groups,
paying particular attention to the case of ample linearisations over projective
varieties. Finally, in Section \ref{sec:BgUnipotent} we recall the main
definitions and results of \cite{dk07}, which will form the basis for our development
of a geometric invariant theoretic approach to studying actions of more general linear
algebraic groups in later sections.       

The material on linear algebraic groups is taken from
\cite{bor91,spr94} and our main references for the material on quotients are
\cite{ser58,ses72,mfk94}. For reductive GIT we have mostly used \cite{mfk94};
see also \cite{new78,dol03,muk03,sch08}. 


\subsection{Basics of Group Actions and Quotients}
\label{sec:LiBasic}


\subsubsection{Linear Algebraic Groups and Quotients}
\label{subsec:BgLinear}

We begin by recalling some of the basic theory of algebraic groups. Following
\cite[Chapter 1]{bor91} we define an
\emph{algebraic group}  as a variety $H$ equipped with a group
structure such that the multiplication map $H \times H \to H$ and inversion map
$H \to H$ are morphisms of varieties. We write $e \in H$ for the identity
element of $H$. A \emph{homomorphism} of algebraic groups
$H_1 \to 
H_2$ is a morphism of varieties that is also a homomorphism of the group
structures. If $H_1 \hookrightarrow H_2$ is a homomorphism that is also a closed
immersion then we say that $H_1$ is a \emph{closed subgroup} of $H_2$. A first
example of an algebraic group is the general linear group $\GL(n,\kk)$,
for any integer $n\geq 0$. A \emph{linear algebraic group}  \index{linear algebraic group} is an 
algebraic group that is a closed subgroup of $\GL(n,\kk)$, for some $n \geq
0$. It is these groups we will be concerned with in this article; see
\cite{bri09,bri11,bsu13,bri15} for work on the structure and geometry of more
general algebraic groups. A basic result in the theory of algebraic groups says that 
every
\emph{affine} algebraic group (i.e. an algebraic group that is also an affine
variety) is isomorphic to a linear algebraic group \cite[Proposition
1.10]{bor91}, thus one can identify and work with linear algebraic groups in a more
intrinsic fashion.  

\begin{eg} \label{ex:TrRe0.1}
Any finite group is a linear algebraic group. The group $\GG_m:=(\kk \setminus
\{0\},\times)$ of non-zero elements of $\kk$ 
under multiplication is a linear algebraic group (indeed, it is just
$\GL(1,\kk)$). The group $\GG_a:=(\kk,+)$ of elements of $\kk$ under addition is
also a linear algebraic group: it is isomorphic to the
group $\mb{U}_2 \subseteq \GL(2,\kk)$ of upper-triangular matrices via $a
\mapsto \left(\begin{smallmatrix} 1 & a \\ 0 & 1 \end{smallmatrix} \right)$.
\end{eg}

\begin{eg} \label{ex:TrRe0.2} (Operations on linear algebraic groups
  \cite[Chapters 1 and 6]{bor91}.)
A \emph{normal subgroup} $N$ of an affine algebraic group $H$ is a closed
subgroup that is normal as an abstract group. If $H$ is a linear algebraic
group and $N$ a normal subgroup, then $H/N$ is a linear algebraic
group. Products of linear algebraic groups are also linear algebraic groups.
\end{eg}

Now let $X$ be a variety over $\kk$ and $H$ a linear algebraic group. An
\emph{action} of $H$ on $X$ is a (left)
action $H \times X \to X$ that is also a morphism of varieties. In this case we
will often refer to $X$ as an  \index{$H$-variety}
``$H$\emph{-variety}'' and
sometimes write $H \act X$ to indicate the given action. We usually
write the morphism of an action as   
\[
H \times X \to X, \quad (h,x) \mapsto h \cdot x \quad \text{(or $(h,x) \mapsto hx$)}  
\]
if no confusion is likely to occur. Given a point $x \in X$, we write 
\[
H \cdot x := \{hx \mid h \in H\}
\]
for the \emph{orbit} of $x$ under the action of $H$ and 
\[
\stab_H(x):=\{h \in H \mid hx=x \}
\]
for the \emph{stabiliser} of $x$. Given a subset $Z \subseteq X$, we say $Z$ is
\emph{$H$-stable}, or \emph{$H$-invariant}, if $H \cdot z \subseteq Z$ for all
$z \in Z$.   

Note that in the case where $X=V$ is a finite-dimensional vector space with an
action of $H$ that is algebraic, $V$ is also called a \emph{rational
  $H$-module}.     \index{rational $H$-module}
\begin{rmk} \label{rmk:TrRe0}  
Throughout this article we shall assume for simplicity that all actions are such
that stabilisers of general points are finite.      
\end{rmk}

Given a homomorphism of linear algebraic
groups $\rho:H_1 \to H_2$, an $H_1$-variety $X$ and an $H_2$-variety $Y$, we say
a 
morphism of $\phi:X \to Y$ is  \index{equivariant}
\emph{equivariant (with respect to $\rho:H_1 \to H_2$)} if
$\phi(hx)=\rho(h)\phi(x)$ for all $h \in H_1$ and all $x \in X$. If $H=H_1=H_2$
and $\rho$ is 
the identity homomorphism, then we simply say $\phi$ is \emph{$H$-equivariant},
and if furthermore $H$ acts trivially on $Y$ (so that $\phi(hx)=\phi(x)$ for all
$h \in H$ and all $x\in X$) then we say $\phi$ is \emph{$H$-invariant}.    

If a linear algebraic group $H$ acts on a variety $X$ then a fundamental
question, if vaguely stated, is to ask: does there exist a variety $Y$ that is a 
`quotient' of $X$
by the action of $H$? There are various definitions to make the term
`quotient' more precise, with varying agreement with one's geometric
intuition. We will recall the kinds of `quotient' we shall be concerned with 
momentarily. Before doing so, note that given an $H$-variety
$X$ and an $H$-stable open subset $U \subseteq X$, there is a canonically
induced action of $H$ on the ring of regular functions: 
\begin{equation} \label{eq:TrRe1} 
(h\cdot f)(x):=f(h^{-1}x) \quad \text{for all } \
x\in U, \ f \in \OO(U), \ h\in H,
\end{equation}
and one can consider the subring of invariant functions:
\[
\OO(U)^H = \{ f \in \OO(U) \mid h \cdot f = f \text{ for all } h \in H \}.
\]  
\begin{defn} \label{def:TrRe1}  
Let $H$ be a linear algebraic group acting on a variety $X$.
\begin{enumerate}
\item \label{itm:TrRe1-1} A \emph{categorical quotient}  \index{categorical quotient} is a variety $Y$ together with 
an 
$H$-invariant morphism $\phi:X \to Y$ satisfying the following universal
property: any other $H$-invariant morphism $X \to Z$ admits a unique
factorisation through $\phi$. 
 \item \label{itm:TrRe1-2} A \emph{good quotient} \index{good quotient} is an $H$-invariant morphism
   $\phi:X \to Y$ satisfying the following properties:
\begin{enumerate}
\item \label{itm:TrRe1-2a} the morphism $\phi$ is surjective and affine;
\item \label{itm:TrRe1-2b} the pull-back map $\phi^{\#}:\OO_Y \to \phi_*\OO_X$ induces 
an isomorphism of
  sheaves $\OO_Y \cong (\phi_*\OO_X)^H$, where
  $(\phi_*\OO_X)^H(U)=\OO_X(\phi^{-1}(U))^H$ for each open subset $U \subseteq
  Y$; and 
\item \label{itm:TrRe1-2c} if $W_1,W_2$ are disjoint $H$-invariant closed
  subsets of $X$, then
  $\phi(W_1)$ and $\phi(W_2)$ are disjoint closed subsets of $Y$. (Note this
  implies $\phi:X \to Y$ is a submersion \cite[Chapter 0, \S 2, Remark
6]{mfk94}.)    
\end{enumerate}
\item \label{itm:TrRe1-3} A \emph{geometric quotient}  \index{geometric quotient} is a good quotient $\phi:X
  \to Y$ that is also an \emph{orbit
space}; i.e. $\phi^{-1}(y)$ is a single $H$-orbit for each $y\in Y$. In this
case we write $Y=X/H$.
\item \label{itm:TrRe1-4} A \emph{principal $H$-bundle}  \index{principal $H$-bundle} (or a \emph{locally
    isotrivial quotient})  \index{locally isotrivial quotient} is an $H$-invariant
  morphism $\phi:X \to Y$ such that, for every point $y \in Y$, there is a
  Zariski-open neighbourhood 
  $U_y \subseteq Y$ of $y$ and a finite \'{e}tale 
 morphism 
  $\widetilde{U_y} \to U_y$ such that there exists an $H$-equivariant
  isomorphism $H \times \widetilde{U_y} \cong \widetilde{U_y} \times_Y X$, where the
  fibred product $\widetilde{U_y} \times_Y X$ has
  the canonical $H$-action and $H \times \widetilde{U_y}$ has the
  \emph{trivial $H$-bundle} action, induced by left multiplication by $H$ on itself: 
\[
H \times (H \times \widetilde{U_y}) \to H \times \widetilde{U_y}, \quad
(h,h_0,u) \mapsto (hh_0,u). 
\] 
\end{enumerate}
\end{defn}

Definition \ref{def:TrRe1}, \ref{itm:TrRe1-1} is taken from \cite[Definition
0.5]{mfk94}, while \ref{itm:TrRe1-2}--\ref{itm:TrRe1-3} are from
\cite[Definitions 1.4 and 1.5]{ses72}
and \ref{itm:TrRe1-4} is \cite[Definition 2.2]{ser58}. 

\begin{rmk} \label{rmk:BgLi1}
Because
we work exclusively with linear algebraic groups, by a result of Grothend-ieck \cite[Page 326]{gro59} we may equivalently work with
quotients that are locally trivial in the fppf-topology in Definition \ref{def:TrRe1}, \ref{itm:TrRe1-4} (the reader may also
consult \cite[Remark 2.1.1.6]{sch08} for a justification of this). \end{rmk}

In general we have the
following chain of implications: principal bundle $\implies$ geometric quotient
$\implies$ good quotient $\implies$ categorical quotient.
(The main non-trivial implication is the last one, whose proof may be found in
\cite[Chapter 0, \S 2, Remark 6]{mfk94}. Accordingly, one often refers to a good
quotient as a ``good categorical quotient''.) However, none of the reverse
implications hold. 
\begin{eg} \label{ex:TrRe1.1}
(Good quotient $\nRightarrow$ geometric quotient.) Let $\GG_m$ act on $X=\kk^n$
by the usual scaling action, $t \cdot (x_1,\dots,x_n)=(tx_1,\dots,tx_n)$. Then
the unique map $\kk^n \to \pt:=\spec \kk$ is a good quotient
for this action, but is 
clearly not a geometric quotient: the preimage of $\pt$ consists of many
orbits.  
\end{eg}
       
\begin{eg} \label{ex:TrRe1.2}
(Geometric quotient $\nRightarrow$ principal bundle.) Let $\GG_m$ act on $\kk^n
\setminus \{0\}$ ($n>1$) by the action $t \cdot (x_1,\dots,x_n) =
(t^rx_1,\dots,t^rx_n)$, where $r\geq 2$ is an integer. Then the usual projection
$\kk^n \setminus \{0\} \to \PP^{n-1}$ is a geometric quotient which is not a
principal $\GG_m$-bundle, because the action is not set-theoretically
free. (Thus, it is possible 
for geometric quotients to exist for actions where some stabilisers are
non-trivial.) 
\end{eg}

\begin{eg} \label{ex:TrRe1.3}
(Categorical quotient $\nRightarrow$ good quotient.) Such examples are more
difficult to come by, but do exist. The interested reader can refer to
\cite{ah99,ah01}.     
\end{eg}

A very useful property of good and geometric quotients is that they are
determined locally on the base variety. That is \cite[Proposition
3.10]{new78},
\begin{itemize}
\item an $H$-invariant morphism $\phi:X \to Y$ is a good (respectively,
geometric) quotient if, and only if, there is is an open cover $\{U_i\}$ of $Y$
such that each restriction $\phi:\phi^{-1}(U_i) \to U_i$ is a good
(respectively, geometric) quotient of $H \act \phi^{-1}(U_i)$; and
\item if $\phi:X \to Y$ is a good (respectively, geometric) quotient, then for
each open subset $U \subseteq Y$ the restriction $\phi:\phi^{-1}(U) \to U$ is a good
(respectively, geometric) quotient of $H \act \psi^{-1}(U)$.
\end{itemize} 

Given an action of $H$ on $X$, there are certain topological restrictions on the
action that must be fulfilled if a 
geometric quotient or a principal bundle structure is to exist. If
a geometric quotient for $H \act X$ exists then the action must be  \index{closed action}
\emph{closed}: that is, for each point $x \in X$ the orbit $H \cdot x$ is a
closed subset of $X$. Furthermore, by \cite[Proposition 0.9]{mfk94} and
\cite[Theorem 6.1]{ses72} a geometric quotient $X \to X/H$
has the structure of a principal $H$-bundle if, and only if, the action of $H$
on $X$ is  \index{free action}\emph{free}: that is, the graph morphism
\[
H \times X \to X \times X, \quad (h,x) \mapsto (hx,x)
\]
of the action morphism is a closed immersion. Checking freeness of an action can
be made simpler by the following lemma.
\begin{lem} \label{lem:TrRe1} 
\cite[\S 6.3, Lemma 8]{eg98} An action of a linear algebraic group $H$ on $X$
  is free (in the above sense) if, and only if, it is set-theoretically free and
  \emph{proper}  \index{proper action} (that is, the graph morphism $H \times X \to X \times X$ of the action 
is
  a proper morphism).
\end{lem}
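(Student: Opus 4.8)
The plan is to work directly with the graph morphism $\gamma\colon H \times X \to X \times X$, $(h,x) \mapsto (hx,x)$, and to reduce the equivalence to two standard facts: that a closed immersion is both proper and injective, and that a proper monomorphism is a closed immersion.

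The forward implication is immediate. If $\gamma$ is a closed immersion then it is in particular finite, hence proper, which gives properness; and a closed immersion is injective on closed points, so $\gamma(h,x)=\gamma(h',x)$ forces $h=h'$, which is precisely set-theoretic freeness (triviality of every stabiliser $\stab_H(x)$). For the converse I would reinterpret the two hypotheses scheme-theoretically. The fibre of $\gamma$ over a closed point $(y,x) \in X \times X$ is empty if $y \notin H\cdot x$ and is a coset of $\stab_H(x)$ if $y \in H\cdot x$; in particular the fibre over the diagonal point $(x,x)$ is the stabiliser group scheme $\stab_H(x)$ itself. Set-theoretic freeness says each $\stab_H(x)$ has a single closed point, and here characteristic zero enters: by Cartier's theorem every group scheme of finite type over $\kk$ is smooth, hence reduced, so each $\stab_H(x)$ is the reduced point $\spec \kk$.

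Organising these fibrewise statements into the relative stabiliser group scheme $\mathcal{S}=\{(h,x) : hx=x\} \subseteq H \times X$ over $X$, I obtain a group scheme $\mathcal{S}\to X$ all of whose geometric fibres are trivial and reduced; being fibrewise smooth and quasi-finite it is \'{e}tale, and an \'{e}tale group scheme with trivial fibres coincides with its identity section. Thus $\mathcal{S}=X$, which is exactly the statement that $\gamma$ is a monomorphism (equivalently, that $\gamma$ is universally injective and unramified). It then remains to combine this with properness: a monomorphism of finite type is unramified and so quasi-finite, and together with the assumed properness, Zariski's main theorem yields that $\gamma$ is finite. Since a finite monomorphism is a closed immersion, the proof is complete.

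The step I expect to be the main obstacle is the scheme-theoretic upgrade in the middle paragraph. Set-theoretic freeness only constrains the closed points of the stabilisers, whereas to produce a genuine monomorphism one must know that the stabiliser group schemes are reduced. This is precisely what can fail in positive characteristic and is supplied here by Cartier's theorem, so it is essential that we are working over a field of characteristic zero.
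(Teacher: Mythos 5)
The paper does not actually prove this lemma: it is quoted verbatim from \cite[\S 6.3, Lemma 8]{eg98}, so there is no in-text argument to compare yours against. Judged on its own terms, your proof is essentially sound and follows the standard route. The forward direction is the triviality you describe, and in the converse you correctly locate the real content: set-theoretic freeness only controls the closed points of the stabilisers, and it is Cartier's theorem in characteristic zero that upgrades this to triviality of the stabiliser \emph{group schemes}, whence the graph morphism $\gamma$ is unramified; combined with properness this yields a proper monomorphism, hence a closed immersion.

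One intermediate step is stated incorrectly, though the damage is repairable. You assert that $\mathcal{S}\to X$ is \'etale because it is quasi-finite with smooth fibres; without flatness this implication fails (the inclusion of a point into $\mathbb{A}^1$ is quasi-finite with smooth fibres but is not \'etale). What you actually have, and what suffices, is that every fibre of $\mathcal{S}\to X$ over a closed point is the reduced point $\spec\kk$; since $\mathcal{S}$ is of finite type over $\kk$ (hence Jacobson, so closed points are dense in every closed subset), this forces $\Omega_{\mathcal{S}/X}=0$, i.e.\ $\mathcal{S}\to X$ is unramified. The identity section of a separated unramified morphism is an open and closed immersion, and it is topologically surjective by your fibrewise computation, so $\mathcal{S}=e(X)$ and $\gamma$ is a monomorphism as required. (Alternatively you can bypass the monomorphism reformulation entirely: properness plus quasi-finiteness of $\gamma$ gives finiteness, and finiteness, unramifiedness and injectivity on closed points force $\OO_{X\times X}\to\gamma_*\OO_{H\times X}$ to be surjective by Nakayama and semicontinuity of fibre degree, which is the closed-immersion statement directly.) With that step patched, your argument is a correct self-contained proof of the cited result.
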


Example \ref{ex:TrRe1.1} shows that not every action of a linear
algebraic groups on a variety need admit a geometric quotient. More generally,
there are actions which do not admit even a categorical quotient; see
\cite{ah00} for examples in the context of toric varieties under actions of
subtori. From here, there
are a couple of possible ways to proceed if one wants to construct a quotient
for the action. One way is to try to enlarge
the category in which one works so that it contains a quotient object
for the action. For example, any proper action with finite stabilisers has an
\emph{algebraic space} that is a geometric quotient
\cite{art71,knu71,kol97,km97}. More generally, one can use the category of stacks
\cite{dm69,lmb00}, where every action of a linear algebraic group on a variety has a 
quotient
stack. An alternative approach, which we adopt in this article, is to look
for nonempty invariant open subsets
that admit a geometric quotient. This approach is validated by the
following result of Rosenlicht.
\begin{thm} \label{thm:TrRe1}
\cite{ros63}\footnote{For slightly more modern treatments, see also
  \cite[Theorem 4.4]{vp94} and \cite[Theorem 6.2]{dol03}.} Let $H$ be a linear
algebraic group acting on an irreducible variety $X$. Then 
there is a nonempty $H$-invariant open subset $U \subseteq X$ admitting a
quasi-projective geometric quotient $U/H$. 
\end{thm}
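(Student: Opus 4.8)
The plan is to follow Rosenlicht's original strategy, realising the quotient as a model of the field of rational invariants and exploiting the fact that such invariants separate orbits in general position.

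First I would pass to the function field. Since $X$ is irreducible, $\kk(X)$ is a field carrying the induced action \eqref{eq:TrRe1} of $H$, and the invariant subfield $K := \kk(X)^H$ sits between $\kk$ and the finitely generated extension $\kk(X)/\kk$; hence $K$ is itself finitely generated over $\kk$, since subfields of finitely generated field extensions are finitely generated. Choosing an irreducible quasi-projective variety $Y$ with $\kk(Y)\cong K$, the inclusion $K\hookrightarrow \kk(X)$ induces a dominant rational map $\phi\colon X \dashrightarrow Y$ which is $H$-invariant as a rational map. Because $\phi(hx)=\phi(x)$ for all $h\in H$, the domain of $\phi$ can be enlarged to an $H$-invariant dense open subset $X_0\subseteq X$ on which $\phi$ is a genuine morphism, by gluing the translates $x\mapsto\phi(h^{-1}x)$ over the open cover $\{hX_0'\}$ of the translated domains.

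The heart of the argument --- and the step I expect to be the main obstacle --- is \emph{Rosenlicht's separation lemma}: there is a dense $H$-invariant open subset $X_1\subseteq X_0$ on which the invariant functions separate orbits, i.e. for $x,x'\in X_1$ one has $\phi(x)=\phi(x')$ if and only if $H\cdot x=H\cdot x'$. To establish this I would first compute dimensions: by Remark \ref{rmk:TrRe0} the stabiliser of a general point is finite, so a general orbit has dimension $\dim H$, whence $\operatorname{trdeg}_{\kk}K=\dim X-\dim H$; thus the general fibre of $\phi$ has dimension $\dim H$, matching the dimension of a general orbit. The substantive content is then that, after discarding a proper closed invariant subset, each such fibre consists of a \emph{single} orbit rather than finitely many. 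The key point is that two distinct orbits generically unseparated by $K$ would allow one to construct a further invariant rational function distinguishing them, contradicting the maximality of $K=\kk(X)^H$; making this precise --- via a constructibility argument (Chevalley) for the image of the graph morphism $H\times X\to X\times X$, $(h,x)\mapsto(hx,x)$, together with generic flatness of $\phi$ --- is the delicate part.

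Granting the separation locus $X_1$, I would then extract an $H$-invariant open subset $U\subseteq X_1$ on which $\phi$ restricts to a geometric quotient. Shrinking $Y$ to the image $\phi(U)$ makes $\phi$ surjective, and by generic flatness one may shrink further so that $\phi$ is flat with fibres of pure dimension $\dim H$, which are then closed single orbits. Since good and geometric quotients may be checked locally on the base, it suffices to work over an affine open cover of $Y$; over each such piece one arranges $\phi$ to be affine and verifies the sheaf identity $\OO_{Y}\cong(\phi_*\OO_{U})^{H}$ and the separation of disjoint invariant closed subsets directly from the separation property of the invariants. As the fibres of $\phi|_U$ are exactly the $H$-orbits, this good quotient is in fact geometric in the sense of Definition \ref{def:TrRe1}, \ref{itm:TrRe1-3}, and since $Y$ is quasi-projective so is $U/H$, completing the proof.
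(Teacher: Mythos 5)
The paper offers no proof of this statement: it is quoted directly from Rosenlicht \cite{ros63} (with \cite[Theorem 4.4]{vp94} and \cite[Theorem 6.2]{dol03} as modern references), and the text immediately afterwards only remarks that the proof is non-constructive. So there is nothing in the paper to compare against; your proposal has to be measured against the standard argument in those references, and its architecture --- pass to $K=\kk(X)^H$, take a quasi-projective model $Y$ of $K$, extend the induced rational map to an $H$-invariant open set, prove that rational invariants separate orbits in general position, then shrink until the fibres are single closed orbits and verify the good-quotient axioms locally on $Y$ --- is exactly the classical one. The bookkeeping at the end is also fine: once $\phi$ is flat (hence open), surjective and affine with fibres equal to orbits, condition \ref{itm:TrRe1-2c} of Definition \ref{def:TrRe1} comes for free because $\phi(W)=Y\setminus\phi(U\setminus W)$ for any closed invariant $W$. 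One cosmetic point: the appeal to Remark \ref{rmk:TrRe0} is unnecessary; the theorem needs no hypothesis on stabilisers, and the dimension count works with $\dim H$ replaced by the generic orbit dimension.

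The one genuine soft spot is your sketch of the separation lemma. You propose to argue that two generically unseparated orbits ``would allow one to construct a further invariant rational function distinguishing them, contradicting the maximality of $K$.'' But $K=\kk(X)^H$ is by definition \emph{all} invariant rational functions, so there is no maximality to contradict; the problem is precisely to show that the functions already in $K$ suffice, and no proof by ``adding one more invariant'' can get off the ground. The actual mechanism is the one you gesture at in your parenthetical: let $W\subseteq X\times X$ be the closure of the image of the graph morphism $(h,x)\mapsto(hx,x)$, which is irreducible of dimension $\dim X + d$ with $d$ the generic orbit dimension and whose generic fibre under the second projection is an orbit closure; since invariants are constant on orbits, $W$ sits generically inside $X\times_Y X$, and the content of the lemma is that these two coincide over a dense open subset of $X$ --- equivalently $\operatorname{trdeg}_K\kk(X)=d$ and the generic fibre of $X\times_Y X\to X$ is irreducible. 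Establishing this comparison (Chevalley constructibility plus the dimension count, as in \cite[\S 2.3--2.4]{vp94}) is the real work, and as written your proposal leaves it as a black box with a misleading heuristic attached. Everything else is sound.
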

Rosenlicht's proof of Theorem \ref{thm:TrRe1} is non-constructive, so the 
question remains of how to explicitly find nonempty open subsets---ideally as
large as possible---that admit geometric
quotients. This is the basic task of \emph{geometric invariant theory}. We will
discuss ways in which this has been done for certain kinds of linear algebraic group 
in the
upcoming Sections \ref{sec:BgReductive} and \ref{sec:BgUnipotent}.   


\subsubsection{Linearisations of Actions}
\label{subsec:BgGrLinearisations}

A natural way to try and construct open subsets of $X$ that admit geometric quotients 
is to
glue together quotients of smaller open subsets which are easier to
understand (for example, affine open subsets) and appeal to the fact that
geometric quotients are local on the base. Such a strategy in general runs
the risk of resulting in non-separated quotient schemes. This can be avoided by 
considering open subsets $X_f$ defined by the non-vanishing of
some invariant rational function $f$ and gluing the maps $X_f \to
\spec(\OO(X_f)^H)$, for then orbits in $X_f$ are separated by orbits outside of
$X_f$ by $f$. This is essentially what a
\emph{linearisation} achieves for us, a notion due to Mumford \cite[Definition
1.6]{mfk94} that  is fundamental for what follows. 
\begin{defn} \label{def:TrRe2} Let $H$ be a linear algebraic group acting on a
variety $X$. A \emph{linearisation}  \index{linearisation} of the action is a line bundle
$L \to X$ together with a choice of $H$-action on $L$ such that
\begin{enumerate} 
\item the bundle projection $L \to X$ is $H$-equivariant; and
\item for each $h \in H$ and $x \in X$, the induced map between the fibres 
\[  
L|_x \to L|_{hx}, \quad l \mapsto hl
\]
is linear. 
\end{enumerate} 
\end{defn}

\begin{rmk}
If $L \to X$ is a linearisation for the action of $H$ on $X$, we will often
represent this using the notation $H \act L \to X$, or say that $L \to X$ is an
``$H$\emph{-linearisation}''  \index{$H$-linearisation} for short. In general we will not distinguish
between the line bundle and the linearisation in our notation, unless this
is likely to lead to confusion.
\end{rmk}

For practical purposes (e.g. the study of moduli problems) the following two
classes of examples frequently arise.
\begin{eg} \label{ex:TrRe2.1}
Consider the case where $X=\spec A$ affine and $L=\OO_X=X \times \kk$ is the trivial
line bundle. Then a linearisation
of $H$ on $\OO_X$ corresponds to a choice of character $\chi:H \to
  \mb{G}_m$ \cite[Theorem 7.1 and Corollary 7.1]{dol03} via
\[
H \times (X \times \kk) \to X \times \kk, \quad (h,x,t) \mapsto (hx,\chi(h)t).
\]  
\end{eg}

\begin{eg} \label{ex:TrRe2.2}
Let $V$ be a finite dimensional vector space over
$\kk$ and $\rho:H \to \GL(V)$ a homomorphism. Then $H$ acts on $\PP(V)$ in the
obvious manner, and $\rho$ defines a canonical choice of linearisation on the 
tautological line
bundle $\OO(-1) \to \PP(V)$. This dually defines a linearisation on $\OO(1) \to
\PP(V)$ (see below). 
\end{eg}

A linearisation $H \act L \to X$ gives us a natural action on the
sections of $L \to X$ over invariant open subsets $U \subseteq X$, by the
formula
\begin{equation} 
\label{eq:TrRe2} (h \cdot f)(x)=hf(h^{-1}x) \quad \text{for all } x\in
U, \ f\in H^0(U,L), \ h\in H.
\end{equation}
Given any invariant open subset $U \subseteq X$ we write
\[
H^0(U,L)^H:=\{f \in H^0(U,L) \mid h \cdot f = f \text{ for all } h\in H\}
\]
for the sections invariant under the action \eqref{eq:TrRe2}. Elements of
$H^0(U,L)^H$ are called \emph{invariants} for the linearisation $L|_U \to U$. 

\begin{rmk} \label{rmk:TrRe2.3}
We should point out here that saying a \index{invariant} \index{equivariant}
  section $f \in H^0(U,L)$ is invariant in this sense is the same as
  saying, in the 
  terminology of Section \ref{subsec:BgLinear}, it is \emph{$H$-equivariant} as a 
  morphism $f:U \to L$, rather than necessarily invariant. When
  talking about sections of line bundles we always take invariance to be with
  respect to the action \eqref{eq:TrRe2}. Unfortunately both uses of the term
  `invariant' are commonplace.    
\end{rmk}

There are also various natural operations on linearisations over an $H$-variety
arising from the standard operations on line bundles. Given an $H$-linearisation $L
\to X$, the dual line bundle $L^* \to X$ has a canonical linearisation, defined
fibre-wise by pulling back linear maps along the action of $H$ i.e. for any $x
\in X$, an element $h \in H$ acts via  
\[
(L|_x)^* \to (L|_{hx})^*, \quad (h,\alpha) \mapsto \alpha \circ
h^{-1}:L|_{hx} \to \kk.
\]
(Note the use of $h^{-1}$ is to ensure the resulting $H$-action is a left
action.) Also, given two $H$-linearisations $L_1 \to X$ and $L_2 \to X$, there is a
canonical linearisation on the tensor product $L_1 \ten L_2 \to X$,
induced by the map on fibres
\[
( (L_1)|_x \ten (L_2)|_x) \to (L_1)|_{hx} \ten (L_2)|_{hx}, \quad
(h,l_1 \ten l_2) \mapsto (hl_1) \ten (hl_2),
\]         
for any $x \in X$ and $h \in H$.

A \emph{character} of $H$ is simply a group homomorphism $H \to \GG_m$. Given a
linearisation $L \to X$  
and a character $\chi \in \Hom(H,\GG_m)$, we define a linearisation
$L^{(\chi)} \to X$, which is said to be the result of 
\emph{twisting $L$ by the character}  \index{twisting a linearisation by a character} $\chi$, as the linearisation $L
\ten \OO_X^{(\chi)}$, where $\OO_X^{(\chi)}$ is the linearisation of the trivial
bundle $\OO_X=X \times \kk$ defined by $\chi^{-1}$ (see Example
\ref{ex:TrRe2.1}). In other words, $L^{(\chi)} \to X$ is obtained by multiplying
the fibres of the linearisation $L \to X$ by the character $\chi^{-1}$.  

Finally, if $\phi:X \to Y$ is an equivariant morphism between two $H$-varieties and $H
\act L \to Y$ is a linearisation, then there is a unique linearisation on the
pullback line bundle $\phi^*L \to X$ making the natural bundle map $\phi^*L \to L$
equivariant. This linearisation makes the pullback map $\phi^*:H^0(Y,L) \to
H^0(X,\phi^*L)$ an $H$-equivariant linear map with respect to the actions defined
as in \eqref{eq:TrRe2}.

Given a line bundle $L \to X$, define the \emph{section ring}  \index{section ring} of $L \to X$ to be the
commutative graded ring
\[ 
S:=\kk[X,L]:=\bigoplus_{r\geq 0}H^0(X,L^{\ten r}),
\] 
where the multiplication is induced by the natural maps
\[ 
H^0(X,L^{\ten r_1}) \otimes H^0(X,L^{\ten r_2}) \to H^0(X,L^{\ten(r_1 +
r_2)}).
\] 
The action in \eqref{eq:TrRe2} defines a linear action of
$H$ on $\kk[X,L]$ that respects the grading and distributes over the
multiplicative structure. Given $r >0$ and an invariant global section $f \in 
H^0(X,L^{\ten
  r})^H$, the open set 
\[  
X_f:=\{x\in X \mid f(x) \neq 0\}
\] 
is $H$-invariant. There is a naturally induced action of $H$ on $S_{(f)}$, and
the corresponding action on $\OO(X_f)$ under the canonical isomorphism $S_{(f)} \cong 
\OO(X_f)
$ is the one defined by the formula in \eqref{eq:TrRe1}. A linearisation thus gives a 
way of
studying the invariant functions on certain open subsets of $X$, which is an
important consideration for the construction of geometric quotients
(cf. Definition \ref{def:TrRe1}).
\begin{eg} \label{ex:TrRe3.1}
In the case of Example \ref{ex:TrRe2.1}, where $X=\spec A$ is affine, $L=\OO_X$ and 
the action of $H$ on
  $\OO_X$ is defined by a character $\chi:H \to
  \mb{G}_m$, then $S$ is the graded
  ring $\bigoplus_{r \geq 0} A$ (with the grading corresponding to $r$) and the ring 
of 
  invariants $S^H$ is the graded subring of \emph{semi-invariants},  \index{semi-invariants}
\[
\bigoplus_{r \geq 0} A^H_{\chi^r}, \quad A^H_{\chi^r}:=\{f \in A \mid
\text{$f(hx)=\chi(h)^rf(x)$ for all $x \in X$, $h \in H$}\};
\]
see \cite[Chapter 6]{muk03} or \cite[\S 3]{vp94}.  
\end{eg}

\begin{eg} \label{ex:TrRe3.2} 
Suppose now $X$ is a projective $H$-variety and $L \to X$ a very ample
linearisation (that is, a linearisation which is very ample as a line
bundle). Letting $V=H^0(X,L)^*$, the natural graded ring map $\symdot H^0(X,L) \to
\kk[X,L]$ defines an embedding $\phi:X \hookrightarrow \PP(V)$. Dualising the action 
of $H$ on $H^0(X,L)$
of \eqref{eq:TrRe2} defines a canonical linearisation $H \act \OO_{\PP(V)}(1) \to
\PP(V)$ as in Example \ref{ex:TrRe2.2}, with respect to which $\phi$ is $H$-
equivariant and
$L=\phi^*\OO_{\PP(V)}(1)$ as linearisations. If $L \to X$ is sufficiently
positive then the restriction map $\phi^*:\kk[\PP(V),\OO(1)] \to \kk[X,L]$ is
surjective by Serre Vanishing \cite[Chapter 3, Proposition 5.3]{har77}, so that
$\kk[X,L]^H \cong (\kk[\PP(V),\OO(1)]/\ker(\phi^*))^H$. Note 
that in general the induced restriction map on invariants $\phi^*:\kk[\PP(V),\OO(1)]^H 
\to
\kk[X,L]^H$ is \emph{not} surjective; that is, not every invariant section over
$X$ extends to one over $\PP(V)$.   
\end{eg}
\begin{rmk} \label{rmk:TrRe3} When $X$ is a \emph{normal} quasi-projective variety
equipped with an action of a connected linear algebraic group $H$, then one can always
find an equivariant embedding of $X$ into some projective space $\PP^m$, with
the $H$-action on $\PP^m$ defined by some representation $H \to \GL(m+1,\kk)$ 
(cf. Example \ref{ex:TrRe2.2} \cite[Corollary 1.6]{mfk94} when $X$ is complete, and for the more general case \cite{su74,su75}). Hence any normal quasi-projective variety 
equipped with an action of a connected linear algebraic group admits a very ample
linearisation. For work on linearisations of actions on more general
varieties see \cite{bri15-2}.   
\end{rmk}

We saw earlier that, given a linear algebraic group $H$ acting on a variety $X$, the operations of
tensor product and dualising may be applied to $H$-linearisations. These give
an abelian group structure to the set $\Pic^H(X)$ of isomorphism classes of
$H$-linearised line bundles, such that the natural forgetful map $\Pic^H(X) \to
\Pic(X)$ to the usual Picard group of $X$ is a homomorphism. We shall see that
many constructions in geometric invariant theory are 
independent of taking positive tensor powers of a linearisation, therefore it is
convenient to consider the following notion.
\begin{defn} \label{def:TrRe3.1}
Given a linear algebraic group $H$ acting on a variety $X$, define a \emph{rational linearisation} to be an element of $\Pic^H(X)
\ten_{\mb{Z}} \mb{Q}$.   
\end{defn}
\begin{rmk} \label{rmk:TrRe3.2}
  Given an element $\mc{L} \in \Pic^H(X) \ten_{\mb{Z}} \mb{Q}$, we may write
  $\mc{L}=\frac{1}{n}L$ for some integer $n>0$ and $H$-linearisation $L \in
  \Pic^H(X)$, and if $\tilde{n} \in \mb{Z}_{>0}$ and $\tilde{L} \in \Pic^H(X)$
  are another such integer and linearisation then we have $L^{\ten \tilde{n}} =
  \tilde{L}^{\ten n}$ within $\Pic^H(X)$. This observation allows us to define various geometric invariant theoretic notions for rational linearisations.
\end{rmk} 

We conclude this section with a very useful observation regarding linearisations $L \to X$: the induced
actions on $H^0(X,L)$ are
\emph{locally finite} (also called \emph{rational} in     \index{locally finite action}
\cite{new78}). In other words, we have the following result (see 
\cite[Chapter 1, \S 1, Lemma]{mfk94}, or \cite[Proposition 1.9]{bor91} in the
case $L=\OO_X$).
\begin{lem} \label{lem:TrRe4} Let $H$ be a linear algebraic group acting on a
  variety $X$ and $L \to X$ a linearisation. Given a finite-dimensional linear
  subspace $W \subseteq H^0(X,L)$, there is a finite-dimensional
  rational $H$-module $V \subseteq H^0(X,L)$ containing $W$.  
\end{lem}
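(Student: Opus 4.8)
The plan is to realise the $H$-action on $H^0(X,L)$ as a comodule structure over the Hopf algebra $\OO(H)$, and then invoke the standard fact that a comodule is the union of its finite-dimensional subcomodules, each of which is automatically a rational $H$-module. Concretely, write $\sigma\colon H\times X\to X$ for the action morphism and $\pr_X\colon H\times X\to X$ for the projection. The linearisation data amount to an isomorphism of line bundles $\theta\colon \pr_X^*L\xrightarrow{\sim}\sigma^*L$ on $H\times X$ which on the fibre over $(h,x)$ is the map $L|_x\to L|_{hx}$, $l\mapsto hl$. First I would use $\theta$ to build a \emph{coaction}
\[
\mu\colon H^0(X,L)\longrightarrow H^0(H\times X,\pr_X^*L),\qquad f\mapsto \theta^{-1}(\sigma^*f),
\]
so that on the fibre over $(h,x)$ one has $\mu(f)(h,x)=h^{-1}f(hx)=(h^{-1}\cdot f)(x)$, the last equality being precisely the definition \eqref{eq:TrRe2} of the action on sections.

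The key identification is $H^0(H\times X,\pr_X^*L)\cong \OO(H)\ten_\kk H^0(X,L)$. This follows from the projection formula $\pr_{X*}\pr_X^*L\cong L\ten_\kk\OO(H)$ together with the fact that, since $X$ is Noetherian (hence quasi-compact and quasi-separated), taking global sections commutes with the filtered colimit expressing $\OO(H)$ as the union of its finite-dimensional subspaces; here affineness of $H$ guarantees $\pr_{X*}\OO_{H\times X}\cong \OO_X\ten_\kk\OO(H)$. Under this identification I would write $\mu(f)=\sum_{i=1}^{n}a_i\ten f_i$ as a \emph{finite} sum with $a_i\in\OO(H)$ and $f_i\in H^0(X,L)$; restricting to $\{h\}\times X$ then yields $h^{-1}\cdot f=\sum_i a_i(h)f_i$, so that the entire orbit $\{h\cdot f:h\in H\}$ lies in the single finite-dimensional space $\spn\{f_1,\dots,f_n\}$, which is independent of $h$.

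To finish, let $w_1,\dots,w_k$ be a basis of $W$ and set $V:=\spn\{h\cdot w_j : h\in H,\ 1\le j\le k\}$. By the previous paragraph $V$ is finite-dimensional, and it is manifestly $H$-invariant (since $h'\cdot(h\cdot w_j)=(h'h)\cdot w_j$) and contains $W$ (take $h=e$). Passing to a minimal expression for each $\mu(w_j)$ shows the resulting $f_i$ lie in $V$, so $\mu$ restricts to a map $V\to\OO(H)\ten_\kk V$; choosing a basis of $V$, this exhibits the homomorphism $H\to\GL(V)$ through matrix coefficients lying in $\OO(H)$, so it is a morphism of varieties and $V$ is a rational $H$-module. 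The main obstacle, and the only place requiring genuine care, is the identification $H^0(H\times X,\pr_X^*L)\cong\OO(H)\ten_\kk H^0(X,L)$ and the attendant claim that $\mu(f)$ is a finite sum: since $\OO(H)$ is typically infinite-dimensional, one must use affineness of $H$ and the finiteness hypotheses on $X$ to control both the pushforward and the passage to global sections.
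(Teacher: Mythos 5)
Your proof is correct and is essentially the standard local-finiteness argument that the paper invokes by citation (to \cite[Chapter 1, \S 1, Lemma]{mfk94} and \cite[Proposition 1.9]{bor91}) rather than proving in situ: realise the action on sections as a co-action $H^0(X,L)\to\OO(H)\ten H^0(X,L)$ via the linearisation isomorphism and the K\"unneth identification, observe that a finite tensor expression forces each orbit to span a finite-dimensional space, and take the span of the orbit of a basis of $W$. The same machinery, including the ``minimal expression'' trick showing the $f_i$ lie in the span of the orbit, appears verbatim in the paper's later construction of $\Sigma^*_{\theta}$ in \eqref{eq:GiNa5.1} and the proof of Lemma \ref{lem:GiNa6}, so nothing further is needed.
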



\subsubsection{Unipotent Groups and Reductive Groups}
\label{subsec:BgGrUnipotentAndReductive}

It turns out that the problem of constructing quotients and finding invariants
for a given 
linearisation depends very much on the sort of linear algebraic group one is
considering. In regards to this, it is helpful to focus one's attention on two
particular sub-classes of group: 
\emph{unipotent} groups and \emph{reductive} groups. These are defined as
follows. 

\begin{defn} \label{def:TrRe3} Let $H$ be a linear algebraic group. 
\begin{enumerate}
  \item \label{itm:TrRe3-1} \cite[Chapter 4]{bor91} We say $H$ is  \index{unipotent linear algebraic group}
    \emph{unipotent} if there is a closed 
    embedding $\rho:H \hookrightarrow \GL(n,\kk)$, for some $n\geq 0$, such that
    $\rho(h)-\rho(e)$ is nilpotent in $\GL(n,\kk)$ for each $h \in H$, i.e. $(\rho(h)-
\rho(e))^m=0$
    for some $m\geq 0$ (depending on $h$).\footnote{If this is the case, then in
      fact for \emph{any} closed embedding $\rho:H \hookrightarrow \GL(\tilde{n},\kk)$ 
into any
 general linear group $\GL(\tilde{n},\kk)$ one has $\rho(h)-\rho(e)$ nilpotent for 
each $h \in
 H$; cf. \cite[Theorem 4.4]{bor91}.}
\item \label{itm:TrRe3-2} \cite[\S 11.21]{bor91} The \emph{unipotent radical
    $H_u$}  \index{unipotent radical} \index{$H_u$ unipotent radical of $H$} of $H$ is the maximal connected\footnote{In fact, over characteristic
    zero all unipotent groups are necessarily connected; see \cite[Chapter II,
    \S 6.3]{dg70}.} normal unipotent
  subgroup of $H$. 
\item \label{itm:TrRe3-3} \cite[\S 11.21]{bor91} We say $H$ is \emph{reductive}  \index{reductive group}
  if $H_u=\{e\}$.     
  \end{enumerate}
\end{defn}

The following are well-known examples of reductive and unipotent groups.
\begin{eg} \label{ex:TrRe2} (Reductive groups; see \cite{spr94}.)  The classical groups $\GL(n,\kk)$,
  $\SL(n,\kk)$, $\Sp(2n,\kk)$ and $\SO(n,\kk)$ are reductive. Products of
  reductive groups are again reductive; in particular, groups isomorphic to
  products of $\GG_m$ (which are called \emph{tori}) are reductive. All finite groups and all
  semisimple linear algebraic groups are reductive. 
\end{eg}

\begin{eg} \label{ex:TrRe3} (Unipotent groups.) The group $\GG_a$ is
  unipotent. The group 
\[
\mb{U}_n:=\{(a_{ij}) \in \GL(n,\kk) \mid a_{ii}=1 \text{ for each } i=1,\dots,n
\text{ and } a_{ij}=0 \text{ whenever } j<i\} 
\]
 of strictly upper triangular inside $\GL(n,\kk)$ is unipotent, for each $n \geq
 1$. In fact, a group $H$ is unipotent if, and only if, it is
  isomorphic to a closed subgroup of $\mb{U}_n$ for some $n \geq 1$
  \cite[Theorem 4.8]{bor91}. Products of
  unipotent groups are unipotent, and all subgroups of unipotent groups are
  unipotent. 
\end{eg}
Given a linear algebraic group $H$, its quotient by the unipotent radical,
\[
H_r:=H/H_u
\]
is a reductive group. Moreover, given a variety $X$ and a normal subgroup $N$ of
$H$, it is easy to show that if $X$ has a geometric $N$-quotient $X/N$ and $X/N$
has a geometric $H/N$-quotient $(X/N)/(H/N)$, then the geometric quotient for
the $H$-action on $X$ exists, with $X/H=(X/N)/(H/N)$. This suggests that a
natural way to construct geometric quotients by general 
linear algebraic groups is to try and understand the construction of quotients
for unipotent groups and reductive groups. 

Reductive groups and unipotent groups behave rather differently from the point
of view  
of invariant theory. On the one hand, reductive groups have a well behaved
representation theory. Any representation of a reductive group can be
written as a direct sum of irreducible representations \cite[\S 4.6.6]{spr94},
which has a number of important consequences. Firstly, given a finitely generated
$\kk$-algebra $A$ and a reductive group $G$ acting on $A$ in a locally finite
fashion, the ring of invariants $A^G$ is 
also finitely generated over $\kk$ by a theorem of Nagata \cite{nag64}. Thus
$\spec(A^G)$ is an affine \emph{variety}. Secondly, one has the following result (see 
\cite[Lemma
5.1.A]{nag64}): if $I \subseteq A$ is a
$G$-invariant ideal, then any invariant element of $A/I$ lifts to an
invariant in $A$; geometrically stated, any invariant regular
function on a $G$-invariant closed subset $Z$ of $\spec A$ extends to
$G$-invariant regular function on the whole of $\spec A$. Thirdly, given two
distinct ideals $I,J$ of 
$A$ invariant under the $G$-action and such that $I+J=A$, one can always find an
element of $A^G$ 
contained in one but not the other (this follows from a result of Haboush
\cite{hab75}, see also \cite[Lemma 3.3]{new78}). Geometrically this 
says that any two disjoint closed invariant subsets of $\spec A$ can be
separated by an invariant function---this implies property \ref{itm:TrRe1-2c}
of Definition \ref{def:TrRe1} of a good quotient. The upshot
is that reductive group actions on affine varieties are amenable to constructing good
quotients; this will be the content of Theorem \ref{thm:BgRe1} in the next section.    

All three of the above properties fail for non-reductive group actions. The issue of
whether the ring of invariants is finitely generated, which is closely related
to the fourteenth problem of Hilbert\footnote{A good survey of counterexamples to
  Hilbert's fourteenth problem from an invariant theoretic perspective is 
\cite{fre01}.}, has arguably received the most attention
historically. The
following celebrated example of Nagata \cite{nag59} demonstrates that the ring of
invariants for a non-reductive group need not be finitely generated. (We follow
the exposition of \cite[\S 4.3]{dol03}.)
\begin{eg} \label{ex:TrRe2.4} Given $n>0$, let $X=\kk^{2}
  \oplus \dots \oplus \kk^2$ ($n$ times)
  and consider the action of $H^{\prime}_1 \times \dots \times H^{\prime}_n$ on $X$,
  where $H^{\prime}_i=\{\left(\begin{smallmatrix} c_i & a_i \\ 0 &
      c_i \end{smallmatrix} \right) \mid a_i,c_i \in \kk, \ c_i \neq 0\}$ acts
  on the $i$-th factor of $\kk^2$ in $X$ by usual matrix multiplication. Let
  $H \subseteq H^{\prime}_1 \times \dots \times H^{\prime}_n$ be the subgroup
  obtained by demanding that $c_1\cdots c_n=1$ and $(a_1,\dots,a_n)$ satisfy
  three suitable 
  linear equations $\sum_j x_{i,j}a_j = 0$, ($i=1,2,3$). Then for $n=16$, the
  ring of invariants $\OO(X)^H$ is not finitely generated over
  $\kk$. It follows that $\OO(X)^{H_u}$ is not
  finitely generated over $\kk$, 
  where $H_u$ is the unipotent radical of $H$ defined by $c_i=1$ for each
  $i=1,\dots,n$ (see \cite{nag60}). 
\end{eg}

Despite suffering representation-theoretic deficiencies from not being
reductive, unipotent group actions 
nevertheless have their own distinctive invariant theoretic flavour; indeed,
topologically they can be better behaved that reductive groups. For example,
every action of a unipotent group on an affine variety is closed. Somewhat more
strikingly, every unipotent group is \emph{special}: any principal bundle of a    \index{special group}
unipotent group is \emph{Zariski}-locally trivial, not just locally trivial in
the isotrivial topology \cite[Proposition 14]{ser58}. Finally, affine varieties
that admit affine locally trivial quotients are easily recognisable thanks to
the next result.  
\begin{propn} \label{prop:TrRe3.1} Suppose $X$ is an affine 
variety acted
  upon by a unipotent group $U$ and a locally trivial quotient $X \to X/U$
  exists. Then $X/U$ is affine if and only if $X \to X/U$ is a trivial
  $U$-bundle.   
\end{propn}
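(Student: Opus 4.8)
The plan is to prove the two implications separately, with the forward direction (affine quotient $\Rightarrow$ trivial bundle) being the substantial one. For the reverse implication, suppose $X \to X/U$ is a trivial $U$-bundle, so that there is a $U$-equivariant isomorphism $X \cong U \times (X/U)$ carrying the quotient map to the projection onto the second factor. The subvariety $\{e\} \times (X/U)$ is the preimage of the closed point $e \in U$ under the first projection $U \times (X/U) \to U$, hence is closed in $X$; being a closed subvariety of the affine variety $X$ it is affine, and it is isomorphic to $X/U$. Therefore $X/U$ is affine. Note that this direction uses the affineness of $X$, whereas the converse will only need that $X/U$ is affine.

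For the forward implication I would isolate the crux as a lemma: \emph{if $U$ is a unipotent group and $Y$ an affine variety, then every principal $U$-bundle over $Y$ is trivial.} Granting this, the claim follows by applying the lemma to $\pi\colon X \to X/U$ with $Y = X/U$, which is affine by hypothesis. To prove the lemma, first recall that unipotent groups are \emph{special} (\cite[Proposition 14]{ser58}, as noted above), so the locally isotrivial bundle is in fact Zariski-locally trivial; moreover a principal bundle is trivial precisely when it admits a global section, since a section $s\colon Y \to P$ of a $U$-torsor induces the trivialisation $U \times Y \to P$, $(u,y) \mapsto u \cdot s(y)$. Thus the task reduces to constructing a section $Y \to P$ of an arbitrary Zariski-locally trivial principal $U$-bundle $\pi\colon P \to Y$ over an affine base.

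I would prove the lemma by induction on $\dim U$, the case $U = \{e\}$ being vacuous (in characteristic zero unipotent groups are connected, so $\dim U = 0$ forces $U = \{e\}$). For the inductive step, choose a central subgroup $Z \cong \GG_a$ of $U$; such a $Z$ exists because a nontrivial unipotent group is nilpotent with nontrivial centre, and the centre, being a commutative unipotent group, contains a copy of $\GG_a$. Being central, $Z$ is normal, so $P/Z \to Y$ is a Zariski-locally trivial principal $(U/Z)$-bundle; since $\dim(U/Z) < \dim U$, the inductive hypothesis yields a section $\sigma\colon Y \to P/Z$. Pulling the principal $\GG_a$-bundle $P \to P/Z$ back along $\sigma$ gives a principal $\GG_a$-bundle $\sigma^* P \to Y$, which I would trivialise by identifying $\GG_a$-torsors over $Y$ with classes in $H^1(Y,\OO_Y)$ (the sheaf of sections of $\GG_a$ being $\OO_Y$ under addition); this group vanishes because $Y$ is affine and $\OO_Y$ is quasi-coherent \cite[Chapter III]{har77}. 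A section of $\sigma^* P \to Y$ then composes with $\sigma^* P \to P$ to give the sought section of $\pi$, completing the induction. The main obstacle I anticipate is the careful d\'evissage: verifying that the intermediate quotient $P/Z$ genuinely exists as a variety and forms a principal $(U/Z)$-bundle inheriting Zariski-local triviality from $P$, and that the torsor--cohomology dictionary is applied correctly so that the vanishing of $H^1(Y,\OO_Y)$ really produces a global section.
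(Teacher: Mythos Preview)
Your proof is correct and follows essentially the same approach as the paper's. The paper sketches the forward direction by saying ``it is easy to prove by induction on the dimension of $U$ that every principal $U$-bundle over an affine base is trivial,'' which is precisely the d\'evissage via a central $\GG_a$ and the vanishing of $H^1(Y,\OO_Y)$ that you spell out; for the converse the paper likewise observes that the unit section of $U$ yields a closed immersion $X/U \hookrightarrow X$, hence $X/U$ is affine.
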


\begin{proof} This is an immediate consequence of  \cite[Theorem 3.14]{ad07}, which involves
 cohomological techniques from \cite{gp93}. However these techniques are not needed for this result: it is easy to prove by induction on the dimension of $U$ that every principal $U$-bundle over an affine base is trivial. Conversely if $X \to X/U$ is a trivial $U$-bundle, so that $X \cong X/U \, \times \, U$, then the unit morphism $\spec \kk \to U$ gives us a closed immersion $X/U \to X$ and so $X/U$ is affine.
\end{proof}

\subsection{Mumford's Geometric Invariant Theory for Reductive Groups}
\label{sec:BgReductive}

In the first edition of \cite{mfk94} Mumford introduced his geometric invariant
theory (GIT) to give both a theoretical framework and 
computational tools for finding invariant open sets of points inside a $G$-variety
$X$ that admit geometric quotients, when $G$ is a reductive linear algebraic
group. On the theoretical side, a basic strategy for constructing such open
subsets is to patch together quotients of open affines arising from the data
of a linearisation. Because the invariant theory of reductive groups is well
behaved, such quotients are easy to describe.          

\begin{thm} \label{thm:BgRe1} \cite[Chapter 1, \S 2]{mfk94}\footnote{While Theorem 
\ref{thm:BgRe1},
    \ref{itm:BgRe1-2} is not stated 
    explicitly in \cite[Chapter 1, \S 2]{mfk94}, it follows easily from the
    material there, together with the Closed Orbit Lemma \cite[Proposition
    1.8]{bor91} and the lower semi-continuity of the 
    function $x \mapsto \dim(H \cdot x)$. See \cite[Proposition 3.8]{new78} for
    a proof.} Let $X=\spec A$ be an affine variety upon which a
reductive group $G$ acts. Then
\begin{enumerate}
\item \label{itm:BgRe1-1} the natural map $\phi:X \to \spec (A^G)$ induced by
  the inclusion 
$A^G \hookrightarrow A$ is a good categorical quotient; and
\item \label{itm:BgRe1-2} the set $U:=\{x \in X \mid G \cdot x \text{ \upshape{is
      closed and} } \stab_G(x) \text{ \upshape{is finite}}\}$ is an open subset of $X
$, and the
restriction of $\phi$ to $U$ gives a geometric quotient $U \to
\phi(U)$ for the $G$-action on $U$, with $\phi(U)$ open in $\spec(A^G)$. 
\end{enumerate}
\end{thm}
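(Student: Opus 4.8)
The plan is to verify part (1) directly from the definition of a good quotient (Definition \ref{def:TrRe1}, \ref{itm:TrRe1-2}) and then invoke the implication good quotient $\implies$ categorical quotient recorded after that definition. The three properties of reductive actions highlighted in Section \ref{subsec:BgGrUnipotentAndReductive} are exactly the inputs I expect to need. Writing $Y := \spec(A^G)$, finite generation of $A^G$ (Nagata, \cite{nag64}) ensures $Y$ is an affine variety and that $\phi$ is a morphism of affine schemes, hence automatically affine. For the remaining properties the key tool is the \emph{Reynolds operator} $R \colon A \to A^G$, the $A^G$-linear $G$-equivariant projection onto invariants supplied by complete reducibility of rational $G$-modules in characteristic zero.

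First I would prove surjectivity. Given a maximal ideal $\mf{m} \subseteq A^G$ I must show the extended ideal $\mf{m}A$ is proper; were $1 = \sum_i a_i m_i$ with $a_i \in A$, $m_i \in \mf{m}$, then applying $R$ and using $A^G$-linearity would give $1 = \sum_i R(a_i) m_i \in \mf{m}$, a contradiction, and any maximal ideal of $A$ over $\mf{m}A$ then maps to $\mf{m}$. For property \ref{itm:TrRe1-2b} it suffices, since everything is affine, to check on distinguished opens $D(f)$ with $f \in A^G$, where it reduces to the identity $(A_f)^G = (A^G)_f$; this holds because $R$ commutes with localisation at the invariant element $f$. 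Property \ref{itm:TrRe1-2c} is where separation enters: given disjoint $G$-invariant closed subsets $W_1, W_2$ with invariant ideals $I_1, I_2$ satisfying $I_1 + I_2 = A$, applying $R$ to a partition of unity shows $(I_1 \cap A^G) + (I_2 \cap A^G) = A^G$, and combined with the lifting property $(A/I_j)^G \cong A^G/(I_j \cap A^G)$ (\cite[Lemma 5.1.A]{nag64}) this identifies $\phi(W_j)$ with the closed set $V(I_j \cap A^G)$, so the images are disjoint. This establishes (1).

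For part (2) I would first grant openness of $U$, which rests on upper semicontinuity of $x \mapsto \dim \stab_G(x)$ together with the Closed Orbit Lemma (see \cite[Proposition 3.8]{new78}). The structural heart is the description of the fibres of $\phi$: because disjoint closed invariant sets are separated, no fibre can contain two disjoint closed orbits, so each fibre $\phi^{-1}(y)$ contains exactly one closed orbit, namely the unique orbit of minimal dimension in the fibre, and two points have the same image precisely when their orbit closures meet. Now if $x \in U$ then $G \cdot x$ is closed of dimension $\dim G$ and is this unique closed orbit; for any $x'$ in the same fibre one has $G\cdot x \subseteq \overline{G \cdot x'}$, and since $\dim \overline{G \cdot x'} \le \dim G = \dim(G \cdot x)$ with $\overline{G\cdot x'}$ irreducible, containment of the closed set $G \cdot x$ forces $\overline{G \cdot x'} = G \cdot x$, whence $G\cdot x' = G \cdot x$ and $x' \in U$. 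Thus $U$ is saturated, $U = \phi^{-1}(\phi(U))$, and each fibre over $\phi(U)$ is a single orbit.

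Finally, saturation together with the fact that a good quotient is a submersion (the parenthetical in Definition \ref{def:TrRe1}, \ref{itm:TrRe1-2c}) makes $\phi(U)$ open, and the restriction of a good quotient to the preimage of an open subset is again a good quotient by the locality of good quotients on the base. Being additionally an orbit space, $\phi|_U \colon U \to \phi(U)$ is a geometric quotient, as required. The main obstacle I anticipate is part (2): the openness of $U$ is genuinely non-formal, and knitting it together with the separation-based fibre analysis to obtain saturation and the orbit-space property is the delicate step, whereas part (1) should be a fairly mechanical application of the Reynolds operator and the three reductivity properties.
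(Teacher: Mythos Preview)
The paper does not prove this theorem at all: it is stated as a background result with a citation to \cite[Chapter 1, \S 2]{mfk94} and a footnote pointing to \cite[Proposition 3.8]{new78} for part \ref{itm:BgRe1-2}. Your sketch is correct and is essentially the standard argument one finds in those references: part (1) via the Reynolds operator (surjectivity from $A^G$-linearity, the sheaf condition from $(A_f)^G = (A^G)_f$, and separation of disjoint invariant closed sets from $I_1^G + I_2^G = A^G$), and part (2) by combining openness of $U$ with the unique-closed-orbit description of fibres to deduce that $U$ is saturated, then invoking locality of good quotients. There is nothing to compare beyond noting that you have supplied what the paper merely cites.
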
 
To deal with the more general case where $G$ acts on any 
variety $X$, Mumford used the extra data of a linearisation $L \to X$ to define
$G$-invariant open subsets which are obtained by patching affine open subsets of
the form $X_f$, for $f$ an invariant section of a positive tensor power of $L
\to X$.   

\begin{defn} \label{def:TrRe5} \cite[Chapter 1, \S 4]{mfk94} Let $X$ be a
  $G$-variety and $L \to X$ a linearisation. A point $x \in X$ is called
\begin{enumerate} 
\item \label{itm:TrRe5-1} \emph{semistable} if there is an invariant  \index{semistable}
$f \in 
H^0(X,L^{\otimes r})^G$, with $r>0$, such that $f(x) \neq 0$ and $X_f$ is affine; and
\item \label{itm:TrRe5-2} \emph{stable} if there is  \index{stable}
  an invariant $f\in H^0(X,L^{\otimes r})^G$, with $r>0$, such that $X_f$ is
  affine, the $G$-action on $X_f$ is closed and $\stab_G(y)$ is finite for all
  $y \in X_f$.
\end{enumerate} 
\end{defn}
We denote the subset of semistable (respectively, stable) points 
by $X^{\rmss(L)}$ (respectively $X^{\rms(L)}$), dropping the mention of $L$ if there 
is
no risk of confusion. Note that we have followed \cite[Chapter 3, \S 5]{new78} in 
requiring
finite stabilisers for Definition
\ref{def:TrRe5}, \ref{itm:TrRe5-2} of `stable'; this 
corresponds to Mumford's definition of `properly stable' in \cite[Definition 1.8]
{mfk94}.  

\begin{rmk} \label{rmk:BgRe3} The sets $X^{\rmss}$ and $X^{\rms}$ are
  $G$-invariant open subsets of
$X$ that may be defined for rational linearisations    \index{rational linearisation}  \index{fractional linearisation}
$\mc{L}$, in the following way: if $n>0$ is an integer such that $L=n\mc{L}$ is in $\Pic^G(X)$, then
define $X^{\rmss(\mc{L})}=X^{\rmss(L)}$ and $X^{\rms(\mc{L})}=X^{\rms(L)}$. Since 
$X_{f}=X_{f^m}$ for any global section $f$ of a line bundle and any integer $m>0$, this is
well-defined by Remark \ref{rmk:TrRe3.2}. In fact, for ample $L \to X$ the sets $X^{\rms(L)}$ and $X^{\rmss(L)}$ depend only on the \emph{fractional linearisation} class of $L$, in the sense of Thaddeus \cite{tha96}. 
\end{rmk}

The definitions of semistability and stability, respectively, are so specified as to 
allow one
to glue the good or geometric quotients $X_f \to \spec(\OO(X_f)^G)$,
respectively, of the affine $X_f$. The central result of Mumford's geometric
invariant theory says that these quotients can be glued into \emph{quasi-projective
varieties}.  

\begin{thm} \label{thm:BgRe2} \cite[Theorem 1.10]{mfk94} Let $G$ be a reductive
  group acting on a variety $X$ and $L \to X$ a linearisation for the action. Then
\begin{enumerate}
\item \label{itm:BgRe2-1} the semistable locus $X^{\rmss}$ has a good
  categorical quotient 
  $\phi:X^{\rmss} \to X \dblslash G$ onto a quasi-projective variety $X \dblslash G$, 
and there
  is an ample line bundle $M \to X \dblslash G$ pulling back to a positive
  tensor power of $L|_{X^{\rmss}}$ under $\phi$; and 
\item \label{itm:BgRe2-2} the image of $X^{\rms}$ under $\phi$ is an open subset of $X 
\dblslash G$, and the
restriction of $\phi$ to $X^{\rms}$ gives a geometric quotient $\phi:X^{\rms} \to
\phi(X^{\rms})$ for the action of $G$ on $X^{\rms}$.
\end{enumerate}
\end{thm}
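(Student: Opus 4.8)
The plan is to construct $X \dblslash G$ by gluing the affine GIT quotients of the basic affine opens $X_f$, and then to descend a power of $L$ to an ample line bundle on the result; this realises the gluing strategy indicated just before Theorem \ref{thm:BgRe1}. By Definition \ref{def:TrRe5}, every point of $X^{\rmss}$ lies in some $X_f$ with $f\in H^0(X,L^{\otimes r})^G$, $r>0$, and $X_f$ affine. Since $X$ is of finite type, $X^{\rmss}$ is quasi-compact and hence covered by finitely many such sets $X_{f_1},\dots,X_{f_N}$, where $f_i$ has degree $r_i$. Writing $X_{f_i}=\spec A_i$, Nagata's theorem gives that $A_i^G$ is finitely generated, so $U_i:=\spec(A_i^G)$ is an affine variety, and by Theorem \ref{thm:BgRe1}\,\ref{itm:BgRe1-1} the induced map $\phi_i:X_{f_i}\to U_i$ is a good categorical quotient.

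The heart of the construction is the gluing. For a pair of indices $i,j$ the element $g_{ij}:=f_j^{r_i}/f_i^{r_j}$ is a degree-zero element of the localisation of the section ring at $f_i$, hence an invariant regular function on $X_{f_i}$, and its nonvanishing locus is precisely $X_{f_i}\cap X_{f_j}=(X_{f_i})_{g_{ij}}$. The key reductive input is that the formation of invariants commutes with this localisation, giving $\OO(X_{f_i}\cap X_{f_j})^G=(A_i^G)_{g_{ij}}$; thus $\spec\OO(X_{f_i}\cap X_{f_j})^G$ is a principal affine open of $U_i$ and, symmetrically, of $U_j$. Patching the $U_i$ along these opens yields a scheme $X \dblslash G$, and because good quotients are local on the base the $\phi_i$ glue to a single $G$-invariant morphism $\phi:X^{\rmss}\to X \dblslash G$. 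Separatedness of $X \dblslash G$ follows from the affineness of the overlaps together with surjectivity of $A_i^G\otimes_\kk A_j^G\to\OO(U_i\cap U_j)$, and finite-typeness is inherited from $X$, so $X \dblslash G$ is a variety. The good categorical quotient properties of Definition \ref{def:TrRe1} for $\phi$ then follow from their local counterparts on the $\phi_i$, with the separation property \ref{itm:TrRe1-2c} supplied by the reductive principle that disjoint closed invariant subsets are separated by invariants.

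To obtain quasi-projectivity I would descend $L$. On $X_{f_i}$ the nonvanishing invariant section $f_i$ trivialises $L^{\otimes r_i}$ equivariantly, and this trivialisation descends through $\phi_i$ to a line bundle on $U_i$; choosing $N$ to be a common multiple of the $r_i$, these local bundles glue to a line bundle $M\to X \dblslash G$ with $\phi^*M\cong L^{\otimes N}|_{X^{\rmss}}$. A suitable power of each $f_i$ descends (by the invariant-sheaf property of a good quotient) to a global section $s_i$ of $M$ whose nonvanishing locus is exactly the affine $U_i$; since finitely many such affine $U_i$ cover $X \dblslash G$, the standard criterion for ampleness shows $M$ is ample, so $X \dblslash G$ is quasi-projective. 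This completes part \ref{itm:BgRe2-1}.

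For part \ref{itm:BgRe2-2}, observe that $X^{\rms}$ is the union of those $X_f$ on which the $G$-action is closed with finite stabilisers, so on each such piece the open set $U$ of Theorem \ref{thm:BgRe1}\,\ref{itm:BgRe1-2} is all of $X_f$ and $\phi$ restricts to a geometric quotient $X_f\to U_f$. As $X^{\rms}$ is saturated, $\phi(X^{\rms})$ is the union of the corresponding affine opens $U_f$, hence open in $X \dblslash G$, and by the local-on-base characterisation of geometric quotients $\phi:X^{\rms}\to\phi(X^{\rms})$ is geometric. \textbf{The main obstacle} is the gluing lemma $\OO(X_{f_i}\cap X_{f_j})^G=(A_i^G)_{g_{ij}}$: this is precisely where linear reductivity (the Reynolds operator) is indispensable, since it is what forces the affine quotients to patch along \emph{affine} overlaps and hence produce a \emph{separated} scheme, and its failure for non-reductive groups is the central difficulty motivating the rest of the article.
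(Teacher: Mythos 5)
This theorem is stated in the paper as a cited background result (\cite[Theorem 1.10]{mfk94}) with no proof given, and your argument is a faithful reconstruction of Mumford's own: cover $X^{\rmss}$ by finitely many affine $X_{f_i}$, take the affine quotients $\spec(A_i^G)$, glue along the principal opens determined by $g_{ij}=f_j^{r_i}/f_i^{r_j}$, and descend a power of $L$ to an ample bundle whose distinguished sections have affine non-vanishing loci. Both parts, including the reduction of \ref{itm:BgRe2-2} to Theorem \ref{thm:BgRe1}\,\ref{itm:BgRe1-2} via locality of geometric quotients, are correct.

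One conceptual point is misattributed, though it does not invalidate the proof. The identity $\OO(X_{f_i}\cap X_{f_j})^G=(A_i^G)_{g_{ij}}$ — commutation of invariants with localisation at an \emph{invariant} element — holds for an arbitrary linear algebraic group acting rationally (for integral $X$ this is exactly Lemma \ref{lem:GiFi3.1}\,\ref{itm:GiFi3.1-1} and Remark \ref{rmk:GiFi2.5} of the paper, and it is what lets the non-reductive enveloping quotient be glued inside $\proj(S^H)$ as a separated scheme). Reductivity is genuinely indispensable elsewhere: in Nagata's theorem for finite generation of the $A_i^G$, in the surjectivity of $X_{f_i}\to\spec(A_i^G)$, and in the separation of disjoint closed invariant subsets by invariants (property \ref{itm:TrRe1-2c} of a good quotient). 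For non-reductive $H$ the gluing itself still works; what fails is finite generation, surjectivity of $q:X^{\ssfg}\to X\env H$, and the good-quotient properties — which is the actual difficulty the rest of the article is organised around.
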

The variety $X \dblslash G$ is called the \emph{GIT quotient}  \index{GIT quotient} for the
linearisation $G \act L \to X$. By a result of Seshadri \cite[Proposition
9]{ses77} the GIT quotient $X \dblslash G$ can be regarded topologically as the
quotient $X^{\rmss}/ \! \! \sim$ of $X^{\rmss}$ under the `S-equivalence' relation $
\sim$, where $x_1
\sim x_2$ if, and only if, the closures of $G \cdot x_1$ and $G \cdot x_2$ in
$X^{\rmss}$ intersect nontrivially.

The affine case of Theorem \ref{thm:BgRe1} can be recovered from Theorem
\ref{thm:BgRe2} by considering the linearisation of $L=\OO_X \to X$ defined by the
trivial character from Examples \ref{ex:TrRe2.1} and \ref{ex:TrRe3.1}: in
this case the constant function $1 \in H^0(X,L)$ is an invariant, so
$X^{\rmss(\OO_X)}=X$, and it follows immediately that $U$ from Theorem
\ref{thm:BgRe1} is equal to the stable locus $X^{\rms(\OO_X)}$. 

Another important special case of Theorem \ref{thm:BgRe2} is when $X$ is
\emph{projective} and $L \to X$ is an  
\emph{ample} linearised line bundle. In this case the ring of invariant sections
$S^G$ is a finitely generated $\kk$-algebra by Nagata's theorem \cite{nag64},
and $X \dblslash G = \proj(S^G)$ is the projective variety associated to the
graded ring $S^G$ \cite[Page
40]{mfk94}. The good quotient $\phi:X^{\rmss} \to X \dblslash G$ is
a representative of the rational map $X \dashrightarrow \proj(S^G)$ induced by
the inclusion $S^G \hookrightarrow S$, thus we have a commutative diagram, with
all inclusions open:
\begin{equation} \label{eq:BgRe1}
\begin{diagram}
X^{\rms} & \rEmpty~\subseteq & X^{\rmss} & \rEmpty~\subseteq & X  \\
\dTo^{geo}_{\phi} & & \dOnto^{good}_{\phi} & & \dDashto\\
X^{\rms}/G & \rEmpty~\subseteq & X \dblslash G & \rEmpty~\subseteq &
\proj(S^G)\\
\end{diagram}
\end{equation}
Note in the case that the GIT quotient $X
\dblslash G$ may therefore be regarded as a canonical compactification of the 
geometric
quotient $X^{\rms}/G$ of the stable locus.  

Another appealing feature of the case where $X$ is projective and $L \to X$ is ample
is that there is an effective way to compute the semistable and stable loci, via
the Hilbert-Mumford criteria. For completeness we present here two versions of
this result. Firstly, if $G$ is any reductive group, 
then a \emph{one-parameter subgroup} of $G$ (or \emph{1-PS} for short) is simply  \index{one-parameter subgroup}  \index{1-PS}
a non-trivial homomorphism $\lambda:\GG_m \to G$. Given a point $x \in X$ and a
one-parameter subgroup $\lambda$, the limit\footnote{By `$\lim_{t \to 0}
  \lambda(t) \cdot x$' we mean the value 
at $0 \in \kk$ of $\phi:\kk \to X$, where $\phi$ is the unique extension of the
morphism of varieties $t \mapsto \lambda(t) \cdot x$ to a morphism on $\kk$.}
$x_0:=\lim_{t \to 0} \lambda(t) \cdot x$  
is a well-defined point in $X$, because $X$ is proper. The subgroup $\lambda$
fixes $x_0$, hence $\GG_m$ acts on the fibre $L|_{x_0}$ over $x_0$ via $\lambda$; 
define
\[
\mu(x,\lambda):= \text{weight for the $\GG_m$-action on $L|_{x_0}$}.
\]    
Then the first form of the Hilbert-Mumford criteria can be stated as follows.
\begin{thm} \label{thm:BgRe3} \cite[Theorem 2.1]{mfk94}
Let $G$ be a reductive group, $X$ a projective $G$-variety and $L \to X$ an
ample linearisation. Then for any point $x \in X$,
\begin{align*}
x \in X^{\rmss} &\iff \mu(x,\lambda) \geq 0 \text{ for all 1-PS
  $\lambda:\GG_m \to G$;} \\
x \in X^{\rms} &\iff \mu(x,\lambda) > 0 \text{ for all 1-PS
  $\lambda:\GG_m \to G$.} 
\end{align*}   
\end{thm}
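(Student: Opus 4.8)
The plan is to follow Mumford's original strategy: reduce to the model case of projective space, translate (semi)stability into a statement about the affine cone, make the weight $\mu$ explicit, and then reduce the nontrivial direction to the fundamental theorem that degenerations in an orbit closure are witnessed by a single one-parameter subgroup.

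First I would reduce to $X=\PP(V)$ with $L=\OO(1)$. As $L$ is ample over the projective variety $X$, some power $L^{\otimes r}$ is very ample and, by Example \ref{ex:TrRe3.2}, gives a $G$-equivariant closed embedding $X \hookrightarrow \PP(V)$ with $V=H^0(X,L^{\otimes r})^*$ and $L^{\otimes r}=\OO_{\PP(V)}(1)|_X$ as linearisations. Passing from $L$ to $L^{\otimes r}$ leaves the (semi)stable loci unchanged (Remark \ref{rmk:BgRe3}) and multiplies $\mu$ by $r>0$, so signs and vanishing are preserved; moreover the limit $\lim_{t\to 0}\lambda(t)\cdot x$ of a point of $X$ stays in the closed invariant subvariety $X$ and is computed the same way in $\PP(V)$, so $\mu(x,\lambda)$ agrees whether taken over $X$ or over the ambient projective space. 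It therefore suffices to prove the theorem for $\PP(V)$ with its natural linearisation.

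Next I would work on the cone. Lifting $x=[\hat x]$ to $\hat x \in V\setminus\{0\}$, I would use Theorem \ref{thm:BgRe1} applied to the affine cone to show that $x$ is semistable precisely when $0\notin\overline{G\cdot\hat x}$, and stable precisely when $G\cdot\hat x$ is closed in $V$ with $\stab_G(\hat x)$ finite; this is where reductivity enters, through the separation of disjoint closed invariant sets by invariants recalled before Theorem \ref{thm:BgRe1}. I would then diagonalise a given $\lambda$, writing $\lambda(t)\cdot e_i=t^{w_i}e_i$ on a weight basis and $\hat x=\sum_i a_i e_i$. A direct computation identifies $\lim_{t\to 0}\lambda(t)\cdot x$ with the class of the sub-sum over the minimal occurring weight $w_{\min}:=\min\{w_i: a_i\neq 0\}$ and gives $\mu(x,\lambda)=-w_{\min}$; in particular $\lim_{t\to 0}\lambda(t)\cdot\hat x=0$ exactly when $\mu(x,\lambda)<0$. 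The easy implications follow at once: a $\lambda$ with $\mu(x,\lambda)<0$ forces $0\in\overline{G\cdot\hat x}$, so $x$ is not semistable, and for the stable case a $\lambda$ with $\mu(x,\lambda)\leq 0$ produces a fixed limit in the orbit closure, violating closedness or finiteness of the stabiliser.

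The hard part will be the converse: from $0\in\overline{G\cdot\hat x}$ I must manufacture a single 1-PS $\lambda$ with $\lim_{t\to 0}\lambda(t)\cdot\hat x=0$. I would prove this fundamental theorem via the valuative criterion of properness, choosing a curve in $\overline{G\cdot\hat x}$ through $0$, lifting it to an arc $g(t)\in G(\kk((t)))$, and applying the Cartan (Iwahori) decomposition $G(\kk((t)))=G(\kk[[t]])\,\lambda(t)\,G(\kk[[t]])$ to extract the degenerating one-parameter subgroup; reductivity of $G$ is exactly what makes this decomposition available and is the crux of the whole argument. Granting it, semistability becomes equivalent to the absence of any degenerating $\lambda$, which by the weight computation is precisely $\mu(x,\lambda)\geq 0$ for all $\lambda$; the same machinery applied to limits of non-closed or positive-dimensional orbits yields the strict inequality characterising the stable locus.
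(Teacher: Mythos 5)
The paper offers no proof of Theorem \ref{thm:BgRe3}: it is quoted directly from \cite[Theorem 2.1]{mfk94}, so there is no in-text argument to compare against. Your outline is precisely the proof in that cited source --- reduction to $\PP(V)$ via a very ample power, the affine-cone characterisation of (semi)stability, the weight computation giving $\mu(x,\lambda)=-w_{\min}$ (so that $\lim_{t\to 0}\lambda(t)\cdot\hat x=0$ exactly when $\mu(x,\lambda)<0$), and the Iwahori decomposition of $G(\kk((t)))$ for the converse --- and it is correct as a sketch. The only two steps you pass over that would each need a sentence in a full write-up are: (i) the compatibility $X^{\rmss}=X\cap\PP(V)^{\rmss}$ and $X^{\rms}=X\cap\PP(V)^{\rms}$, which uses Nagata's lifting of invariants along the surjection of homogeneous coordinate rings (the reductivity property recalled just before Theorem \ref{thm:BgRe1}), since invariant sections over $X$ need not literally extend to $\PP(V)$; and (ii) the converse for stability, where degeneration to $0$ is not enough --- one needs the refined form of the fundamental theorem producing a $1$-PS that carries $\hat x$ into a prescribed closed $G$-invariant subset of $\overline{G\cdot\hat x}\setminus G\cdot\hat x$ (e.g.\ the unique closed orbit in the boundary when the orbit is not closed), together with the observation that a closed orbit with infinite stabiliser admits a $1$-PS fixing $\hat x$ and hence with $\mu(x,\lambda)=0$.
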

The second form of the Hilbert-Mumford criteria makes use of an
embedding in a projective space. Suppose still that $X$ is a projective
$G$-variety, with $G$ reductive, but now assume $L \to X$ is a very ample
linearisation. As in Example \ref{ex:TrRe3.2}, $X$ embeds into the projective
space $\PP(V)$ equivariantly, where $V=H^0(X,L)^*$. Fix a maximal torus $T
\subseteq G$ and let $\Hom(T,\GG_m)$ be the abelian group of characters of
$T$. The action of $T$ on $V$ is diagonalisable 
\cite[Proposition 8.4]{bor91}, so we may decompose $V$ into $T$-weight spaces:
\[
V=\bigoplus_{\chi \in \Hom(T,\GG_m)} V_{\chi}, \quad V_{\chi}:=\{v \in V \mid t
\cdot v = \chi(t)v \text{ for all } t \in T\}. 
\]   
Given $x \in X$, write $x = [v] \in \PP(V)$ with $v \in V \setminus \{0\}$ and let
$v=\sum_{\chi} v_{\chi}$ with $v_{\chi} \in V_{\chi}$. Define the \emph{weight 
polytope} of $x$ to be  \index{weight polytope}
\[
\Delta_x:=\ol{\text{convex hull of $\{\chi \mid v_{\chi} \neq 0\}$}}
\subseteq \Hom(T,\GG_m) \ten_{\mb{Z}} \mb{R}, 
\]
where the closure is taken with respect to the usual Euclidean topology on the
vector space $\Hom(T,\GG_m) \ten_{\mb{Z}} \mb{R}$. Denote the interior of
$\Delta_x$ inside $\Hom(T,\GG_m) \ten_{\mb{Z}} \mb{R}$ by
$\Delta_x^{\circ}$. Then the Hilbert-Mumford criteria can be stated in the
following way (see \cite[Theorem 9.2]{dol03} and \cite[Theorem 9.3]{dol03}).    
\begin{thm} \label{thm:BgRe4} 
Retain the preceding notation.
\begin{enumerate}
\item \label{itm:BgRe4-1} A point $x \in X$ is semistable (respectively,
stable) for $G \act L \to X$ if, and only if, for each $g \in G$ the point $gx$
is semistable (respectively, stable) for the restricted linearisation $T \act L
\to X$.
\item \label{itm:BgRe4-2} For any point $x \in X$ we have
\begin{align*}
\text{x is semistable for $T \act L \to X$} &\iff 0 \in \Delta_x; \\
\text{x is stable for $T \act L \to X$} &\iff 0 \in \Delta_x^{\circ}.
\end{align*}  
\end{enumerate}
\end{thm}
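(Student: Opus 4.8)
The plan is to use the numerical form of the Hilbert--Mumford criterion, Theorem \ref{thm:BgRe3}, as a bridge, applying it both to $G$ and to the maximal torus $T$ (which is itself reductive). For the forward implication in Part \ref{itm:BgRe4-1} this is immediate: the semistable and stable loci are $G$-invariant, so if $x$ is $G$-(semi)stable then so is $gx$ for every $g \in G$; and since every one-parameter subgroup of $T$ is also one of $G$, Theorem \ref{thm:BgRe3} shows that $G$-(semi)stability of $gx$ forces $\mu(gx,\lambda) \geq 0$ (resp. $>0$) for all $\lambda\colon \GG_m \to T$, which is exactly $T$-(semi)stability of $gx$.

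For the reverse implication in Part \ref{itm:BgRe4-1} I would invoke two standard facts from the structure theory of reductive groups: first, that every one-parameter subgroup $\lambda\colon \GG_m \to G$ is conjugate into $T$, i.e. there is $g \in G$ with $g^{-1}\lambda g$ taking values in $T$; second, the conjugation-equivariance $\mu(x,\lambda) = \mu(g^{-1}x,\, g^{-1}\lambda g)$, which follows because translation by $g$ is a linearised isomorphism carrying $\lim_{t \to 0}\lambda(t)\cdot x$ to $\lim_{t \to 0}(g^{-1}\lambda g)(t)\cdot(g^{-1}x)$ compatibly with the $\GG_m$-weight on the fibre. Assuming every translate $hx$ is $T$-(semi)stable, for an arbitrary $G$-one-parameter subgroup $\lambda$ I pick $g$ with $g^{-1}\lambda g$ a one-parameter subgroup of $T$, and then $\mu(x,\lambda) = \mu(g^{-1}x, g^{-1}\lambda g) \geq 0$ (resp. $>0$) by $T$-(semi)stability of $g^{-1}x$; as $\lambda$ was arbitrary, Theorem \ref{thm:BgRe3} yields $G$-(semi)stability of $x$.

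For Part \ref{itm:BgRe4-2} I would first make $\mu$ explicit for the torus. Writing $x = [v]$ with $v = \sum_\chi v_\chi$ and pairing a one-parameter subgroup $\lambda \colon \GG_m \to T$ with characters via $\langle \chi, \lambda \rangle$, the limit $\lim_{t \to 0}\lambda(t)\cdot v$ is governed by the characters minimising $\langle \chi, \lambda \rangle$ on the support $\{\chi : v_\chi \neq 0\}$, and tracing through the linearisation on $\OO(1) = \OO(-1)^*$ gives
\[
\mu(x,\lambda) = -\min\{\langle \chi, \lambda \rangle : v_\chi \neq 0\}.
\]
Consequently $\mu(x,\lambda) \geq 0$ for all $\lambda$ is equivalent to the weights $\{\chi : v_\chi \neq 0\}$ admitting no hyperplane through the origin that strictly separates them from $0$, which by the separating-hyperplane theorem is precisely $0 \in \Delta_x$; and $\mu(x,\lambda) > 0$ for all nonzero $\lambda$ is equivalent to the weights lying in no closed half-space bounded by a hyperplane through $0$, i.e. $0 \in \Delta_x^{\circ}$. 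Combining these equivalences with Theorem \ref{thm:BgRe3} applied to $T$ completes Part \ref{itm:BgRe4-2}.

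The main technical point to handle with care is that one-parameter subgroups correspond to \emph{integral} points of the cocharacter lattice, whereas the convex-geometric conditions $0 \in \Delta_x$ and $0 \in \Delta_x^{\circ}$ a priori involve arbitrary real separating functionals. The resolution is that $\Delta_x$ is a rational polytope---its vertices are characters, hence lattice points---so any strictly separating hyperplane (when $0 \notin \Delta_x$) and any supporting hyperplane at $0$ (when $0$ lies on the boundary of $\Delta_x$) may be chosen with integral normal vector, allowing the test to be restricted to genuine one-parameter subgroups without loss. Keeping the strict versus non-strict inequalities correctly aligned through this rational-approximation step is where I expect the only real care to be needed.
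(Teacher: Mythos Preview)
The paper does not supply its own proof of this theorem; it simply states the result and refers the reader to \cite[Theorem 9.2]{dol03} and \cite[Theorem 9.3]{dol03}. Your argument is the standard one, and essentially what one finds in Dolgachev: reduce Part~\ref{itm:BgRe4-1} to the numerical criterion of Theorem~\ref{thm:BgRe3} via conjugation of one-parameter subgroups into $T$ together with the identity $\mu(x,\lambda) = \mu(g^{-1}x,\,g^{-1}\lambda g)$, and handle Part~\ref{itm:BgRe4-2} by the explicit formula $\mu(x,\lambda) = -\min\{\langle \chi,\lambda\rangle : v_\chi \neq 0\}$ combined with the separating-hyperplane theorem, using rationality of the polytope $\Delta_x$ to pass from real to integral functionals. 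Your outline is correct and fills in precisely what the paper outsources to the reference.
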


Thus we see that in the case where $X$ is projective and $L \to X$ is very ample,
semistability and stability can be computed in terms of weights for torus actions.


\subsection{Geometric Invariant Theory for Unipotent Groups}
\label{sec:BgUnipotent}

Given the effectiveness of Mumford's GIT for studying
quotients of reductive groups, there has been interest in developing a similar
geometric invariant theoretic approach to studying unipotent group actions. Such a
programme is taken up in the paper \emph{Towards non-reductive geometric
  invariant theory} \cite{dk07}, building on previous work such as
\cite{fau83,fau85,gp93,gp98,win03}. Given an irreducible projective variety $X$
with an action 
of a unipotent group $U$ and an ample linearsation $L \to X$ for the
action, the paper considers various notions of `stability' (intrinsic to the
data of the linearisation $L \to X$) that admit geometric
quotients, formulates an analogue of Mumford's reductive GIT
quotient in this context, and relates these notions to those of reductive GIT
for the purposes of computation. In this section we summarise the main
definitions and results presented there, which will form 
the backbone of our development of a geometric invariant theory for more general
linear algebraic 
groups in upcoming sections. We also take the opportunity to point out some
errors in \cite{dk07}, but leave details of how to correct them to Section 
\ref{sec:gitNonReductive}.

We assume for the rest of this section that $U$ is a unipotent group acting on an
irreducible projective variety $X$ with ample linearisation $L \to X$. 


\subsubsection{Intrinsic Notions of Semistability and Stability}
\label{subsec:BgUnIntrinsic}

As in Section \ref{subsec:BgGrLinearisations}, let $S=\kk[X,L]$ be the section ring. 
The inclusion $S^U \hookrightarrow S$ defines a rational
map
\[
q:X \dashrightarrow \proj(S^U)
\]
which is $U$-invariant on its maximal domain of definition. 
\begin{defn} \label{def:BgUn1} \cite[Definition 4.1.1]{dk07} Let $U$ be a
  unipotent linear algebraic group acting on an irreducible 
projective variety $X$ and $L \to X$ an ample linearisation. The \emph{naively
  semistable locus} is the open subset   \index{naively semistable}
\[
X^{\nss}:=\bigcup_{f \in I^{\nss}} X_f
\]
of $X$, where $I^{\nss}:=\bigcup_{r>0}H^0(X,L^{\ten r})^U$ is the set of invariant
sections of positive tensor powers of $L$.
\end{defn}
The rational map $q$ restricts to define a $U$-invariant morphism $q:X^{\nss}
\to \proj(S^U)$. As Nagata showed \cite{nag59}, the ring of
invariants $S^U$ need not be finitely generated over $\kk$, so $\proj(S^U)$ is in 
general a
non-noetherian scheme. It can also happen that this map is not surjective, with
the image only a dense constructible subset of
$\proj(S^U)$ in general, even if $\proj(S^U)$ is of finite type (an example of
this phenomenon---which features later in Example \ref{ex:BgUn16}---is given in 
\cite[\S
6]{dk07}). To address the first of these issues, it is natural to consider the
following subset of $X^{\nss}$.   
\begin{defn} \label{def:BgUn2} \cite[Definition 4.2.6]{dk07} Let $U$ be a
  unipotent linear algebraic group acting on an irreducible 
projective variety $X$ and $L \to X$ an ample linearisation. The
\emph{finitely generated semistable locus} is the open subset  \index{finitely generated semistable}
\[
X^{\ssfg}:= \bigcup_{f \in I^{\ssfg}} X_f
\]
of $X^{\nss}$, where 
\[
I^{\ssfg}:=\{f \in \textstyle{\bigcup_{r>0}} H^0(X,L^{\ten
r})^U \mid \OO(X_f)^U \text{ is a finitely generated $\kk$-algebra} \}.
\]  
\end{defn}
The image of $X^{\ssfg}$ under the map $q:X^{\nss} \to \proj(S^U)$ is contained
in the open subscheme of $\proj(S^U)$ obtained by patching together the affine
open subsets $\spec(\OO(X_f)^U)$ for which $\OO(X_f)^U$ is a finitely generated
$\kk$-algebra. 
\begin{defn} \label{def:BgUn3} \cite[Definition 4.2.7]{dk07} Let $U$ be a
  unipotent linear algebraic group acting on an irreducible 
projective variety $X$ and $L \to X$ an ample linearisation. The
\emph{enveloping quotient} is the open subscheme  \index{enveloping quotient}
\[ 
X \dblslash U:= \bigcup_{f \in I^{\ssfg}} \spec(\OO(X_f)^U)
\subseteq \proj(S^U) 
\]
of $\proj(S^U)$, together with the canonical map $q:X^{\ssfg} \to X \dblslash U$. The 
image
$q(X^{\ssfg})$ of this map is called the \emph{enveloped quotient}.  \index{enveloped quotient}
\end{defn} 

The enveloping quotient $X \dblslash U$ is
  canonical to the data of the linearisation and, as we will shortly see, in some
  sense plays the role of Mumford's reductive GIT quotient \cite{mfk94}. But
  there are two significant differences from the 
reductive case to be aware of (compare with the discussion after Theorem
\ref{thm:BgRe2}). Firstly, $X \dblslash U$ is not a projective variety in
general (however if
$S^U$ is a finitely generated $\kk$-algebra then $X^{\ssfg}=X^{\nss}$ and $X
\dblslash U=\proj(S^U)$ is a projective variety). Secondly, the map $q:X^{\ssfg}
\to X \dblslash U$ is not necessarily surjective and the image $q(X^{\ssfg})$
is not necessarily a variety (even when
$S^U$ is finitely generated). In particular, neither $X \dblslash U$ nor
$q(X^{\ssfg})$ are a categorical quotient of $X^{\ssfg}$ in general. 

\begin{rmk} \label{rmk:BgUn4}
 The enveloping quotient $X \dblslash U$ is a scheme \emph{locally} of finite
 type. In \cite[Proposition 4.2.9]{dk07} it is erroneously claimed that $X
 \dblslash U$ is a quasi-projective variety. The basic problem is that it is not
 necessarily quasi-compact: the ideal  
 in $S^U$ generated by $I^{\ssfg}$ may not satisfy the ascending chain
 condition and we cannot guarantee that finitely many of the affine open subsets
 $\spec(\OO(X_f)^U)$, for $f \in I^{\ssfg}$, cover $X \dblslash U$. (In the
 proof of \cite[Proposition 4.2.9]{dk07} it is implicitly assumed
 such a finite cover of $X \dblslash U$ exists in order to
 construct an embedding of $X \dblslash U$ into a projective space.) 
Of course $X^{\ssfg}$ and $X^{\nss}$ are quasi-compact because they are quasi-projective.
Geometrically
 speaking, the problem with the enveloping quotient $X \dblslash U$ is that even though finitely many of the open sets $X_f$,
 with $f \in I^{\ssfg}$, 
 cover $X^{\ssfg}$, the enveloping quotient map $q:X^{\ssfg} \to X \dblslash U$ is
 not surjective in general. However, if either $S^U$ is a finitely
generated $\kk$-algebra or the enveloping quotient map $q:X^{\ssfg} \to X
\dblslash U$ is surjective then the proof of \cite[Proposition 4.2.9]{dk07} goes
through to show that $X \dblslash U$ is a quasi-projective variety.         
\end{rmk}

The finitely generated semistable locus $X^{\ssfg}$ is analogous to Mumford's
notion of semistability in reductive GIT (cf. Definition \ref{def:TrRe5},
\ref{itm:TrRe5-1}), and 
indeed in \cite[Definition 5.3.7]{dk07} the set $X^{\ssfg}$ is dubbed the
`semistable' locus for the 
linearisation $U \act L \to X$. (In this article we will refrain from referring to
$X^{\ssfg}$ as the `semistable set' to preserve continuity with the more general
non-reductive setting, to be discussed in Section \ref{sec:gitNonReductive}.)

Various kinds of `stable' set are also considered in \cite{dk07}, each of which are 
subsets of
$X^{\ssfg}$ whose images under the 
enveloping quotient map $q:X^{\ssfg} \to X \dblslash U$ define geometric
quotients. One of the conclusions of that paper is that the following `locally
trivial' version of stability is well suited to studying linearised actions of
unipotent groups.   

\begin{defn} \label{def:BgUn5} \cite[Definition 4.2.6]{dk07} Let $U$ be a
  unipotent linear algebraic group acting on an irreducible 
projective variety $X$ and $L \to X$ an ample linearisation. The set of
\emph{locally trivial stable} points (later called the set of \emph{stable}  \index{locally trivial stable} \index{stable}
points in \cite[Definition 5.3.7]{dk07}) is the set
\[ 
X^{\rms}= X^{\textrm{lts}}=\bigcup_{f\in I^{\textrm{lts}}}X_f,
\] 
where
\[ 
I^{\textrm{lts}}:= \left\{ \begin{array}{c|c} \multirow{2}{*}{$f\in
\bigcup_{r>0}H^0(X,L^{\ten m})^U$} & \OO(X_f)^U \text{ is a finitely generated
$\kk$-algebra and } \\ 
& \text{$q:X_f \to \spec (\OO(X_f)^U)$ is a trivial $U$-bundle} 
\end{array} \right \}.
\]
\end{defn} 

\begin{propn} \label{prop:BgUn6} \cite[\S 4]{dk07} Let $U$ be a
  unipotent linear algebraic group acting on an irreducible 
projective variety $X$ and $L \to X$ an ample linearisation. The image
$q(X^{\rms})$ of $X^{\rms}$ under the enveloping quotient map $q:X^{\ssfg} \to X 
\dblslash U$ is
an open subscheme of $X \dblslash U $ that is a quasi-projective variety, and 
$q:X^{\rms} \to
q(X^{\rms})$ is a geometric quotient:
\[
\begin{diagram}
X^{\rms} & \rEmpty~\subseteq & X^{\ssfg} & \rEmpty~\subseteq & X^{\nss}  \\
\dTo^{geo} & & \dTo & & \dTo_q\\
q(X^{\rms}) & \rEmpty~\subseteq & X \dblslash U & \rEmpty~\subseteq &
\proj(S^U)\\
\end{diagram}
\] 
\end{propn}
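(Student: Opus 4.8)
The plan is to prove the three assertions — that $q(X^{\rms})$ is an open subscheme of $X \dblslash U$, that it is a quasi-projective variety, and that $q$ restricts to a geometric quotient — by first analysing the basic pieces $X_f$ and then gluing, using the quasi-compactness of $X^{\rms}$ to keep everything of finite type. First I would record what happens on a single piece. Fix $f \in I^{\mathrm{lts}}$, say $f \in H^0(X,L^{\ten r})^U$. Since $X$ is projective and $L$ is ample, $X_f$ is affine; by the definition of $I^{\mathrm{lts}}$ the algebra $\OO(X_f)^U$ is finitely generated, so $\spec(\OO(X_f)^U)$ is an affine variety, and the restriction $q\colon X_f \to \spec(\OO(X_f)^U)$ is a trivial $U$-bundle. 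By the chain of implications recalled in Section \ref{subsec:BgLinear} (principal bundle $\Rightarrow$ geometric quotient), this restriction is a geometric quotient, and $\spec(\OO(X_f)^U)$ is by construction one of the open affine pieces of $X \dblslash U \subseteq \proj(S^U)$. Because $X^{\rms}$ is an open subset of the projective variety $X$ it is quasi-compact, so finitely many $X_{f_1},\dots,X_{f_n}$ with $f_i \in I^{\mathrm{lts}}$ already cover it; after replacing each $f_i$ by a power (which does not change $X_{f_i}$, hence preserves membership in $I^{\mathrm{lts}}$) I may assume all $f_i$ lie in $H^0(X,L^{\ten r})^U$ for one common $r$.

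Next I would assemble the quotient. Since $q$ is by definition surjective onto its image, $q(X^{\rms}) = \bigcup_i q(X_{f_i}) = \bigcup_i \spec(\OO(X_{f_i})^U)$, a finite union of open affine pieces of $X \dblslash U$; hence it is an open subscheme of $X \dblslash U$ and is quasi-compact. It is separated and reduced as an open subscheme of $\proj(S^U)$, which is integral because $S^U$ is a subring of the domain $S = \kk[X,L]$, and being quasi-compact and locally of finite type it is of finite type over $\kk$; thus $q(X^{\rms})$ is a variety. For quasi-projectivity I would descend the linearisation: as $q\colon X^{\rms} \to q(X^{\rms})$ is a principal $U$-bundle, the $U$-linearisation on $L^{\ten r}|_{X^{\rms}}$ constitutes a descent datum, so it descends to a line bundle $M$ on $q(X^{\rms})$ with $q^*M \cong L^{\ten r}|_{X^{\rms}}$ and $H^0(q(X^{\rms}),M) = H^0(X^{\rms},L^{\ten r})^U$. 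The invariant sections $f_i$ then descend to $\bar f_i \in H^0(q(X^{\rms}),M)$ whose non-vanishing loci are exactly the affine pieces $\spec(\OO(X_{f_i})^U)$ and cover $q(X^{\rms})$. By the standard criterion that a line bundle on a quasi-compact scheme admitting finitely many global sections with affine, covering non-vanishing loci is ample, $M$ is ample, so $q(X^{\rms})$ is quasi-projective.

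Finally, for the geometric quotient claim I would invoke that good and geometric quotients are local on the base. The sets $q(X_{f_i}) = \spec(\OO(X_{f_i})^U)$ form an open cover of $q(X^{\rms})$, and $q^{-1}(q(X_{f_i})) = X_{f_i}$ (the non-vanishing locus of $q^*\bar f_i = f_i$), on which $q$ is a geometric quotient by the first step; hence $q\colon X^{\rms} \to q(X^{\rms})$ is a geometric quotient. I expect the genuine obstacle to lie in the quasi-projectivity, and specifically in the need for surjectivity of $q$ onto $q(X^{\rms})$: it is precisely this surjectivity (which fails for the full enveloping quotient, cf. Remark \ref{rmk:BgUn4}) that guarantees the descended sections $\bar f_i$ cover $q(X^{\rms})$, so that the ampleness criterion applies. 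The descent itself is comparatively clean, since $U$ is special and the bundle is Zariski-locally trivial.
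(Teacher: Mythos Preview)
Your argument is correct. The paper itself does not prove this proposition directly---it is quoted from \cite[\S 4]{dk07} as background---but it does prove a strict generalisation later (Proposition~\ref{prop:GiFi4.1}), and your route to quasi-projectivity differs from the one used there.

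For quasi-projectivity you descend $L^{\ten r}$ along the principal $U$-bundle $q:X^{\rms}\to q(X^{\rms})$ to obtain $M$, then invoke the ampleness criterion on the descended sections $\bar f_i$. The paper's argument (following \cite[Proposition 4.2.2]{dk07} and ultimately Mumford's proof of \cite[Theorem 1.10]{mfk94}) instead works directly with the twisting sheaf $\OO(r)$ on $\proj(S^U)$: one enlarges $\{f_1,\ldots,f_n\}$ to include generators of each algebra $(S^U)_{(f_i)}$, takes $V$ to be their span, and checks that the resulting map $V \to H^0(\mc{U},\OO_{\mc{U}}(r))$ defines a locally closed immersion $\mc{U} \hookrightarrow \PP(V^*)$ for any quasi-compact open $\mc{U}\subseteq X\env H$. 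Your descended $M$ and the paper's $\OO(r)$ agree on $q(X^{\rms})$, so the difference is one of packaging. What the paper's route buys is generality: it applies to any quasi-compact open subscheme of the enveloping quotient, in particular to the inner enveloping quotients introduced later, which need not lie in the image of $q$; your descent argument genuinely requires the principal-bundle structure and is therefore specific to $q(X^{\rms})$. What your route buys is a cleaner explanation of \emph{why} this case behaves better than the full $X\dblslash U$: the surjectivity of $q$ onto its image, which you correctly isolate at the end, is precisely what makes the descended sections cover and the ampleness criterion apply---exactly the point the paper makes in Remark~\ref{rmk:BgUn4}.
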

It is helpful to compare this to the case where $G$ is reductive and $L \to X$
is an ample linearisation over a projective $G$-variety. The diagram in
Proposition \ref{prop:BgUn6} is similar to \eqref{eq:BgRe1}, but the unipotence
of $U$ leads to the two main differences mentioned earlier: the enveloping quotient
$X \dblslash U$ need not be projective and $q:X^{\ssfg} \to X \dblslash U$ is
not in general a good categorical quotient.  

\begin{rmk} \label{rmk:BgUn7}
It is clear from Remark \ref{rmk:TrRe3.2} and the definitions that $X^{\nss}$, $X^{\ssfg}$, $X^{\rms}$ and $X \dblslash U$ may be defined for rational linearisations.  
\end{rmk}

\subsubsection{Extending to Reductive Linearisations}
\label{subsec:BgUnExtending}

For the rest of this section, \emph{we assume that the
  linearisation $U \act L \to X$ is such that $X \dblslash U$ is
  quasi-projective} (see Remark \ref{rmk:BgUn4}). 

A rather helpful approach to studying the $U$-linearisation $L \to X$, and
the spaces $X^{\rms}$, $X^{\ssfg}$ and $X
\dblslash U$ thus arising, is to construct an associated linearisation of a
reductive group $G$ which contains $U$ as a closed subgroup, by making use of
the fibre space associated to the homogeneous space $G/U$. We take a moment to recall 
the general construction of such
fibre spaces.   

Let $H_1$ and $H_2$ be linear algebraic groups and suppose $H_1 \hookrightarrow
H_2$ is a closed embedding. For the moment suppose also that $X$ is any
$H_1$-variety. Then we may consider the diagonal action of $H_1$ on
the product $H_2 \times X$: 
\[
H_1 \act H_2 \times X, \quad h_1 \cdot (h_2,x):=(h_2h_1^{-1},h_1x), 
\]
where $h_1 \in H_1$, $h_2 \in H_2$ and $x\in X$. If $H_1$ is unipotent, or if
$X$ satisfies some mild assumptions---for example, if $X$ is quasi-projective,
or more generally if any finite 
set of points in $X$ is contained in an affine open subset---the geometric quotient
$H_2 \times^{H_1} X$ for this action exists as a variety \footnote{Indeed,
$H_2 \times X \to H_2 \times^{H_1} X$ is in fact a
principal $H_1$-bundle \cite[Proposition 4]{ser58}.} (see \cite[Proposition
23]{eg98} in case $H_1$ is unipotent,\footnote{When $H_1$ is unipotent the $H_1$-bundle $H_2 \to H_2/H_1$ is Zariski locally trivial and thus the geometric quotient $H_2 \times^{H_1} X$ can be shown to exist by working locally over $H_2/H_1$.
} or \cite[Theorem 4.19]{vp94}
otherwise). This quotient is the
associated fibre space of the principal $H_1$-bundle $H_2 \to H_2/H_1$ with
fibre $X$; see \cite[\S 3.2]{ser58}. We shall write points in $H_2
\times^{H_1} X$ as equivalence classes $[h_2,x]$ of points $(h_2,x) \in H_2
\times X$. The action of $H_2$ on 
$H_2 \times^{H_1} X$ induced by left multiplication of $H_2$ on
itself makes $H_2 \times^{H_1} X$ into an $H_2$-variety. Note there is a natural 
closed immersion
\[
\alpha:X  \hookrightarrow H_2 \times^{H_1} X, \quad 
x  \mapsto [e,x],   
\]
which is $H_1$-equivariant with respect to $H_1$ acting on $H_2 \times^{H_1} X$
through the action of $H_2$.

Suppose $L \to X$ is a linearisation for the $H_1$-action on $X$. Then this
extends to a natural
$H_2$-linearisation $H_2 \times^{H_1} L \to H_2 \times^{H_1} X$ which pulls back to $L 
\to X$ under
$\alpha$, using the same constructions as above. 
For brevity, we will usually abuse
notation and write $L \to H_2 \times^{H_1} X$ for this linearisation instead of
$H_2 \times^{H_1} L$, unless confusion is
likely to arise. Observe that, because the projection $H_2 \times X \to H_2
\times^{H_1} X$ is a categorical quotient, pullback along $\alpha$ induces an
isomorphism of graded rings
\[
\alpha^*:\kk[H_2 \times^{H_1} X,L]^{H_2} \overset{\cong}{\longrightarrow}
\kk[X,L]^{H_1}.
\] 

Let us now return to the setting where $U$ is a unipotent group acting on an
irreducible projective variety $X$ with ample linearisation $L \to X$. Following
\cite[\S 5.1]{dk07}, given a
closed embedding of $U$ into some reductive group $G$ (e.g. $G=\GL(n,\kk)$ for
suitable $n$), consider the
$G$-linearisation $G \act L=G \times^U L \to G \times^U X$. This is a 
linearisation over the quasi-projective variety $G \times^U X$, so it makes
sense to ask for semistability and stability, in the sense of Mumford's
reductive GIT (Definition \ref{def:TrRe5}).   
\begin{defn} \label{def:BgUn8} \cite[Definition 5.1.6]{dk07} Let $U$ be a
  unipotent group contained in a 
  reductive group $G$ as a closed subgroup and let $L \to X$ be an ample
  $U$-linearisation over a projective $U$-variety $X$. 
Define the set of \emph{Mumford stable} points to be  \index{Mumford stable}
\[
X^{\textrm{ms}}:=\alpha^{-1}((G \times^U X)^{\rms})
\]
and the set of \emph{Mumford semistable} points to be  \index{Mumford semistable}
\[
X^{\textrm{mss}}:=\alpha^{-1}((G \times^U X)^{\rmss})
\] 
where (semi)stability of $G \times^U X$ is defined as in Definition
\ref{def:TrRe5} with respect to the $G$-linearisation $G \times^U L \to G
\times^U X$, and $\alpha:X \hookrightarrow G \times^U X$ is the natural closed
immersion. 
\end{defn}
These sets would appear to depend on the choice of $G$ and embedding $U
\hookrightarrow G$, but in fact this is not the case by virtue of the following
result.   
\begin{propn} \label{prop:BgUn9} \cite[Lemma 5.1.7 and Proposition 5.1.10]{dk07}
Given a unipotent group $U$, a reductive group $G$ containing $U$ as a closed
subgroup and an ample $U$-linearisation $L \to X$ of a projective $U$-variety
$X$, we have equalities 
\[
X^{\mathrm{mss}}=X^{\mathrm{ms}}=X^{\mathrm{lts}}.
\]
\end{propn}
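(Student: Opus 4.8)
The plan is to prove the two equalities through a cycle of inclusions
\[
X^{\mathrm{ms}} \subseteq X^{\mathrm{mss}} \subseteq X^{\mathrm{lts}} \subseteq X^{\mathrm{ms}},
\]
the first being immediate since stability implies semistability. The whole argument is organised around transporting the relevant conditions between the affine charts $(G\times^U X)_F$ of the fibre space and the affine opens $X_f \subseteq X$, where $F \in H^0(G\times^U X, L^{\otimes r})^G$ corresponds to $f=\alpha^* F \in H^0(X,L^{\otimes r})^U$ under the graded-ring isomorphism $\alpha^*$. First I would record the two structural facts that drive everything: (i) since $F$ is $G$-invariant, $(G\times^U X)_F = G\times^U X_f$ and $\alpha$ induces $\OO((G\times^U X)_F)^G \cong \OO(X_f)^U$; and (ii) a point $[g,x]$ lies in $(G\times^U X)_F$ if and only if $x\in X_f$, so the $G$-orbits in the chart correspond bijectively to the $U$-orbits in $X_f$ via $U\cdot x \mapsto G\cdot\alpha(x)$, with $\stab_G(\alpha(x)) = \stab_U(x)$.

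Next I would prove that every $G$-orbit in $(G\times^U X)_F$ is closed. Since $X_f$ is affine (a closed subscheme of the affine $(G\times^U X)_F$ via $\alpha$) and $U$ is unipotent, every $U$-orbit in $X_f$ is closed; pushing this across the $G$-equivariant fibration $(G\times^U X)_F \to G/U$ — on which $G$ acts transitively, so that a $G$-invariant closed set is recovered as $G$ times its fibre over $eU$ — transfers closedness of $U\cdot x$ in $X_f$ to closedness of $G\cdot\alpha(x)$. Consequently the good quotient of each chart (Theorem \ref{thm:BgRe1}) is in fact a geometric quotient. This yields $X^{\mathrm{lts}} \subseteq X^{\mathrm{ms}}$ cheaply: if $X_f \to \spec\OO(X_f)^U$ is a trivial $U$-bundle then $X_f \cong (X_f/U)\times U$, whence $(G\times^U X)_F = G\times^U X_f \cong (X_f/U)\times G$ with $G$ acting by left translation on the second factor; this is affine with free $G$-action and closed orbits, so every point of the chart is Mumford stable and $X_f \subseteq X^{\mathrm{ms}}$. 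For the remaining inclusion $X^{\mathrm{mss}} \subseteq X^{\mathrm{lts}}$, a semistable $\alpha(x)$ gives $\OO(X_f)^U \cong \OO((G\times^U X)_F)^G$ finitely generated (Nagata) and a geometric quotient $X_f \to X_f/U$; invoking that $U$ is special, so that the quotient of the free locus is Zariski-locally trivial, I would then shrink to a basic invariant open over which the bundle trivialises, producing an $f'$ with $x\in X_{f'}$ that witnesses $x\in X^{\mathrm{lts}}$.

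The main obstacle is precisely the freeness needed in that last step: both $X^{\mathrm{ms}}$ and $X^{\mathrm{lts}}$ force $\stab_U(x)=\{e\}$ (a finite subgroup of a unipotent group is trivial in characteristic zero), whereas Mumford semistability a priori does not, so the crux is to show that $X^{\mathrm{mss}}$ already lies in the free locus. Here I would exploit Remark \ref{rmk:TrRe0}: the locus $W\subseteq G\times^U X$ of points with positive-dimensional stabiliser is closed and $G$-invariant with dense complement. On an affine chart, if $\alpha(x)\notin W$ then the disjoint closed $G$-invariant sets $G\cdot\alpha(x)$ and $W\cap(G\times^U X)_F$ can be separated by an invariant function (reductive invariants separate disjoint closed invariant subsets of an affine $G$-variety), producing a smaller chart $(G\times^U X)_{F'}\ni\alpha(x)$ on which all stabilisers are finite and all orbits closed, so that $\alpha(x)$ is stable. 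The delicate point — and where unipotence of $U$ together with the genericity hypothesis must be used most carefully — is to verify that no semistable point lies in $W$; establishing that a point with nontrivial unipotent stabiliser cannot carry an invariant section whose non-vanishing locus is affine is the step I expect to demand the most work.
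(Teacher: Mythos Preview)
Your first two paragraphs are essentially the paper's approach (via \cite{dk07}): the structural identifications $(G\times^U X)_F = G\times^U X_f$, $\OO((G\times^U X)_F)^G \cong \OO(X_f)^U$, $\stab_G(\alpha(x)) = \stab_U(x)$, the argument that all $G$-orbits in an affine chart are closed (because $U$-orbits on the affine $X_f$ are closed), and the descent/local-triviality passage for $X^{\mathrm{lts}} \subseteq X^{\mathrm{ms}}$ are all correct and match the paper's sketch.

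The gap is exactly the one you flag, but your proposed fix is the wrong tool. Your separation argument only yields $X^{\mathrm{mss}} \setminus W \subseteq X^{\mathrm{ms}}$; it says nothing about semistable points that might lie in $W$, and the vague claim that ``a point with nontrivial unipotent stabiliser cannot carry an invariant section whose non-vanishing locus is affine'' has no evident proof along these lines. The paper instead invokes \emph{Matsushima's criterion} \cite[Theorem 4.17]{vp94}: you have already shown every $G$-orbit in the affine chart $(G\times^U X)_F$ is closed, and Matsushima's criterion then says each stabiliser $\stab_G(\alpha(x))$ is reductive. Since this stabiliser equals $\stab_U(x) \subseteq U$, it is both reductive and unipotent, hence trivial. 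This gives freeness on the \emph{entire} chart at once, so every semistable point is stable and the passage to $X^{\mathrm{lts}}$ goes through. Note also that this makes the genericity hypothesis of Remark~\ref{rmk:TrRe0} unnecessary for the present proposition.
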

There are two main facts used in the proof that
$X^{\mathrm{mss}}=X^{\mathrm{ms}}$. The first is that any stabiliser
of a point with a closed $G$-orbit in $(G \times^U
X)^{\rmss}$ must have a \emph{reductive} stabiliser (by Matsushima's
criterion \cite[Theorem 4.17]{vp94}) that is also a subgroup of $U$, hence is
trivial. The second is that any $U$-orbit of a $U$-stable affine subvariety is
necessarily closed. Both of these rely on the unipotency of $U$ in an
essential way. The equality $X^{\mathrm{ms}}=X^{\mathrm{lts}}$ is established by
using descent to relate the notions of $U$-local triviality of suitable affine
open subsets of $X$ with $G$-local triviality of the corresponding subsets in $G
\times^U X$.   

As we observed above, $G \times^U X$ is only a \emph{quasi}-projective variety, so
computing (semi)stability for the linearisation $G \times^U L \to G \times^U X$
is difficult in general; on the other hand, reductive GIT is very effective at
dealing with
ample linearisations over \emph{projective} varieties. Therefore it is
reasonable to study $G$-equivariant projective completions $\ol{G  
  \times^U X}$ of $G \times^U X$, 
together with extensions $L^{\prime} \to \ol{G \times^U X}$ of the linearisation
$L \to G \times^U X$, in a bid to 
compute the stable locus $X^{\rms}=X^{\textrm{lts}}$ for $U \act L \to X$ and
study completions of the enveloping quotient $X \dblslash U$. More
precisely, the strategy adopted in \cite{dk07} is to look for $G \act L^{\prime}
\to \ol{G \times^U X}$ such that 
\begin{enumerate}
\item[(1)] \label{itm:BgUn10-1} the pre-image of the stable locus of $L^{\prime} \to
  \ol{G \times^U X}$ under $X \hookrightarrow \ol{G \times^U X}$ is contained in
  $X^{\rms}=X^{\textrm{lts}}$; and
\item[(2)] \label{itm:BgUn10-2} there is a naturally induced open embedding of $X
  \dblslash U$ into the GIT quotient $\ol{G \times^U X} \dblslash_{L^{\prime}}
  G$.    
\end{enumerate}
The following definition,
which can be regarded as an enhanced version of a collection of `separating
invariants' in \cite[Definition 2.3.8]{dk02}, facilitates this.\footnote{The
  definition we give is stated in a way 
that corrects a couple of small errors in the original \cite[Definition 5.2.2]{dk07}.}      

\begin{defn} \label{def:BgUn11} Let $U$ be a unipotent group acting on
a projective variety $X$, with ample linearisation $L \to X$, and let $G$ be a
reductive group containing $U$ as a closed 
subgroup. A finite collection $\mc{A} \subseteq \bigcup_{r>0}H^0(X,L^{\ten r})^U$ is
called a \emph{finite fully separating set of invariants} if  \index{finite fully separating set of invariants}
\begin{enumerate}
\item \label{itm:BgUn11-1} $X^{\nss}=\bigcup_{f \in \mc{A}} X_f$ and the set $\mc{A}$ 
is  \index{separating set of invariants}
  \emph{separating}: whenever $x,y
  \in X^{\nss}$ are distinct points and there exist 
$U$-invariant sections $g_0,g_1 \in H^0(X,L^{\ten r})^U$ (for some $r>0$) such that
$g_0(x) \neq 0$, $g_1(y) \neq 0$ and $[g_0(x):g_1(x)] \neq [g_0(y):g_1(y)]$ (as
points in $\PP^{1}$), then there are sections $f_0,f_1 \in \mc{A}$ of some common
tensor power of $L$ such that $f_0(x) \neq 0$, $f_1(y) \neq 0$ and
$[f_0(x):f_1(x)] \neq [f_0(y):f_1(y)]$;
\item \label{itm:BgUn11-2} for every $x \in X^{\rms}$ there is $f\in \mc{A}$
  with associated $G$-invariant $F$ such that $x\in (G \times_U X)_F$ and $(G
  \times_U X)_F$ is affine; and
\item \label{itm:BgUn11-3} we have $X \dblslash U \subseteq \bigcup_{f \in \mc{A}}
  \spec(\OO(X_f)^U) \subseteq \proj(S^U)$, and for every $x\in X^{\ssfg}$ there
  is $f\in \mc{A}$ such that $x\in X_f$ and 
$\OO(X_f)^U \cong \kk[\mc{A}]_{(f)}$ (where $\kk[\mc{A}]$ is the graded subalgebra of
$S^U=\kk[X,L]^U$ generated by $\mc{A}$). 
\end{enumerate}
\end{defn}
The existence of a finite fully separating set of invariants follows by a
suitable application of Hilbert's Basis Theorem inside $\kk[X,L]$ and the
quasi-compactness of $X^{\nss}$, $X^{\ssfg}$ and $X \dblslash U$ (recall from the
assumption made at the start of this section that $X \dblslash U$ is quasi-projective, as are  $X^{\nss}$ and $X^{\ssfg}$). The salient conditions in Definition \ref{def:BgUn11} relevant to points
(1) and (2) 
 above are the conditions
\ref{itm:BgUn11-2} and 
\ref{itm:BgUn11-3}, respectively. The idea now is to consider $L^{\prime} \to \ol{G
  \times^U X}$ such that some finite fully separating set of invariants $\mc{A}
\subseteq \kk[X,L]^U$ extends
to a collection of $G$-invariant sections over $\ol{G \times^U X}$, with various
further restrictions to increase their effectiveness for studying $U \act L \to X$.   
\begin{defn} \label{def:BgUn12} \cite[Definitions 5.2.4--5.2.7]{dk07} Let $U
  \act L \to X$ be an ample linearisation 
of a unipotent group over an irreducible projective variety, $G$ a reductive
group containing $U$ as a closed subgroup and $\mc{A}$ a finite fully
separating set of invariants. Suppose $\beta:G \times^U X \hookrightarrow \ol{G
  \times^U X}$ is a dominant $G$-equivariant open immersion into a projective
$G$-variety $\ol{G
  \times^U X}$ and $L^{\prime} \to \ol{G \times^U X}$ a
$G$-linearisation that restricts to $U \act L \to X$
under $\beta \circ \alpha$. If every $f\in \mc{A}$ extends to a $G$-invariant section 
of some
positive tensor power of $L^{\prime}$ over $\ol{G \times^U X}$, the pair
$(\ol{G\times^U X},L^{\prime})$ is called a \emph{reductive envelope} for $U \act L 
\to X$ (with       \index{reductive envelope}
respect to $\mc{A}$). Furthermore,
\begin{enumerate}
\item if each $f \in \mc{A}$ extends to a $G$-invariant $F$ over $\ol{G \times^U X}$
  such that $\ol{G \times^U X}_F$ is affine then $(\ol{G \times^U X},L^{\prime})$ is
  called a \emph{fine reductive envelope};  \index{fine reductive envelope}
\item if $L^{\prime}$ is an ample line bundle then 
$(\ol{G \times^U X},L^{\prime})$ is called an \emph{ample reductive envelope}; and  \index{ample reductive envelope}
\item if each $f \in \mc{A}$ extends to a $G$-invariant section $F$ over $\ol{G
    \times^U X}$ which vanishes on the codimension 1 part of the boundary $\ol{G
    \times^U X} \setminus (G \times^U X)$ then $(\ol{G \times^U X},L^{\prime})$
  is called a \emph{strong reductive envelope}.   \index{strong reductive envelope}
\end{enumerate}
\end{defn}
Clearly any ample reductive envelope is a fine reductive envelope.
In \cite[Proposition 5.2.8]{dk07} it is shown that for any ample linearisation $U
\act L \to X$, there is some positive tensor power $L^{\ten r}$ of $L$ which
possesses an ample reductive envelope. 

Associated to any reductive envelope
$L^{\prime} \to \ol{G \times^U X}$ are the \emph{completely semistable locus}  \index{completely semistable locus}
\[
X^{\ol{\rmss}}=(\beta \circ \alpha)^{-1}(\ol{G \times^U X}^{\rmss(L^{\prime})})
\]
and the \emph{completely stable locus}  \index{completely stable locus}
\[
X^{\ol{\rms}}=(\beta \circ \alpha)^{-1}(\ol{G \times^U X}^{\rms(L^{\prime})}).
\]  
The main theorem concerning reductive
envelopes, stated below, says that in the case where $L^{\prime} \to \ol{G
  \times^U X}$ is fine, the sets 
$X^{\ol{\rmss}}$ and $X^{\ol{\rms}}$ `bookend' the inclusion $X^{\mathrm{lts}} 
\subseteq
X^{\ssfg}$ associated to the $U$-linearisation $L \to X$, and the GIT quotient
$\ol{G \times^U X} \dblslash_{L^{\prime}} G$ contains the
enveloping quotient $X \dblslash U$.   
  
\begin{thm} \label{thm:BgUn13} \cite[Theorem 5.3.1]{dk07}\footnote{\cite[Theorem
    5.3.1]{dk07} actually says more than presented here and needs a
    normality assumption on $X$ to include this extra material. An examination
    of the proof shows that the version we give here does not need $X$ to be
    normal.} Let $X$ be an irreducible
projective variety with an ample linearisation $L \to X$ of a
unipotent group $U$, and let $(\ol{G \times^U X},L^{\prime})$ be a fine reductive
envelope, with open embedding $\beta:G \times^U X \hookrightarrow \ol{G \times^U
  X}$. Let $\pi:\ol{G \times^U X}^{\rmss(L^{\prime})} \to \ol{G \times^U X}
\dblslash_{L'} G$ be the GIT
quotient map and suppose $X \dblslash U$ is a quasi-projective variety. Then
there is a commutative diagram:  
\[
\begin{diagram}
X^{\overline{\rms}} & \rEmpty~\subseteq & X^{\mathrm{lts}}=X^{\mathrm{ms}}=X^{\mathrm{mss}} & \rEmpty~\subseteq & X^{\ssfg} & \rEmpty~\subseteq & X^{\overline{\rmss}}=X^{\nss} \\
\dTo^{q} & & \dTo^q & & \dTo^q & & \dTo^{\pi \circ \beta \circ \alpha}\\
q(X^{\overline{\rms}}) & \rEmpty~\subseteq & q(X^{\mathrm{lts}}) & \rEmpty~\subseteq & X \dblslash U & \rEmpty~\subseteq & \olGUX \dblslash_{L'} G\\
\end{diagram}
\]
with all inclusions open. 
\end{thm}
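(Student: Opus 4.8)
The plan is to establish the top row as a chain of open subsets of $X$ with the stated equalities, to read off the bottom row and its openness from the quotient maps, and finally to produce the open embedding $X \dblslash U \hookrightarrow \ol{G \times^U X} \dblslash_{L'} G$ making the right-hand square commute. The middle equalities $X^{\mathrm{lts}}=X^{\mathrm{ms}}=X^{\mathrm{mss}}$ are exactly Proposition \ref{prop:BgUn9}, so the real work is the two outer identifications, the two remaining inclusions, and the bottom-right embedding. Throughout I would exploit the graded isomorphism $\alpha^*:\kk[G \times^U X,L]^G \cong \kk[X,L]^U$ and its localisations at invariant sections.

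First I would identify the two ends. For $X^{\overline{\rmss}}=X^{\nss}$: if $x \in X^{\overline{\rmss}}$ then $\alpha(x)$ is semistable for $L'$, so some $G$-invariant $F$ of positive tensor power has $\ol{G \times^U X}_F$ affine and $F(\alpha(x)) \neq 0$; its restriction $f:=(\beta \circ \alpha)^*F \in H^0(X,L^{\ten r})^U$ satisfies $f(x) \neq 0$, whence $x \in X^{\nss}$. Conversely, given $x \in X^{\nss}$, condition \ref{itm:BgUn11-1} of the separating set provides $f \in \mc{A}$ with $x \in X_f$, and since the envelope is \emph{fine} this $f$ extends to a $G$-invariant $F$ with $\ol{G \times^U X}_F$ affine; then $F(\alpha(x))=f(x)\neq 0$ exhibits $\alpha(x)$ as semistable, so $x \in X^{\overline{\rmss}}$. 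The inclusion $X^{\ssfg} \subseteq X^{\nss}=X^{\overline{\rmss}}$ is then immediate from Definition \ref{def:BgUn2}. For $X^{\mathrm{ms}} \subseteq X^{\ssfg}$: a point $x \in X^{\mathrm{ms}}=\alpha^{-1}((G\times^U X)^{\rms})$ lies in an affine chart $(G \times^U X)_F$ with $F$ invariant; localising $\alpha^*$ gives $\OO(X_f)^U \cong \OO((G\times^U X)_F)^G$, and the latter is finitely generated by finite generation of invariants of reductive groups (Nagata) since the chart is affine, so $f \in I^{\ssfg}$ and $x \in X^{\ssfg}$.

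The inclusion $X^{\overline{\rms}} \subseteq X^{\mathrm{ms}}$ I would obtain from a restriction-of-stability lemma: if $p$ is properly stable in $\ol{G \times^U X}$ and lies in the open invariant subset $G \times^U X$, then $p$ is properly stable there too. The key is that the orbit $G \cdot p$ is closed in an affine chart $\ol{G \times^U X}_F$ and disjoint from the closed $G$-invariant boundary trace $\ol{G \times^U X}_F \setminus (G \times^U X)$; by the separation property of reductive group actions there is an invariant function vanishing on the boundary trace but not at $p$, cutting out a principal affine open that is $G$-invariant, contains $p$, and is \emph{contained in} $G \times^U X$. Restricting the corresponding invariant section to $G \times^U X$ leaves its non-vanishing locus unchanged, producing the required affine chart inside $G \times^U X$ on which orbits stay closed and stabilisers finite; applying $(\beta\circ\alpha)^{-1}$ then gives $X^{\overline{\rms}} \subseteq X^{\mathrm{ms}}$. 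I would stress that this uses stability essentially: for merely semistable points the orbit closure can meet the boundary and no separating invariant exists, which is exactly why the semistable ends are only \emph{equal} rather than similarly nested. Openness of every top-row inclusion is clear, since each of the four sets is an open subset of $X$.

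The main obstacle is the bottom-right embedding $X \dblslash U \hookrightarrow \ol{G \times^U X} \dblslash_{L'} G$ together with the commutativity it must satisfy. I would build it chartwise: for $f \in \mc{A}$, restriction along $\beta\circ\alpha$ gives a ring map $\OO(\ol{G \times^U X}_F)^G \to \OO((G \times^U X)_F)^G \cong \OO(X_f)^U$, hence a morphism of the affine chart $\spec\OO(X_f)^U$ into the standard affine chart of $\ol{G \times^U X} \dblslash_{L'} G$ determined by $F$. The delicate point is that this chart map is an \emph{open immersion}; here I would invoke condition \ref{itm:BgUn11-3}, which pins down $\OO(X_f)^U \cong \kk[\mc{A}]_{(f)}$ and forces the invariant rings to match so that $\spec\OO(X_f)^U$ is realised as a saturated open subscheme, then glue over $\mc{A}$ using that the $X_f$ cover both $X^{\ssfg}$ and $X \dblslash U$. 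Granting this, the remaining bottom inclusions and their openness follow: $q(X^{\mathrm{lts}})=q(X^{\rms}) \subseteq X \dblslash U$ is open by Proposition \ref{prop:BgUn6}, and $q(X^{\overline{\rms}}) \subseteq q(X^{\mathrm{lts}})$ is open because $q$ restricted to the stable locus is a geometric quotient (Proposition \ref{prop:BgUn6}) and hence an open map on the open subset $X^{\overline{\rms}}$. Commutativity of each square is finally checked on coordinates: both $q$ and $\pi \circ \beta \circ \alpha$ send $x$ to the point with homogeneous coordinates $[f_i(x)]$ for invariant sections $f_i$, and since every such $f_i$ extends to the invariant $F_i$ with $F_i(\alpha(x))=f_i(x)$, the two maps agree through the embedding.
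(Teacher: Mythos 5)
Your proposal is correct. Bear in mind that the paper does not actually prove Theorem \ref{thm:BgUn13} itself (it is quoted from \cite{dk07}); the arguments it does supply are the generalisations in Proposition \ref{prop:Co1Re8} (the identifications $X^{\ol{\rmss}}=X^{\nss}$ and $X^{\ol{\rms}}\subseteq X^{\rms}$), Proposition \ref{prop:Co1Re10} (the open immersion of the enveloping quotient into $\ol{G\times^U X}\dblslash_{L'}G$), and Theorem \ref{thm:Co1Re11}, and your outline matches these closely: extension/restriction of invariants for the semistable ends, Nagata for $X^{\rms}\subseteq X^{\ssfg}$, and chartwise isomorphisms $\spec(\OO(X_f)^U)\cong\pi(\ol{G\times^U X}_F)$ glued over $\mc{A}$ for the bottom-right embedding. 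The one genuinely different step is $X^{\ol{\rms}}\subseteq X^{\mathrm{ms}}$: you separate the closed orbit $G\cdot p$ from the closed invariant boundary trace of the affine chart $\ol{G\times^U X}_F$ by an invariant function, cutting out a principal affine open inside $G\times^U X$; the paper instead covers the open image $\pi((G\times^U X)_F)$ in the affine quotient $\spec(\OO(\ol{G\times^U X}_F)^G)$ by basic opens $\pi(\ol{G\times^U X}_{F\tilde F})$ and pulls back, which yields the stronger conclusion that the whole chart $(G\times^U X)_F$ is stable. Both arguments turn on the same fact — closedness of orbits in the stable chart keeps them off the boundary — so yours is an acceptable, slightly more pointwise variant. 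Two details worth tightening: a geometric quotient is a submersion rather than an open map, so openness of $q(X^{\ol{\rms}})$ in $q(X^{\mathrm{lts}})$ should be deduced from $X^{\ol{\rms}}$ being $U$-invariant and hence saturated; and for your chart maps to be open immersions you need the restriction $\OO(\ol{G\times^U X}_F)^G\to\OO(X_f)^U$ to be an isomorphism, so the injectivity coming from irreducibility and the dominance of $\beta$ should be stated explicitly alongside the surjectivity you extract from condition \ref{itm:BgUn11-3} and fineness.
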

\begin{rmk}
Note that Theorem \ref{thm:BgUn13} holds for ample reductive envelopes in
particular, and in this case $\ol{G \times^U X} \dblslash_{L^{\prime}} G$ gives
a projective completion of $X \dblslash U$. If
furthermore $\kk[X,L]^U$ is a finitely generated $\kk$-algebra, then
$X^{\ssfg}=X^{\nss}=X^{\overline{\rmss}}$ and $X \dblslash U \cong  \ol{G \times^U X}
\dblslash_{L^{\prime}} G$.  
\end{rmk}
In the case where the reductive envelope is fine and strong with a completion
$\ol{G \times^U X}$ that is normal, the sets $X^{\rms}=X^{\mathrm{lts}}$ and
$X^{\ssfg}$ can be computed via 
the stable and semistable loci of the
reductive envelope:
\begin{thm} \label{thm:BgUn14} \cite[Theorem 5.3.5]{dk07}
Retain the notation of Theorem \ref{thm:BgUn13}. If furthermore $\ol{G \times^U
  X}$ is normal, and $(\ol{G \times^U X},L^{\prime})$ defines a fine strong reductive
envelope, then $X^{\overline{\rms}}=X^{\mathrm{lts}}$ and
$X^{\overline{\rmss}}=X^{\ssfg}$.  
\end{thm}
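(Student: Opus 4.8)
The plan is to prove the two reverse inclusions that upgrade Theorem \ref{thm:BgUn13} to equalities. That theorem (for a fine reductive envelope) already gives $X^{\overline{\rms}} \subseteq X^{\mathrm{lts}}$ and $X^{\ssfg} \subseteq X^{\overline{\rmss}} = X^{\nss}$, so it remains to show $X^{\nss} \subseteq X^{\ssfg}$ (yielding the semistable equality) and $X^{\mathrm{lts}} \subseteq X^{\overline{\rms}}$ (yielding the stable equality). Throughout I fix a finite fully separating set $\mc{A}$ as in Definition \ref{def:BgUn11} with respect to which the envelope is fine and strong, so each $f \in \mc{A}$ extends to a $G$-invariant section $F$ of a positive power of $L'$ over $\ol{G \times^U X}$ with $(\ol{G \times^U X})_F$ affine and with $F$ vanishing on the codimension-$1$ part of the boundary. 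The two workhorses are the graded-ring isomorphism $\alpha^* : \kk[G \times^U X, L]^G \overset{\cong}{\longrightarrow} \kk[X,L]^U$, which (using $(G \times^U X)_F = G \times^U X_f$) identifies $\OO((G \times^U X)_F)^G \cong \OO(X_f)^U$, and algebraic Hartogs for the normal variety $\ol{G \times^U X}$, which extends regular functions across closed subsets of codimension $\geq 2$.

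For the semistable equality I fix $f \in \mc{A}$ with extension $F$. Since $\ol{G \times^U X}$ is normal and $F$ vanishes on the codimension-$1$ boundary, the closed subset $(\ol{G \times^U X})_F \setminus (G \times^U X)_F$ has codimension $\geq 2$ in the normal affine variety $(\ol{G \times^U X})_F$, so Hartogs yields $\OO((G \times^U X)_F) = \OO((\ol{G \times^U X})_F)$. Taking $G$-invariants and applying Nagata's theorem to the reductive action on the affine right-hand side, I conclude that $\OO((G \times^U X)_F)^G$ is finitely generated; via the isomorphism above, so is $\OO(X_f)^U$, that is, $f \in I^{\ssfg}$. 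As $X^{\nss} = \bigcup_{f \in \mc{A}} X_f$ by Definition \ref{def:BgUn11}, \ref{itm:BgUn11-1}, this gives $X^{\nss} \subseteq X^{\ssfg}$ and hence $X^{\overline{\rmss}} = X^{\nss} = X^{\ssfg}$.

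For the stable equality I take $x \in X^{\mathrm{lts}} = X^{\rms}$ and use Definition \ref{def:BgUn11}, \ref{itm:BgUn11-2} to choose $f \in \mc{A}$ whose associated invariant $F$ satisfies $\alpha(x) \in (G \times^U X)_F$ with $(G \times^U X)_F$ affine. Now both $(G \times^U X)_F$ and $(\ol{G \times^U X})_F$ are affine, and they differ by a closed subset contained in the boundary but disjoint from its codimension-$1$ part, hence of codimension $\geq 2$; since a nonempty codimension-$\geq 2$ closed subset cannot be removed from a normal affine variety leaving it affine, the difference is empty and $(\ol{G \times^U X})_F = (G \times^U X)_F$. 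It then suffices to check that $F$ witnesses stability in the sense of Definition \ref{def:TrRe5}, \ref{itm:TrRe5-2}, i.e. that the $G$-action on this affine variety is closed with finite stabilisers everywhere. This is precisely the input of Proposition \ref{prop:BgUn9}: every $G$-stabiliser in $G \times^U X$ is a conjugate of a subgroup of $U$, hence unipotent; the unique closed orbit in each fibre of $\spec(\OO((G \times^U X)_F)^G)$ has reductive stabiliser by Matsushima's criterion, hence trivial stabiliser, and a dimension count then forces every orbit in $(G \times^U X)_F$ to be closed with trivial stabiliser. Thus $F$ certifies $\beta(\alpha(x)) \in (\ol{G \times^U X})^{\rms(L')}$, so $x \in X^{\overline{\rms}}$ and $X^{\mathrm{lts}} \subseteq X^{\overline{\rms}}$.

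The main obstacle is the stable inclusion, and within it the step forcing $(\ol{G \times^U X})_F = (G \times^U X)_F$: this is exactly where normality of the completion and the strong (codimension-$1$ vanishing) hypothesis are indispensable, for without the equality one cannot exclude boundary points of $(\ol{G \times^U X})_F$ at which orbits degenerate or stabilisers jump, and these would violate the ``for all $y$'' clause of stability. By comparison the semistable inclusion is soft, requiring only Hartogs together with Nagata's finiteness theorem.
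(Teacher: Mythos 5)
Your proposal is correct and takes essentially the same route as the paper's own argument (which appears not for this unipotent statement itself but for its generalisation, Proposition \ref{prop:Co1St2}): the strong condition forces the complement of $(G\times^U X)_F$ in $(\ol{G\times^U X})_F$ to have codimension at least $2$, normality then identifies the coordinate rings (and hence, both opens being affine, the opens themselves in the stable case), Nagata's theorem gives the finite generation needed for $X^{\nss}\subseteq X^{\ssfg}$, and the Matsushima/closed-orbit argument underlying Proposition \ref{prop:BgUn9} certifies stability of $(\ol{G\times^U X})_F$. The only cosmetic differences are that you re-derive the "affine invariant opens of $G\times^U X$ lie in the stable locus" fact by a dimension count where the paper cites Proposition \ref{prop:GiSt3}, and you phrase the identification of the two affine opens via the non-removability of a codimension-$\geq 2$ closed subset; both are equivalent to the paper's steps.
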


Given their use for computing the stable locus and the finitely generated semistable
locus of a linearisation of a unipotent group, the question of how to construct
ample strong reductive envelopes which are normal has special importance. Note
that ampleness is desirable, because it means the associated $X^{\ol{\rmss}}$ and 
$X^{\ol{\rms}}$
can be computed via the Hilbert-Mumford criteria applied to $L^{\prime} \to
\ol{G \times^U X}$. For sufficiently nice completions $\ol{G \times^U
  X}$ one can turn any $G$-linearisation $L^{\prime} \to \ol{G 
  \times^U X}$ into a strong reductive envelope by using the boundary divisors
of $\ol{G \times^U X}$
to modify the line bundle $L^{\prime}$ appropriately. 
\begin{defn} \label{def:Co1St3.1} \cite[Definition 5.3.8]{dk07} Let $X$ be a
  quasi-projective variety and 
  $\beta:X \hookrightarrow \ol{X}$ a projective completion of $X$. The
  completion is said to be \emph{gentle} if $\ol{X}$ is normal and every    \index{gentle completion}
  codimension 1 component of the boundary of $X$ in $\ol{X}$ is a
  $\mb{Q}$-Cartier divisor.   
\end{defn}   

\begin{rmk} \label{rem:gentleunipotent} As was observed in \cite[Remark 5.3.11]{dk07}, in general there does not exist a projective completion $\ol{G \times^U
  X}$ of ${G \times^U
  X}$ and a $G$-linearisation on a line bundle $L' \to \ol{G \times^U
  X}$ extending the induced linearisation on ${G \times^U
  X}$ with the following three properties all holding simultaneously, although we can ensure that any two of them hold together:

(i) $L' \to \ol{G \times^U
  X}$ is a reductive envelope for $U \act L \to X$ with respect to some finite fully separating set of invariants;

(ii) the completion  $\ol{G \times^U
  X}$ of  ${G \times^U
  X}$ is gentle;

(iii) $L'$ is ample.
\end{rmk}

Suppose $\ol{G \times^U X}$ is a gentle completion of $G \times^U X$ and $L^{\prime}
\to \ol{G \times^U X}$ is \emph{any}  
$G$-linearisation extending $L \to
G \times^U X$. Let $D_1,\dots,D_m \subseteq \ol{G \times^U X}$ be the
codimension 1 irreducible components of the 
complement of $G \times^U X$ in $\ol{G \times^U X}$ and define the
$\mb{Q}$-Cartier divisor  
\[
D:=\sum_{i=1}^m D_i.
\]
Then for any sufficiently divisible integer
$N>0$ the divisor $ND$ is Cartier and defines a line bundle
$\OO(ND)$ on $\ol{G \times^U X}$ which restricts to the trivial bundle on
$G \times^U X$. Define 
\begin{equation*} 
L^{\prime}_N:=L^{\prime} \ten \OO(ND) \to \ol{G \times^U X}.
\end{equation*}
If $G$ is connected, then the $G$-linearisation on $L \to G \times^U X$ extends
uniquely to a $G$-linearisation on $L^{\prime}_N$. The next proposition provides
a useful way for turning $L^{\prime} \to \ol{G \times^U X}$ into a strong
reductive envelope.
\begin{propn} \label{prop:BgUn14.1} \cite[Proposition 5.3.10]{dk07}
  Suppose $G$ is a connected reductive group and, as above, suppose $\ol{G
    \times^U X}$ is a gentle 
  completion of $G \times^U X$ and $L^{\prime} \to \ol{G \times^U X}$ is an
  extension of the $G$-linearisation $L \to G \times^U X$. Given a finite fully
  separating set of invariants $\mc{A}$ 
  on $X$, then $(\ol{G \times^U X},L^{\prime}_N)$ is a strong reductive envelope
  with respect to $\mc{A}$, for sufficiently divisible integers $N>0$. If in fact
  $(\ol{G \times^U X},L^{\prime})$ defines a fine 
  reductive envelope with respect to $\mc{A}$, then $(\ol{G \times^U
    X},L^{\prime}_N)$ defines a fine strong reductive
  envelope.    
\end{propn}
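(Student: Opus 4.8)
The plan is to exploit the single extra ingredient that $L'_N$ has over $L'$, namely the canonical section of $\OO(ND)$, to clear denominators along the boundary. Writing $D=\sum_{i=1}^m D_i$, gentleness of $\beta:G\times^U X\hookrightarrow\ol{G\times^U X}$ tells me that $\ol{G\times^U X}$ is normal and each $D_i$ is $\mb{Q}$-Cartier, so for all sufficiently divisible $N>0$ the divisor $ND$ is Cartier and $\OO(ND)$ carries a canonical global section $s_{ND}$ with $\mathrm{div}(s_{ND})=ND$. This $s_{ND}$ vanishes to order $N$ along each $D_i$ and trivialises $\OO(ND)$ away from $D$, in particular over $G\times^U X$. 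Since $G$ is connected it preserves each $D_i$, so $\OO(ND)$ has a canonical $G$-linearisation (the one attached to the $G$-invariant divisor $ND$) for which $s_{ND}$ is invariant, and I would take the induced linearisation on $L'_N=L'\ten\OO(ND)$ to be the unique extension of $L\to G\times^U X$ supplied by connectedness. Under the trivialisation $s_{ND}$ this linearisation restricts to $L$ over $G\times^U X$, so the open immersion $\beta$ and the restriction requirement for a reductive envelope are automatic, and everything reduces to extending the members of $\mc{A}$.

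For the strong statement, fix $f\in\mc{A}\subseteq H^0(X,L^{\ten r})^U$ and transport it through the isomorphism $\alpha^*$ to a $G$-invariant section of $(L')^{\ten r}$ over the dense open $G\times^U X$; equivalently, a rational section of $(L')^{\ten r}$ on $\ol{G\times^U X}$ that is regular there. First I would invoke normality: a rational section regular in codimension one is regular, so $f$ is regular away from the codimension-one locus $D$ of the boundary, with finite pole orders $p_1,\dots,p_m$ along $D_1,\dots,D_m$. Then $F':=f\cdot s_{ND}^{\,r}$ is a rational section of $(L')^{\ten r}\ten\OO(rND)=(L'_N)^{\ten r}$ whose order along $D_i$ is $rN-p_i$; choosing $N$ divisible and large enough that $rN>\max_i p_i$ (a single $N$ suffices, since $\mc{A}$ and the $D_i$ are finite) makes $F'$ a genuine global section vanishing along every $D_i$. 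On $G\times^U X$ the factor $s_{ND}^{\,r}$ is the trivialisation, so $F'$ restricts to $f$; and $G$-invariance follows by density, since $F'$ and $g\cdot F'$ are sections of $(L'_N)^{\ten r}$ agreeing with the $G$-invariant $f$ on the dense open $G\times^U X$ and hence agree everywhere. Thus each $f\in\mc{A}$ extends to a $G$-invariant section of a positive power of $L'_N$ vanishing on the codimension-one boundary, which is precisely the definition of a strong reductive envelope.

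For the fine strengthening I would instead feed the construction the affine-locus extension guaranteed by fineness. If $(\ol{G\times^U X},L')$ is a fine reductive envelope, each $f$ already extends to a $G$-invariant $F\in H^0(\ol{G\times^U X},(L')^{\ten r})$ with $(\ol{G\times^U X})_F$ affine; set $F':=F\cdot s_{ND}^{\,r}$. Because $F$ is regular, $F'$ is automatically a regular section for every admissible $N$, it still restricts to $f$ and is $G$-invariant as before, and it vanishes along $D$, so the envelope is strong. The new point is that its nonvanishing locus is $(\ol{G\times^U X})_{F'}=(\ol{G\times^U X})_F\setminus D$, the complement in the affine variety $(\ol{G\times^U X})_F$ of the zero locus of the single line-bundle section $s_{ND}^{\,r}$; the complement of the support of an effective Cartier divisor in an affine scheme is again affine (the open immersion is an affine morphism, being principal-open locally where $\OO(ND)$ is trivialised), so $(\ol{G\times^U X})_{F'}$ is affine and the envelope is fine.

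The step I expect to cause the most trouble is the bookkeeping that keeps $F'$ simultaneously $G$-invariant and an honest extension of $f$: this is where connectedness of $G$ is doing real work, as it is what lets me split $L'_N$ as $L'\ten\OO(ND)$ with a $G$-invariant $s_{ND}$ and identify the restriction of the resulting linearisation over $G\times^U X$ with $L$ itself rather than a character twist of it. The pole-clearing is routine once normality confines the poles to the finitely many $\mb{Q}$-Cartier divisors $D_i$, and the affineness of the complement is a standard local computation; the delicate part is verifying that the canonical linearisation on $\OO(ND)$ is compatible with the extension asserted by connectedness, so that no spurious character appears upon restriction to $G\times^U X$.
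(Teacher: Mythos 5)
Your argument is correct and is essentially the same as the paper's (given in generalised form in Lemma \ref{lem:Co1St4} and Proposition \ref{prop:Co1St5}, which the authors describe as an expanded version of the proof of \cite[Proposition 5.3.10]{dk07}): twist by $\OO(ND)$ and multiply extensions by the canonical boundary section to clear poles and force vanishing along the $D_i$, use normality of $\ol{G\times^U X}$ both to extend over the codimension-$\geq 2$ boundary and to deduce $G$-invariance from density, and in the fine case identify $(\ol{G\times^U X})_{F'}$ with the complement of the support of a Cartier divisor in the affine set $(\ol{G\times^U X})_F$. The only difference is cosmetic: the paper carries out the pole-clearing with local Cartier data $(U_j,t_j)$ and $(V_k,s_k)$ and glues, whereas you work globally with $s_{ND}$ and orders of rational sections along the prime divisors $D_i$.
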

 
The above construction is
especially simple to describe explicitly when $X$ is normal and a reductive
group $G$ can be found that contains $U$ as a closed subgroup in such a way that
\begin{itemize}
\item the homogeneous space $G/U$ can be embedded in a normal affine variety
  $\ol{G/U}^{\aff}$ with codimension 2 complement; and
\item the $U$-linearisation $U \act L \to X$ extends to a $G$-linearisation $G
  \act L \to X$.
\end{itemize}
(Note that the first of these conditions is equivalent to $U$ being a
\emph{Grosshans subgroup} of $G$, about which more will be discussed in Section
\ref{subsec:Co1StImportant}.) The
extension of the linearisation 
leads to an isomorphism of $G$-linearisations
\[
  \begin{diagram} 
G\times^{U} L & \rTo^{\cong} & (G/U) \times L  & \rEmpty & [g,l] \mapsto (gU,gl) \\
\dTo &  & \dTo & & \\
G\times^U X & \rTo^{\cong} & (G/U) \times X &  & [g,x] \mapsto (gU,gx)\\ 
\end{diagram}
\]
with the corresponding $G$-linearisation on the right hand side being the
product of the linearisation $G \act L \to X$ and left multiplication on
$G/U$. Note that because $\ol{G/U}^{\aff} \times X$ is normal and $G$
reductive, the ring of invariants
\[
\kk[X,L]^U \cong (\OO(G/U) \ten \kk[X,L])^G =
(\OO(\ol{G/U}^{\aff}) \ten \kk[X,L])^G
\]
is a finitely generated $\kk$-algebra. In particular, this implies $X \dblslash U =
\proj(S^U)$ is a projective variety. One can choose a normal projective
$G$-equivariant completion 
$\ol{G/U}$ of $\ol{G/U}^{\aff}$ whose boundary consists of an effective Cartier divisor $D_{\infty}$ (not necessarily prime), and there is a very ample $G$-linearisation on the
associated line bundle $\OO(D_{\infty}) \to \ol{G/U}$ extending the canonical
linearisation on $\OO_{G/U} \to G/U$. As above, for any $N>0$, let
\[
L^{\prime}_N=\OO(ND_{\infty}) \boxtimes L \to \ol{G/U} \times X,
\]
equipped with its natural $G$-linearisation. Then using Proposition
\ref{prop:BgUn14.1} and Theorem \ref{thm:BgUn13}, one deduces 
\begin{propn} \label{prop:BgUn15} \cite[Lemma 5.3.14]{dk07}
In the above situation, the pair $(\ol{G/U} \times
X,L^{\prime}_N)$ defines an ample strong reductive envelope for sufficiently large
$N>0$, and $X \dblslash U = (\ol{G/U} \times X) \dblslash_{L^{\prime}_N} G$.  
\end{propn}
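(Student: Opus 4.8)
The plan is to read off both assertions from the general machinery already in place: I would exhibit $(\ol{G/U} \times X, L^{\prime}_N)$ as a strong reductive envelope via Proposition \ref{prop:BgUn14.1}, obtain its ampleness directly from the box-product form of $L^{\prime}_N$, and then deduce the identification of quotients from the finite generation of $\kk[X,L]^U$ together with the Remark following Theorem \ref{thm:BgUn13}.

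First I would fix a finite fully separating set of invariants $\mc{A} \subseteq \kk[X,L]^U$ in the sense of Definition \ref{def:BgUn11}; such a set exists because $X^{\nss}$, $X^{\ssfg}$ and $X \dblslash U = \proj(S^U)$ are all quasi-projective in this setting. Under the isomorphism $G \times^U X \cong (G/U) \times X$ the complement of $(G/U) \times X$ in $\ol{G/U} \times X$ is the effective Cartier divisor $D_\infty \times X$; since $\ol{G/U}$ and $X$ are both normal so is their product, whence $\ol{G/U} \times X$ is a gentle completion of $G \times^U X$ in the sense of Definition \ref{def:Co1St3.1}. The bundle $L^{\prime} := \OO_{\ol{G/U}} \boxtimes L = \pr_X^* L$, with its diagonal $G$-linearisation, restricts on $(G/U) \times X$ to the product linearisation and hence extends the given $G$-linearisation, and twisting by the boundary yields precisely $L^{\prime}_N = L^{\prime} \ten \OO(ND_\infty) = \OO(ND_\infty) \boxtimes L$. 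With gentleness, the extension property, and the set $\mc{A}$ all verified, Proposition \ref{prop:BgUn14.1} gives at once that $(\ol{G/U} \times X, L^{\prime}_N)$ is a strong reductive envelope with respect to $\mc{A}$ for all sufficiently divisible $N$; as $D_\infty$ is already Cartier, ``sufficiently divisible'' here simply means ``sufficiently large''.

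For ampleness I would note that $\OO(ND_\infty)$ is very ample on the projective variety $\ol{G/U}$ for every $N \geq 1$ while $L$ is ample on the projective variety $X$, so that their external tensor product $L^{\prime}_N = \OO(ND_\infty) \boxtimes L$ is ample (a box product of ample bundles on a product of projective varieties is ample). Hence for $N$ large enough $(\ol{G/U} \times X, L^{\prime}_N)$ is simultaneously ample and strong, and since any ample reductive envelope is fine it is in particular a fine ample strong reductive envelope, establishing the first assertion.

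For the identification of quotients I would use that, because $\ol{G/U}^{\aff} \times X$ is normal and $G$ reductive, the isomorphism $\kk[X,L]^U \cong (\OO(\ol{G/U}^{\aff}) \ten \kk[X,L])^G$ exhibits $\kk[X,L]^U$ as a finitely generated $\kk$-algebra, so that $X^{\ssfg} = X^{\nss}$ and $X \dblslash U = \proj(S^U)$ is projective. Having a fine (indeed ample) reductive envelope with $\kk[X,L]^U$ finitely generated, the Remark following Theorem \ref{thm:BgUn13} then upgrades the canonical open embedding $X \dblslash U \hookrightarrow (\ol{G/U} \times X) \dblslash_{L^{\prime}_N} G$ to an isomorphism, giving $X \dblslash U = (\ol{G/U} \times X) \dblslash_{L^{\prime}_N} G$. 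The substantive point---and the step I expect to be the main obstacle if one unwinds Proposition \ref{prop:BgUn14.1} rather than citing it---is the uniform extension of the finitely many invariants in $\mc{A}$ across the boundary $D_\infty \times X$: Grosshans-ness of $U$ (equivalently $\OO(G/U) = \OO(\ol{G/U}^{\aff})$) lets each $f \in \mc{A}$ extend over $\ol{G/U}^{\aff} \times X$, and the finiteness of $\mc{A}$ is what allows a single power of the section cutting out $D_\infty$ both to clear all poles along the boundary and to force vanishing there, which is precisely what forces the passage to sufficiently large $N$.
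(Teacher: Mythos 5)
Your proof is correct and follows essentially the same route as the paper, which deduces this proposition precisely by combining Proposition \ref{prop:BgUn14.1} (gentleness of $\ol{G/U}\times X$ plus the extension of a finite fully separating set across $D_\infty \times X$) with ampleness of the box product and the Remark following Theorem \ref{thm:BgUn13} once $\kk[X,L]^U$ is known to be finitely generated. Your closing observation about why finiteness of $\mc{A}$ and the Cartier boundary reduce ``sufficiently divisible'' to ``sufficiently large'' matches the mechanism of Lemma \ref{lem:Co1St4}.
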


\begin{eg} \label{ex:BgUn16} \cite[\S 6]{dk07} 
Let $U=(\CC,+)$, embedded in $\GL(2,\CC)$ as the subgroup of upper triangular
matrices, act on $V=\sym^n \CC^2$ via the standard representation of
$\GL(2,\CC)$ on $V$, and consider the canonical $U$-linearisation on $L:=\OO(1) \to
X:=\PP(V)$. (Note that $X$ may be regarded as the space of degree $n$ divisors
on $\PP^1$, and the action of $U$ on $X$ 
corresponds to moving points on $\PP^1$ by 
the usual translation M\"{o}bius transformation.) This linearisation extends to one of 
$G=\SL(2,\CC)$ in the obvious
way. The homogeneous space $G/U$ is isomorphic to $\CC^2 \setminus \{0\}$ via the 
usual
transitive action of $G$ on $\CC^2 \setminus \{0\}$, and it has a normal
$G$-equivariant affine completion $\CC^2$. Embedding $\CC^2$ into $\PP^2$ by
adding a hyperplane at infinity, we arrive in the setting of Proposition
\ref{prop:BgUn15}, so $\OO_{\PP^2}(N) \boxtimes L \to \PP^2 \times X$ defines a
strong ample reductive envelope for $U \act L \to X$, for sufficiently large
$N>0$. Using the Hilbert-Mumford 
criterion on $\PP^2 \times X$ and Theorem \ref{thm:BgUn14}, one sees that
\begin{align*}
X^{\rms}&=\{\text{divisors $\textstyle{\sum_{i=1}^n p_i}$ where $<n/2$ of the $p_i$
  coincide}\}, \\
X^{\ssfg}&=\{\text{divisors $\textstyle{\sum_{i=1}^n p_i}$ where $\leq n/2$ of the 
$p_i$
  coincide}\}.
\end{align*}

In the case where $n$ is odd then $X^{\rms}=X^{\ssfg}$ and $X^{\rms}/U$ is an
open subset of $X \dblslash U=(\PP^2 \times X) \dblslash G$ with complement
given by the reductive GIT quotient $(\{0\} \times X) \dblslash G = X \dblslash
G$ for the classical action of $G=\SL(2,\CC)$ on $X$ (linearised with respect to
$\OO(1) \to X$). In particular, the enveloping quotient map $X^{\ssfg} \to X
\dblslash U$ is not surjective.

On the other hand, when $n$ is even then $X^{\rms}$ is a proper subset of
$X^{\ssfg}$ and the image of $X^{\ssfg} \to X \dblslash U$ is not a variety: it is
equal to the union of $X^{\rms}/U$ together with the point $\pt=(X \dblslash G)
\setminus (X^{\rms(G)}/G)$ given by the quotient of the strictly semistable
set for the $G$-linearisation on $X$.   
\end{eg}


\section{Geometric Invariant Theory for Non-Reductive Groups}
\label{sec:gitNonReductive}

In this section we extend the constructions of \cite{dk07}
described in Section \ref{subsec:BgUnIntrinsic} to the case where $H$ is a linear
algebraic group acting on a variety, with $H$ not necessarily unipotent. Let $H_u$ be the unipotent radical of $H$, so that $H_u$ is a unipotent normal subgroup of $H$ and $H_r = H/H_u$ is reductive. A number     \index{$H_r = H/H_u$}
of our definitions and results are simple generalisations of those found in
\cite[\S4 and \S 5.1]{dk07} 
to this more general context. Having said this, we also address
some errors that occur in \cite[\S 4]{dk07} and thus our work can be seen
as giving some new perspectives on the unipotent picture. A further guiding goal
is to develop a theory which reduces to Mumford's GIT \cite{mfk94} in the case
where $H=G$ is a reductive group acting on a projective variety
equipped with an ample linearisation. Throughout this section $X$ will be a
variety with an action of a linear 
algebraic group $H$ and $L \to X$ a linearisation of the action. We don't
necessarily assume $X$ is projective or irreducible, or $L$ is ample, unless
explicitly stated. 

We begin in Section \ref{sec:GiFinitely} by extending the finitely generated
semistable locus $X^{\ssfg}$ and the notions of 
enveloping quotients and enveloped quotients from Section \ref{subsec:BgUnIntrinsic} 
to the more general non-reductive case (Definitions
\ref{def:GiFi1} and \ref{def:GiFi3}). As a way to address the
observation that the enveloping quotient $X \env H$ need not be a variety (see Remark
\ref{rmk:BgUn4}) we introduce the concept of an \emph{inner enveloping
  quotient} in Definition
\ref{def:GiFi3.1}. These are subvarieties of the enveloping quotient that, in
some sense, play the role of 
the GIT quotient from Mumford's theory for reductive groups; indeed, in the
case where $H=G$ is reductive, $X$ is projective and $L \to X$ is ample, there
is only one inner enveloping quotient---namely,
the GIT quotient $X \dblslash G$. An inner enveloping quotient is in general
not intrinsic solely to the data of the linearisation $L \to X$, but instead
corresponds to a choice of a certain kind of linear system,
called an \emph{enveloping system}, introduced in Definition \ref{def:GiFi4}. We
explore ways in which the collection of all inner enveloping quotients gives a certain
`universality' with respect to $H$-invariant morphisms from $X^{\ssfg}$. In
Section \ref{sec:GiNaturality} we examine how the enveloping quotient
behaves under naturally induced group actions. In particular, we note some of
the difficulties that can arise when trying to take enveloping quotients `in
stages': first by a
normal subgroup $N$ of $H$ and then by the quotient group $H/N$. In Section
\ref{sec:GiStability} we introduce the stable locus $X^{\rms}$ for a general non-
reductive
linearisation over an irreducible variety $X$ (Definition
\ref{def:GiSt1.1}). This is an $H$-invariant open subset of $X$ that is intrinsic to 
the
linearisation $L \to X$ and admits a geometric quotient under the $H$-action. Our
notion of stability also reduces to 
Definition \ref{def:TrRe5} in the reductive case and to 
Definition \ref{def:BgUn5} in the unipotent case. Following the ideas of
\cite[\S 5]{dk07}, we relate our definition of stability for $H \act L \to X$ to 
stability for a
certain reductive linearisation obtained by extension to a reductive structure
group, which will be important for the work on reductive envelopes in Section
\ref{sec:compactifications1}. Finally, in Section
\ref{sec:GiSummary} we draw together all our definitions and key results into Theorem
\ref{thm:GiSu2}, which provides a summary of our geometric invariant theoretic picture 
for
non-reductive groups.

\subsection{Finitely Generated Semistability and Enveloping Quotients}
\label{sec:GiFinitely}

Let $H$ be a linear algebraic group acting on a variety $X$ equipped with a
linearisation $H \act L \to X$. As in Section \ref{sec:background}, we let
$S=\kk[X,L]=\bigoplus_{r \geq 0} H^0(X,L^{\ten r})$ 
be the graded $\kk$-algebra of global sections of positive tensor powers of
$L$ and $S^H$ be the subring of invariant sections under the action
\eqref{eq:TrRe2} of Section \ref{subsec:BgGrLinearisations}. The inclusion $S^H
\hookrightarrow S$ defines an $H$-invariant rational 
map of schemes
\begin{equation} \label{eq:GiFi1}
q:X \dashrightarrow \proj(S^H),
\end{equation}
whose maximal domain of definition contains the open subset of points where some
invariant section of a positive tensor power of $L$ does not vanish. As we have
already seen, the basic 
technique of geometric invariant theory is, roughly speaking, to use the
non-vanishing loci $X_f$ of 
invariant sections $f$ to construct $H$-invariant open subsets
of $X$ which admit geometric quotients in the category of
varieties. Since any such geometric quotient must be a scheme of finite type, it
makes sense to restrict which opens $X_f$ to include in the following manner.     

\begin{defn} \label{def:GiFi1} 
Let $H$ be a linear algebraic group acting on a variety $X$ and $L \to X$ a
linearisation of the action. The \emph{naively semistable locus} is the open subset   \index{naively semistable}
\[
X^{\nss}:=\bigcup_{f \in I^{\nss}} X_f
\]
of $X$, where $I^{\nss}:=\bigcup_{r>0}H^0(X,L^{\ten r})^H$ is the set of invariant
sections of positive tensor powers of $L$. The
\emph{finitely generated semistable locus} is the open subset  \index{finitely generated semistable}
\[
X^{\ssfg}:= \bigcup_{f \in I^{\ssfg}} X_f
\]
of $X^{\nss}$, where 
\[
I^{\ssfg}:=\left\{f \in \textstyle{\bigcup_{r>0}} H^0(X,L^{\ten
r})^H \mid (S^H)_{(f)} \text{ is a finitely generated $\kk$-algebra}\right\}.
\]  
\end{defn}
These definitions generalise Definitions \ref{def:BgUn1} and \ref{def:BgUn2} of the
naively semistable and finitely generated semistable loci, respectively, from
\cite{dk07}. The finitely generated semistable locus is also closely related to the `algebraic locus' of an affine scheme introduced in \cite[Remark 2.3]{dk15} (see the upcoming Example \ref{ex:GiFi1.3}). They depend on the choice of 
the linearisation $L$;
when necessary, we shall indicate this by writing $X^{\nss(L)}$ and
$X^{\ssfg(L)}$.   

\begin{rmk} \label{rmk:GiFi1.2}
It is clear from the definition that for any $r>0$ the subset
$X^{\nss}$ is unaffected by replacing the 
linearisation $L \to X$ with $L^{\ten r} \to X$, and there is a canonical isomorphism
$\proj(\kk[X,L]^H)\cong\proj(\kk[X,L^{\ten r}]^H)$. The subset $X^{\ssfg}$ is
also unaffected by this replacement. Indeed, it is easy to see that
$X^{\ssfg(L^{\ten r})} \subseteq X^{\ssfg(L)}$. For the reverse containment,
note that for any $f \in H^0(X,L^{\ten m})^H$ ($m>0$) with $(\kk[X,L]^H)_{(f)}$
a finitely generated $\kk$-algebra, we have $f^r \in \kk[X,L^{\ten r}]^H$ with
\[
(\kk[X,L^{\ten r}]^H)_{(f^r)}=(\kk[X,L]^H)_{(f^r)}=(\kk[X,L]^H)_{(f)}
\]
a finitely generated $\kk$-algebra, so that $X_f=X_{f^r} \subseteq
X^{\ssfg(L^{\ten r})}$. It thus makes sense to define $X^{\nss}$, $X^{\ssfg}$
and the scheme $\proj(\kk[X,L]^H)$ for rational linearisations using Remark \ref{rmk:TrRe3.2}.      
\end{rmk}

As we noted in Section \ref{subsec:BgGrLinearisations}, the most common 
linearisations one comes across are either when $X$ is
affine and $L=\OO_X$ is the trivial bundle, or else when $X$ is a projective variety 
and
$L$ is an ample line bundle. We take a moment to consider the rational map
\eqref{eq:GiFi1} and Definition \ref{def:GiFi1} in each of these cases.
\begin{eg} \label{ex:GiFi1.3} 
In the case where $X=\spec A$ affine and $L=\OO_X$, recall from Example
\ref{ex:TrRe3.1} that the
linearisation is defined by a character $\chi:H \to
  \mb{G}_m$ and that $S^H$ is the graded subring of semi-invariants, 
\[
\bigoplus_{r \geq 0} A^H_{\chi^r}, \quad A^H_{\chi^r}:=\{f \in A \mid
\text{$f(hx)=\chi(h)^rf(x)$ for all $x \in X$, $h \in H$}\}.
\]
The rational map $q:X
\dashrightarrow \proj(\bigoplus_{r \geq 0} A^H_{\chi^r})$ corresponds to the
natural map $\bigoplus_{r \geq 0} A^H_{\chi^r} \to A$ induced by the inclusions
$A^H_{\chi^r} \hookrightarrow A$, and
$X^{\nss}$ is in this case the maximal domain of definition of $q$, consisting of
points $x \in X$ where $f(x) \neq 0$ for some $f \in A^H_{\chi^r}$ with
$r>0$. 

In the special case where $\chi=1$ is the trivial character, then the ring of
semi-invariants is just $\bigoplus_{r\geq 0} A^H$, so that
$\proj(\kk[X,L]^H)=\spec(A^H)$. Furthermore, $X^{\nss}=X$ because the constant
function $1 \in H^0(X,L)^H$, and $X^{\ssfg}$ is the union of $X_f$ with $f
\in A^H$ such that $(A^H)_f$ is a finitely generated $\kk$-algebra. (In fact, $X^{\ssfg}$ is the preimage of Dufresne and Kraft's algebraic locus of $\spec(A^H)$ under $q:X \to \spec(A^H)$ in this case; see \cite[Remark 2.3]{dk15}.)    
\end{eg} 

\begin{eg} \label{ex:GiFi1.4} 
If now $X$ is projective and $L$ is ample, then each of the open subsets $X_f$
arising in Definition \ref{def:GiFi1} is affine, so the restriction of the
rational map $q:X \dashrightarrow \proj(S^H)$ to $X^{\nss}$ and $X^{\ssfg}$ defines an
\emph{affine} morphism. Moreover, by taking a
  sufficiently positive tensor power $L^{\ten r}$ of $L$ we may embed $X$
  equivariantly into the projective
  space $\PP(V^*)$ using the complete linear system $V=H^0(X,L^{\ten
    r})$, and the linearisation $L^{\ten r}$ extends to $\OO_{\PP(V^*)}(1) \to
  \PP(V^*)$; see Example \ref{ex:TrRe3.2}. If $L$ is very ample (so that we may
  take $r=1$), $I_X$ is the kernel of the
  restriction map $\kk[\PP(V^*),\OO(1)] \to \kk[X,L]$ and
  $R_X=\kk[\PP(V^*),\OO(1)]/I_X$, then by 
\cite[Chapter 2, Lemma 5.14 and Remark 5.14.1]{har77}  for some $m>0$ the $m$-th Veronese subring
  $(R_X)^{(m)} \subseteq R_X$ is isomorphic to $\kk[X,L^{\ten m}]$. Then $
\proj(\kk[X,L]^H)
  \cong \proj((R_X)^H)$, and in light of Remark \ref{rmk:GiFi1.2} computing
  $X^{\nss}$ and $X^{\ssfg}$ for
  $H \act L \to X$ is essentially equivalent to
  studying the action of $H$ on $R_X$. Thus when $L$ is ample and $X$ is
  projective we can always reduce to the case where $H$
  acts on a projective space $\PP^n$ via a
  representation $H \to \GL(n+1,\kk)$ and $X \subseteq \PP^n$ is a closed
  subvariety invariant under the action.     
\end{eg} 

\begin{rmk} \label{rmk:GiFi1.1}
In general the finitely generated semistable locus $X^{\ssfg}$ is strictly
contained in $X^{\nss}$, due to the fact that the subring of invariant sections
can be non-noetherian (even if $S$ is a finitely generated
$\kk$-algebra). Indeed, when $X=\spec A$ is an affine variety and $L=\OO_X$
is equipped with the canonical $H$-linearisation (i.e. defined by the trivial
character $1:H \to \mb{G}_m$), then as seen in Example \ref{ex:GiFi1.3}
$X^{\nss}=X$ and $X^{\ssfg}$ is the union of all $X_f$ where $(A^H)_f$ is
finitely generated over $\kk$. In \cite[Proposition
2.10]{dk08} Derksen and Kemper show that the set 
\[
I^{\ssfg} \cup \{0\}=\{f \in A^H \mid \text{$(A^H)_f$ is
  finitely generated}\} \cup \{0\}
\]
is in fact a radical ideal of $A^H$. In \cite{gro76} Grosshans shows that if $A$
is an integral domain then there is
a nonzero $f \in A^H$ such that $(A^H)_f$ is finitely generated, so if $A^H$ is not 
finitely
generated then $I^{\ssfg}$ is a
proper nonzero ideal. It follows that any irreducible affine example in which
the ring of invariant 
global functions is not finitely generated will result in $\emptyset \neq
X^{\ssfg} \neq X^{\nss}$; for example, the Nagata counterexample in Example
\ref{ex:TrRe2.4}.           
\end{rmk} 

The rational map of \eqref{eq:GiFi1} restricts to
define a morphism on $X^{\ssfg}$ whose image is contained in the following open
subscheme of $\proj(S^H)$.

\begin{defn}
\label{def:GiFi3} Let $H$ be a linear algebraic group and $H \act L \to X$ a
linearisation of an 
$H$-variety $X$. The \emph{enveloping quotient} is the scheme   \index{enveloping quotient}
\[ \label{deg-0-localisation}
X \env H:= \bigcup_{f \in I^{\ssfg}} \spec \left((S^H)_{(f)}\right)
\subseteq \proj(S^H) 
\]
together with the canonical map $q:X^{\ssfg} \to X \env H$. We call the image
$q(X^{\ssfg})$ of this map the \emph{enveloped quotient}.   \index{enveloped quotient}
\end{defn}
When it is necessary to do so, we will include the data of the linearisation in
an enveloping quotient by
writing $X \env_{\! L} H$. Definition \ref{def:GiFi3} is simply an extension of
the definition of enveloping quotient and enveloped quotient in
\cite{dk07} to the case of linearisations for any linear algebraic group. Observe
that we do not use the ``$\dblslash$'' notation, since one can define the
enveloping quotient for a linearisation of a reductive group and in general this
is \emph{not} equal to Mumford's reductive GIT 
quotient from \cite{mfk94} (more will be said about this in Section
\ref{subsec:GiFiComparison}). 
Observe that the enveloping quotient is a canonically defined reduced, separated 
scheme
locally of finite type over $\kk$.      

\begin{rmk} \label{rmk:GiFi3.1}
  As we noted earlier in Remark \ref{rmk:BgUn4}, the enveloping quotient is only
  a scheme \emph{locally} 
  of finite type in general. However, when $S^H$ is a finitely
generated $\kk$-algebra, or when the enveloping quotient map $q:X^{\ssfg} \to X
\env H$ is surjective, then $X \env H$ is noetherian and hence a variety. In
particular, if $X$ is projective, $L \to X$ is ample and
$S^H$ finitely generated over $\kk$ then $X \env H = \proj(S^H)$ is a projective
variety.    
\end{rmk}

\begin{rmk} \label{rmk:GiFi3.2}
When $X=\spec A$ is affine and $L=\OO_X$ has linearisation defined by the trivial character (see Example \ref{ex:TrRe2.1}) the enveloping quotient is precisely the algebraic locus of $\spec(A^H)$, in the sense of \cite[Remark 2.3]{dk15}.
\end{rmk}

In the next lemma we make some initial observations about the rational map $q:X
\dashrightarrow \proj(S^H)$ of \eqref{eq:GiFi1} associated to the linearisation $H
\act L \to X$. As well as using standard facts about the $\proj$
construction, we need the
following commutative algebra result from \cite[Proposition 2.9]{dk08}: if $A$
is an integral domain over $\kk$ and $a,b \in A \setminus \{0\}$ are
such that $A_a$ and $A_b$ are both finitely generated $\kk$-algebras and the
ideal generated by $a,b$ is equal to $A$, then $A$ is 
also a finitely generated $\kk$-algebra (and in fact $A=A_a \cap A_b$).   
 
\begin{lem} \label{lem:GiFi3.1} 
Suppose $S^H \neq \kk$, let $\mc{Y}=\proj(S^H)$ and let
$\underline{L}$ \label{sheaf-of-line-bundle} denote the sheaf of sections of $L$
on $X$. Then for each $r \geq 0$, pulling back along $q$ defines
  inclusions of sheaves $q^*:\OO_{\mc{Y}}(r) \hookrightarrow
  (q_*(\underline{L}^{\ten r}|_{X^{\nss}}))^H \subseteq q_*(\underline{L}^{\ten
    r}|_{X^{\nss}})$. If $S$ is furthermore assumed to be an integral domain, then 
\begin{enumerate}
\item \label{itm:GiFi3.1-1} for each $r \geq 0$ the twisting sheaf $\OO_{\mc{Y}}(r)$ 
is
  identified with $(q_*(\underline{L}^{\ten r}|_{X^{\nss}}))^H$ via $q^*$; and
\item \label{itm:GiFi3.1-2} if $S$ is finitely generated over $\kk$ then the
  ideal $\mf{a} \subseteq S^H$ generated by
  $I^{\ssfg}$ is a non-zero graded radical ideal of $S^H$ satisfying $\mf{a} \cap
  S^H_r=I^{\ssfg} \cap S^H_r$ for each $r \geq 0$. In particular, $X^{\ssfg}
  \neq \emptyset$.   
\end{enumerate} 
\end{lem}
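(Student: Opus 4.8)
The plan is to work one distinguished affine chart of $\mc{Y}=\proj(S^H)$ at a time. For $f\in I^{\nss}$ (that is, a homogeneous element of $S^H$ of positive degree) we have the standard open $D_+(f)=\spec\big((S^H)_{(f)}\big)$, and these cover $\mc{Y}$. Since $q$ restricts to a morphism on $X^{\nss}$ with $q^{-1}(D_+(f))=X_f$, the pushforward $q_*(\underline L^{\ten r}|_{X^{\nss}})$ has sections $H^0(X_f,L^{\ten r})$ over $D_+(f)$, while the twisting sheaf has $\OO_{\mc{Y}}(r)(D_+(f))=(S^H_f)_r$ by the usual $\proj$-formalism. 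First I would record the map $q^*$ on each chart as $g/f^n\mapsto (g|_{X_f})/f^n$, which lands in the invariant subsheaf $(q_*\underline L^{\ten r})^H$ because $g,f$ are invariant; these local maps agree on overlaps $D_+(ff')$ and glue. For injectivity — which needs only that $X$ is reduced, not that $S$ is a domain — I would note that if $g|_{X_f}=0$ then $gf$ vanishes at every point of $X$ (on $X_f$ because $g$ does, off $X_f$ because $f$ does), so $gf=0$ and hence $g/f^n=0$ in $(S^H_f)_r$. This gives the claimed inclusions, the second one $(q_*\underline L^{\ten r})^H\subseteq q_*\underline L^{\ten r}$ being tautological.

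For part (1) the content is surjectivity of $q^*$ on each chart, i.e. the identification $H^0(X_f,L^{\ten r})=(S_f)_r$ for $X$ integral. I would prove this by choosing a finite affine cover $\{V_i\}$ of $X$ trivialising $L$: on the basic open $V_i\cap X_f$ of the affine $V_i$ every section of $L^{\ten r}$ becomes, via the trivialisation, a regular function on a localisation, hence of the form $a_i/f^{n_i}$; taking $n=\max_i n_i$, the sections $sf^n|_{V_i}$ extend over each $V_i$ and, agreeing on the dense open $X_f$, glue by integrality to a global $\sigma\in H^0(X,L^{\ten(r+nd)})$ with $s=\sigma/f^n$. This isomorphism is $H$-equivariant, so it remains to match invariants: as $f$ is invariant and $S$ is a domain, $h\cdot(a/f^n)=a/f^n$ forces $(h\cdot a-a)f^k=0$ and hence $h\cdot a=a$, giving $((S_f)_r)^H=(S^H_f)_r$. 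Combining, $q^*$ is an isomorphism $\OO_{\mc{Y}}(r)\cong(q_*\underline L^{\ten r})^H$ on every chart, hence globally.

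For part (2), $\mf{a}$ is graded because $I^{\ssfg}$ consists of homogeneous elements. Two closure facts drive the degreewise equality. First, if $f\in I^{\ssfg}$ and $h\in S^H$ is homogeneous with $hf\ne0$, then $D_+(hf)\subseteq D_+(f)$ exhibits $(S^H)_{(hf)}$ as a localisation of the finitely generated algebra $(S^H)_{(f)}$, so $hf\in I^{\ssfg}$. Second, if $f_1,f_2\in I^{\ssfg}\cap S^H_r$ and $g=f_1+f_2\ne0$, then inside $R:=(S^H)_{(g)}$ the degree-zero elements $f_1/g$ and $f_2/g$ sum to $1$ and so generate the unit ideal, while $R_{f_1/g}=(S^H)_{(gf_1)}$ and $R_{f_2/g}=(S^H)_{(gf_2)}$ are localisations of the finitely generated algebras $(S^H)_{(f_1)}$, $(S^H)_{(f_2)}$; the cited \cite[Proposition 2.9]{dk08} then forces $R=(S^H)_{(g)}$ to be finitely generated, i.e. $g\in I^{\ssfg}$. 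Decomposing any $g\in\mf{a}\cap S^H_r$ into homogeneous summands $h_if_i\in S^H_r$ and applying these two facts yields $\mf{a}\cap S^H_r=I^{\ssfg}\cap S^H_r$ (reading $I^{\ssfg}\cup\{0\}$). Radicality is then immediate: $g^m\in\mf{a}$ gives $(S^H)_{(g)}=(S^H)_{(g^m)}$ finitely generated, so $g\in\mf{a}$.

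Finally, for non-vanishing of $\mf{a}$ (equivalently $X^{\ssfg}\ne\emptyset$) I would pick any nonzero homogeneous $f_0\in S^H_+$ (possible since $S^H\ne\kk$); then $\OO(X_{f_0})=(S)_{(f_0)}$ is a finitely generated domain with $H$-action, so Grosshans's theorem \cite{gro76} (as quoted in Remark \ref{rmk:GiFi1.1}) supplies a nonzero invariant $\bar a=b/f_0^n\in(S^H)_{(f_0)}$ with $((S^H)_{(f_0)})_{\bar a}$ finitely generated; since this ring is $(S^H)_{(f_0 b)}$, the element $f_0 b\in I^{\ssfg}$ is nonzero. I expect the main obstacle to be the addition-closure in part (2): unlike the multiplicative and sheaf-theoretic steps it is genuinely non-formal and rests entirely on the gluing criterion of \cite[Proposition 2.9]{dk08}; the only other substantive point, the local identification $H^0(X_f,L^{\ten r})=(S_f)_r$ in part (1), is comparatively routine once one clears denominators on an affine trivialising cover.
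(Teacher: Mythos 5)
Your proposal is correct and follows essentially the same route as the paper: chart‑by‑chart identification of $\OO_{\mc{Y}}(r)(D_+(f))$ with the invariants of $H^0(X_f,L^{\ten r})$ using integrality to cancel powers of $f$, Grosshans' localisation theorem for $I^{\ssfg}\neq\emptyset$, closure of $I^{\ssfg}$ under homogeneous products and sums via \cite[Proposition 2.9]{dk08}, and radicality from $(S^H)_{(f)}=(S^H)_{(f^m)}$. The only cosmetic difference is that you re-derive the identification $H^0(X_f,L^{\ten r})\cong M_{(f)}$ by hand on a trivialising cover, where the paper simply cites \cite[Chapter 2, Lemma 5.14 and Remark 5.14.1]{har77}.
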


\begin{proof}
Fix $r \geq 0$ and 
let $f \in S_m^H=H^0(X,L^{\ten m})^H$ for $m>0$. Let $S(r)$ be the graded $S$-module
with degree $d$ piece equal to $S_{d+r}$ for each $d \in \mb{Z}$ and let 
\[ \label{twisted-module}
M=\bigoplus_{n\geq 0}S(r)_{mn}=\bigoplus_{n\geq 0}H^0(X,L^{\ten r} \ten L^{\ten mn})
\]
with its $S^{(m)}=\kk[X,L^{\ten m}]$-module structure. Since $X$ is
quasi-compact and separated, by \cite[Chapter 2, Lemma 5.14 and Remark
5.14.1]{har77} there is a 
canonical identification of $\OO(X_f)$-modules $H^0(X_f,L^{\ten r}) =
M_{(f)}$, 
\label{deg-0-localised-module} where the module structure on the right hand side
comes from the identification $\OO(X_f)=(S^{(m)})_{(f)}$. By definition we have
$H^0(\spec((S^H)_{(f)}),\OO_{\mc{Y}}(r)) = (M^H)_{(f)}$ with its
$((S^H)^{(m)})_{(f)}$-module structure, and the pullback map 
$q^*:H^0(\spec((S^H)_{(f)}),\OO_{\mc{Y}}(r)) \to
H^0(X_f,L^{\ten r})$ corresponds to the inclusion $(M^H)_{(f)}
\hookrightarrow (M_{(f)})^H \subseteq M_{(f)}$ under these identifications. Hence 
\[ 
q^*:\OO_{\mc{Y}}(r) \hookrightarrow (q_*(\underline{L}^{\ten r}|_{X^{\nss}}))^H
\subseteq q_*(\underline{L}^{\ten r}|_{X^{\nss}})  
\]
is an inclusion of sheaves. If furthermore $S$ is an integral domain, then in
fact $(M^H)_{(f)}=(M_{(f)})^H$ (with notation as above): for if $g \in S(r)_{mn}$ is 
such that $g/f^{n}
\in (M_{(f)})^H$ and $h \in H$, then $g/f^{n} = h\cdot(g/f^{n})=(h \cdot
g)/f^n$ and so $h \cdot g=g$. Statement \ref{itm:GiFi3.1-1} of the lemma
follows. 
  
Now we prove \ref{itm:GiFi3.1-2}, assuming $S$ is an integral domain and
finitely generated over $\kk$. We first show that $I^{\ssfg} \neq
\emptyset$. Since $S^H \neq \kk$ we can 
find a nonzero homogeneous $f \in S^H$ of positive degree. Then $A:=S_{(f)}$ is a 
finitely
generated integral domain over $\kk$, so applying Grosshans' localisation result
\cite{gro76} (see Remark \ref{rmk:GiFi1.1}) to 
$A=S_{(f)}$ we conclude that there exists $a \in A^H \setminus \{0\}$ such that $
(A^H)_a$ is
finitely generated over $\kk$. Because $S$ is an integral
domain we have $A^H=(S^H)_{(f)}$, so $a=g/f^m$ for some integer $m
\geq 0$ and $g \in S^H$ homogeneous of degree equal to $m \deg f$, and
$(A^H)_a=(S^H)_{(fg)}$. Hence $fg \in I^{\ssfg}$. Note this implies
$X^{\ssfg} \neq \emptyset$.   

It is immediate that $\mf{a}$ is a graded ideal of $S^H$. The fact that it
is radical follows from the equality $(S^H)_{(f)}=(S^H)_{(f^m)}$ for each $f \in
S^H$ homogeneous and each $m>0$. It
remains to show
$\mf{a} \cap S^H_r = I^{\ssfg} \cap S^H_r$ for all $r \geq 0$. The inclusions
$\mf{a} \cap S^H_r \supseteq I^{\ssfg} 
\cap S^H_r$ are obvious. For the reverse containments, it suffices to show that
for any $g_1,g_2 \in I^{\ssfg}$ and any $f \in H^0(X,L^{\ten r^{\prime}})^H$
($r^{\prime}>0$), we have $fg_i \in I^{\ssfg}$ and $\tilde{g}:=g_1+g_2 \in I^{\ssfg}$. 
To
this end, note that $fg_i \in I^{\ssfg}$ because
\[
(S^H)_{(fg_i)}=((S^H)_{(g_i)})_{\frac{f^r}{g_i^{r^{\prime}}}}
\] 
is the localisation of a finitely generated algebra. On the other hand, setting
$a_i:=g_i/\tilde{g} \in (S^H)_{(\tilde{g})}$, we see that each
$((S^H)_{(\tilde{g})})_{a_i}=((S^H)_{(\tilde{g}g_i)})$ is a finitely generated 
integral domain over $\kk$. 
Since $(a_1,a_2)$ is the unit ideal in $(S^H)_{(\tilde{g})}$, the ring
$(S^H)_{(\tilde{g})}$ is therefore finitely generated by the result
\cite[Proposition 2.9]{dk08} quoted before the statement of the lemma, so
$\tilde{g}=g_1+g_2 \in I^{\ssfg}$. Thus, $\mf{a} \cap H^0(X,L^{\ten
  r})^H \subseteq I^{\ssfg} \cap H^0(X,L^{\ten r})^H$ for each $r \geq 0$.               
\end{proof}

\begin{rmk} \label{rmk:GiFi2.5}
When $X$ is irreducible the ring of sections $S$ is an integral domain, so for any
section $f \in I^{\nss}=\bigcup_{r>0}H^0(X,L^{\ten r})^H$ we have a natural
identification $\OO(X_f)^H=(S_{(f)})^H=(S^H)_{(f)}$, by Lemma \ref{lem:GiFi3.1},
\ref{itm:GiFi3.1-1}. Under this identification, the morphism $q:X_f
\to \spec((S^H)_{(f)})$ defined by \eqref{eq:GiFi1} corresponds to the natural map 
$X_f
\to \spec(\OO(X_f)^H)$ induced by $\OO(X_f)^H \hookrightarrow \OO(X_f)$.      
\end{rmk}

From the lemma we see that for general $X$ the natural map of
sheaves $q^{\#}: \OO_{\mc{Y}} \to q_*\OO_{X^{\nss}}$ is injective with image contained 
in
$(q_*\OO_{X^{\nss}})^H$ and $q:X^{\nss} \to \mc{Y}$ is a dominant
morphism. Similarly, if $\mc{U} \subseteq \mc{Y}$ is any nonempty open
subscheme (such as the enveloping quotient $X \env H$) then the sheaves 
$\OO_{\mc{Y}}(r)|_{\mc{U}}$ ($r \geq 0$)
are quasi-coherent sheaves whose sections are included
in the $H$-invariant sections of $L^{\ten r}|_{q^{-1}(\mc{U})}$ under pullback by
$q$. Notice also that, as a corollary of \ref{itm:GiFi3.1-2} of Lemma
\ref{lem:GiFi3.1}, any situation where a linear algebraic group $H$ acts on
a finitely generated integral $\kk$-algebra $S$ such that the ring of invariants
$S^H$ is not finitely 
generated over $\kk$ will result in an example of a projective variety $X=\proj
S$ with ample linearisation $L \to X$ such that $X^{\nss} \neq X^{\ssfg}$ and $X
\env H \neq \proj(S^H)$. This holds, in particular, for the projectivised
version of the Nagata example (cf. Example \ref{ex:TrRe2.4}).       


\subsubsection{`Universality' of the Enveloping Quotient}
\label{subsec:GiFiUniversality}

Given a linear algebraic group $H$ acting on a variety $X$ with linearisation $L
\to X$, it is not necessarily the case that $q:X^{\ssfg} \to X \env H$ is
  surjective, thus $X \env H$ is not in general a categorical quotient of
  $X^{\ssfg}$. 
\begin{eg} \label{eg:GiFi3} Let $X=\SL(2,\kk)$ and let $H \subseteq X$ be the
  subgroup of strictly upper triangular matrices, acting on $X$ via matrix
  multiplication. There is a unique linearisation of the trivial bundle $\OO_X
  \to X$. By \cite[Theorem 6.8]{bor91} the geometric quotient $X / H$ exists; in
  fact, $H$ is precisely the stabiliser of the standard action of $\SL(2,\kk)$
  on $\kk^2 \setminus \{0\}$, therefore $X/H \cong \kk^2 \setminus \{0\}$ and
  $\kk[X,\OO_X]^H \cong \kk[z_0,z_1]$ is finitely generated. So in this case
  $X \env H=\spec(\kk[X,\OO_X]^H)\cong \kk^2$, and the image of $X^{\ssfg}=X$ under
  the enveloping quotient map is identified with $\kk^2 \setminus \{0\}$.
\end{eg}  

An example of the failure of surjectivity of $q:X^{\ssfg} \to X \env H$ in the
projective case was
given in Example \ref{ex:BgUn16}. There we also saw
  examples where the enveloped quotient $q(X^{\ssfg})$ is not a variety, so in
  general a categorical quotient of $X^{\ssfg}$ need not exist at all. This
  raises the question of whether there is any sort of way in which to view the
  enveloping quotient as `universal' for $H$-invariant morphisms. Here we give one
  possible way to answer this. (We should say that our use of the word
  `universal' here is informal---while we do prove a sort of uniqueness and existence
  result regarding morphisms induced by certain $H$-invariant morphisms from
  $X^{\ssfg}$, we don't formulate this in terms of a universal property
  within some category, though it is surely possible to do so. This is simply because
  we won't have need for such a formal usage in what follows.)    
       
The key observation is that, even though $X \env H$ may not be quasi-compact,
the enveloped quotient---being the image $q(X^{\ssfg})$ of
$X^{\ssfg}$---is quasi-compact as a subset of $X \env H$. So it is natural to
look at \emph{quasi-compact} open
subschemes $\mc{U}$ of $X \env H$ that contain
$q(X^{\ssfg})$. Observe that it is easy to give examples of such subsets (at least non-constructively): because $X^{\ssfg}$ is quasi-compact there is a finite collection of sections $f_i \in I^{\ssfg}$ such that $X^{\ssfg}$ is covered by the basic opens $X_{f_i}$ and $\bigcup_i \spec((S^H)_{(f_i)})$ is a quasi-compact open subscheme of $X \env H$ containing $q(X^{\ssfg})$. Furthermore, it is easy to see that we have an
equality of sets
\[
q(X^{\ssfg})= \bigcap \{\mc{U} \mid \text{$\mc{U} \subseteq X \env H$ is open, quasi-
compact and contains
  $q(X^{\ssfg})$}\}.   
\]
In fact, with a bit more work we can see that $q(X^{\ssfg})$ is a constructible subset of $X \env H$. Indeed, any quasi-compact open subscheme $\mc{U}$ of $\proj(S^H)$ is of finite type and separated, since $\proj(S^H)$ is separated. Choosing such a $\mc{U}$ and restricting attention to the induced morphism between varieties $q:X^{\ssfg} \to \mc{U}$, we may apply Chevalley's Theorem \cite[Tag 05H4]{stacks-project} to conclude that $q(X^{\ssfg})$ is a constructible subset of $\mc{U}$, and since $\mc{U}$ is a quasi-compact open subset of $X \env H$ it follows that $q(X^{\ssfg})$ is constructible inside $X \env H$ too.

This suggests it is natural to study diagrams of the form
\[
\begin{diagram} 
  &       & X^{\ssfg} & \rEmpty~\subseteq & (G/U) \times L  & \rEmpty~\subseteq & X \\
  & \ldTo & \dTo &  & \dTo^q & & \\
\mc{U} & \rEmpty~\subseteq  & X \env H & \rEmpty~\subseteq  & \proj(S^H) &   & \\ 
\end{diagram}
\]
where the inclusions are open and the $\mc{U}$ are quasi-compact.

More generally, because $q:X^{\ssfg} \to X \env H$ is dominant \emph{any} nonempty 
open
set $\mc{U} \subseteq X \env H$ intersects $q(X^{\ssfg})$ and is
covered by basic affine open subsets of the form $\spec((S^H)_{(f)})$ with $f$ such 
that
$(S^H)_{(f)}$ is finitely generated. Thus the pre-image of $\mc{U}$ under the
enveloping quotient map $q$ is a nonempty union of the associated open subsets
$X_f$. So given any open subset $U \subseteq X$ that is a union of $X_f$ with $f
\in I^{\ssfg}$ we may also consider its image $q(U)$ as an intersection of those
quasi-compact open $\mc{U} \subseteq X \env H$ containing it. This motivates the
following definition.  

\begin{defn} \label{def:GiFi3.1}
Let $H$ be a linear algebraic group acting on a variety $X$ with linearisation
$L \to X$ and let
$U \subseteq X^{\ssfg}$ be a nonempty $H$-invariant open subset. An \emph{inner
  enveloping quotient of $U$} is a quasi-compact open subscheme 
of $X \env H$ that contains the image $q(U)$ of $U$ under the enveloping
quotient map $q:X^{\ssfg} \to X \env H$. An inner enveloping quotient of   \index{inner enveloping quotient}
$U=X^{\ssfg}$ is simply called an \emph{inner enveloping quotient}.
\end{defn}

\begin{eg} \label{ex:GiFi3.2}
In the case where $X$ is an irreducible projective $H$-variety and $L \to X$ an ample
linearisation we can intrinsically define a collection of inner
enveloping quotients, as follows. The section ring $S=\kk[X,L]$ is an integral
domain finitely generated over 
$\kk$, so by Lemma \ref{lem:GiFi3.1}, \ref{itm:GiFi3.1-2} the set
$I^{\ssfg} \cap H^0(X,L^{\ten r})^H$ is a finite dimensional vector space over
$\kk$, for each $r 
>0$. Taking $r>0$ such that $X^{\ssfg} = \bigcup \{X_f \mid f
\in I^{\ssfg} \cap H^0(X,L^{\ten r})^H\}$, the associated open subscheme 
\[
\mc{U}^{(r)}:= \bigcup \{\spec((S^H)_{(f)}) \mid f \in I^{\ssfg} \cap
H^0(X,L^{\ten r})^H\} \subseteq X \env H
\]
is an inner enveloping quotient: choosing any basis $\{f_i\}$ of
$I^{\ssfg} \cap H^0(X,L^{\ten r})^H$ yields a finite open cover
$\{\spec((S^H)_{(f_i)})\}$ of $\mc{U}^{(r)}$ by quasi-compact open subsets.     
\end{eg}

From the discussion above we see it is natural to regard the image of an
$H$-invariant open subset $U$ of $X^{\ssfg}$ under the
enveloping quotient $q:X^{\ssfg} \to X \env H$ as sitting inside a `germ' of
inner enveloping
quotients of $U$. The following proposition makes this idea more precise in the
case where $U$ is a union of open subsets of the form $X_f$ with $f \in f \in
I^{\ssfg}$. 

\begin{propn} \label{prop:GiFi3.2}
Let $H$ be a linear algebraic group acting on an irreducible variety $X$ with
linearisation 
$L \to X$ and let $U = \bigcup_{f \in \mc{S}} X_f$, where $\mc{S}$ is a nonempty
subset of $I^{\ssfg}$. Suppose we are given the data of a
quasi-projective variety $Z$ together with a very ample line bundle $M
\to Z$ and an $H$-invariant morphism
$\phi:U \to Z$ with $\phi^*M \cong L^{\ten r}|_{U}$ for some $r>0$. Then 
\begin{enumerate}
\item \label{itm:GiFi3.2-1} there is an inner enveloping
  quotient $\mc{U}$ of $U$ and a morphism
  $\ol{\phi}:\mc{U} \to Z$ such that $\phi = \ol{\phi} \circ q|_{U}$ and
  $\ol{\phi}^*M \cong \OO_{\mc{U}}(r)$; and
\item \label{itm:GiFi3.2-2} if $\mc{U},\mc{U}^{\prime} \subseteq X \env H$ are two
  inner enveloping quotients of $U$ and $\psi:\mc{U} \to Z$ and
  $\psi^{\prime}:\mc{U}^{\prime} \to Z$ two morphisms such that
  $\psi \circ q|_{U} = \psi^{\prime} \circ q|_{U}$, then $\psi$
  and $\psi^{\prime}$ agree on $\mc{U}^{\prime} \cap \mc{U}$.     
\end{enumerate}
\end{propn}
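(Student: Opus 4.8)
The plan is to isolate one fact that drives everything---that $q(U)$ is \emph{dense} in $X \env H$---then dispose of part \ref{itm:GiFi3.2-2} by a routine separatedness argument and spend the real effort building the extension in part \ref{itm:GiFi3.2-1}. For the density: since $X$ is irreducible, $S=\kk[X,L]$ is an integral domain, hence so is $S^H$, so $\proj(S^H)$ and every nonempty open subscheme of it (in particular any inner enveloping quotient) is irreducible. Fixing any $f \in \mc{S} \subseteq I^{\ssfg}$ (which has positive degree, so $S^H \neq \kk$ and Lemma \ref{lem:GiFi3.1} applies), the morphism $q\colon X_f \to \spec((S^H)_{(f)})$ is dominant onto a nonempty open of $\proj(S^H)$; as $X_f \subseteq U$, the set $q(U)$ is dense in $\proj(S^H)$, hence dense in any open subscheme containing it.

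Part \ref{itm:GiFi3.2-2} then follows quickly. The intersection $\mc{U} \cap \mc{U}^{\prime}$ is a nonempty open of the irreducible, reduced scheme $X \env H$ (both factors contain $q(U)$), and $q(U) \subseteq \mc{U} \cap \mc{U}^{\prime}$ is dense in it. The hypothesis $\psi \circ q|_U = \psi^{\prime} \circ q|_U$ says $\psi$ and $\psi^{\prime}$ agree on $q(U)$; since $Z$ is a variety, hence separated, and $\mc{U} \cap \mc{U}^{\prime}$ is reduced, the locus where $\psi$ and $\psi^{\prime}$ agree is closed and contains a dense subset, so it is all of $\mc{U} \cap \mc{U}^{\prime}$.

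For part \ref{itm:GiFi3.2-1} I would use very ampleness to fix a finite-dimensional basepoint-free subspace $W \subseteq H^0(Z,M)$ giving a locally closed immersion $\iota\colon Z \hookrightarrow \PP(W^*)$ with $\iota^*\OO_{\PP(W^*)}(1) \cong M$. Each $s \in W$ is $H$-invariant ($H$ acts trivially on $Z$), so reading $\phi^*M \cong L^{\ten r}|_U$ as an isomorphism of $H$-linearisations, its pullback $\tilde{s}:=\phi^*s$ lies in $H^0(U,L^{\ten r}|_U)^H$. Because $X$ is a variety, $U=\bigcup_{f\in\mc{S}}X_f$ is quasi-compact, so finitely many $f_1,\dots,f_n \in \mc{S}$ cover $U$; setting $\mc{U}_0:=\bigcup_i \spec((S^H)_{(f_i)})$ gives a quasi-compact open of $X \env H$ with $q^{-1}(\mc{U}_0)=U$. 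By Lemma \ref{lem:GiFi3.1}, \ref{itm:GiFi3.1-1} (writing $\mc{Y}=\proj(S^H)$), we have $H^0(\mc{U}_0,\OO_{\mc{Y}}(r)) = H^0(U,L^{\ten r})^H$, so each $\tilde{s}$ is a section of $\OO_{\mc{Y}}(r)$ over $\mc{U}_0$ with $q^*\tilde{s}=\phi^*s$. On the open $\mc{U}^{\prime}:=\mc{U}_0 \setminus \mathrm{Bs}(\{\tilde{s}\})$ these sections define a morphism $g\colon \mc{U}^{\prime} \to \PP(W^*)$ with $g^*\OO_{\PP(W^*)}(1) \cong \OO_{\mc{Y}}(r)|_{\mc{U}^{\prime}}$ and $g \circ q|_U = \iota \circ \phi$; the base locus misses $q(U)$ (if every $\tilde{s}$ vanished at $q(x)$ then every $\phi^*s$ would vanish at $x$, against basepoint-freeness of $W$ on $U$), so $q(U)\subseteq \mc{U}^{\prime}$.

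The step I expect to be the main obstacle is that $g$ lands only in $\PP(W^*)$, whereas $Z$ is merely quasi-projective, so $g(\mc{U}^{\prime})$ could a priori meet $\ol{Z}\setminus Z$. I would resolve this the same way as the base-locus issue. The crucial structural point is that $\mc{U}_0$ is of finite type over $\kk$, hence a Noetherian space, so $\mc{U}^{\prime}$ and every further open I extract from it are automatically quasi-compact (this is exactly what lets the shrinking preserve the inner-enveloping-quotient property). Now $g(q(U))=\iota(\phi(U))\subseteq Z$ and $q(U)$ is dense in $\mc{U}^{\prime}$, so continuity forces $g(\mc{U}^{\prime})\subseteq \ol{Z}$; since $Z$ is open in $\ol{Z}$, the set $\mc{U}:=g^{-1}(Z)$ is open in $\mc{U}^{\prime}$, contains $q(U)$, and is quasi-compact, hence an inner enveloping quotient of $U$ in the sense of Definition \ref{def:GiFi3.1}. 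Putting $\ol{\phi}:=g|_{\mc{U}}\colon \mc{U}\to Z$ yields $\ol{\phi}\circ q|_U=\phi$ and $\ol{\phi}^*M = g^*\iota^*\OO_{\PP(W^*)}(1) = \OO_{\mc{Y}}(r)|_{\mc{U}} = \OO_{\mc{U}}(r)$, completing the construction.
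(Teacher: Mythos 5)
Your argument is correct and follows essentially the same route as the paper's proof: for (1), embed $Z$ in projective space by sections of $M$, descend the pulled-back invariant sections to sections of $\OO_{\mc{Y}}(r)$ via Lemma \ref{lem:GiFi3.1}, \ref{itm:GiFi3.1-1}, obtain a morphism to projective space on a quasi-compact open neighbourhood of $q(U)$, and shrink so the image lands in $Z$; for (2), use density of $q(U)$ together with separatedness. The only cosmetic difference is that you show the image lies in $\ol{\iota(Z)}$ by density of $q(U)$ and continuity, where the paper argues via injectivity of $q^*$ applied to sections vanishing on $Z$ --- both are fine.
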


\begin{proof}
(Proof of \ref{itm:GiFi3.2-1}.) Let $\iota:Z \hookrightarrow \PP^n$ be a
locally closed immersion defined by sections $\sigma_0,\dots,\sigma_n \in
H^0(Z,M)$, so that the composition $\iota \circ
\phi:U \to \PP^n$ is defined by the $H$-invariant sections
$f_0=\phi^*\sigma_0,\dots,f_n=\phi^*\sigma_n \in
H^0(U,L^{\ten r})^H$. Let $\mc{U}_0=\bigcup_{f \in \mc{S}} \spec((S^H)_{(f)})
\subseteq X \env H$. Then $U=q^{-1}(\mc{U}_0)$, so appealing to Lemma
\ref{lem:GiFi3.1}, \ref{itm:GiFi3.1-1} 
there are $g_0,\dots,g_n \in
  H^0(\mc{U}_0,\OO(r))$ such that $q^*g_i=f_i$ for each $i$. The sections
$g_0,\dots,g_n$ define a morphism $\Phi:\mc{U} \to \PP^n$ on some nonempty
quasi-compact open subscheme $\mc{U} \subseteq \mc{U}_0$ that contains
$q(U)$, since the collection of $q^*g_i=f_i$ is basepoint free on
$U$, and by construction $\Phi \circ q=\iota \circ \phi$. Because $q^*$ is
injective, any section of a power of $\OO_{\PP^n}(1)$ that vanishes on
$Z$ will pull back under $\ol{\phi}$ to a zero section over $\mc{U}$, so the image
  of $\mc{U}$ under $\Phi$ is contained in the closure $\ol{\iota(Z)}$ of $\iota(Z)$ 
in
  $\PP^n$; by shrinking $\mc{U}$ if 
necessary we may assume that $\Phi(\mc{U}) \subseteq \iota(Z) \cong Z$. Then
$\ol{\phi}:=\Phi|_{\mc{U}}:\mc{U} \to Z$ is a morphism such that $\ol{\phi} \circ
q|_U=\phi$ and $\ol{\phi}^*M \cong \OO_{\mc{U}}(r)$.   

(Proof of \ref{itm:GiFi3.2-2}.) Suppose we have $\psi:\mc{U} \to Z$ and
$\psi^{\prime}:\mc{U}^{\prime} \to Z$ with $q(U) \subseteq \mc{U} \cap \mc{U}^{\prime}
$ and
$\psi \circ q|_U=\psi^{\prime} \circ q|_U$. Then $q(U)$ is a dense
constructible subset of the noetherian space $\mc{U} \cap \mc{U}^{\prime}$, so
the interior $q(U)^{\circ}$ is a nonempty dense open 
subscheme of $\mc{U} \cap \mc{U}^{\prime}$ on which $\psi$ and $\psi^{\prime}$
agree. Since $X \env H$ is separated, so too is $\mc{U} \cap \mc{U}^{\prime}$ and thus 
we
have $\psi=\psi^{\prime}$ on $\mc{U} \cap \mc{U}^{\prime}$.  
\end{proof}

\begin{rmk} \label{rmk:GiFi3.25}
If the sections $\sigma_0,\dots,\sigma_n \in H^0(Z,M)$ defining an embedding $Z
\hookrightarrow \PP^n$ in the statement
of Proposition \ref{prop:GiFi3.2} are such that each $\phi^*\sigma_i$ extends to a 
global
section of $L^{\ten r} \to X$, then in fact one can prove 
\ref{itm:GiFi3.2-1} and \ref{itm:GiFi3.2-2} for reducible $X$ and \emph{any}
$H$-invariant open subset $U \subseteq X$.   
\end{rmk}

As a corollary of Proposition \ref{prop:GiFi3.2}, we obtain a sort of universal
property for the enveloping  \index{enveloping quotient ! universal property}
quotient $q:X^{\ssfg} \to X \env H$ when $X$ is irreducible (for reducible $X$ an
appropriate statement can be formulated from Remark \ref{rmk:GiFi3.25}).
Given a quasi-projective variety $Z$ embedded in some
projective space and an $H$-invariant morphism
$\phi:X^{\ssfg} \to Z$ defined by sections of some positive
power of $L|_{X^{\ssfg}}$, 
there is an inner enveloping quotient $\mc{U} \subseteq X \env H$ of $X^{\ssfg}$ 
and a morphism $\ol{\phi}:\mc{U} \to Z$ such that the diagram          
\[
\begin{diagram}
& & X^{\ssfg} & & \\
&  &  \dTo^q & \rdTo^\phi & \\
X \env H & \rEmpty~\supseteq & \mc{U} & \rTo{\ol{\phi}} & Z    
\end{diagram}
\]        
commutes, and any other inner enveloping quotient $\mc{U}^{\prime}$ and morphism
$\ol{\phi}^{\prime}$ with $\ol{\phi}^{\prime}\circ q=\phi$ defines the same
rational map $X \env H \dashrightarrow Z$ as $(\mc{U},\ol{\phi})$. 

\begin{rmk} \label{rmk:GiFi3.3}
The inner enveloping quotient $\mc{U}$ and the map $\ol{\phi}:\mc{U} \to Z$
constructed above depend on the choice of embedding of $Z$ into a projective
space $\PP^n$, and the whole construction furthermore relies on the requirement
that the morphism
$\phi:X^{\ssfg} \to Z \subseteq \PP^n$ is defined by sections of some positive
tensor power of $L \to X^{\ssfg}$ (or for reducible $X$, of $L \to X$). Contrast
this to Mumford's GIT quotient arising from a reductive group $G$ acting on a variety 
$X$
with linearisation $L$: then the GIT quotient $X^{\rmss} \to X\dblslash G$ is a 
categorical
quotient of the semistable locus $X^{\rmss}$ \emph{in the category of
  varieties}, so that a $G$-invariant morphism $X^{\rmss} \to Z$ factors
uniquely through $X^{\rmss} \to X \dblslash G$ without any further assumptions
on $X^{\rmss} \to Z$. So we see that the universality of the collection of
inner enveloping quotients for a general linear algebraic
group $H$ is a considerably weaker notion than the universal property of a
reductive GIT quotient. The reason for this can
be traced in large part to the fact that the enveloping quotient $q:X^{\ssfg}
\to X \env H$ is not surjective. In \ref{subsubsec:ExVaCase2} of Section 
\ref{sec:example} we will find
examples of enveloping quotients where $q:X^{\ssfg} \to X \env H$ is surjective
and indeed $X \env H$ is a geometric---and hence categorical---quotient of
$X^{\ssfg}$.  
\end{rmk}

Any inner enveloping quotient $\mc{U} \subseteq X \env H$ is quasi-compact, so for
sufficiently large integers $r>0$ the twisting sheaf $\OO_{\mc{U}}(r)$ defines a
line bundle on $\mc{U}$. We
shall soon see that, for $r$ large enough, 
$\OO_{\mc{U}}(r)$ is in fact very ample. In order to prove this---as well as a similar
statement for inner enveloping quotients of more general open subsets of $X^{\ssfg}
$---it is
convenient to make the following definition.  
\begin{defn} 
\label{def:GiFi4} Let $H$ be a linear algebraic group acting on a variety $X$ and $L 
\to X$ a
linearisation. For $r>0$ and $\mc{S} \subseteq
H^0(X,L^{\ten r})^H$ a finite subset of invariant sections, we
say a linear subspace
$V\subseteq H^0(X,L^{\ten r})$ is an \emph{enveloping system adapted to $\mc{S}$} if   \index{enveloping system adapted to $\mc{S}$}
\begin{enumerate}
\item \label{itm:GiFi4-1} it is finite dimensional, contains $\mc{S}$ and is
  stable under the $H$-action; and 
\item \label{itm:GiFi4-2} for each $f\in \mc{S}$ the $\kk$-algebra
  $(S^H)_{(f)}$ is finitely generated with
generating set $\{\tilde{f}/f \mid \tilde{f} \in V^H\}$.
\end{enumerate}
We call $V$ simply an \emph{enveloping system} if it is an enveloping system \index{enveloping system}
adapted to a subset $\mc{S}$ such that $X^{\ssfg} = \bigcup_{f \in \mc{S}} X_f$. 
\end{defn} 

\begin{eg} \label{ex:GiFi4.2}
Suppose $S^H$ is a finitely generated
$\kk$-algebra. Then there exists $r>0$ such that the $r$-th Veronese subring
$(S^H)^{(r)}$ is generated by its degree 1 piece $S^H_r=H^0(X,L^{\ten
  r})^H$. Therefore $X^{\ssfg}=X^{\nss}$ is covered by the basic open subsets
$X_f$ with $f \in H^0(X,L^{\ten r})^H$, and for each such $f$ we have
$(S^H)_{(f)}=((S^H)^{(r)})_{(f)}$ generated by $\tilde{f}/f$ with $\tilde{f} \in 
H^0(X,L^{\ten
  r})^H$. So $H^0(X,L^{\ten r})^H$ is an enveloping system. 
\end{eg}

The following basic result asserts that finding enveloping systems adapted to
finite subsets is
essentially equivalent to finding quasi-compact open subschemes of the
enveloping quotient $X \env H$ and giving ways to embed them into projective
spaces. 

\begin{propn}
  \label{prop:GiFi4.1} Suppose $H$ is a linear algebraic group and $L \to X$ a
  linearisation of an $H$-variety $X$.
  \begin{enumerate}
  \item \label{itm:GiFi4.1-1} For any quasi-compact open subscheme $\mc{U} \subseteq
    X \env H$, there is an enveloping system $V \subseteq H^0(X,L^{\ten r})^H$
    adapted to a finite 
    subset $\mc{S} \subseteq H^0(X,L^{\ten r})^H$ with $\mc{U}= \bigcup_{f \in \mc{S}}
    \spec((S^H)_{(f)})$, for some $r>0$ such that $\OO_{\mc{U}}(r)$ is a very ample
    line bundle. Moreover, the natural
    map $V \to H^0(\mc{U},\OO_{\mc{U}}(r))$
    defines a locally closed embedding $\mc{U} \hookrightarrow \PP(V^*)$.
\item \label{itm:GiFi4.1-2} Conversely, suppose $H^0(X,L^{\ten r})$ contains an
enveloping system $V$ adapted to a finite subset $\mc{S} \subseteq H^0(X,L^{\ten
  r})^H$,
let $\mc{U}= \bigcup_{f \in \mc{S}} \spec((S^H)_{(f)}) \subseteq X \env H$ and let
$\phi:U := \bigcup_{f \in \mc{S}} X_f \to \PP((V^H)^*)$ be the
$H$-invariant map defined by the inclusion $V^H \subseteq H^0(X,L^{\ten r})$.  
Then there is a locally closed embedding $\ol{\phi}:\mc{U} \hookrightarrow
\PP((V^H)^*)$ such that $\phi=\ol{\phi} \circ q$ on $U$ and
$\ol{\phi}^*\OO_{\PP((V^H)^*)}(1) = \OO_{\mc{U}}(r)$.  
\item \label{itm:GiFi4.1-3} If $V \subseteq H^0(X,L^{\ten r})$ is any enveloping
  system adapted to $\mc{S}$, then the
  image of the natural multiplication map $V^{\ten n} \to H^0(X,L^{\ten rn})$
defines an enveloping system adapted to the set $\{f^n \mid f \in \mc{S}\}$, for each 
$n>0$.        
  \end{enumerate}
\end{propn}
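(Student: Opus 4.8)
The plan is to prove the three parts in turn, with the substantial work concentrated in part \ref{itm:GiFi4.1-1} and its geometric conclusion; parts \ref{itm:GiFi4.1-2} and \ref{itm:GiFi4.1-3} then follow from the same circle of ideas together with direct computations in $S^H$. For part \ref{itm:GiFi4.1-1} I would first establish that the basic open subschemes $\spec((S^H)_{(f)})$ with $f\in I^{\ssfg}$ form a basis for the topology of $X\env H$. Indeed, given $p\in\mc{U}$ and any neighbourhood, $p$ lies in some $\spec((S^H)_{(f)})$ with $f\in I^{\ssfg}$, and since the $\spec((S^H)_{(g)})$ form a basis of $\proj(S^H)$ there is a $g$ with $p\in\spec((S^H)_{(fg)})$ inside that neighbourhood; as $(S^H)_{(fg)}$ is a localisation of the finitely generated algebra $(S^H)_{(f)}$, we have $fg\in I^{\ssfg}$. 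Quasi-compactness of $\mc{U}$ then yields a finite union $\mc{U}=\bigcup_{i=1}^{k}\spec((S^H)_{(f_i)})$ with each $f_i\in I^{\ssfg}$. For each $i$ I would choose finitely many algebra generators of $(S^H)_{(f_i)}$, of the form $g_{i,j}/f_i^{m_{i,j}}$, and pick an integer $r$ divisible by every $\deg f_i$ with $r/\deg f_i\ge m_{i,j}$ for all $i,j$. Putting $h_i:=f_i^{r/\deg f_i}\in H^0(X,L^{\otimes r})^H$ leaves the cover unchanged, keeps $h_i\in I^{\ssfg}$, and lets me rewrite each generator as $\tilde g_{i,j}/h_i$ with $\tilde g_{i,j}\in H^0(X,L^{\otimes r})^H$. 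Taking $\mc{S}:=\{h_1,\dots,h_k\}$ and $V$ to be the span of $\mc{S}\cup\{\tilde g_{i,j}\}$ produces a finite-dimensional subspace of $H^0(X,L^{\otimes r})^H$, which is automatically $H$-stable since its elements are invariant, and which satisfies conditions \ref{itm:GiFi4-1} and \ref{itm:GiFi4-2} of Definition \ref{def:GiFi4}; thus $V$ is an enveloping system adapted to $\mc{S}$ with $\mc{U}=\bigcup_{f\in\mc{S}}\spec((S^H)_{(f)})$.

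The step I expect to be the main obstacle is upgrading this algebraic data to the geometric assertion that $\OO_{\mc{U}}(r)$ is very ample and that $V$ defines a locally closed embedding. Here I would invoke Lemma \ref{lem:GiFi3.1}, \ref{itm:GiFi3.1-1} to realise the invariant sections in $V$ as global sections of $\OO_{\mc{U}}(r)$, which is invertible on $\mc{U}$ because on each $\spec((S^H)_{(f)})$ it is trivialised by the non-vanishing section $f$. These sections are base-point free on $\mc{U}$, as $\mc{S}\subseteq V$ and the charts $\spec((S^H)_{(f)})$, $f\in\mc{S}$, cover $\mc{U}$, so they define a morphism $\Psi:\mc{U}\to\PP(V^*)$. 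On the standard affine chart $U_f\subseteq\PP(V^*)$ where the coordinate $f$ is non-zero one checks $\Psi^{-1}(U_f)=\spec((S^H)_{(f)})$, and the induced ring map $\kk[\tilde g/f\mid\tilde g\in V]\to(S^H)_{(f)}$ is surjective by condition \ref{itm:GiFi4-2}, so $\Psi$ restricts to a closed immersion $\spec((S^H)_{(f)})\hookrightarrow U_f$. Since being a closed immersion is local on the target and the $U_f$ with $f\in\mc{S}$ cover the image, $\Psi$ is a closed immersion onto a closed subscheme of the open set $\bigcup_{f\in\mc{S}}U_f$, hence a locally closed embedding into $\PP(V^*)$ with $\Psi^*\OO(1)\cong\OO_{\mc{U}}(r)$; in particular $\OO_{\mc{U}}(r)$ is very ample.

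Part \ref{itm:GiFi4.1-2} is then the embedding half of the previous argument, run with $V^H$ in place of $V$: condition \ref{itm:GiFi4-2} says precisely that the invariant sections $V^H$ generate each $(S^H)_{(f)}$, so the identical chart-by-chart closed-immersion check yields $\ol{\phi}:\mc{U}\hookrightarrow\PP((V^H)^*)$, using $\mc{S}\subseteq V^H$ for base-point freeness on both $U=\bigcup_{f\in\mc{S}}X_f$ and $\mc{U}$. The identities $\phi=\ol{\phi}\circ q$ on $U$ and $\ol{\phi}^*\OO_{\PP((V^H)^*)}(1)=\OO_{\mc{U}}(r)$ hold because $\phi$ and $\ol{\phi}$ are defined by the same invariant sections $V^H$ (compatibly via $q$ and Lemma \ref{lem:GiFi3.1}, \ref{itm:GiFi3.1-1}). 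Finally, part \ref{itm:GiFi4.1-3} is a direct verification: writing $W$ for the image of $V^{\otimes n}\to H^0(X,L^{\otimes rn})$, it is finite-dimensional, $H$-stable (the multiplication map is $H$-equivariant and $V$ is $H$-stable), and contains each $f^n$. For condition \ref{itm:GiFi4-2} one uses $(S^H)_{(f^n)}=(S^H)_{(f)}$ and observes that each old generator $\tilde f/f=(\tilde f f^{n-1})/f^n$, with $\tilde f,f\in V^H$, has numerator $\tilde f f^{n-1}\in W^H$; hence the proposed generating set $\{\tilde g/f^n\mid\tilde g\in W^H\}$ contains the old generating set of $(S^H)_{(f)}$ and therefore still generates, so $W$ is an enveloping system adapted to $\{f^n\mid f\in\mc{S}\}$.
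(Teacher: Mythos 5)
Your proof is correct and follows essentially the same route as the paper's: cover $\mc{U}$ by finitely many basic affines $\spec((S^H)_{(f_i)})$, pass to a common degree $r$, adjoin the numerators of finitely many generators to form $V$, and check chart-by-chart that the induced map to $\PP(V^*)$ restricts to closed immersions $\spec((S^H)_{(f)})\hookrightarrow \PP(V^*)_f$, with parts \ref{itm:GiFi4.1-2} and \ref{itm:GiFi4.1-3} handled by the same computations as in the paper. The only addition is your explicit justification that a quasi-compact open subscheme of $X \env H$ is a finite union of basic opens $\spec((S^H)_{(f)})$ with $f \in I^{\ssfg}$ (via the observation that $fg \in I^{\ssfg}$ whenever $f \in I^{\ssfg}$), a point the paper's proof asserts without comment.
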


\begin{proof}
(Proof of \ref{itm:GiFi4.1-1}.) The argument we use can essentially be found in
\cite[Proposition 4.2.2]{dk07} and is based on a slight
modification of the argument used to prove quasi-projectivity of the GIT quotient in
reductive GIT (cf. \cite[Theorem 1.10]{mfk94}). 
For completeness, it runs as follows. Let $\mc{Y}=\proj(S^H)$. Since $\mc{U}$ is
quasi-compact, we may find
finitely many invariants $f_1,\dots,f_m \in I^{\ssfg}$ such that the basic open
subsets $\spec((S^H)_{(f_i)})$ cover $\mc{U}$. Using the 
reducedness of $S^H$ we can take powers of the $f_i$ and assume, without loss of
generality, that there is $r_0>0$ such that $f_i \in
S_{r_0}^H$ for each $i$, so that $\OO_{\mc{U}}(r_0)$ is the trivial line bundle over
$\spec((S^H)_{(f_i)})$. The $\kk$-algebras $(S^H)_{(f_i)}$
have finite generating sets, which we can write as
$\{g_{i1}/(f_i^{r_1}),\dots,g_{in_i}/(f_i^{r_1})\}$ for $g_{ij} \in
S^H_{r_0r_1}$ and some $n_i>0$, with one common $r_1>0$ working for each
$i=1,\dots,m$. Resetting $f_i=f_i^{r_1}$ for each $i$ and letting
$\mc{S}:=\{f_1,\dots,f_m\} $, we can assume that we have found $r>0$ and a set 
\[
A:=\mc{S} \cup \{g_{i,j} \mid i=1,\dots,m, \ j=1,\dots,n_i\}
\]
of invariant sections such that $\mc{U} = \bigcup_{f \in \mc{S}}
\spec((S^H)_{(f)})$, the sheaf $\OO_{\mc{U}}(r)$ is locally free and 
$(S^H)_{(f_i)} = \kk[g_{i,1}/f_i,\dots,g_{i,n_i}/f_i]$ for each 
$i$. Taking $V \subseteq S^H_r=H^0(X,L^{\ten r})^H$ to be the $\kk$-span of the
elements of $A$, we see that $V$ is an enveloping system adapted to
$\mc{S}$. The image of the natural
map $V \to H^0(\mc{U},\OO_{\mc{U}}(r))$ induced by the structure map $S^H_r \to
H^0(\mc{Y},\OO_{\mc{Y}}(r))$ is basepoint-free on $\mc{U}$, so $V \to
H^0(\mc{U},\OO_{\mc{U}}(r))$ defines a morphism 
\[
\psi:\mc{U} \to \PP(V^*)
\]
such that $\psi^*\OO_{\PP(V^*)}(1) = \OO_{\mc{U}}(r)$. Now $H^0(\mc{U}_{f_i},
\OO_{\mc{U}}(r)) \cong
(S^H)_{(f_i)}$ and the restriction of $\psi$ to $\mc{U}_{f_i}$ maps into
the affine open subset $\PP(V^*)_{f_i}$ of points of $\PP(V^*)$ where $f_i \in
H^0(\PP(V^*),\OO(1))$ doesn't vanish. So $\psi:\mc{U}_{f_i} \to \PP(V^*)_{f_i}$
corresponds to the natural ring homomorphism  
\[
(\symdot V)_{(f_i)} \to (S^H)_{(f_i)} 
\]
given by multiplying sections, which is surjective because the generators
$g_{i,1}/f_i,$ $\dots, g_{n_i,1}/f_i$ of
$(S^H)_{(f_i)}$ are contained in the image. Thus $\psi:\mc{U}_{f_i} \to \PP(V^*)_{f_i}
$ is a closed
immersion. Since $\mc{U}$ is covered by the $\mc{U}_{f_i}$, the map $\psi:\mc{U} \to 
\PP(V^*)$
is a locally closed immersion and $\OO_{\mc{U}}(r)$ is very ample.         

(Proof of \ref{itm:GiFi4.1-2}.) Suppose $V \subseteq H^0(X,L^{\ten r})$ is an
enveloping system adapted to $\mc{S} \subseteq H^0(X,L^{\ten r})^H$ and let $\mc{U}=
\bigcup_{f \in \mc{S}}
\spec((S^H)_{(f)}) \subseteq \mc{Y}=\proj(S^H)$. As above, the structure map
$S_r^H \to H^0(\mc{Y},\OO_{\mc{Y}}(r))$ defines a linear map 
\[
\alpha:H^0(\PP((V^H)^*),\OO(1))=V^H \to H^0(\mc{U},\OO_{\mc{U}}(r))
\]
such that the composition $q^* \circ \alpha$ is equal to
$\phi^*:H^0(\PP((V^H)^*),\OO(1)) \to H^0(U,L^{\ten r})$. Now $\mc{S} \subseteq
V^H$, so $\OO_{\mc{U}}(r)$ is globally generated by the sections in the image of
$\alpha$ and thus $\alpha$ defines a morphism 
\[
\ol{\phi}:\mc{U} \to \PP((V^H)^*)
\]     
such that $\ol{\phi}^*\OO_{\PP((V^H)^*)}(1)=\OO_{\mc{U}}(r)$ and $\phi=\ol{\phi}
\circ q$. By \ref{itm:GiFi4-2} of
Definition \ref{def:GiFi4}, for each $f \in \mc{S}$ the algebra $(S^H)_{(f)}$ is
generated by $\tilde{f}/f$, where $\tilde{f} \in V^H$, and now the argument used
in the proof of \ref{itm:GiFi4.1-1} above shows that $\ol{\phi}$ is a locally
closed immersion. 

(Proof of \ref{itm:GiFi4.1-3}.) Given an enveloping system $V \subseteq
H^0(X,L^{\ten r})$ adapted to $\mc{S}$ and $n>0$, the
image $V^{\prime}$ of the natural multiplication map $V^{\ten n} \to
H^0(X,L^{\ten nr})$ is a finite dimensional $H$-stable subspace of $H^0(X,L^{\ten nr})
$ that contains
the set of $n$-fold products of invariant sections $A^{\prime}:=\{f_1\cdots
f_{n} \mid \text{each } f_k \in V^H\}$. For any $f \in \mc{S}$ the algebra
$(S^H)_{(f^n)}=(S^H)_{(f)}$ is generated by 
$A^{\prime}$, since we have
$\tilde{f}/f=(\tilde{f}f^{n-1})/f^n$ in $(S^H)_{(f)}$ for all $\tilde{f}
\in V^H$. Hence $V^{\prime}$ is an enveloping system adapted to $\{f^n \mid f
\in \mc{S}\}$.  
\end{proof} 

\begin{rmk} \label{rmk:GiFi4.1} Given an enveloping system $V$ consisting of invariant
  sections, it follows from Proposition \ref{prop:GiFi4.1},
  \ref{itm:GiFi4.1-1} that any basis of $V$ will give a set of invariants of some 
positive
  tensor power of $L \to X$ that separates points \emph{in $X^{\ssfg}$} (compare
  with Definition \ref{def:BgUn11}, \ref{itm:BgUn11-1}).  
\end{rmk}

We have already seen that when $X$ is projective, $L \to X$ is ample and the ring of 
invariants $S^H$ is finitely
generated then the enveloping quotient $X \env H=\proj(S^H)$ is a projective
variety. As a first
application of enveloping systems, we can prove a sort of converse to this fact
for irreducible $X$.
\begin{cor} \label{cor:GiFi5} Suppose $H$ is a linear algebraic group, $X$ an
  irreducible $H$-variety and $L \to X$ a linearisation. If the
  enveloping quotient $X \env H$ is 
 complete, then $X \env
  H=\proj(S^H)$. Furthermore, for suitably
  divisible integers $r>0$ the sheaf $\OO_{X \env H}(r)$ is an ample line bundle
  on $X \env H$ and the natural structure map 
\[
\kk[X,L^{\ten r}]^H = (S^H)^{(r)} \to \kk[X \env H,\OO_{X \env H}(r)]
\]
is an isomorphism. (In particular, $\kk[X,L^{\ten r}]^H$  
is a finitely generated $\kk$-algebra for such $r$ and we have
$X^{\nss}=X^{\ssfg}$.)   
\end{cor}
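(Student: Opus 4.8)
The plan is to feed the completeness of $X \env H$ into Proposition \ref{prop:GiFi4.1} to realise $X \env H$ as a projective variety, to identify it with $\proj(S^H)$ by a connectedness argument, and then to extract the section-ring isomorphism from Serre vanishing. We may assume $X \env H \neq \emptyset$, the empty case being trivial. Since $X \env H$ is complete it is quasi-compact, hence an inner enveloping quotient of itself; Proposition \ref{prop:GiFi4.1}, \ref{itm:GiFi4.1-1} then produces an integer $r_0 > 0$, an enveloping system $V \subseteq H^0(X, L^{\ten r_0})^H = S^H_{r_0}$, and a locally closed embedding $X \env H \hookrightarrow \PP(V^*)$ for which $\OO_{X \env H}(r_0)$ is a very ample line bundle. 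A locally closed immersion into the separated scheme $\PP(V^*)$ whose source is proper over $\kk$ is, by the cancellation property for proper morphisms, itself proper, and a proper immersion is a closed immersion; thus $X \env H$ is a closed subvariety of $\PP(V^*)$, in particular a projective (and, being a nonempty open in $\proj(S^H)$, integral) variety.

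First I would prove $X \env H = \proj(S^H)$. Because $X$ is irreducible, $S = \kk[X,L]$ and its subring $S^H$ are integral domains, so $\mc{Y} := \proj(S^H)$ is integral and hence connected, and $X \env H$ is by definition a nonempty open subscheme of $\mc{Y}$. As $\mc{Y} \to \spec \kk$ is separated while $X \env H \to \spec \kk$ is proper, the cancellation property again shows the open immersion $X \env H \hookrightarrow \mc{Y}$ is proper, so its image is closed in $\mc{Y}$. A nonempty subset of the connected scheme $\mc{Y}$ that is simultaneously open and closed is all of $\mc{Y}$, whence $X \env H = \proj(S^H)$.

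For the section-ring statement, set $Y := X \env H = \proj(S^H)$ and examine the structure map $S^H_m \to H^0(Y, \OO_Y(m))$ degree by degree. In degree $0$, completeness gives $H^0(Y, \OO_Y) = \kk$ (as $Y$ is integral and proper over the algebraically closed field $\kk$); since the constants inject into $H^0(Y, \OO_Y)$, this forces $S^H_0 = \kk$, so the degree-$0$ map is an isomorphism $\kk \to \kk$. In positive degrees $m = r_0 k$ ($k \geq 1$), Lemma \ref{lem:GiFi3.1}, \ref{itm:GiFi3.1-1} identifies $H^0(Y, \OO_Y(m))$ with $H^0(X^{\nss}, L^{\ten m})^H$ and the structure map with restriction of sections from $X$ to $X^{\nss}$; as $X$ is irreducible and $X^{\nss}$ a nonempty (hence dense) open, this restriction is injective. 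For surjectivity I would apply Serre vanishing to the closed embedding $Y \hookrightarrow \PP(V^*)$, under which $\OO_Y(r_0 k) = \OO_{\PP(V^*)}(k)|_Y$: for all $k \gg 0$ the restriction map $\sym^k V = H^0(\PP(V^*), \OO(k)) \to H^0(Y, \OO_Y(r_0 k))$ is surjective. Since $V \subseteq S^H_{r_0}$, this restriction factors as the multiplication map $\sym^k V \to S^H_{r_0 k}$ followed by the structure map $S^H_{r_0 k} \to H^0(Y, \OO_Y(r_0 k))$, so the latter is surjective, hence an isomorphism, for $k \gg 0$. Choosing $r$ to be a suitably large multiple of $r_0$, each positive degree $n \geq 1$ of the Veronese $(S^H)^{(r)}$ equals $S^H_{rn}$ with $rn$ a large multiple of $r_0$, so $(S^H)^{(r)} \to \kk[X \env H, \OO_{X \env H}(r)]$ is an isomorphism in every degree. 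For such $r$ the sheaf $\OO_{X \env H}(r) = \OO_{X \env H}(r_0)^{\ten (r/r_0)}$ is a very ample (so ample) line bundle, and the section ring of an ample line bundle on the projective variety $Y$ is a finitely generated $\kk$-algebra; hence so is $\kk[X, L^{\ten r}]^H = (S^H)^{(r)}$.

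Finally, $X^{\nss} = X^{\ssfg}$ follows formally: for any $f \in I^{\nss}$, say $f \in S^H_d$ with $d > 0$, one has $(S^H)_{(f)} = (S^H)_{(f^r)} = ((S^H)^{(r)})_{(f^r)}$, which is a degree-$0$ localisation of the finitely generated algebra $(S^H)^{(r)}$ and hence finitely generated; thus $f \in I^{\ssfg}$, giving $X^{\nss} \subseteq X^{\ssfg} \subseteq X^{\nss}$. The main obstacle is the surjectivity half of the section-ring isomorphism: everything turns on correctly matching the Serre-vanishing restriction map on $\PP(V^*)$ with the intrinsic structure map of $\proj(S^H)$ via the factorisation coming from $V \subseteq S^H_{r_0}$, together with the observation that completeness forces $S^H_0 = \kk$, so that the degree-$0$ comparison does not obstruct the conclusion.
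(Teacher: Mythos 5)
Your proof is correct and follows essentially the same route as the paper's: both identify $X \env H$ with $\proj(S^H)$ by combining completeness with the fact that it is a nonempty open subscheme of the separated scheme $\proj(S^H)$, then use Proposition \ref{prop:GiFi4.1}, \ref{itm:GiFi4.1-1} to embed it in $\PP(V^*)$, obtain surjectivity of the structure map from Serre vanishing via the factorisation through $\sym^\bullet V \to (S^H)^{(r)}$, and injectivity from restriction of sections to the dense open $X^{\nss}$. The only cosmetic difference is that you deduce $X \env H = \proj(S^H)$ from irreducibility/connectedness of $\proj(S^H)$ where the paper uses density of the enveloped quotient $q(X^{\ssfg})$; the remaining steps (the very ampleness of $\OO_{X \env H}(r)$, finite generation, and $X^{\nss}=X^{\ssfg}$) match the paper's argument.
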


\begin{proof}
Recall that $q:X^{\nss} \to \proj(S^H)$ is a dominant morphism, as a result of
Lemma \ref{lem:GiFi3.1}. Because $X$ is irreducible, by \ref{itm:GiFi3.1-2} of
the same lemma $X^{\ssfg}$ is a dense open subset of $X^{\nss}$, so the
enveloped quotient $q(X^{\ssfg})$ 
is a dense subset of $\proj(S^H)$ and hence the enveloping quotient $X
\env H$ is a dense open subscheme of 
$\proj(S^H)$. Because $X \env H$ is complete, and hence quasi-compact, it is universally
closed over $\spec \kk$, and since $\proj(S^H)$ is separated over
$\spec \kk$ the open immersion ${X \env H} \hookrightarrow \proj(S^H)$ is a
closed morphism \cite[Tag 01W0]{stacks-project}. Thus $X \env
H=\proj(S^H)$. Using Proposition \ref{prop:GiFi4.1}, \ref{itm:GiFi4.1-1} find
$r^{\prime}>0$ and an enveloping system $V \subseteq H^0(X,L^{\ten r^{\prime}})^H$ so 
that the 
natural map $V \to H^0(X \env H,\OO(r^{\prime})) =
H^0(\proj(S^H),\OO(r^{\prime}))$ defines a closed immersion 
$\proj(S^H) \hookrightarrow \PP(V^*)$ (the fact the immersion is closed is implied
from the completeness of $\proj(S^H)=X \env H$). The line bundle $\OO(r^{\prime})$ on
$\proj(S^H)$ is (very) ample, so by Serre vanishing
\cite[Chapter 3, Proposition 5.3]{har77} there is $m_0>0$ such that for all $m
\geq m_0$ the restriction map
\[
H^0(\PP(V^*),\OO(m))=\sym^mV \to H^0(\proj(S^H),\OO(mr^{\prime})) 
\]
is surjective. Letting $r$ be any positive multiple of $m_0r^{\prime}$ and
$m=r/r^{\prime}$, we see that the restriction map $\kk[\PP(V^*),\OO(m)] \to 
\kk[\proj(S^H),\OO(r)]$
is surjective and therefore $\kk[\proj(S^H),\OO(r)]$ is a finitely generated
$\kk$-algebra. The map $\kk[\PP(V^*),\OO(m)] \to \kk[\proj(S^H),\OO(r)]$
factors through the canonical structure map 
\[
\kk[X,L^{\ten r}]^H=(S^H)^{(r)} \to \kk[\proj(S^H),\OO(r)],
\]
thus this too is a surjective map onto a finitely generated $\kk$-algebra. On the 
other hand,
the composition of this map with pull-back 
along the natural map $q$ from \eqref{eq:GiFi1} agrees with restriction of
sections $\kk[X,L^{\ten r}]^H \to \kk[X^{\nss},L^{\ten r}]^H$, 
which is injective because $X$ is irreducible. It follows that
\[
\kk[X,L^{\ten r}]^H=(S^H)^{(r)} \cong \kk[\proj(S^H),\OO(r)].
\]
In particular, $\kk[X,L^{\ten r}]^H$ is a finitely generated $\kk$-algebra. The
equality $X^{\nss}=X^{\ssfg}$ now follows from the Definitions \ref{def:GiFi1}
of these sets and Remark \ref{rmk:GiFi1.2}.    
\end{proof}
  
\subsubsection{Comparison with Mumford's Reductive GIT}
\label{subsec:GiFiComparison}

The definitions of the naively semistable locus, finitely generated semistable
locus and enveloping quotient are direct generalisations of the corresponding
notions in \cite{dk07} for unipotent groups to the context of general varieties with 
actions of any
linear algebraic group. As such they apply to the situation where $H=G$ is a
reductive group, so we take a moment to compare these notions to those arising in
Mumford's GIT \cite{mfk94} for reductive groups.

Firstly, \emph{in the case where $X$ is affine with a linearisation of the trivial
  bundle $L=\OO_X \to X$, or $X$ is projective with ample linearisation $L \to X$, 
then
  $X^{\nss}=X^{\ssfg}$ is equal to Mumford's semistable locus $X^{\rmss}$ for $G
  \act L \to X$ \cite[Definition 1.7]{mfk94}, and the enveloping quotient is
  precisely the GIT quotient $X \dblslash G=\proj(S^G)$ of \cite[Theorem
  1.10]{mfk94}}. Indeed, we have $I^{\ssfg}=\bigcup_{r>0}H^0(X,L^{\ten r})^G$ in this 
case: for any invariant
section $f$ of a positive tensor power of $L$, the algebra $(S^G)_f$ is the
localisation of a finitely generated algebra $S^G$ by Nagata's theorem \cite{nag64}, and so by Nagata's theorem again
$(S^G)_{(f)}$ is also a finitely generated algebra, being the subalgebra of invariants 
for the
$\GG_m$-action defining the grading on $S^G$. Thus $X \env G =
\proj(S^G) $. Because $L$ is ample $X_f$ is affine for each $f \in I^{\ssfg}$,
from which it follows that $X^{\rmss}=X^{\nss}=X^{\ssfg}$. 

However, the similarities with Mumford's GIT when $G$ is reductive do not extend
beyond these cases. For a general variety $X$ with possibly
non-ample linearisation $L \to X$ of $G$, there may be invariant sections $f$
whose non-vanishing loci $X_f$ are not affine. In Mumford's theory only those
$X_f$ that are affine are included in the definition of the semistable locus
$X^{\rmss}$; 
see Definition \ref{def:TrRe5}, \ref{itm:TrRe5-1}. So
\emph{for a general linearisation $G \act L \to X$ with $G$ reductive,
  Mumford's semistable locus $X^{\rmss}$ is contained in $X^{\ssfg}$ as a
  (possibly empty) open subset.} Given any inner enveloping quotient
$q:X^{\ssfg} \to \mc{U} \subseteq X \env G$, the restriction to Mumford's semistable
locus $X^{\rmss}$ coincides with the GIT quotient map, thus $X \dblslash 
G=q(X^{\rmss})$ is an
open subvariety of $\mc{U}$. Hence \emph{the GIT quotient $X \dblslash G$ is a
  (possibly empty) quasi-compact open subscheme of each inner enveloping quotient
  inside $X \env G$.} Finally, as discussed in Remark \ref{rmk:GiFi3.3}, \emph{the
  enveloping quotient $q:X^{\ssfg} \to X \env G$ is not in general a categorical
  quotient in the category of varieties for the $G$-action on $X^{\ssfg}$,
  whereas Mumford's GIT quotient $X^{\rmss} \to X \dblslash G$ is a categorical
  quotient for the $G$-action on $X^{\rmss}$ \cite[Theorem 1.10]{mfk94}}.


\subsection{Natural Properties of Enveloping Quotients with Respect to Induced
  Group Actions}
\label{sec:GiNaturality}

In this section we will study various natural properties of the enveloping quotient 
and inner enveloping
quotients (adapted to some finite subset) with respect to various natural
operations on groups.

\subsubsection{Restriction and Extension of the Structure Group}
\label{subsec:GiNaEnveloping}

We first look at the case of restricting a linearisation under a surjective
homomorphism 
$\rho:H_1\to H_2$ of linear algebraic groups. Suppose $X$ is an
$H_2$-variety and $L \to X$ line bundle with an $H_2$-linearisation. For precision, 
let us denote
this linearisation as $\mc{L}_2 \to X$. The homomorphism $\rho$ induces an
$H_1$-linearisation on the line bundle $L \to X$, which we denote $\mc{L}_1 \to
X$. Clearly $\kk[X,\mc{L}_1]^{H_1}=\kk[X,\mc{L}_2]^{H_2}$, from which it
follows that there are canonical identifications 
\[
X^{\nss(\mc{L}_1)}=X^{\nss(\mc{L}_2)}, \quad X^{\ssfg(\mc{L}_1)} =
X^{\ssfg(\mc{L}_2)}, \quad X \env_{\! \! \mc{L}_1} H_1 = X \env_{\! \! \mc{L}_2} H_2 
\]
and the natural maps $q_1:X^{\nss(\mc{L}_1)} \to X \env_{\! \! \mc{L}_1} H_1$
and $q_2:X^{\nss(\mc{L}_2)} \to X \env_{\! \! \mc{L}_2} H_2$ of \eqref{eq:GiFi1}
coincide under these identifications.   
 
Now let us consider extensions of the structure group. Suppose we have an
inclusion $H_1 \hookrightarrow H_2$ and $L \to X$ is an
$H_1$-linearisation. Recall from Section
\ref{subsec:BgUnExtending} that we may consider the fibre space $H_2
\times^{H_1} X$ associated to the principal $H_1$-bundle $H_2 \to H_2/H_1$,
which is a variety if $H_1$ is unipotent or if any finite subset of points in $X$ is 
contained in
an affine open subset (e.g. if $X$ is quasi-projective). Then $H_2 \times^{H_1} L 
\to H_2 \times^{H_1} X$ is a line bundle and there is a natural
$H_2$-linearisation on $H_2 \times^{H_1} L \to H_2 \times^{H_1} X$ induced by left
multiplication. This extends the $H_1$-linearisation $L \to X$ under the closed
immersion  
\[
\alpha:X \hookrightarrow H_2 \times^{H_1} X, \quad x \mapsto [e,x].  
\]
As before, we will usually abuse
notation and write $L \to H_2 \times^{H_1} X$ for this linearisation instead of
$H_2 \times^{H_1} L$, unless confusion is
likely to arise. Recall also that pullback along $\alpha$ induces an
isomorphism of graded rings
\[
\alpha^*:\kk[H_2 \times^{H_1} X,L]^{H_2} \overset{\cong}{\longrightarrow}
\kk[X,L]^{H_1}.
\]
Applying $\proj$ gives an isomorphism of schemes
\[
\ol{\alpha}:\proj(\kk[X,L]^{H_1}) \overset{\cong}{\longrightarrow} \proj(\kk[H_2
\times^{H_1} X,L]^{H_2})
\]
such that $\ol{\alpha}^*$ identifies the corresponding twisting sheaves. Let
$q_{H_1}:X^{\ssfg} \to X \env H_1$ be the enveloping quotient map for the
linearisation $H_1 \act L \to X$ and let $q_{H_2}:(H_2 \times^{H_1} X)^{\ssfg} \to 
(H_2 \times^{H_1}
X) \env \,  H_2$ be the enveloping quotient map
for the linearisation $H_2 \act H_2 \times^{H_1} L \to H_2 \times^{H_1}
X$. Clearly pulling back along $\alpha$ establishes a bijection $I^{\ssfg(H_2
  \times^{H_1} L)} \longleftrightarrow I^{\ssfg(L)}$. 
This implies that $\alpha$ restricts to give a closed immersion of $X^{\ssfg(H_1)}$
into $(H_2 \times^{H_1} X)^{\ssfg(H_2)}$ and
$\ol{\alpha}$ restricts to an isomorphism of the enveloping
quotients, fitting into the following commutative diagram:    
\[
\begin{diagram} 
X^{\ssfg(H_1)} & \rInto^\alpha &  (H_2 \times^{H_1}
X)^{\ssfg(H_2)} \\
 \dTo^{q_{H_1}} & & \dTo^{q_{H_2}}\\
X \env H_1 & \rTo^{\cong}_{\ol{\alpha}} & (H_2 \times^{H_1} X) \env H_2\\
\end{diagram}
\]  

\subsubsection{Induced Actions of Quotient Groups on Enveloping Quotients}
\label{subsec:GiNaInduced}

Let us return to the situation where a linear algebraic group $H$ acts on a
variety $X$ and is equipped with a linearisation $L \to X$, but now also
suppose $H$ has a normal subgroup $N$. (We will in particular be interested in the case when $N=H_u$ is the unipotent radical of $H$ and $H_r = H/N$ is reductive). Then we may consider the restricted
linearisation $N \act L \to X$ and form its naively semistable locus $X^{\nss(N)}$, 
semistable
finitely generated locus $X^{\ssfg(N)}$ and enveloping quotient
$q_N:X^{\ssfg(N)} \to X \env N$. Because $N$ is normal in $H$, the action of $H$
on $S=\kk[X,L]$ induces a natural $H/N$-action on the ring $S^N$ of
$N$-invariant sections. For any $h \in H$, the action on $X$ induces an isomorphism 
\[
X_f \overset{\cong}{\longrightarrow} X_{h\cdot f}, \quad x \mapsto hx
\]
(with inverse given by acting by $h^{-1}$), so the action of $H$ on $X$ restricts
to an action on $X^{\nss(N)}$. Moreover, for any $f \in H^0(X,L^{\ten r})^N$
(with $r>0$) and
$\ol{h}=hN \in H/N$ the application of $\ol{h}$ induces an
isomorphism
\[
\ol{h}\cdot(-):(S^N)_{(f)} \overset{\cong}{\longrightarrow} (S^N)_{(h
  \cdot f)},
\]
from which it follows that the action of $H/N$ on $S^N$ preserves
$I^{\ssfg(N)}$. Thus $X^{\ssfg(N)}$ is also stable under the $H$-action.    

\begin{propn} \label{prop:GiNa7} 
Retain the notation of the preceding discussion. Then
the action of $H/N$ on $S^N$ defines a canonical action of $H/N$ on
$\mc{Y}=\proj(S^N)$ such that 
the map $q_N:X^{\nss(N)} \to \mc{Y}$ of \eqref{eq:GiFi1} is equivariant with respect 
to the
quotient $H \to H/N$. In addition, if $\mc{S} \subseteq I^{\nss(N)}$ is any
subset that is stable under the canonical $H/N$-action on $S^N$, then the open
subscheme $\mc{U}=\bigcup_{f \in \mc{S}} \spec((S^N)_{(f)})$ of $\mc{Y}$ is
preserved under this action. (In particular, $X \env N$ is preserved under the
action and $q_N:X^{\ssfg(N)} \to X \env N$ is equivariant.)
\end{propn}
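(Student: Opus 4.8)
The plan is to build the $H/N$-action on $\mc{Y} = \proj(S^N)$ in three stages: first as an action on the graded ring $S^N$, then as an action by scheme automorphisms, and finally to verify that this action is algebraic; the equivariance of $q_N$ and the statement about $\mc{U}$ then follow readily.

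First I would record the ring-level action. For $h \in H$ the map $\theta_h \colon S \to S$, $f \mapsto h \cdot f$ of \eqref{eq:TrRe2} is a graded $\kk$-algebra automorphism, and since $N \triangleleft H$ we have $n \cdot (h \cdot f) = h \cdot ((h^{-1} n h) \cdot f) = h \cdot f$ for $f \in S^N$ and $n \in N$; thus $\theta_h$ preserves $S^N$, and $\theta_n = \id$ on $S^N$ for $n \in N$, so the $H$-action on $S^N$ factors through $H/N$ and is by graded $\kk$-algebra automorphisms. By Lemma \ref{lem:TrRe4} the action of $H$ on each $H^0(X, L^{\ten d})$ is locally finite, hence so is the induced $H/N$-action on each graded piece $S^N_d = H^0(X, L^{\ten d})^N$. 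Applying $\proj$ to $\theta_{\ol h}$ (composing with the inversion $\ol h \mapsto \ol h^{-1}$ to correct for the contravariance of $\proj$) produces scheme automorphisms $\psi_{\ol h}$ of $\mc{Y}$ assembling into a left action of the abstract group $H/N$; by the standard behaviour of $\proj$ on graded maps, $\psi_{\ol h}$ carries the basic open $\spec((S^N)_{(f)})$ isomorphically onto $\spec((S^N)_{(\ol h \cdot f)})$ for every homogeneous $f \in S^N$ of positive degree.

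The crux is to promote this abstract action to an algebraic one, that is, to check that $\sigma \colon (H/N) \times \mc{Y} \to \mc{Y}$ is a morphism; here the difficulty is that $S^N$ need not be finitely generated, so $\mc{Y}$ is only locally of finite type and one cannot reduce directly to the finitely generated situation of reductive GIT. Since being a morphism is local on source and target, I would fix a point $y_0 \in \spec((S^N)_{(f_0)})$ with $f_0 \in S^N_d$ homogeneous of degree $d > 0$, and use local finiteness to choose a finite-dimensional $H/N$-stable subspace $W \subseteq S^N_d$ containing $f_0$. The rational representation $H/N \to \GL(W)$ furnishes an honestly algebraic action of $H/N$ on $\PP(W^*)$, and over the locus where the sections in $W$ generate, the natural map to $\PP(W^*)$ intertwines $\sigma$ with this algebraic action; shrinking to a suitable neighbourhood of $(\ol h_0, y_0)$ and gluing over a cover by such charts shows $\sigma$ is a morphism. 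I expect this gluing and reduction step to be the main obstacle, since it is where the possible non-finite-generation of $S^N$ must be absorbed, and it is the point at which the argument genuinely goes beyond the reductive case.

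Finally, equivariance of $q_N \colon X^{\nss(N)} \to \mc{Y}$ with respect to $H \to H/N$ follows by tracing \eqref{eq:GiFi1}: since $X^{\nss(N)}$ is $H$-stable and for $x \in X^{\nss(N)}$, $h \in H$ and $f \in S^N$ one has $f(hx) \neq 0 \iff (h^{-1}\cdot f)(x) \neq 0$, the point $q_N(hx)$ equals $\psi_{\ol h}(q_N(x))$, while $N$-invariance of the inclusion $S^N \hookrightarrow S$ makes $q_N$ constant on $N$-orbits downstairs. For the second statement, if $\mc{S} \subseteq I^{\nss(N)}$ is $H/N$-stable then for each $f \in \mc{S}$ and $\ol h \in H/N$ we have $\psi_{\ol h}(\spec((S^N)_{(f)})) = \spec((S^N)_{(\ol h \cdot f)})$ with $\ol h \cdot f \in \mc{S}$, so $\mc{U} = \bigcup_{f \in \mc{S}} \spec((S^N)_{(f)})$ is preserved by the action. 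Taking $\mc{S} = I^{\ssfg(N)}$, which was shown to be $H/N$-stable in the discussion preceding the proposition, gives that $X \env N$ is preserved; and $q_N \colon X^{\ssfg(N)} \to X \env N$ is the restriction of the already-established equivariant map $q_N \colon X^{\nss(N)} \to \mc{Y}$ to the $H$-stable open $X^{\ssfg(N)}$, hence is itself equivariant.
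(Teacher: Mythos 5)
There is a genuine gap at precisely the step you flag as the crux. Your plan is to verify algebraicity of $\sigma\colon (H/N)\times\mc{Y}\to\mc{Y}$ chart by chart, by choosing a finite-dimensional $H/N$-stable $W\subseteq S^N_d$ containing $f_0$ and intertwining with the algebraic action on $\PP(W^*)$. But the natural map from (an open subset of) $\mc{Y}$ to $\PP(W^*)$ is only known to be a locally closed immersion when $W$ is an enveloping system adapted to the relevant sections, i.e.\ when each $(S^N)_{(f)}$ is finitely generated by elements $\tilde f/f$ with $\tilde f\in W$ (Proposition \ref{prop:GiFi4.1}). On a general point of $\mc{Y}=\proj(S^N)$ this is unavailable --- $S^N$ need not be finitely generated and $(S^N)_{(f_0)}$ need not be generated in any single graded piece --- so knowing that the composite $(H/N)\times\mc{Y}_W\to\PP(W^*)$ is a morphism does not let you descend to the conclusion that $\sigma$ itself is a morphism: the map to $\PP(W^*)$ can collapse information. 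The proposition asserts an action on all of $\proj(S^N)$, so this cannot be repaired by restricting attention to $X\env N$.

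The idea your proposal is missing, and which the paper uses, is to work with the co-action rather than with the individual automorphisms $\theta_{\ol h}$. The linearisation gives a graded homomorphism $\Sigma^*_\theta\colon S\to\OO(H)\ten S$ (the dual action), and the key technical point (Lemma \ref{lem:GiNa6}, proved by a linear-independence argument on the coefficient functions $a_j$, using normality of $N$) is that $\Sigma^*_\theta$ carries any $H$-stable subspace of $N$-invariant sections into $\OO(H)^N\ten S^N$. Identifying $\OO(H)^N\cong\OO(H/N)$ and observing that $(H/N)\times\mc{Y}=\proj(\OO(H/N)\ten S^N)$, one obtains the action morphism $\ol\Sigma$ as $\proj$ of the single graded ring map $\Sigma^*_\theta|_{S^N}$; the only thing left to check is that this rational map is everywhere defined, which is done by exhibiting, for each homogeneous $f\in S^N$ of positive degree, a homogeneous prime not containing $\Sigma^*_\theta(f)$. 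Algebraicity is then automatic, the action axioms reduce to commutative diagrams of graded rings, and equivariance of $q_N$ falls out of the compatibility of $\Sigma^*_\theta|_{S^N}$ with $\Sigma^*_\theta$ on $S$. Your opening paragraph (the ring-level action factors through $H/N$) and your closing paragraph (equivariance of $q_N$ and preservation of $\mc{U}$, granted the action exists) are both sound; it is only the construction of the action as a morphism of varieties that needs this different mechanism.
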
  


Before proving Proposition \ref{prop:GiNa7} we need to introduce some notation
and prove a lemma. Let 
\[
\Sigma:H \times X \to X
\]
be the action morphism. A linearisation of $H$ on $L$ is equivalent to a choice
of line bundle isomorphism 
\[ 
\Theta:\Sigma^*L \overset{\cong}{\longrightarrow} \OO_H \boxtimes L=H \times L
\]
over $H \times X$, satisfying an appropriate cocycle condition (see
\cite[Chapter 1, \S 3]{mfk94}). This naturally extends
to isomorphisms of tensor powers of the bundles, giving
isomorphisms 
\[
 \theta:H^0(H \times X,\Sigma^*(L^{\ten r})) \overset{\cong}{\longrightarrow}
 H^0(H \times X,H \times L^{\ten r}) =\OO(H) \ten
H^0(X,L^{\ten r}), \quad r \geq 0,
\]
where the last equality comes from the K\"{u}nneth formula \cite[Tag
02KE]{stacks-project}.  (Note we abuse notation and suppress mention of $r$ in
the map $\theta$.) Composition of 
$\theta$ with $\Sigma$ thus gives us the co-action (or dual action, cf. \cite[Definition 1.2]{mfk94})  
\begin{equation} \label{eq:GiNa5.1}
\Sigma^*_{\theta}:H^0(X,L^{\ten r}) \overset{\theta \circ
  \Sigma^*}{\longrightarrow} \OO(H) \ten
H^0(X,L^{\ten r}), \quad r \geq 0. 
\end{equation}
For any $h \in H$ and any $r \geq 0$, the linearisation $L^{\ten r} \to X$
yields a linear automorphism of
$H^0(X,L^{\ten r})$ given by the composition
\[
H^0(X,L^{\ten r}) \overset{\Sigma^*_{\theta}}{\longrightarrow} \OO(H) \ten 
H^0(X,L^{\ten
  r}) \overset{\ev_h \ten \id_X^*}{\longrightarrow} H^0(X,L^{\ten r}),    
\]
which satisfies 
\begin{equation} \label{eq:GiNa5.2}
(\ev_h \ten \id_X^*)(\Sigma^*_{\theta}(f))=h^{-1} \cdot f
\end{equation}
for all $f \in H^0(X,L^{\ten r})$. 

\begin{lem} \label{lem:GiNa6}
Let $r \geq 0$ and suppose $V \subseteq H^0(X,L^{\ten r})^N$ is an $H$-stable
subspace of sections. Then the image of $V$ under
$\Sigma^*_{\theta}$ lies in $\OO(H)^N \ten V$, where $\OO(H)^N$ is the ring of
functions invariant under the right multiplication action of $N$ on $H$. (In 
particular, this holds for
$V=H^0(X,L^{\ten r})^N$.)   
\end{lem}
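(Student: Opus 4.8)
The plan is to exploit the comodule formalism of \eqref{eq:GiNa5.1}--\eqref{eq:GiNa5.2}. Fixing a section $f \in V$, I would first establish the containment $\Sigma^*_\theta(f) \in \OO(H) \ten V$, which expresses that an $H$-stable subspace is a subcomodule, and then refine a suitable representation inside $\OO(H) \ten V$ to show that the coefficient functions may be taken in $\OO(H)^N$. Since $\Sigma^*_\theta(f)$ is by construction a finite sum in $\OO(H) \ten H^0(X,L^{\ten r})$, every manipulation involves only finitely many terms.

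For the first containment, write $\Sigma^*_\theta(f)=\sum_{i=1}^k a_i \ten f_i$ with the $a_i \in \OO(H)$ linearly independent over $\kk$. Formula \eqref{eq:GiNa5.2} gives $\sum_i a_i(h)\,f_i = h^{-1}\cdot f$ for every $h \in H$. Because the $a_i$ are linearly independent as regular functions and $\kk$ is algebraically closed, I can choose $h_1,\dots,h_k \in H$ for which the matrix $(a_i(h_j))_{i,j}$ is invertible; inverting this system expresses each $f_i$ as a $\kk$-linear combination of the translates $h_j^{-1}\cdot f$. As $V$ is $H$-stable, each translate $h_j^{-1}\cdot f$ lies in $V$, and hence $f_i \in V$ for all $i$, giving $\Sigma^*_\theta(f) \in \OO(H)\ten V$.

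Now rewrite $\Sigma^*_\theta(f)=\sum_j b_j \ten g_j$ with the $g_j \in V$ linearly independent over $\kk$ and $b_j \in \OO(H)$ uniquely determined. Evaluating \eqref{eq:GiNa5.2} at $hn$ for $n \in N$ yields $\sum_j b_j(hn)\,g_j=(hn)^{-1}\cdot f = n^{-1}\cdot(h^{-1}\cdot f)$. The translate $h^{-1}\cdot f$ lies in $V$ by $H$-stability, and every element of $V$ is $N$-invariant by hypothesis, so $n^{-1}\cdot(h^{-1}\cdot f)=h^{-1}\cdot f=\sum_j b_j(h)\,g_j$. Comparing the two expressions and using the linear independence of the $g_j$ forces $b_j(hn)=b_j(h)$ for all $h \in H$ and $n \in N$; that is, each $b_j$ is invariant under right multiplication by $N$, so $b_j \in \OO(H)^N$. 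Thus $\Sigma^*_\theta(f) \in \OO(H)^N \ten V$, and as $f \in V$ was arbitrary the lemma follows.

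Each step is a routine application of the linear-independence trick, so I anticipate no serious obstacle; the one point demanding care is the bookkeeping of left versus right actions---in particular, verifying that the invariance obtained for the coefficient functions $b_j$ is with respect to \emph{right} multiplication by $N$, which is precisely what the factorisation $(hn)^{-1}=n^{-1}h^{-1}$ delivers in the third step. The parenthetical case $V=H^0(X,L^{\ten r})^N$ is then immediate, once one observes that this space is $H$-stable by the normality of $N$ in $H$.
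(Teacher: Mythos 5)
Your proof is correct and follows essentially the same route as the paper's: the linear-independence trick with an invertible evaluation matrix $(a_i(h_j))$ to place the section components in $V$, followed by evaluation at $hn$ and linear independence of the section components to force right-$N$-invariance of the coefficient functions. The only cosmetic difference is that the paper arranges both tensor factors to be linearly independent in a single expression, whereas you do it in two passes; your closing remark that normality of $N$ gives $H$-stability of $H^0(X,L^{\ten r})^N$ for the parenthetical case matches the paper's use of normality as well.
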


\begin{proof}
This follows from the $H$-equivariance of the co-action (\ref{eq:GiNa5.1}) when $H$ acts on the right-hand side of (\ref{eq:GiNa5.1}) via right multiplication on $H$. In more detail, suppose $f \in V$ is non-zero. Then we may
write $\Sigma^*_{\theta}f= \sum_{j=1}^m a_j \ten f_j$, with $m >0$, $q_j \in
\OO(H)$ and $f_j \in  H^0(X,L^{\ten r})$ such that the $a_j$ and the $f_j$ are
linearly independent over $\kk$. For any 
$h \in H$ we have 
\[
h^{-1} \cdot f=(\ev_h \ten \id_X^*)(\Sigma^*_{\theta}(f))=\sum_j a_j(h)f_j.
\]
We can find $h_1,\dots,h_m \in H$ such that the matrix 
\[
(a_j(h_i))_{i,j}
\]
is invertible:\footnote{A result like this is used in the proof of \cite[Lemma
  3.1]{new78}.} indeed, the morphism $H \to \kk^m$ defined by the $a_j$ has image
not contained in any proper linear subspace of $\kk^m$, so there are
$h_1,\dots,h_m$ such that the $(a_1(h_i),\dots,a_m(h_i)) \in \kk^m$
span $\kk^m$. For such $h_i$, the system of linear equations
\[
h_i^{-1} \cdot f=\sum_{j=1}^m a_j(h_i)f_j, \quad i=1,\dots,m
\]
tells us that each $f_j$ is in the span of $\{h_1^{-1} \cdot f,\dots,h_m^{-1}
\cdot f\} \subseteq V$. So $f_j \in V$ for each $j$. Because $V \subseteq
H^0(X,L^{\ten r})^N$, by the associativity
property of an action and the fact that $N$ is normal in $H$ we have 
\[
\sum_j a_j(hn)f_j=(n^{-1}h^{-1})\cdot f = h^{-1} \cdot f = \sum_j a_j(h)f_j.
\]
for any $n \in N$ and $h \in H$. Since the $f_j$ are linearly independent
$a_j(hn)=a_j(h)$ for all $n \in N$, $h \in H$, so $a_j \in \OO(H)^N$ for each
$j$. Hence $\Sigma^*_{\theta}f \in \OO(H)^N \ten V$.
\end{proof}

\begin{proof}[Proof of Proposition \ref{prop:GiNa7}]

The proof is divided into two steps. We begin by constructing the morphism
$\ol{\Sigma}:(H/N) \times \mc{Y} \to \mc{Y}$ which defines the desired action and show 
that it
maps $(H/N) \times \mc{U}$ to $\mc{U}$, for $\mc{U}$ as in the statement of the
proposition. We then show $\ol{\Sigma}$
satisfies the axioms for a group action and prove the equivariance of
$q_N$. 

(\textbf{Step 1:} Definition of $\ol{\Sigma}$ and restriction to $\mc{U}$.)
Recall that 
\[
(H/N) \times \mc{Y} = \proj(\OO(H/N) \ten S^N),
\]
where the grading in $\OO(H/N) \ten S^N$ is induced by $S^N$, with
$\OO(H/N)$ having degree $0$. The corresponding twisting sheaves $\OO(r)$ are
given by the exterior tensor product $\OO_{H/N} \boxtimes \OO_{\mc{Y}}(r)$ for
each $r \geq 0$ \cite[Tag 01MX]{stacks-project}. Pullback along the quotient map
$H \to H/N$ identifies 
$\OO(H/N)$ with $\OO(H)^N$ and by virtue of Lemma \ref{lem:GiNa6} the diagram 
\begin{equation} \label{eq:GiNa6-1}
\begin{diagram}
S^N & \rTo^{\Sigma^*_{\theta}|_{S^N}} & \OO(H)^N \ten S^N \\
\dInto^{q_N^*} & & \dInto \\
S  & \rTo^{\Sigma^*_{\theta}} & \OO(H) \ten S\\
\end{diagram}
\end{equation} 
of graded rings is well defined and commutes, where $\Sigma_{\theta}^*$ is as in
\eqref{eq:GiNa5.1}. Applying the $\proj$ functor to the top horizontal map
defines a rational map, which we claim is in fact a morphism 
\[
\ol{\Sigma}:=\proj(\Sigma_{\theta}^*|_{S^n}):(H/N) \times \mc{Y} \to \mc{Y}.
\]
To see this, we need to verify that if $f \in S^N$ is a homogenous element of
positive degree, then there is a homogeneous prime ideal of $\OO(H)^N \ten S^N$,
different to the 
irrelevant ideal and not containing $\Sigma^*_{\theta}(f)$. But since $S^N$ is reduced 
there is a
homogeneous prime $\mf{p} \in \mc{Y}=\proj(S^N)$ not containing $f$, and it follows 
that 
\[
(\ev_e \ten \id_S)^{-1}(\mf{p}) \in  \proj(\OO(H)^N \ten S^N)
\]
is a homogeneous prime which does not contain $\Sigma^*_{\theta}(f)$ and is
different to the irrelevant ideal.
    
Now let $\mc{S} \subseteq I^{\nss(N)}$ be a subset that is stable under the
$H$-action on $S$. Notice that this includes the case $\mc{S}=I^{\ssfg(N)}$
by virtue of the discussion before the statement of Proposition
\ref{prop:GiNa7}. Let $\mc{U}=\bigcup_{f \in \mc{S}} \spec((S^N)_{(f)}) \subseteq
\mc{Y}$ and consider the restriction of $\ol{\Sigma}$ to $(H/N) \times
\mc{U}$. Given $y \in \mc{U} \subseteq \mc{Y}$ and $f \in \mc{S}$ such that $f(y) \neq
0$, for any $h \in H$ the section $h \cdot f$ is contained in $\mc{S}$ and maps to
$f$ under the composition $(\ev_h \ten \id_X^*) \circ \Sigma_{\theta}^*$ by
\eqref{eq:GiNa5.2}. Applying $\proj$, this says that  
\[
(h \cdot f)(\ol{\Sigma}(\ol{h},y)) \neq 0 \iff f(y) \neq 0,
\]   
where $\ol{h}=hN$ and we think of $f$ as a section of some
power of $\OO_{\mc{Y}}(1)$. If follows that 
$\ol{\Sigma}$ maps $(\ol{h},y)$ into $\spec((S^N)_{(h \cdot f)}) \subseteq
\mc{U}$, hence $\ol{\Sigma}$ restricts to a map $(H/N) \times \mc{U} \to \mc{U}$. In 
the case
where $\mc{S}=I^{\ssfg(N)}$, we conclude that $\ol{\Sigma}$ restricts to a
morphism $(H/N) \times X \env N \to X \env N$.

(\textbf{Step 2:} $\ol{\Sigma}$ is an action of $(H/N)$ on $\mc{Y}$ and $q_N$ is 
equivariant.) Let $\mu$
(respectively, $\ol{\mu}$) be the morphism defining group multiplication on $H$
(respectively, on $(H/N)$). By using the $\proj$ functor, the commutative
diagrams that $\ol{\Sigma}:(H/N) \times \mc{Y} \to \mc{Y}$ needs to satisfy in order 
to be
an action follow immediately from verifying that the following diagrams of graded
rings commute:
\begin{equation} \label{eq:GiNa6-3}
\begin{diagram}
S^N & &  \\
\dTo^{\Sigma^*_{\theta}|_{S^N}} & \rdTo^{id_{S^N}}  & \\
\OO(H)^N \ten S^N & \rTo_{\ev_{e} \ten \id_{S^N}} & S^N\\
\end{diagram} \tag{Identity}
\end{equation}
and 
\begin{equation} \label{eq:GiNa6-4}
\begin{diagram}
S^N & \rTo^{\Sigma^*_{\theta}|_{S^N}}  & \OO(H)^N \ten
S^N\\
\dTo_{\Sigma^*_{\theta}|_{S^N}} & & \dTo_{\ol{\mu}^* \ten \id_{S^N}} \\
\OO(H)^N \ten S^N & \rTo^{\id_{H/N}^* \ten (\Sigma^*_{\theta}|_{S^N})} &
\OO(H)^N \ten \OO(H)^N \ten S^N\\
\end{diagram}\tag{Associativity}
\end{equation}
Note that  \eqref{eq:GiNa6-4} is well defined by Lemma \ref{lem:GiNa6}. The
diagram \eqref{eq:GiNa6-3} is simply \eqref{eq:GiNa5.2} applied to 
$h=e \in H$. To verify commutativity of diagram \eqref{eq:GiNa6-4},
note that
\[
\ol{\mu}^*:\OO(H)^N \to \OO(H)^N \ten \OO(H)^N
\]
is just the restriction of $\mu^*:\OO(H) \to \OO(H) \ten \OO(H)$ to the subring
$\OO(H)^N$, so \eqref{eq:GiNa6-4} is obtained by restricting the diagram
\[
\begin{diagram}
S & \rTo^{\Sigma^*_{\theta}}  & \OO(H) \ten S \\
\dTo_{\Sigma^*_{\theta}} & & \dTo_{\mu^* \ten
  \id_S} \\
\OO(H) \ten S & \rTo^{\id_H^* \ten \Sigma^*_{\theta}} & \OO(H) \ten \OO(H) \ten S\\
\end{diagram}
\]
to subalgebras of $N$-invariants. But this diagram commutes because $\Sigma:H \times
X \to X$ defines an action.  

Finally, let $\pi:H \to H/N$ be the canonical quotient map. Applying $\proj$ to
the commuting diagram \eqref{eq:GiNa6-1}, we see that $\ol{\Sigma}$ makes the diagram 
\[
\begin{diagram}
H \times X^{\nss(N)} & \rTo^{\Sigma} & X^{\nss(N)}\\
\dTo_{\pi \times q_N}& & \dTo_{q_N} \\
(H/N) \times \mc{Y} & \rTo^{\ol{\Sigma}} & \mc{Y}\\
\end{diagram}
\]
commute, which is to say that $q_N$ is equivariant with respect to the
projection $\pi:H \to H/N$. 
\end{proof}

A consequence of Proposition \ref{prop:GiNa7} is that there is a canonical
action of $H/N$ on the enveloping quotient $X \env N$. The next result says
that, if $\mc{U} \subseteq X \env N$ is a quasi-compact $H/N$-stable open
subscheme, then any sufficiently divisible positive tensor power of the twisting
sheaf $\OO_{\mc{U}}(1)$ has a uniquely defined natural $H/N$-linearisation. 
 
\begin{propn} \label{prop:GiNa8}
Retain the notation preceding Proposition \ref{prop:GiNa7}. Let $\mc{S}
\subseteq I^{\ssfg(N)}$ be a
finite subset such that $\mc{U}=\bigcup_{f \in \mc{S}} \spec((S^N)_{(f)})$ is
stable under the $H/N$-action on $X \env N$ of Proposition
\ref{prop:GiNa7}. 
\begin{enumerate}
\item \label{itm:GiNa8-1} If $r>0$ and $V \subseteq H^0(X,L^{\ten r})^N$ is an
  $H$-stable enveloping system adapted to $\mc{S}$ for the restricted linearisation $N 
\act L \to
  X$, then the immersion $\ol{\phi}:\mc{U}
  \hookrightarrow \PP(V^*)$ of Proposition \ref{prop:GiFi4.1},
  \ref{itm:GiFi4.1-2} is equivariant, and pullback of the canonical linearisation $H/N 
\act
  \OO_{\PP(V^*)}(1) \to \PP(V^*)$ along $\ol{\phi}$ defines a linearisation $(H/N) 
\act
\OO_{\mc{U}}(r) \to \mc{U}$ such that the
natural morphism $L^{\ten r}|_{q_N^{-1}(\mc{U})} \to \OO_{\mc{U}}(r)$ is equivariant 
with respect
to the projection $H \to H/N$. 
\item \label{itm:GiNa8-2} Given
  $r>0$ such that $\OO_{\mc{U}}(r) \to \mc{U}$ is very ample, there is at most one
  $H/N$-linearisation on $\OO_{\mc{U}}(r) \to \mc{U}$ making the natural map $L^{\ten
    r}|_{q_N^{-1}(\mc{U})} \to \OO_{\mc{U}}(r)$ equivariant with respect
to the projection $H \to H/N$. 
\end{enumerate}         
\end{propn}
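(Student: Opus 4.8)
The plan is to treat \ref{itm:GiNa8-1} as an existence-with-equivariance statement and \ref{itm:GiNa8-2} as a separate uniqueness statement, both ultimately resting on the density of the enveloped quotient $q_N(U)$ inside $\mc{U}$ together with separatedness and reducedness of the schemes involved. Throughout I write $U=q_N^{-1}(\mc{U})=\bigcup_{f\in\mc{S}}X_f$, and recall from the discussion following Lemma \ref{lem:GiFi3.1} that $q_N\colon U\to\mc{U}$ is dominant, so $q_N(U)$ is dense in $\mc{U}$.

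For \ref{itm:GiNa8-1} I would first observe that, since $V\subseteq H^0(X,L^{\ten r})^N$ is $H$-stable and consists of $N$-invariant sections, the subgroup $N$ acts trivially on $V$; hence the linear $H$-representation on $V$ factors through a representation of $H/N$, which by Example \ref{ex:TrRe2.2} endows $\PP(V^*)$ with an $H/N$-action and $\OO_{\PP(V^*)}(1)$ with a canonical $H/N$-linearisation. The map $\phi\colon U\to\PP(V^*)$ defined by the $H$-stable linear system $V=V^N$ is equivariant for this action (which factors through $H/N$), so combining the equivariance of $q_N$ from Proposition \ref{prop:GiNa7} with the identity $\phi=\ol{\phi}\circ q_N$ yields $\ol{\phi}(\ol{h}\cdot q_N(x))=\ol{h}\cdot\ol{\phi}(q_N(x))$ for all $h\in H$ and $x\in U$. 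For each fixed $\ol{h}$ both sides are morphisms $\mc{U}\to\PP(V^*)$ agreeing on the dense reduced subset $q_N(U)$; since $\PP(V^*)$ is separated they agree on all of $\mc{U}$, so $\ol{\phi}$ is $H/N$-equivariant. Pulling back the canonical linearisation along the equivariant immersion $\ol{\phi}$ then defines the $H/N$-linearisation on $\OO_{\mc{U}}(r)=\ol{\phi}^*\OO_{\PP(V^*)}(1)$.

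To obtain the asserted equivariance of the natural map $\kappa\colon L^{\ten r}|_U\to\OO_{\mc{U}}(r)$, I would factor it through $\OO_{\PP(V^*)}(1)$. Identifying $L^{\ten r}|_U$ with $\phi^*\OO_{\PP(V^*)}(1)=q_N^*\OO_{\mc{U}}(r)$, the map $\kappa$ is the canonical bundle map $q_N^*\OO_{\mc{U}}(r)\to\OO_{\mc{U}}(r)$ over $q_N$, and its composite with the bundle map $\OO_{\mc{U}}(r)\to\OO_{\PP(V^*)}(1)$ over $\ol{\phi}$ is exactly the canonical map $\phi^*\OO_{\PP(V^*)}(1)\to\OO_{\PP(V^*)}(1)$, which is $H$-equivariant because the linearisation on $L^{\ten r}$ agrees with the pullback one (Example \ref{ex:TrRe3.2}). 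Since the second factor is $H/N$-equivariant by construction and restricts to an isomorphism on each fibre, a cancellation on fibres forces $\kappa$ to be equivariant with respect to $H\to H/N$. The step I expect to require the most care is verifying that $\kappa$ is fibrewise an isomorphism and that this composite really is the canonical map; this is checked on each chart $X_f$ with $f\in\mc{S}\subseteq S^N_r$, where both $L^{\ten r}$ and $\OO_{\mc{U}}(r)$ are trivialised by $f$ and $\kappa$ carries one trivialisation to the other.

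For \ref{itm:GiNa8-2}, suppose two $H/N$-linearisations $\Lambda_1,\Lambda_2$ on the line bundle $\OO_{\mc{U}}(r)$ both make $\kappa$ equivariant. Then for each $\ol{h}$ and each $e$ in a fibre $L^{\ten r}|_x$ with $x\in U$ we have $\Lambda_1(\ol{h})(\kappa(e))=\kappa(h\cdot e)=\Lambda_2(\ol{h})(\kappa(e))$; because $\kappa$ restricts to a linear isomorphism on each fibre over $q_N(U)$ (this is where very ampleness of $\OO_{\mc{U}}(r)$ enters, guaranteeing it is a genuine invertible sheaf trivialised chartwise by the $f\in\mc{S}$), the lifts $\Lambda_1(\ol{h})$ and $\Lambda_2(\ol{h})$ agree on every fibre over the dense set $q_N(U)$. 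As these are morphisms of the total space over the reduced, separated scheme $\mc{U}$, agreement on a dense subset gives agreement everywhere, whence $\Lambda_1(\ol{h})=\Lambda_2(\ol{h})$ for all $\ol{h}$ and therefore $\Lambda_1=\Lambda_2$.
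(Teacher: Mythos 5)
Your proof is correct, but it follows a genuinely different route from the paper's in two places. For the equivariance of $\ol{\phi}$ in \ref{itm:GiNa8-1}, the paper works algebraically: it restricts the co-action $\Sigma^*_{\theta}$ to $V$, applies $\symdot$ and fits the result into a commutative diagram of graded rings with $(S^N)^{(r)}$ and $\kk[\mc{U},\OO_{\mc{U}}(r)]$, then applies $\proj$; you instead deduce equivariance from $\phi=\ol{\phi}\circ q_N$ by the density of $q_N(U)$ in $\mc{U}$ together with reducedness of $\mc{U}$ and separatedness of $\PP(V^*)$ — an argument the paper itself uses elsewhere (e.g.\ in Proposition \ref{prop:GiFi3.2}, \ref{itm:GiFi3.2-2}), and which is shorter, though the ring-theoretic diagram has the advantage of exhibiting the linearisation explicitly at the level of section rings (which is then reused in the proof of Proposition \ref{prop:Co1Re10}). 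Your treatment of the equivariance of $\kappa$ by factoring through $\OO_{\PP(V^*)}(1)$ and cancelling a fibrewise isomorphism is essentially a repackaging of the paper's pointwise computation with sections $f=q_N^*F$. The real divergence is in \ref{itm:GiNa8-2}: the paper transfers the two linearisations to a common $H/N$-action on global sections via the injections $q_N^*:H^0(\mc{U},\mc{L}_i)\hookrightarrow H^0(q_N^{-1}(\mc{U}),L^{\ten r})^N$, then uses very ampleness and Lemma \ref{lem:TrRe4} to choose a complete finite-dimensional rational $H/N$-module $W$ of sections and recovers both linearisations as restrictions of the canonical one on $\OO_{\PP(W^*)}(1)$; you argue directly on total spaces, using that $\kappa$ is a fibrewise isomorphism over the dense subset $q_N(U)$. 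Your version is more elementary and in fact needs very ampleness only to guarantee that $\OO_{\mc{U}}(r)$ is globally generated and invertible (which is what really forces $\kappa$ to be fibrewise an isomorphism — your stated reason, trivialisation by the $f\in\mc{S}$, is slightly off for general $r$ since those sections live in a fixed degree, but global generation supplies the needed fact); the paper's version buys the cleaner statement that any such linearisation is induced from a projective-space embedding, which is in the spirit of how linearisations are manipulated throughout the article.
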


\begin{proof}
(Proof of \ref{itm:GiNa8-1}.) The action of $H$ on $V$ descends to an action of $H/N$ 
on
$V$, which defines a linearisation $(H/N) \act \OO(1) \to
\PP(V^*)$. We show that $\ol{\phi}$ is equivariant with respect to this
action on $\PP(V^*)$. Let $\Sigma:H \times X \to X$ denote the action
morphism and $\ol{\Sigma}:(H/N) \times \mc{Y} \to \mc{Y}$ the action morphism on
$\mc{Y}=\proj(S^N)$ constructed in Proposition \ref{prop:GiNa7}; note that
$\ol{\Sigma}$ restricts to a morphism $(H/N) \times \mc{U} \to \mc{U}$ by assumption. 
By
Lemma \ref{lem:GiNa6} the linear map $\Sigma_{\theta}^*$ of \eqref{eq:GiNa5.1}
restricts to define a map 
$\Sigma^*_{\theta}|_V:V \to \OO(H)^N \ten V$. Applying the $\symdot$ functor, we
get a homomorphism of graded rings
\[
\symdot(\Sigma^*_{\theta}|_V):\symdot V \to \OO(H)^N \ten \symdot V,
\]
where $\OO(H)^N$ is in degree zero in the latter ring, and applying
$\proj$ to this homomorphism recovers the action of $H/N$ on $\PP(V^*)$ just
described. Furthermore, the following diagram of graded rings commutes (recall
$(S^N)^{(r)}$ is the $r$-th Veronese subring of $S^N$):
\[
\begin{diagram}
 \symdot V & \rTo^{\symdot(\Sigma^*_{\theta}|_V)}  & \OO(H)^N
 \ten \symdot V \\
 \dTo_{\text{mult}} & & \dTo_{\id_H^* \ten \text{mult}} \\
(S^N)^{(r)} & \rTo^{\Sigma^*_{\theta}|_{S^N}} & \OO(H)^N \ten (S^N)^{(r)}\\
\dTo & & \dTo \\
\kk[\mc{U},\OO_{\mc{U}}(r)] & \rTo^{\ol{\Sigma}^*} & \OO(H)^N \ten
\kk[\mc{U},\OO_{\mc{U}}(r)] \\
\end{diagram}
\]     
Under the identification $\symdot V = \kk[\PP(V^*),\OO(1)]$, the
composition of the left-hand vertical arrows corresponds to pull-back along 
$\ol{\phi}$ and, by the K\"{u}nneth isomorphism, the composition of the
right-hand vertical arrows corresponds to pulling back along $\id_{H/N} \times
\ol{\phi}$. Applying $\proj$ to this diagram, it follows that $\ol{\phi}:\mc{U}
\hookrightarrow \PP(V^*)$ is $H/N$-equivariant. 

Define $(H/N) \act \OO_{\mc{U}}(r) \to \mc{U}$ 
to be the linearisation obtained by pulling back $H/N \act \OO(1) \to \PP(V^*)$
under $\ol{\phi}$. We have
$q_N^*\OO_{\mc{U}}(r)=L^{\ten r}|_{q_N^{-1}(\mc{U})}$ as line bundles; let $
\psi:L^{\ten
  r}|_{q_N^{-1}(\mc{U})} \to \OO_{\mc{U}}(r)$ be the naturally induced bundle map. To
show $\psi$ is equivariant with respect to $H
\to H/N$, argue as follows. The image of $\ol{\phi}^*:H^0(\PP(V^*),\OO(1)) \to
H^0(\mc{U},\OO(r))$ is an $H/N$-stable subspace of $H^0(\mc{U},\OO(r))$ that
pulls back  under $q_N$ to the linear system $V \subseteq H^0(X,L^{\ten r})^N$,
which is basepoint free on $q_N^{-1}(\mc{U})$. Let $x \in q_N^{-1}(\mc{U})$, let
$f \in V$ such that $f(x) \neq 0$ and let $F \in H^0(\mc{U},\OO(r))$ with
$q_N^*F=f$. Then because $q_N^*$ is equivariant with respect to the natural
$H/N$-actions on $V$ and $H^0(\mc{U},\OO(r))$, for any $h \in H$ we have
\[
h f(x)=(h\cdot f)(hx)=(h\cdot (q_N^*F))(hx)=(q_N^*(\ol{h} \cdot F))(hx),
\]   
(where $\ol{h}=hN \in H/N$), whence
\[
\psi(hf(x))=(\ol{h} \cdot F)(q_N(hx)) = \ol{h} F(q_N(x))=\ol{h} \psi(f(x)).
\]
It follows by linearity that $\psi(hl)=\ol{h}\psi(l)$ for any $l \in L^{\ten
  r}|_x$. Hence $\psi$ is equivariant with respect to $H \to H/N$.   

(Proof of \ref{itm:GiNa8-2}.) Suppose now $r>0$ is such that $\OO_{\mc{U}}(r)
\to \mc{U}$ is equipped with two $H/N$-linearisations $\mc{L}_1,\mc{L}_2$ such
that the natural maps $L^{\ten r}|_{q_N^{-1}(\mc{U})} \to \mc{L}_1$ and $L^{\ten 
  r}|_{q_N^{-1}(\mc{U})} \to \mc{L}_2$ are both equivariant with respect to the 
projection
$H \to H/N$. Then the inclusions 
\begin{align*}
q_N^*:H^0(\mc{U},\mc{L}_1) &\hookrightarrow H^0(q_N^{-1}(\mc{U}),L^{\ten r})^N, \\
q_N^*:H^0(\mc{U},\mc{L}_2) &\hookrightarrow H^0(q_N^{-1}(\mc{U}),L^{\ten r})^N
\end{align*}
are both $H/N$-equivariant linear maps, therefore the $H/N$-actions on
$H^0(\mc{U},\mc{L}_1)$ and $H^0(\mc{U},\mc{L}_2)$ agree. Because
$\OO_{\mc{U}}(r) \to \mc{U}$ is very ample, by
Lemma \ref{lem:TrRe4} we can find a finite dimensional rational $H/N$-module
$W \subseteq H^0(\mc{U},\mc{L}_1)=H^0(\mc{U},\mc{L}_2)$, that is also a complete
linear system, with which to equivariantly embed $\mc{U}$ into $\PP(W^*)$. Then
the restriction of $H/N 
\act \OO_{\PP(W^*)}(1) \to \PP(W^*)$ to $\mc{U}$ is equal to both the
linearisations 
$\mc{L}_1$ and $\mc{L}_2$, so that $\mc{L}_1=\mc{L}_2$.    
\end{proof}

Given an inner enveloping quotient $q_H:X^{\ssfg} \to \mc{U}$ for the
$H$-linearisation $L \to X$, it is natural and desirable to want to factorise it
through an inner enveloping quotient for the $N$-linearisation $N \act L \to X$
obtained by restriction. Unfortunately there is a theoretical obstruction to doing
this, in that there may be $H$-invariant sections $f$ over $X$ where
$(S^H)_{(f)}$ is a finitely generated algebra, but $(S^N)_{(f)}$ is not. 

\begin{eg} \label{eg:GiNa9}
Consider any example where $N$ is a linear
algebraic group acting linearly on a finitely generated graded $\kk$-algebra
$A=\bigoplus_{d \geq 0} A_d$, with $A_0=\kk$, such that $A^N$ is not 
finitely generated over $\kk$ (e.g. Nagata's Example
\ref{ex:TrRe2.4}). Let $X=\spec A$ and $L=\OO_X$ with the canonical
$N$-linearisation. Because $N$ respects the grading on $A$, there is a
linearisation of $H=N \times \GG_m$ on $L \to X$, where $\GG_m \act L \to X$ is the
canonical linearisation defined by the grading on $A$. Now consider $f=1 \in
H^0(X,L)^{H}=A^H$. Then $(S^H)_{(1)}=(\kk[X,L]^H)_{(1)}=A^H=\kk$ is finitely
generated over $\kk$, because the only
$\GG_m$-invariants in $A$ are the constant functions. But $(S^N)_{(1)} = A^N$ is
not finitely generated over $\kk$.    
\end{eg}

One can remedy this issue by considering a definition of
`semistability' that forces finite generation of both the algebras
$N$-invariants and $H$-invariants, as follows. 
The inclusion $S^H \hookrightarrow S^N$ induces a rational map
\begin{equation} \label{eq:GiNa10}
q_{H/N}:\proj(S^N) \dashrightarrow \proj(S^H)
\end{equation}
that is invariant with respect to the canonical $H/N$-action on $\proj(S^N)$ of
Proposition \ref{prop:GiNa7}. For the purpose of this
discussion, let $X^{\rmss,N \dashfg}$ denote the union of all $X_f$ such that
$f \in \bigcup_{r>0}H^0(X,L^{\ten r})^H$ is an $H$-invariant section with
both of the $\kk$-algebras  
$(S^N)_{(f)}$ and $(S^H)_{(f)}$ finitely generated. For each such $f$ we
have $\spec((S^N)_{(f)}) \subseteq X \env N$ and the
restriction of
$q_{H/N}$ to $\spec((S^N)_{(f)})$ maps into $\spec((S^H)_{(f)})  \subseteq X \env H$. 
Letting $\mc{U}$ be the union of the
$\spec((S^N)_{(f)})$ defined by such $f$, we see that the rational map $q_{H/N}$
restricts to give a well-defined morphism $q_{H/N}:\mc{U} \to X \env
H$. Furthermore, if $q_H:X^{\ssfg(H)} \to X \env H$ and $q_N:X^{\ssfg(N)} \to X
\env N$ are the enveloping quotients for the $H$- and $N$-linearisations on $L
\to X$, respectively, then the
diagram
\[
\begin{diagram}
X^{\ssfg(H)} & \rEmpty~\supseteq & X^{\rmss,N \dashfg} & \rEmpty~\subseteq &
X^{\ssfg(N)}\\
& \rdTo(2,4)_{q_H} &  \dTo_{q_N} & & \dTo_{q_N} \\
&  & \mc{U} & \rEmpty~\subseteq &  X \env N \\
& & \dTo_{q_{H/N}}& & \\
& & X \env H & & \\  
\end{diagram}
\]
commutes, with all inclusions open. We will take up this theme in the next
section when talking about stability.  

    








    
\subsection{Stability for Non-Reductive Linearisations}
\label{sec:GiStability}

We now turn to the question of defining an open subset of `stable' points of $X$
for a given linearisation $L \to X$ of a linear algebraic group $H$, which
admits a geometric $H$-quotient. A basic requirement we demand of such a definition is
that it should extend the definitions of
stability in the cases where $H$ is reductive (Definition \ref{def:TrRe5},
\ref{itm:TrRe5-2}) or unipotent (Definition \ref{def:BgUn5}). We will do this in
the case where $X$ is \emph{irreducible}, since it will be helpful to make 
use of Remark \ref{rmk:GiFi2.5}.

\subsubsection{An Intrinsic Definition of Stability}
\label{sec:GiStIntrinsic}
 
Recall from Definition \ref{def:TrRe3}, \ref{itm:TrRe3-2} that for any linear
algebraic group $H$ there is a canonical  
normal unipotent subgroup $H_u$ of $H$, called the unipotent radical of $H$,
with the property that the quotient $H_r=H/H_u$ is a reductive group. According to 
Proposition
\ref{prop:GiNa7}, the enveloping quotient
$X \env H_u$ for the restricted linearisation $H_u
\act L \to X$ has a canonical $H_r$-action which makes the enveloping quotient
map $q_{H_u}:X^{\ssfg(H_u)} \to X \env H_u$ equivariant
with respect to $H \to H_r$. This action comes from an
action of $H_r$ on $\proj(S^{H_u})$ which has the property that the
rational map 
\begin{equation} \label{eq:GiSt1}
q_{H_r}:\proj(S^{H_u}) \dashrightarrow \proj(S^H)
\end{equation}
defined via \eqref{eq:GiNa10} is $H_r$-invariant. 

Given an $H$-invariant section $f$ of some
positive tensor power of $L \to X$ such that $(S^{H_u})_{(f)}$ is a
finitely generated $\kk$-algebra, the basic open set
$\spec((S^{H_u})_{(f)})$ is stable under the
$H_r$-action on $X \env H_u$. The composition of $q_{H_u}:X_f \to
\spec((S^{H_u})_{(f)})$ with $q_{H_r}$ coincides with the restriction of the 
enveloping
quotient map $q_H:X_f \to \spec((S^{H})_{(f)})$. Because $X$ is irreducible, we
have
\[
(S^{H})_{(f)} = \left((S^{H_u})^{H_r}\right)_{(f)}= \left((S^{H_u})_{(f)}\right)^{H_r}
\]
(see Remark \ref{rmk:GiFi2.5}) and since $H_r$ is reductive we thus have
$(S^H)_{(f)}$ finitely generated over $\kk$ and
$\spec((S^{H})_{(f)}) \subseteq X \env H$. Following the ideas of unipotent GIT
in Section \ref{subsec:BgUnIntrinsic}, we define a notion of stability for the
linearisation $H \act L \to X$ by requiring the restriction of the enveloping
quotient map $q_H:X^{\ssfg(H)} \to X 
\env H$ for $H \act L \to X$ to give a geometric quotient $X_f \to
\spec((S^H)_{(f)})$ for the $H$-action on $X_f$, for suitably chosen sections $f$. 
There
are a number of ways one could go about doing this. For example, it is easy to see
that if each of $q_{H_u}$ and $q_{H_r}$ define geometric quotients for the $H_u$-
and $H_r$-actions on $X_f$ and
$\spec((S^{H_u})_{(f)})$, respectively, then the composition $q_H$ is a
geometric quotient for $H \act X_f$. But we also want to build on Definition
\ref{def:BgUn5} of stability from
\cite{dk07}, where one takes the $X_f$ which are affine and admit a locally
trivial geometric quotient. So it makes sense to further require that
$X_f$ is affine and $q_{H_u}:X_f \to \spec((S^{H_u})_{(f)})$ is a
principal bundle for the action of $H_u$ on $X_f$. By
\cite[Amplification 1.3]{mfk94}, the induced action of the reductive group $H_r$ on $
\spec((S^{H_u})_{(f)})$ has a geometric quotient if, and only if, all the
orbits are closed in $\spec((S^{H_u})_{(f)})$, and following the ideas of
stability in reductive GIT it also natural to demand that the stabilisers for
this action are finite. Because the action of
$H_u$ on $X_f$ is free and $H_u$ is normal in $H$, these last conditions can be
lifted to the action of $H$ on $X_f$ using the following lemma.

\begin{lem} \label{lem:GiSt1}
Suppose $H$ is a linear algebraic group, $N$ is a normal subgroup of $H$ and
$X$ is an $H$-variety (not necessarily assumed 
irreducible). Suppose all the stabilisers for the restricted
action $N \act X$ are finite and this action has a geometric quotient
$\pi:X \to X/N$. Note that $H/N$ acts canonically on $X/N$. Then 
\begin{enumerate}
\item \label{itm:GiSt1-1} for all the $H/N$-orbits in $X/N$ to be closed, it is
  necessary and sufficient that all the $H$-orbits in $X$ are closed;
\item \label{itm:GiSt1-2} given $y \in X/N$, the stabiliser $\stab_{H/N}(y)$ is finite 
if, and
only if, $\stab_H(x)$ is finite for some (and hence all) $x \in
\pi^{-1}(y)$; and
\item \label{itm:GiSt1-3} if $H/N$ is reductive and $X/N$ is affine, then $X/N$
  has a geometric $H/N$-quotient if, and only if, all $H$-orbits in $X$ are closed.  
\end{enumerate}        
\end{lem}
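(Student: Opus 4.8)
The plan is to reduce all three parts to two structural facts about the geometric quotient $\pi:X \to X/N$ combined with the normality of $N$. First I would record that the $H$-action descends to $X/N$ and that $\pi$ is equivariant for $H \to H/N$; since $N$ is normal, for any $x \in X$ the orbit $H \cdot x$ is $N$-stable, and because the fibres of $\pi$ are exactly the $N$-orbits one gets the key preimage identity
\[
\pi^{-1}\big((H/N)\cdot \pi(x)\big) = H \cdot x .
\]
This is the step where the orbit-space property of a \emph{geometric} quotient (not merely a good or categorical quotient) is genuinely used. Second, I would invoke that a geometric quotient is in particular a good quotient and hence submersive (Definition \ref{def:TrRe1} and \cite[Chapter 0, \S 2, Remark 6]{mfk94}), so a subset of $X/N$ is closed if and only if its $\pi$-preimage is closed.

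For \ref{itm:GiSt1-1}, the displayed identity presents $H \cdot x$ as precisely the $\pi$-preimage of $(H/N)\cdot \pi(x)$, so submersiveness yields immediately that $(H/N)\cdot \pi(x)$ is closed in $X/N$ if and only if $H \cdot x$ is closed in $X$. As every $H/N$-orbit has the form $(H/N)\cdot\pi(x)$ and every $H$-orbit projects to one, the ``for all'' equivalence follows. For \ref{itm:GiSt1-2}, the plan is to construct the short exact sequence
\[
1 \longrightarrow \stab_N(x) \longrightarrow \stab_H(x) \longrightarrow \stab_{H/N}(\pi(x)) \longrightarrow 1 :
\]
restriction of the quotient homomorphism $H \to H/N$ carries $\stab_H(x)$ into $\stab_{H/N}(\pi(x))$ with kernel $\stab_H(x)\cap N = \stab_N(x)$, and it is surjective because $hN$ fixing $\pi(x)$ forces $hx \in N \cdot x$ (again the orbit-space property), so $h$ differs from an element of $\stab_H(x)$ by an element of $N$. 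Since $\stab_N(x)$ is finite by hypothesis, $\stab_H(x)$ is finite if and only if $\stab_{H/N}(\pi(x))$ is; independence of the choice of $x \in \pi^{-1}(y)$ follows because replacing $x$ by $nx$ conjugates $\stab_H(x)$ by $n \in N$. Finally, \ref{itm:GiSt1-3} is then immediate: as $H/N$ is reductive and $X/N$ is affine, \cite[Amplification 1.3]{mfk94} gives that $X/N$ admits a geometric $H/N$-quotient exactly when all $H/N$-orbits are closed, which by \ref{itm:GiSt1-1} holds exactly when all $H$-orbits in $X$ are closed.

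Each step is short, so rather than a deep obstacle the main point requiring care is the pair of facts that genuinely rely on $\pi$ being a geometric quotient: the preimage identity and the surjectivity of $\stab_H(x) \to \stab_{H/N}(\pi(x))$. Both also use the normality of $N$ — to keep $H$-orbits $N$-saturated and to identify the kernel as $\stab_N(x)$ — and I would verify the stabiliser argument on $\kk$-points, which suffices for the finiteness conclusions.
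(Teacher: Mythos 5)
Your proof is correct and follows essentially the same route as the paper: the preimage identity $\pi^{-1}((H/N)\cdot\pi(x)) = H\cdot x$ plus submersiveness of $\pi$ for \ref{itm:GiSt1-1}, and the reductive-affine criterion via \cite[Amplification 1.3]{mfk94} for \ref{itm:GiSt1-3}. For \ref{itm:GiSt1-2} your short exact sequence $1 \to \stab_N(x) \to \stab_H(x) \to \stab_{H/N}(\pi(x)) \to 1$ is just a tidier packaging of the paper's explicit fibre-counting argument (surjectivity uses exactly the same orbit-space step, and the kernel is the same $\stab_N(x)$), so the content is identical.
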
 

\begin{proof}
(Proof of \ref{itm:GiSt1-1}.) Let $x \in X$ and $y=\pi(x)$. We first show that
$H \cdot x = \pi^{-1}((H/N) \cdot y)$. Clearly $H \cdot x \subseteq
\pi^{-1}((H/N) \cdot y)$, because $\pi$ is equivariant with respect to the
projection $H \to H/N$. On the other hand, 
if $x^{\prime} \in \pi^{-1}((H/N) \cdot y)$, then there is $h \in H$
such that $y=\ol{h}\pi(x^{\prime})=\pi(hx^{\prime})$. Since $\pi^{-1}(y)=N \cdot
x$, there is therefore $n \in N$ such that $x^{\prime} = h^{-1}nx \in H \cdot
x$. Hence $H \cdot x=\pi^{-1}((H/N)\cdot y)$. Because $\pi$ is a submersion, $H
\cdot x$ is closed if, and only if, $(H/N)\cdot y$ is closed. Since $\pi$ is
surjective, this suffices to prove \ref{itm:GiSt1-1}.   

(Proof of \ref{itm:GiSt1-2}.) Suppose $y \in X/N$ has finite stabiliser in $H/N$
and again let $x \in \pi^{-1}(y)$. Then  
\[
\stab_{H/N}(y) = \{ g_1N, \dots, g_m N\}
\]
for some finite collection of representatives $g_1,\dots,g_m \in H$, which we
fix once and for all, such that the cosets $g_iN$
are pairwise disjoint. If $h \in \stab_H(x)$ then $\ol{h}=hN \in
\stab_{H/N}(y)$, so $h$ is contained in a unique coset $g_{i(h)}N$,
where $i(h) \in \{1,\dots,m\}$. In this way we define a function
\[
\stab_H(x) \to \{g_1,\dots,g_m\}, \quad h \mapsto g_{i(h)}.
\]
We claim the fibres of this function are finite. Indeed, let $h \in
\stab_H(x)$ and suppose $\tilde{h} \in \stab_H(x)$ is such that  
$g_{i_0}:=g_{i(h)}=g_{i(\tilde{h})}$, with $i_0 \in \{1,\dots,m\}$. Then we may
find $n,\tilde{n} \in N$ such that $h=ng_{i_0}$ and
$\tilde{h}=\tilde{n}g_{i_0}$, so $ng_{i_0}x=\tilde{n}g_{i_0}x$. It follows that,
for some $p \in \stab_N(g_{i_0}x)$, we have
$\tilde{n}=pn$ and $\tilde{h}=ph$. Since all stabilisers for the $N$-action
on $X$ are finite, there are finitely many choices for $\tilde{h}$ and hence the
fibre containing $h$ is finite, as claimed. We 
conclude that $\stab_H(x)$ is finite for any $x \in \pi^{-1}(y)$.  

Conversely, suppose $x \in X$ has finite stabiliser in $H$ and let $y=\pi(x) \in
X/N$. Let $h \in H$ such that $\ol{h}=hN \in \stab_{H/N}(y)$. Then
$\pi(x)=\ol{h}\pi(x) = \pi(hx)$ and, because $\pi$ is a geometric $N$-quotient
and $N$ is normal in $H$,  
there is $n \in N$ such that $hnx=x$. Hence
$hn \in \stab_H(x)$ and $\ol{h}$ is in the image 
of $\stab_H(x)$ under the quotient map $H \to H/N$. Thus $\stab_{H/N}(y)$ is
finite. 

(Proof of \ref{itm:GiSt1-3}.) If a geometric quotient $X/N \to (X/N)/(H/N)$
exists then the composition $X 
\overset{\pi}{\longrightarrow} X/N \longrightarrow (X/N)/(H/N)$ is a geometric
quotient for the $H$-action on $X$, which implies that all the $H$-orbits in $X$
are closed. Now suppose all the $H$-orbits in $X$ are closed. Because $X/N$ is
affine and $H/N$ is
reductive the categorical quotient of $X/N$ by $H/N$ exists by Theorem
\ref{thm:BgRe1}. Every $H/N$-orbit in $X/N$ is closed by \ref{itm:GiSt1-1}, so
the categorical quotient  
of $X/N$ by $H/N$ is a geometric quotient \cite[Amplification 1.3]{mfk94}.      
\end{proof}

In light of the above lemma and the preceding discussion, we make the following 
definition.
\begin{defn} \label{def:GiSt1.1}
Let $H$ be a linear algebraic group (with as usual unipotent radical $H_u$ and $H_r = H/H_u$ reductive) acting on an irreducible variety $X$ and let $L \to X$ be
a
linearisation for the action. The \emph{stable locus} is the open subset  \index{stable}
\[
X^{\rms}:= \bigcup_{f \in I^{\rms}} X_f
\]
of $X^{\nss}$, where $I^{\rms} \subseteq \bigcup_{r>0} H^0(X,L^{\ten r})^H$ is the 
subset
of $H$-invariant sections satisfying the following conditions:
\begin{enumerate}
\item \label{itm:GiSt1.1-1} the open set $X_f$ is affine;
\item \label{itm:GiSt1.1-2} the action of $H$ on $X_f$ is closed with all
  stabilisers finite groups; and
\item \label{itm:GiSt1.1-3} the restriction of the $H_u$-enveloping quotient map
 \[
q_{H_u}:X_f \to \spec((S^{H_u})_{(f)})
\]
is a principal $H_u$-bundle for the action of $H_u$ on $X_f$. 
\end{enumerate}
\end{defn}

\begin{rmk} \label{rmk:GiSt2}
It is clear that this definition of stability extends the definition of
stability in \cite{dk07} for unipotent groups (see Definition \ref{def:BgUn5}). In the 
case
where $H$ is reductive, then $H_u$ is trivial and our definition reduces to Mumford's 
notion of 
properly stable points \cite{mfk94} (see Definition \ref{def:TrRe5}, 
\ref{itm:TrRe5-2}).    
\end{rmk}

\begin{rmk} \label{rmk:GiSt2.1}
Observe that requiring \ref{itm:GiSt1.1-1} and
\ref{itm:GiSt1.1-3} in Definition \ref{def:GiSt1.1} is equivalent to demanding
that $X_f$ be an affine open subset of $X$ that is a \emph{trivial}
$H_u$-bundle by Proposition \ref{prop:TrRe3.1}. 
\end{rmk}

The significance of assuming that $X$ is irreducible in Definition
\ref{def:GiSt1.1} is that it ensures  
\[
\spec((S^{H})_{(f)})=\spec(((S^{H_u})_{(f)})^{H_r}),
\]
so that by reductive GIT for affine varieties $q_{H_r}:\spec((S^{H_u})_{(f)}) \to 
\spec((S^{H})_{(f)})$
is at least a good categorical $H_r$-quotient when $f$ is an $H$-invariant
(see Theorem \ref{thm:BgRe1}) and $\spec((S^H)_{(f)}) \subseteq X \env H$. Then
if $f\in I^{\rms}$, conditions
\ref{itm:GiSt1.1-1}--\ref{itm:GiSt1.1-3} in Definition \ref{def:GiSt1.1},
combined with Lemma \ref{lem:GiSt1}, \ref{itm:GiSt1-3}, tell us that
\[
q_H:X_f \overset{q_{H_u}}{\longrightarrow} \spec((S^{H_u})_{(f)})
\overset{q_{H_r}}{\longrightarrow} \spec((S^{H})_{(f)})
\]
is a composition of geometric quotients, hence a geometric quotient for $H \act X_f$. 
Because
the property of being a geometric quotient is local on the base, it 
follows that the enveloping quotient 
$q_H:X^{\ssfg(H)} \to X \env H$ restricts to define a geometric quotient 
\[
q_H:X^{\rms} \to X^{\rms}/H = q_H(X^{\rms}). 
\]
This factorises through the restriction of the enveloping quotient for $H_u$
in a natural way, and we have the following commutative diagram, with
all inclusions open:
\[
\begin{diagram}
X^{\ssfg(H)} & \rEmpty~\supseteq & X^{\rms} & \rEmpty~\subseteq & X^{\ssfg(H_u)} \\
& & \dTo_{q_{H_u}}^{\text{geo}} & & \dTo_{q_{H_u}} \\
\dTo_{q_H} & & X^{\rms}/H_u & \rEmpty~\subseteq & X \env H_u \\
 & & \dTo_{q_{H_r}}^{\text{geo}} & &  \\
X \env H & \rEmpty~\supseteq & X^{\rms}/H & & \\ 
\end{diagram}
\] 
\begin{rmk} \label{rmk:GiSt2.2}
If $X$ is irreducible and $q_H:X^{\ssfg(H)} \to \mc{U}$ is any inner enveloping 
quotient for the
linearisation $H \act L \to X$, then the geometric quotient $X^{\rms}/H$ of
$X^{\rms}$ is naturally an open subvariety of $\mc{U}$.    
\end{rmk}

One of the features of the stable locus defined in Definition \ref{def:GiSt1.1}
is that it behaves well under affine locally closed immersions, as the next
lemma shows. 

\begin{lem} \label{lem:GiSt1.2}
Let $H$ be a linear algebraic group acting on an irreducible variety $X$ with
linearisation $L \to X$. Suppose $Y$ is another irreducible variety and $\phi:Y
\hookrightarrow X$ is an $H$-equivariant locally closed immersion that is an
affine morphism. Then $\phi^{-1}(X^{\rms})$ is an open subset of $Y^{\rms(\phi^*L)}$,
the image of $\phi^{-1}(X^{\rms})$ under the enveloping quotient
$q^{\prime}:Y^{\ssfg(\phi^*L)} \to Y \env_{\! \! \phi^*L} \, H$ is a geometric
quotient for the $H$-action on $\phi^{-1}(X^{\rms})$, and there is a locally
closed immersion $\bar{\phi}:\phi^{-1}(X^{\rms})/H \hookrightarrow
X^{\rms}/H$ such that the following diagram commutes (with unmarked inclusions
open)
\[
\begin{diagram}
Y^{\rms(\phi^*L)} &  \rEmpty~\supseteq &  \phi^{-1}(X^{\rms}) & \rInto^\phi & X^{\rms}\\
\dTo_{q^{\prime}} & & \dTo_{q^{\prime}}^{\mathrm{geo}} & &  \dTo_{q} \\
Y^{\rms(\phi^*L)}/H  & \rEmpty~\supseteq &  \phi^{-1}(X^{\rms})/H & \rInto^{\ol{\phi}} & X^{\rms}/H\\    
\end{diagram}
\]
\end{lem}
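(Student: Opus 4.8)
The plan is to reduce everything to the behaviour under pullback of the sections generating the stable locus. Write $S=\kk[X,L]$ and $S'=\kk[Y,\phi^*L]$. The core step is to prove that \emph{for every $f\in I^{\rms}$ on $X$ one has $f':=\phi^*f\in I^{\rms}$ for $\phi^*L\to Y$, with $Y_{f'}=\phi^{-1}(X_f)$}; granting this, the three displayed assertions follow fairly formally. The identity $Y_{f'}=\phi^{-1}(X_f)$ is immediate, and I would verify the three conditions of Definition \ref{def:GiSt1.1} in turn. Condition \ref{itm:GiSt1.1-1} holds because $\phi$ is an affine morphism and $X_f$ is affine, so $Y_{f'}=\phi^{-1}(X_f)$ is affine. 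For \ref{itm:GiSt1.1-2}, since $\phi$ is a locally closed immersion it is injective, whence $\stab_H(y)=\stab_H(\phi(y))$ is finite for $y\in Y_{f'}$; moreover $\phi$ restricts to a homeomorphism of $Y_{f'}$ onto the $H$-invariant locally closed subset $\phi(Y_{f'})\subseteq X_f$, under which the closed orbit $H\cdot\phi(y)=\phi(H\cdot y)$ corresponds to $H\cdot y$, so the $H$-action on $Y_{f'}$ is closed.

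The delicate condition is \ref{itm:GiSt1.1-3}, and here I would exploit the triviality afforded by Remark \ref{rmk:GiSt2.1} and Proposition \ref{prop:TrRe3.1}. Since $X_f$ is affine and $q_{H_u}:X_f\to\spec((S^{H_u})_{(f)})=:B$ is a principal $H_u$-bundle over an affine base, it is a \emph{trivial} bundle, so $X_f\cong B\times H_u$ with $H_u$ acting by left multiplication on the second factor. The $H_u$-invariant locally closed subvariety $Y_{f'}\subseteq X_f$ is then a union of whole fibres $\{b\}\times H_u$, so $Y_{f'}=\bar B\times H_u$ where $\bar B:=q_{H_u}(Y_{f'})$ is locally closed in $B$ and is affine (it embeds as a closed subvariety of the affine variety $Y_{f'}\cong\bar B\times H_u$). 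Thus $Y_{f'}\to\bar B$ is a trivial $H_u$-bundle over an affine base, and since $Y$ is irreducible Remark \ref{rmk:GiFi2.5} identifies $\bar B$ with $\spec(\OO(Y_{f'})^{H_u})=\spec(((S')^{H_u})_{(f')})$ and $q_{H_u}$ with the projection. This gives \ref{itm:GiSt1.1-3}, completing the core step.

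The first assertion is then immediate: $\phi^{-1}(X^{\rms})=\bigcup_{f\in I^{\rms}}Y_{\phi^*f}\subseteq Y^{\rms(\phi^*L)}$, and it is open in $Y$ as a preimage of an open set, hence open in $Y^{\rms(\phi^*L)}$. For the second assertion, the discussion after Definition \ref{def:GiSt1.1} shows $q':Y^{\rms(\phi^*L)}\to Y^{\rms(\phi^*L)}/H$ is a geometric quotient; the $H$-invariant set $\phi^{-1}(X^{\rms})$ is saturated for this orbit map and open, so its image is open, and since geometric quotients are local on the base, $q'$ restricts to a geometric quotient on $\phi^{-1}(X^{\rms})$.

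For the third assertion, the composite $q\circ\phi$ is $H$-invariant, so the universal property of the categorical quotient $q'$ produces a unique $\ol\phi$ with $\ol\phi\circ q'=q\circ\phi$, giving the commuting square. The main obstacle is to show $\ol\phi$ is a locally closed immersion. I would verify this over the affine cover $\{C_f:=\spec((S^H)_{(f)})\}_{f\in I^{\rms}}$ of $X^{\rms}/H$: commutativity and saturation give $\ol\phi^{-1}(C_f)=q'(Y_{f'})=:C'_f$, so it suffices to treat each $\ol\phi_f:C'_f\to C_f$, since being an immersion is local on the target (a subset that is locally closed in each member of an open cover is locally closed). Passing to $H_u$-quotients, the inclusion $Y_{f'}=\bar B\times H_u\hookrightarrow B\times H_u=X_f$ induces the $H_r$-equivariant locally closed immersion $\bar B\hookrightarrow B$ of affine $H_r$-varieties; because $f,f'\in I^{\rms}$, all $H_r$-orbits in $B$ (hence in $\bar B$) are closed. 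Factoring $\bar B$ as an open subset of its closure $Z$ (a closed $H_r$-invariant subvariety of $B$), I would handle the closed immersion $Z\hookrightarrow B$ using reductivity of $H_r$ — the restriction $\OO(B)^{H_r}\twoheadrightarrow\OO(Z)^{H_r}$ is surjective since invariants lift through invariant ideals \cite{nag64}, so $Z/H_r\to C_f$ is a closed immersion — and the open saturated inclusion $\bar B\hookrightarrow Z$ using that its image in $Z/H_r$ is open. Composing, $\ol\phi_f:C'_f=\bar B/H_r\to C_f$ is a locally closed immersion, which completes the argument.
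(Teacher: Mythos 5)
Your proof is correct, and for the heart of the lemma — showing that $\phi^*f\in I^{\rms(\phi^*L)}$ for every $f\in I^{\rms(L)}$ by checking affineness, closedness of orbits with finite stabilisers, and the principal-bundle condition via the triviality of $X_f\to\spec((S^{H_u})_{(f)})$ and the fact that an $H_u$-invariant subvariety of a trivial bundle is a sub-trivial-bundle — it follows the paper's argument, just with the details the paper leaves as ``clear'' or ``by restriction'' written out. Where you genuinely diverge is in establishing that $\ol{\phi}$ is a locally closed immersion. The paper does this in one stroke: since $q:X_f\to X_f/H$ is a geometric quotient and hence a submersion, the saturated locally closed invariant subset $\phi(Y_{\phi^*f})$ has locally closed image in $X_f/H$, that image is again a geometric quotient of $Y_{\phi^*f}$, and uniqueness of geometric (categorical) quotients forces the induced map $\spec((R^H)_{(\phi^*f)})\to X_f/H$ to be a locally closed immersion. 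You instead descend to the $H_r$-action on $\bar{B}\hookrightarrow B$, factor through the closure $Z$, and use reductivity twice — surjectivity of $\OO(B)^{H_r}\to\OO(Z)^{H_r}$ for the closed part and saturation of the open part. Both are sound; the paper's route is shorter and avoids invoking the lifting of invariants, while yours makes the mechanism more explicit and mirrors the two-stage ($H_u$ then $H_r$) structure of the stability definition. One small point worth recording if you keep your version: the local immersions you build over the cover $\{C_f\}$ agree with the globally defined $\ol{\phi}$ because both postcompose $q'|_{Y_{\phi^*f}}$ to give $q\circ\phi$, and $q'|_{Y_{\phi^*f}}$ is a categorical quotient, so uniqueness applies fibrewise over the cover.
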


\begin{proof}
Let $R=\kk[Y,\phi^*L]$ and $S=\kk[X,L]$. The set $\phi^{-1}(X^{\rms})$
is covered by open subsets of the form $Y_{\phi^*f}$, where $f$ is a section
in $I^{\rms} \subseteq \bigcup_{r>0} H^0(X,L^{\ten r})^H$ of
Definition \ref{def:GiSt1.1}, and by Remark \ref{rmk:GiSt2.1} the map $q:X_f \to
\spec((S^H)_{(f)})$ is a trivial $H_u$-bundle for such an $f$. For each such $f$ the 
open subset
$Y_{\phi^*f}=\phi^{-1}(X_f)$ is
affine because $\phi$ is affine and $X_f$ is affine. It is clear that the action of $H
$
on $Y_{\phi^*f}$ is closed with all stabilisers finite. By restriction
$Y_{\phi^*f}$ also has the structure of a trivial $H_u$-bundle, thus 
$Y_{\phi^*f}/H_u$ is affine and isomorphic to $\spec((R^{H_u})_{(\phi^*f)})$
(because $Y$ is irreducible). Thus $Y_{\phi^*f} \subseteq
Y^{\rms(\phi^*L)}$ and the restriction of the enveloping quotient map,
\[
q^{\prime}:Y_{\phi^*f} \to \spec((R^H)_{(\phi^*f)}),
\]
is a geometric quotient for the $H$-action on $Y_{\phi^*f}$. On the other hand, by the 
submersion
property of a geometric quotient the image of $q \circ 
\phi:Y_{\phi^*f} \to X_f/H$ is a locally closed subset of
$X_f/H$ that is also a geometric quotient for
the $H$-action on $Y_{\phi^*f}$, hence
there is a unique locally closed immersion 
\[
\spec((R^H)_{(\phi^*f)}) \hookrightarrow X_f/H
\]
factoring $(q \circ \phi)|_{Y_{\phi^f}}$ through $q^{\prime}|_{Y_{\phi^f}}$. By
varying over suitable $f$, we see that $\phi^{-1}(X^{\rms}) \subseteq 
Y^{\rms(\phi^*L)}$ and the above morphisms $\spec((R^H)_{(\phi^*f)}) \hookrightarrow 
X_f/H$ glue
to give the locally closed immersion 
\[
\ol{\phi}:q^{\prime}(\phi^{-1}(X^{\rms}))=\phi^{-1}(X^{\rms})/H \hookrightarrow
X^{\rms}/H
\]
making the required diagram commute.
\end{proof} 


\subsubsection{Relation to Stability of Reductive Extensions}
\label{subsec:GiStRelation}

We next consider how the notion of stability for a linear action of $H$ proposed in Definition \ref{def:GiSt1.1}
relates to stability for a reductive group acting on the fibre space $\GUX$
associated to certain embeddings $H_u \hookrightarrow G$, with $G$
reductive, where $H_u$ is the unipotent radical of $H$. The resulting Proposition \ref{prop:GiSt3} provides an extension of
the fact that the Mumford stable locus is equal to the locally trivial stable
locus in the unipotent setting (see Proposition \ref{prop:BgUn9}). This will
be important for the theory of reductive envelopes in the upcoming
Section \ref{sec:compactifications1}, and shows more generally how Mumford's
reductive GIT can be used to study non-reductive group actions.  

It will be convenient to make the following definition. 
\begin{defn} \label{def:GiSt1.3} 
Let $H$ be a linear algebraic group and $G$ a reductive group. A homomorphism
$H \to G$ is called \emph{$H_u$-faithful} if its restriction to the  \index{$H_u$-faithful homomorphism} 
unipotent radical $H_u$ of $H$ defines a closed embedding $H_u \hookrightarrow G$.    
\end{defn}

Fix a reductive group $G$ and an $H_u$-faithful homomorphism $\rho:H \to G$. Then we
can consider the fibre
space $G \times^{H_u} X$ associated to $X$ and the homomorphism $\rho|_{H_u}:H_u 
\hookrightarrow
G$, together with its natural closed immersion $\alpha:X \hookrightarrow \GUX$.
This space exists as a variety because $H_u$ is unipotent \cite[Proposition
23]{eg98}, hence the natural projection 
\[
G \times X \to G\times^{H_u}X
\]
is a geometric quotient for the diagonal action of $H_u$ in the category of
varieties. Because $H_u$ is normal in $H$ the diagonal action of $H$ on $G \times X$,
induced by the action of $H$ on $X$ and right multiplication on $G$ through
$\rho$, descends 
through this projection to give an action of $H_r=H/H_u$ on
$\GUX$. Explicitly, this given by
\[
\ol{h} \cdot [g,x] = [g\rho(h)^{-1},hx] 
\]
for each $g\in G$, $x\in X$, and $\ol{h}=hH_u \in H_r$. This action of $H_r$
commutes with the $G$-action on $\GUX$, so we can view 
$\GUX$ as a $G\times H_r$-variety in a natural way. Notice that the inclusion
$\alpha:X \hookrightarrow \GUX$ is equivariant with respect to
the diagonal embedding $H \hookrightarrow G \times H_r$ induced by $\rho$ and the
quotient $H \to H_r$. 
 
As noted in Section \ref{subsec:BgUnExtending},
there is a natural
$G$-linearisation over $\GUX$ which extends the $H_u$-linearisation on $L$ under
the inclusion $\alpha$. By abuse of notation, we denote this linearisation $L=G
\times^{H_u} L \to \GUX$. The diagonal  
$H_r$-action on $\GUX$ canonically lifts to 
the line bundle $L$ to define an $H_r$-linearisation on $L \to \GUX$ which
commutes with the $G$-linearisation, hence there is a natural linearisation 
\[
G \times H_r \act L \to \GUX.
\]
This provides an extension of the $H$-linearisation $H \act L \to X$, when we
let $H$ act on $L=G \times^{H_u}L$ via the diagonal homomorphism 
$H \to G \times H_r$. As such, pulling back sections
along $\alpha$ induces an isomorphism
\[
\alpha^*:\kk[\GUX,L]^{G\times H_r} \overset{\cong}{\longrightarrow} \kk[X,L]^{H}.
\]
We may now state
\begin{propn} \label{prop:GiSt3}
Let $H$ be a linear algebraic group (with unipotent radical $H_u$ and $H_r = H/H_u$ reductive) acting on an irreducible variety $X$ with
linearisation $H \act L \to X$, and let
$G$ be a reductive 
group with an $H_u$-faithful homomorphism $\rho:H \to G$. Let $\alpha$ be the natural 
closed
immersion of $X$ into $\GUX$ and let 
$(\GUX)^{\rms(L)}$ be the stable locus for the $G
\times H_r$-linearisation $L \to \GUX$ in the sense of Definition
\ref{def:TrRe5}, \ref{itm:TrRe5-2}. Then
\[
X^{\rms} = \alpha^{-1}((\GUX)^{\rms(L)}).
\]
\end{propn}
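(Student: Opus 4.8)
The plan is to prove both inclusions by matching the sections witnessing stability on the two sides through the isomorphism $\alpha^*:\kk[\GUX,L]^{G\times H_r}\xrightarrow{\cong}\kk[X,L]^H$. Write $\Gamma=G\times H_r$ and $U=H_u$. First I would record the geometric dictionary between the two pictures. For a $\Gamma$-invariant $F\in H^0(\GUX,L^{\ten r})^{\Gamma}$ with $f:=\alpha^*F$, the relation $F(\alpha(x))=f(x)$ gives $\alpha^{-1}((\GUX)_F)=X_f$; since $F$ is $G$-invariant and every $G$-orbit in $\GUX$ meets $\alpha(X)$ (as $[g,x']=g\cdot\alpha(x')$), one checks the sharper identity $(\GUX)_F=G\times^{U}X_f$. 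I would also use the structure-group-extension isomorphism of Section \ref{subsec:GiNaEnveloping}, namely $\alpha^*:\kk[\GUX,L]^G\cong\kk[X,L]^{U}$, which is $H_r$-equivariant and recovers $\alpha^*$ on $\Gamma$-invariants after taking $H_r$-invariants, using that $X$ is irreducible so that $\OO(X_f)^{U}=(S^{U})_{(f)}$ by Remark \ref{rmk:GiFi2.5}.

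For the inclusion $X^{\rms}\subseteq\alpha^{-1}((\GUX)^{\rms(L)})$, take $f\in I^{\rms}$ with $x\in X_f$ and let $F$ be the corresponding $\Gamma$-invariant. By Remark \ref{rmk:GiSt2.1} the set $X_f$ is an affine trivial $U$-bundle, so $(\GUX)_F=G\times^{U}X_f\cong (X_f/U)\times G$ is affine; this simultaneously exhibits $(\GUX)_F\to X_f/U$ as a trivial, hence geometric, $G$-quotient. I would then apply Lemma \ref{lem:GiSt1} twice. Applied to the normal subgroup $U\trianglelefteq H$ acting on $X_f$, where $U$ acts with trivial stabilisers and geometric quotient $q_{U}$, parts \ref{itm:GiSt1-1} and \ref{itm:GiSt1-2} convert the closedness and finiteness in Definition \ref{def:GiSt1.1}, condition \ref{itm:GiSt1.1-2}, into closedness of the $H_r$-orbits and finiteness of the $H_r$-stabilisers on $X_f/U$. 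Applied next to $G\trianglelefteq\Gamma$ acting on $(\GUX)_F$, with geometric $G$-quotient $(\GUX)_F\to X_f/U$, the same two parts lift these statements back up to all $\Gamma$-orbits in $(\GUX)_F$ being closed with finite stabilisers. Hence $F$ witnesses Mumford stability of $\alpha(x)$ in the sense of Definition \ref{def:TrRe5}, condition \ref{itm:TrRe5-2}.

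For the reverse inclusion I would take a $\Gamma$-invariant $F$ witnessing $\alpha(x)\in(\GUX)^{\rms(L)}$ and show $f:=\alpha^*F$ lies in $I^{\rms}$. Affineness of $X_f$ is immediate, as $X_f=\alpha^{-1}((\GUX)_F)$ is a closed subvariety of the affine $(\GUX)_F$. Finiteness of the $H$-stabilisers follows because the diagonal map $H\to\Gamma$ is injective on stabilisers: its kernel lies in $U$ and meets $U$ trivially since $\rho|_{U}$ is a closed embedding, so $\stab_H(x')\hookrightarrow\stab_{\Gamma}(\alpha(x'))$. Closedness of the $H$-orbits I would obtain from the orbit identity $\Gamma\cdot\alpha(x')\cap\alpha(X_f)=\alpha(H\cdot x')$: intersecting the closed $\Gamma$-orbit with the closed subvariety $\alpha(X_f)$ shows $\alpha(H\cdot x')$, and hence $H\cdot x'$, is closed in $X_f$. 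Finally, for condition \ref{itm:GiSt1.1-3}, I note $U$ acts freely on $X_f$ since $\stab_{U}(x')=\stab_H(x')\cap U$ is a finite subgroup of the unipotent $U$, hence trivial in characteristic zero; and I would transfer properness from the reductive side by restricting the proper $\Gamma$-action to $G$, making $G\act(\GUX)_F$ proper and free, so that $(\GUX)_F\to X_f/U$ is a principal $G$-bundle. The graph map of $U\act X_f$ then factors as $U\times X_f\hookrightarrow G\times(\GUX)_F\to(\GUX)_F\times(\GUX)_F$ through the closed immersion $\alpha\times\alpha$, and the cancellation property for proper morphisms yields properness of $U\act X_f$. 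With Lemma \ref{lem:TrRe1} this gives a principal $U$-bundle $q_{U}:X_f\to X_f/U=\spec((S^{U})_{(f)})$, the target being affine because it is the reductive quotient $(\GUX)_F\dblslash G$ of the affine $(\GUX)_F$.

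The main obstacle I anticipate is condition \ref{itm:GiSt1.1-3} in the reverse inclusion: passing from the good behaviour of the reductive $\Gamma$- (or $G$-) action on $(\GUX)_F$ down to a genuine principal $U$-bundle structure on $X_f$. Set-theoretic freeness of $U$ is cheap, but upgrading to a principal bundle requires properness, and the cleanest route is the properness-cancellation argument transferring $G\act(\GUX)_F$ to $U\act X_f$ along $\alpha$. One must also verify the compatibility that the $H_r$-action induced on $X_f/U$ from the $\Gamma$-quotient agrees with the one coming from $H/U$; this follows from the explicit formula $\ol{h}\cdot[g,x']=[g\rho(h)^{-1},hx']$ together with the naturality of Proposition \ref{prop:GiNa7}, but should be checked carefully as it underlies the two-stage application of Lemma \ref{lem:GiSt1}.
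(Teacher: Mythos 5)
Your overall strategy coincides with the paper's: both inclusions are proved by matching the witnessing sections $f$ and $F$ through $\alpha^*$ and comparing orbits and stabilisers via the identities $H\cdot y=\alpha^{-1}((G\times H_r)\cdot\alpha(y))$ and $\stab_H(y)\subseteq\stab_{G\times H_r}(\alpha(y))$. Two of your sub-arguments differ from the paper's and are sound: in the forward direction you deduce closedness of the $G\times H_r$-orbits and finiteness of their stabilisers by applying Lemma \ref{lem:GiSt1} twice (descending from $H$ to $H_r$ along $X_f\to X_f/H_u$, then ascending from $H_r$ to $G\times H_r$ along $(\GUX)_F\to(\GUX)_F/G$), where the paper argues directly that each $G\times H_r$-orbit is the image of $G\times O$ under the submersion $G\times X_f\to(\GUX)_F$; and your observation that a trivial $H_u$-bundle structure on $X_f$ gives $(\GUX)_F\cong G\times(X_f/H_u)$ replaces the paper's appeal to \cite[Proposition 5]{ser58} and \cite[Proposition 0.7]{mfk94} for the affineness of $(\GUX)_F$. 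The compatibility of the two $H_r$-actions on $X_f/H_u$, which you rightly flag, is routine from the explicit formula for the $H_r$-action on $\GUX$.

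There is, however, a genuine gap at the end of the reverse inclusion, exactly at the point you identified as the main obstacle. Your properness-cancellation argument correctly shows that the $H_u$-action on $X_f$ is proper, and together with set-theoretic freeness Lemma \ref{lem:TrRe1} then shows the action is \emph{free}. But freeness does not by itself produce a geometric quotient, nor does it identify a particular morphism as one: the statement recalled in Section \ref{subsec:BgLinear} (from \cite[Proposition 0.9]{mfk94} and \cite[Theorem 6.1]{ses72}) is that \emph{a geometric quotient} is a principal $H_u$-bundle if and only if the action is free, so you must first establish that $q_{H_u}:X_f\to\spec((S^{H_u})_{(f)})$ is a geometric quotient before any bundle structure can be inferred. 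Your proposal never does this. The paper closes precisely this gap by descent: the free, proper $G$-action on the affine variety $(\GUX)_F$ has an affine geometric quotient by Theorem \ref{thm:BgRe1}, which is locally trivial by \cite[Proposition 0.9]{mfk94}; since $G\times X_f\to(\GUX)_F$ is a principal $H_u$-bundle, descent along the principal $G$-bundle $(\GUX)_F\to(\GUX)_F/G$ (\cite[Proposition 10]{ser58}) then endows $X_f\to(\GUX)_F/G\cong\spec((S^{H_u})_{(f)})$ with the structure of an affine, locally trivial geometric $H_u$-quotient. Replacing the final sentence of your reverse inclusion with this descent step completes the proof (and renders your separate freeness argument for $H_u\act X_f$ redundant, since local triviality already gives the principal bundle structure).
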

 
\begin{proof} 
(Proof of $\alpha^{-1}((\GUX)^{\rms(L)})  \subseteq X^{\rms}$.) Suppose $x \in
X$ and $\alpha(x) \in (\GUX)_{F}$, where $F$ is a $G\times H_r$-invariant
section of some positive tensor power of $L$ over
$\GUX$ such that
$(\GUX)_{F}$ is affine and the $G \times H_r$-action on $(\GUX)_{F}$ is closed
with 
finite stabilisers. Let $f = \alpha^*F$ be the corresponding
$H$-invariant over $X$, so that $x \in X_f$. Then $\alpha$ restricts to an 
$H$-equivariant closed immersion $X_f \hookrightarrow (\GUX)_{F}$, thus $X_f$ is
affine. For any $y \in X_f$ the orbit $H \cdot y =
\alpha^{-1}((G \times H_r) \cdot \alpha(y))$ is closed in $X_f$ and
$\stab_H(y) \subseteq \stab_{G\times H_r}(\alpha(y))$, so all $H$-orbits in
$X_f$ are closed and all stabilisers for the
$H$-action on $X_f$ are finite. It remains to show that the restriction of the
$H_u$-enveloping quotient 
\[
q_{H_u}:X_f \to \spec((S^{H_u})_{(f)})
\]
gives $X_f$ the structure of a principal $H_u$-bundle. The action of
$G$ on $(\GUX)_{F}$ is set-theoretically free, because all its stabilisers are
conjugate to subgroups of the unipotent group $H_u$ and, since they are finite,
are thus 
trivial. Furthermore, all
$G$-orbits in $(\GUX)_{F}$ are closed because the action of $G \times
H_r$ on $(\GUX)_{F}$ is proper \cite[Corollary 2.5]{mfk94}. Hence
$(\GUX)_{F}$ is in the stable locus for the restricted linearisation $G \act
L \to \GUX$. The action of $G$ on $(\GUX)_{F}$ is therefore proper
(\cite[Corollary 2.5]{mfk94} again) and so the action of $G$ on $(\GUX)_{F}$
is free by Lemma \ref{lem:TrRe1}. The subset $(\GUX)_{F}$ has an affine geometric
quotient $(\GUX)_{F}/G \cong X_f/H_u$ by Theorem \ref{thm:BgRe1},
\ref{itm:BgRe1-2}, which by \cite[Proposition 0.9]{mfk94} is actually a locally
trivial quotient. By descent \cite[Proposition 10]{ser58} this means $X_f$ has
an affine locally trivial geometric quotient, isomorphic to
$\spec(\OO(X_f)^{H_u})=\spec((S^{H_u})_{(f)})$. So $q_{H_u}:X_f \to
\spec((S^{H_u})_{(f)})$ is a locally trivial $H_u$-quotient.      

(Proof of $X^{\rms} \subseteq \alpha^{-1}((\GUX)^{\rms(L)})$.) Let $x \in X_f$,
where $f$ is an $H$-invariant section of some positive tensor power of $L \to X$
such that $X_f$ is affine, has closed 
$H$-orbits with all stabilisers finite and $q_{H_u}:X_f \to
\spec((S^{H_u})_{(f)})$ is a principal $H_u$-bundle. Let $F$ be the
$G \times H_r$-invariant over $\GUX$ pulling back to $f$ under
$\alpha$. By \cite[Proposition 5]{ser58}, the natural
morphism $(\GUX)_{F}=G \times^{H_u}(X_f) \to X_f/H_u$ is a principal $G$-bundle
with affine base. By \cite[Proposition 0.7]{mfk94} this means $(\GUX)_{F} \to
X_f/H_u$ is an affine morphism, hence $(\GUX)_{F}$ is affine. Now, any $G
\times H_r$-orbit in $(\GUX)_{F}$ is the image $G \times^{H_u} O$ of a 
subset of the form $G \times O \subseteq G \times X_f$ under the geometric
quotient $G \times X_f \to (\GUX)_{F}$, where $O \subseteq X_f$ is an
$H$-orbit. Since $O$ is 
closed in $X_f$, so too is the $G \times H_r$-orbit $G \times^{H_u} O$ inside
$(\GUX)_{F}$. Hence all $G \times H_r$-orbits in $(\GUX)_{F}$ are
closed. Moreover, because any point in $(\GUX)_{F}$ is in the
$G$-sweep of a point in $X_f$ via $\alpha$, any stabiliser for the $G
\times H_r$-action on $(\GUX)_{F}$ is conjugate to an
$H$-stabiliser for a point in $X_f$ 
under the inclusion $H \hookrightarrow G \times H_r$ induced by $\rho$ and $H \to
H_r$. Hence all stabilisers for the $G \times H_r$-action on $(\GUX)_{F}$ are
finite. It follows that $(\GUX)_{F} \subseteq
(\GUX)^{\rms(L)}$ and $x \in \alpha^{-1}((\GUX)^{\rms(L)})$.    
\end{proof}

\begin{rmk} \label{rmk:GiSt3}
For future reference we note the following fact, which was shown during the
proof of 
Proposition \ref{prop:GiSt3}: given an $H$-linearisation $L \to X$, an
$H_u$-faithful homomorphism $H \to G$ and an $H$-invariant
section $f$ of some positive tensor power of $L \to X$ with associated $G \times
H_r$-invariant section $F$ over $\GUX$, 
then $(\GUX)_F=G \times^{H_u} (X_f)$ is affine if, and only if, $X_f$ is affine
and $X_f \to 
\spec((S^{H_u})_{(f)})$ a principal $H_u$-bundle.      
\end{rmk}

It immediately follows from Proposition \ref{prop:GiSt3} that we have an equality
\[
(\GUX)^{\rms(L)} = G \times^{H_u} (X^{\rms}).
\]
Because $G$ is a closed reductive subgroup of $G \times H_r$ it follows that
$(\GUX)^{\rms(L)}$ is contained in the stable locus for the restricted
linearisation $G \act L \to \GUX$ and hence has a geometric
quotient for the $G$-action. The inclusion $\alpha_{H_u}:X^{\rms} \hookrightarrow
G \times^{H_u} (X^{\rms})$ thus induces an $H_r$-equivariant isomorphism
\[
X^{\rms}/H_u \cong (\GUX)^{\rms(L)}/G,
\]
and since $(\GUX)^{\rms(L)}/(G\times H_r)=((\GUX)^{\rms(L)}/G)/H_r$, we conclude
that $\alpha_{H_u}$ descends further to an isomorphism 
\[
X^{\rms}/H \cong (\GUX)^{\rms(L)}/(G \times H_r).
\]
So the significance of Proposition \ref{prop:GiSt3} is that it allows us to describe
stability for the linearisation $H \act L \to X$ and its geometric quotient in terms 
of
stability and the associated quotient for the \emph{reductive}
linearisation $G \times H_r \act L \to \GUX$.       


\begin{rmk} \label{rmk:GiSt4}
Even if $H \act L \to X$ is a linearisation of an ample line bundle over a
projective variety, the induced fibre space $\GUX$  will only be
quasi-projective with an ample linearisation, so care needs to be taken when
computing stability. Similarly, if $X$ is affine then $\GUX$ is not
necessarily affine.      
\end{rmk}

\begin{rmk} \label{rmk:GiSt4.1}
One can also consider the fibre space $G \times^H X$ associated to the
$H_u$-faithful homomorphism $\rho:H \to G$, together with its natural
$G$-linearisation $\tilde{L}:=G \times^H L \to G \times^H X$ and inclusion $\alpha_H:H
\hookrightarrow G \times^H X$ (assuming these spaces exists
as varieties). If $\ker \rho$ is finite, then is can be shown that
$X^{\rms}=\alpha_{H}^{-1}((\GHX)^{\rms(\tilde{L})})$ and the induced embedding
$X^{\rms} \hookrightarrow (\GHX)^{\rms(\tilde{L})}$ descends to an isomorphism
$X^{\rms}/H \cong ((\GHX)^{\rms(\tilde{L})})/G$. Since we will not use this in the
sequel, we omit the details of the proofs.     
\end{rmk}

\subsection{Summary of the Intrinsic Picture}
\label{sec:GiSummary}

We shall shortly draw together the work done so far to give a result that we believe
provides a good theoretical basis for doing geometric invariant
theory, in the case where $H$ is any linear algebraic group acting on an
irreducible variety $X$ with linearisation $L \to X$. Before doing so, we make one 
final
observation about the relationship between the notion of stability in Definition
\ref{def:GiSt1.1} and the various notions of semistability considered before. 

As already observed, we have 
\[
X^{\rms} \subseteq X^{\ssfg} \subseteq X^{\nss}.
\]
This can be further refined using the ideas at the end of Section
\ref{subsec:GiNaInduced}. The stable locus is
patched together with affine open subsets $X_f$, for certain $H$-invariant
sections $f$ of a positive tensor power of $L \to X$ which, among other things,
have the property that $(S^{H_u})_{(f)}$ is a finitely generated
$\kk$-algebra (cf. Definition \ref{def:GiSt1.1}, \ref{itm:GiSt1.1-3}). Because
$H_r$ is reductive and $X$ is irreducible then the full invariant
algebra $(S^{H})_{(f)}$ is finitely generated over $\kk$. This idea suggests it
is useful to consider another
notion of `semistability' that sits inside the finitely generated semistable locus.  

\begin{defn} \label{def:GiSt4}
Let $H$ be a linear algebraic group with unipotent radical $H_u$, let $H_r = H/H_u$ and let $H \act L \to X$ be a linearisation of an
irreducible $H$-variety $X$. We define the \emph{$H_u$-finitely generated
  semistable locus} to be the open subset \index{$H_u$-finitely generated
  semistable locus}
\[
X^{\rmss,H_u \dashfg}:=\bigcup_{f \in I^{\rmss,H_u \dashfg}} X_f,
\]
of $X^{\nss}$, where
\[
I^{\rmss,H_u \dashfg}=\left\{
f \in \cup_{r>0} H^0(X,L^{\ten r})^H \mid (S^{H_u})_{(f)}  
 \text{ is a finitely generated $\kk$-algebra}
\right\}.
\]  
\end{defn}   
It follows from the discussion above that $X^{\rms} \subseteq X^{\rmss,H_u
  \dashfg} \subseteq X^{\ssfg}$. The image of $X^{\rmss,H_u \dashfg}$ under the
enveloping quotient map 
$q_{H_u}:X^{\ssfg(H_u)} \to X \env H_u$ for the restricted linearisation $H_u
\act L \to X$ is contained in the $H_r$-invariant open subscheme
\[
\bigcup_{f \in I^{\rmss,H_u \dashfg}} \spec((S^{H_u})_{(f)}) \subseteq X \env H_u.
\]
This subscheme is not necessarily quasi-compact, but we can always find a finite
subset $\mc{S} \subseteq I^{\rmss,H_u \dashfg}$ of invariant sections such that
the image $q_{H_u}(X^{\rmss,H_u \dashfg})$ is contained in the quasi-compact
open subscheme
\[
\mc{U}:=\bigcup_{f\in \mc{S}} \spec((S^{H_u})_{(f)}) \subseteq X \env H_u,
\] 
which is an inner enveloping quotient of $X^{\rmss,H_u \dashfg}$ under the 
linearisation $H_u \act L \to X$. The rational map
$q_{H_r}:\proj(S^{H_u}) \dashrightarrow \proj(S^{H})$ of \eqref{eq:GiSt1} is
defined on $\mc{U}$ and gives a morphism 
\[
q_{H_r}:\mc{U} \to X \env H.
\]
In fact, the image under $q_{H_r}$ is precisely the reductive GIT
quotient $\mc{U} \dblslash H_r$ for the natural $H_r$-linearisation
$\OO_{\mc{U}}(r) \to \mc{U}$ (for $r>0$ sufficiently large) defined in
Proposition \ref{prop:GiNa8}, noting that the semistable set for this
linearisation is the whole of $\mc{U}$. Indeed, we have
\[
\mc{U} \dblslash H_r = \bigcup_{f \in \mc{S}} \spec((S^H)_{(f)}) \subseteq X
\env H.
\]
Thus $q_{H_r}:\mc{U} \to \mc{U} \dblslash H_r$
is a good categorical quotient by $H_r$, and $q_H:X^{\rmss,H_u \dashfg} \to
\mc{U} \dblslash H_r \subseteq X \env H$ is an inner enveloping quotient of
$X^{\rmss,H_u \dashfg}$  for the full
linearisation $H \act L \to X$. It is also clear that the geometric quotient
$X^{\rms}/H$ of the  
stable locus $X^{\rms}$ by $H$ is naturally an open subvariety of $\mc{U}
\dblslash H_r$. 

The following theorem summarises the main points
of our work so far.

\begin{thm} \label{thm:GiSu2}
Let $H$ be a linear algebraic group (with unipotent radical $H_u$) acting on an irreducible variety $X$ and $L
\to X$ a 
linearisation for the action. Let $H_r = H/H_u$, let $S=\kk[X,L]$ and let 
\begin{align*}
q_H:\proj S &\dashrightarrow \proj(S^H) \\
q_{H_u}:\proj S &\dashrightarrow \proj(S^{H_u}) \\ 
q_{H_r}:\proj(S^{H_u}) &\dashrightarrow \proj(S^H) 
\end{align*}
be the rational maps defined by the obvious inclusions of graded algebras. Also let
\[
\mc{U}=\bigcup_{f \in \mc{S}} \spec((S^{H_u})_{(f)}),
\]
where $\mc{S}$ is a
finite subset of $I^{\rmss, H_u \dashfg}$ such that $X^{\rmss,H_u \dashfg} =
\bigcup_{f \in \mc{S}} X_f$.  
\begin{enumerate}
\item \label{itm:GiSu2-1} There is a commutative diagram
\[
\begin{diagram}
X^{\rms} & \rEmpty~\subseteq & X^{\rmss,H_u\dashfg}
&  \rEmpty~\subseteq & X^{\ssfg} & \rEmpty~\subseteq &
X^{\nss} & \rEmpty~\subseteq & X \\
\dTo_{q_{H_r}}^{\text{\upshape{geo}}} & & \dTo_{q_{H_u}} && &&&& \dDashto_{q_{H_u}}\\
X^{\rms}/H_u  & \rEmpty~\subseteq & \mc{U} & & \dTo_{q_H} && \dTo_{q_H} && \proj(S^{H_u})\\
\dTo_{q_{H_r}}^{\text{\upshape{geo}}}  &  & \dTo_{q_{H_r}}^{\text{\upshape{good}}} & & & && & \dDashto_{q_{H_r}}\\
X^{\rms}/H & \rEmpty~\subseteq & \mc{U} \dblslash H_r & \rEmpty~\subseteq &  X \env H & \rEmpty~\subseteq &
\proj(S^{H}) & \rEmpty~= & \proj(S^{H}) \\
\end{diagram}
\]
with good or geometric quotients as indicated and all inclusions open. The
induced morphism $q_H:X^{\rmss,H_u \dashfg} \to
\mc{U} \dblslash H_r$ is an inner enveloping quotient of $X^{\rmss,H_u \dashfg}$.
\item \label{itm:GiSu2-2} Given any reductive group $G$ and $H_u$-faithful 
homomorphism $H
\to G$, we have 
\[
X^{\rms} = \alpha^{-1}((\GUX)^{\rms(L)}),
\]
inducing a natural isomorphism
\[
X^{\rms}/H \cong (\GUX)^{\rms(L)}/(G \times H_r)
\]
where $\alpha:X \hookrightarrow \GUX$ is the natural
inclusion and $L=G \times^{H_u} L$ is the natural $G \times H_r$-linearisation
defined in Section \ref{subsec:GiStRelation}.   
\end{enumerate}     
\end{thm}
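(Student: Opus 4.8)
The plan is to assemble Theorem \ref{thm:GiSu2} from the constructions and propositions already established in Sections \ref{sec:GiFinitely}--\ref{subsec:GiStRelation}; almost all of the substantive work has been carried out, so the proof amounts to organising these ingredients into the stated diagram and checking that the various local quotient morphisms glue. First I would record the chain of open inclusions $X^{\rms} \subseteq X^{\rmss,H_u\dashfg} \subseteq X^{\ssfg} \subseteq X^{\nss} \subseteq X$, which is immediate from Definitions \ref{def:GiFi1}, \ref{def:GiSt4} and \ref{def:GiSt1.1} together with the remarks following Definition \ref{def:GiSt4}. Proposition \ref{prop:GiNa7}, applied with $N = H_u$, then supplies the canonical $H_r$-action on $\proj(S^{H_u})$ for which $q_{H_u}$ is equivariant with respect to $H \to H_r$; since $I^{\rmss,H_u\dashfg}$ is stable under this action, the quasi-compact open subscheme $\mc{U} = \bigcup_{f \in \mc{S}} \spec((S^{H_u})_{(f)})$ is $H_r$-stable.

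The genuine content of part \ref{itm:GiSu2-1} is the good quotient $q_{H_r}: \mc{U} \to \mc{U}\dblslash H_r$. Here I would cover $\mc{U}$ by the $H_r$-invariant affine opens $\spec((S^{H_u})_{(f)})$ with $f \in \mc{S}$. Because $X$ is irreducible, Remark \ref{rmk:GiFi2.5} identifies $(S^H)_{(f)}$ with $((S^{H_u})_{(f)})^{H_r}$, and as $H_r$ is reductive Theorem \ref{thm:BgRe1} makes each $\spec((S^{H_u})_{(f)}) \to \spec((S^H)_{(f)})$ a good quotient. Since good quotients are local on the base, these patch to a good quotient onto $\mc{U}\dblslash H_r = \bigcup_{f \in \mc{S}} \spec((S^H)_{(f)})$, which is separated as an open subscheme of $\proj(S^H)$; this is the reductive GIT quotient for the $H_r$-linearisation $\OO_{\mc{U}}(r)$ of Proposition \ref{prop:GiNa8} (whose semistable locus is all of $\mc{U}$), and it exhibits $q_H: X^{\rmss,H_u\dashfg} \to \mc{U}\dblslash H_r$ as an inner enveloping quotient of $X^{\rmss,H_u\dashfg}$ for $H \act L \to X$.

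Next I would treat the left column. For $f \in I^{\rms}$, condition \ref{itm:GiSt1.1-3} of Definition \ref{def:GiSt1.1} makes $q_{H_u}: X_f \to \spec((S^{H_u})_{(f)})$ a principal $H_u$-bundle, hence a geometric quotient, while conditions \ref{itm:GiSt1.1-1}--\ref{itm:GiSt1.1-2} together with Lemma \ref{lem:GiSt1}, \ref{itm:GiSt1-3}, make the induced map $q_{H_r}$ a geometric $H_r$-quotient; the composite $q_H$ is therefore a geometric quotient of $X_f$, and these glue (again using locality on the base) to the geometric quotient $q_H: X^{\rms} \to X^{\rms}/H$. Commutativity of the whole diagram then follows from the equivariance of $q_{H_u}$ recorded in Proposition \ref{prop:GiNa7} and the factorisations built in above.

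Finally, part \ref{itm:GiSu2-2} is Proposition \ref{prop:GiSt3} for the equality $X^{\rms} = \alpha^{-1}((\GUX)^{\rms(L)})$; the isomorphism $X^{\rms}/H \cong (\GUX)^{\rms(L)}/(G \times H_r)$ is then the descent argument recorded immediately after that proposition, using $(\GUX)^{\rms(L)} = G \times^{H_u}(X^{\rms})$ and that $G$ is a closed reductive subgroup of $G \times H_r$. The hard part will be the gluing in the two middle paragraphs: one must check that the local good (respectively geometric) quotients are compatible on overlaps and assemble into globally defined morphisms landing in a separated scheme, so that the locality-on-the-base principle genuinely applies. Irreducibility of $X$ is used essentially here, both to pass invariants through the reductive quotient via Remark \ref{rmk:GiFi2.5} and to guarantee that $(S^H)_{(f)}$ is finitely generated for $f \in I^{\rmss,H_u\dashfg}$.
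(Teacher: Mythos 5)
Your proposal is correct and follows essentially the same route as the paper, which presents this theorem as a summary assembled from exactly the ingredients you cite: the inclusion chain from the definitions, Proposition \ref{prop:GiNa7} and Proposition \ref{prop:GiNa8} for the $H_r$-action and linearisation on $\mc{U}$, Theorem \ref{thm:BgRe1} together with Remark \ref{rmk:GiFi2.5} and locality on the base for the good quotient $\mc{U} \to \mc{U}\dblslash H_r$, Lemma \ref{lem:GiSt1} with Definition \ref{def:GiSt1.1} for the geometric quotient of $X^{\rms}$, and Proposition \ref{prop:GiSt3} with the ensuing descent argument for part (2). The only remark worth adding is that the gluing you flag as the "hard part" is in fact automatic here, since each local quotient is the restriction of the single globally defined rational map $q_{H_u}$, $q_{H_r}$ or $q_H$, so compatibility on overlaps requires no further checking.
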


\begin{rmk} \label{rmk:GiSu3} The spaces involved in the statement of
  Theorem \ref{thm:GiSu2} are unchanged when we replace the linearisation $L \to
  X$ by any positive tensor power $L^{\ten r} \to X$. (In the case of
  $X^{\nss}$ and $X^{\ssfg}$ this was observed in Remark \ref{rmk:GiFi1.2}.) It
  thus makes sense to talk about the notions of stability, finitely generated
  semistability, enveloping quotients etc. for rational linearisations (see Remark \ref{rmk:TrRe3.2}).
\end{rmk}

Theorem \ref{thm:GiSu2} is a culmination of all the intrinsic notions we have
discussed so far and provides what we believe is a good basis for doing
geometric invariant theory for non-reductive groups. One reason for this is
because, in the case where $L \to X$  is an ample linearisation 
over a projective variety $X$, it extends the main geometric
invariant theoretic theorems in both
the reductive and unipotent settings. However, given a general non-reductive
linearisation the question remains of how one
can study the stable locus
$X^{\rms}$, finitely generated semistable locus $X^{\ssfg}$, inner enveloping
quotients of $X^{\ssfg}$ and the geometric quotient $X^{\rms}/H$, in a more
explicit manner. In the next section we consider some
techniques for doing this.


\section{Projective Completions of Enveloped Quotients and Reductive Envelopes} 
\label{sec:compactifications1}

\setcounter{defn}{0}

In Section \ref{sec:gitNonReductive} we developed a theoretical framework for
identifying open subsets of varieties that admit geometric quotients under a
given group action, which seeks to provide an analogue of Mumford's work
\cite{mfk94} to the non-reductive setting. Due to the fact that a non-reductive
group does not have  
such a well behaved invariant theory (notably the possibility of non-finite
generation of rings of invariants) there are significant differences with
Mumford's theory for reductive groups. 

Throughout this section, as before, let $H$ be a linear algebraic group with unipotent radical $H_u$, and let $H_r = H/H_u$.
Recall that when $H=H_r = G$ is a reductive group acting on a projective
variety $X$ with an ample linearisation $L \to X$, the quotient of the stable
locus (in the sense of Definition \ref{def:TrRe5}, \ref{itm:TrRe5-2}) admits a
canonical projective completion $X \dblslash G$, which is a good
categorical quotient of the semistable locus. We have $X \dblslash G =
\proj(\kk[X,L]^G)$, where $\kk[X,L]^G$ is a finitely generated $\kk$-algebra by
Nagata's theorem \cite{nag64}, and topologically $X \dblslash G$ can be described
as the quotient of $X^{\rmss}$ under the S-equivalence relation; see Section
\ref{sec:BgReductive}. When $H$ is not
reductive, this picture does not carry over into our non-reductive GIT, due to
the fact that the ring of invariants 
$\kk[X,L]^H$ is not necessarily finitely generated and the image of the
enveloping quotient map $q:X^{\ssfg} \to X \env H$ (within which the
quotient $X^{\rms}/H$ of the stable locus $X^{\rms}$ is contained) is not
necessarily a variety. To address the first issue, we
introduced the notion of an inner enveloping quotient (Definition
\ref{def:GiFi3.1}). Recall this is a choice of quasi-compact open subscheme $\mc{U}
\subseteq X \env H$ which contains the image of the enveloping quotient map
$q:X^{\ssfg} \to X \env H$ as a dense subset. Every inner enveloping quotient is
quasi-projective, so we may talk about their projective completions. 

A reasonable way to try and recover a picture similar to Mumford's for
reductive groups is to therefore consider how to construct projective completions of
inner enveloping quotients containing the enveloped quotient
as a dense subset. We make
\begin{defn} \label{def:Co10}
Let $H$ be a linear algebraic group acting on a variety $X$ with linearisation   \index{enveloped quotient!projective completion of}
$L \to X$. We call a projective variety $Z$ a \emph{projective completion of the
  enveloped quotient} if there is an inner enveloping quotient $\mc{U} \subseteq
X \env H$ and an open immersion $\mc{U} \hookrightarrow Z$.  
\end{defn}

The purpose of this section is to describe a method for constructing projective
completions of
the enveloped quotient, based on extending the work of \cite[\S 5]{dk07} described in 
Section
\ref{subsec:BgUnExtending}. In Section \ref{sec:Co1Reductive} we
extend the notion of a reductive envelope in Definition \ref{def:BgUn12} to the
more general case 
where $H$ is not necessarily unipotent, nor the linearisation $L \to X$ ample
over a projective variety (see Definition \ref{def:Co1Re5}). The idea here is to
consider equivariant projective completions 
$\beta:\GUX \hookrightarrow \olGUX$ (where $G$ is a reductive group $G$ and
$\GUX$ is formed with respect to an $H_u$-faithful homomorphism $H \to G$),
together with an extension of the linearisation $L \to X$ to a $G \times
H_r$-linearisation $L^{\prime} \to \olGUX$, with conditions imposed in order to
yield inclusions 
\begin{equation} \label{eq:Co10}
\begin{array}{c} 
X \cap (\olGUX^{\rms(L^{\prime})}) \subseteq X^{\rms} \subseteq X^{\ssfg} \subseteq
X \cap (\olGUX^{\rmss(L^{\prime})}) \\ \\
\text{and } \quad q(X^{\ssfg}) \subseteq \mc{U} \subseteq \olGUX
\dblslash_{L^{\prime}} (G \times H_r), 
\end{array}   
\end{equation}
where $\mc{U}$ is some inner enveloping quotient of $X^{\ssfg}$. The main result
concerning reductive envelopes is Theorem \ref{thm:Co1Re11}. In Section
\ref{sec:Co1Strong} we consider 
certain kinds of reductive envelope, called \emph{strong reductive envelopes},
which yield equalities $X \cap \olGUX^{\rms(L^{\prime})}=X^{\rms}$ and
$X^{\ssfg} = X \cap \olGUX^{\rmss(L^{\prime})}$. These are therefore interesting from
the point of view of computing the stable and finitely generated semistable loci
for $H \act L \to X$. Here we also give an explicit way to construct strong reductive 
envelopes when some extra simplifying conditions on the linearisation $H \act L \to X$ 
and
group $G$ are satisfied.

\subsection{Reductive Envelopes: General Theory and Main Theorem}
\label{sec:Co1Reductive}

Let $H$ be a linear algebraic group acting on an irreducible variety $X$ with
linearisation $L \to X$ and suppose we have an $H_u$-faithful homomorphism $H
\to G$ into a reductive group $G$. Consider the fibre space $\GUX$ associated to
this homomorphism, as in Section \ref{subsec:GiStRelation}. As there, we
abuse notation and write $L$ for the natural $G \times H_r$ linearisation $G
\times^{H_u} L \to \GUX$, unless confusion is likely to  
occur, and also let $\alpha:X \hookrightarrow \GUX$ be the natural closed
immersion. We wish to extend the theory of reductive
envelopes from \cite{dk07} to the more general setting 
where $H$ is not necessarily a unipotent group. It is natural to
preserve as much of the intrinsic non-reductive GIT picture as possible:
we would like the reductive quotients of $L^{\prime} \to
\olGUX$ by $G$ and $G \times H_r$ to reflect the `quotienting in stages' aspect
of the diagram in Theorem
\ref{thm:GiSu2}, \ref{itm:GiSu2-1}. This requires identifying collections of
invariant sections in $\kk[X,L]$ that are large 
enough to detect the subsets $X^{\nss}$,
$X^{\ssfg}$, $X^{\rmss, H_u \dashfg}$ and $X^{\rms}$, and ensuring
these extend to sections over $\olGUX$. 

\begin{defn} \label{def:Co1Re3} Let $H$ be a linear algebraic group acting on an
  irreducible variety $X$ and $L \to X$ a linearisation. Fix a reductive
  group $G$ and an $H_u$-faithful homomorphism, where $H_u$ is the unipotent radical of $H$ and $H_r = H/H_u$. We say an enveloping system $V$  \index{fully separating enveloping system} \index{enveloping system ! fully separating}
  is \emph{fully separating} if it is adapted to a finite
  subset $\mc{S} \subseteq V$ such that the following properties are satisfied: 
  \begin{enumerate}
  \item \label{itm:Co1Re3-1} $X^{\nss} = \bigcup_{f \in V^H} X_f$;
  \item \label{itm:Co1Re3-2} there is a subset $\mc{S}^{\ssUfg} \subseteq \mc{S}$
    such that $X^{\ssUfg}=\bigcup_{f \in \mc{S}^{\ssUfg}} X_f$ and $V$ defines
    an enveloping system adapted to 
    $\mc{S}^{\ssUfg}$ for the restricted linearisation $H_u \act L \to X$; and 
  \item \label{itm:Co1Re3-3} for every $x\in X^{\rms}$ there is $f
  \in \mc{S}$ with corresponding $G \times H_r$-invariant $F$ over $\GUX$ such
  that $(\GUX)_F$ is affine. (Equivalently, for every $x \in X^{\rms}$ there if
  $f \in \mc{S}$ such that $X_f$ is affine and $X_f \to \spec((S^{H_u})_{(f)})$
  is a principal $H_u$-bundle; see Remark \ref{rmk:GiSt3}.)  
  \end{enumerate}
\end{defn}

It is not difficult to modify the proof of Proposition \ref{prop:GiFi4.1},
\ref{itm:GiFi4.1-1} to prove the existence of fully separating enveloping
systems, for any given linearisation $L \to X$ with $X$ irreducible, and that
such an enveloping system is stable under taking products of sections. This is
done in the next lemma.


\begin{lem} \label{lem:Co1Re4} Given any irreducible $H$-variety $X$ with
  linearisation $L \to X$ and any $H_u$-faithful homomorphism $H \to G$ with $G$
  reductive, for some $r>0$ there exists a fully separating enveloping
  system $V \subseteq H^0(X,L^{\ten r})^{H_u}$. Furthermore, for any fully
  separating enveloping system $V \subseteq H^0(X,L^{\ten m})$ and any $n>0$ the
  image of the natural multiplication map 
\[
V^{\ten n} \to H^0(X,L^{\ten mn}), 
\]
is again a fully separating enveloping system.
\end{lem}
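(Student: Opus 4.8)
The plan is to prove existence by adapting the construction in the proof of Proposition \ref{prop:GiFi4.1}, \ref{itm:GiFi4.1-1}: I collect a finite set of invariant sections witnessing each of the three conditions of Definition \ref{def:Co1Re3}, normalise their degrees to a common $r>0$, and then enlarge to a finite-dimensional $H$-stable subspace using local finiteness. The statement about products is the analogue of Proposition \ref{prop:GiFi4.1}, \ref{itm:GiFi4.1-3}. For existence, I first use that $X$ is a variety and hence noetherian, so each of the open subsets $X^{\nss}$, $X^{\ssUfg}=X^{\rmss,H_u\dashfg}$ and $X^{\rms}$ is quasi-compact. This produces three finite families of $H$-invariant sections of positive powers of $L$: sections $p_1,\dots,p_a$ with $X^{\nss}=\bigcup_i X_{p_i}$; sections $q_1,\dots,q_b\in I^{\rmss,H_u\dashfg}$ with $X^{\ssUfg}=\bigcup_j X_{q_j}$, each $(S^{H_u})_{(q_j)}$ finitely generated; and sections $f_1,\dots,f_c\in I^{\rms}$ with $X^{\rms}=\bigcup_l X_{f_l}$, where by Definition \ref{def:GiSt1.1} and Remark \ref{rmk:GiSt3} the associated $G\times H_r$-invariant $F_l$ over $\GUX$ has $(\GUX)_{F_l}$ affine.

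For each $q_j$ and each $f_l$ the finitely generated $\kk$-algebra $(S^{H_u})_{(q_j)}$, respectively $(S^{H_u})_{(f_l)}$, has a finite generating set. Writing these generators with a common denominator power and replacing each $q_j,f_l$ by that power (which does not change the loci $X_{q_j},X_{f_l}$), and likewise raising the $p_i$ to suitable powers, I can arrange a single $r>0$ so that all the $p_i,q_j,f_l$ lie in $H^0(X,L^{\ten r})^{H}$ and every generator has the form $g/q_j$ or $g/f_l$ with numerator $g\in H^0(X,L^{\ten r})^{H_u}$. Since $H_u$ is normal in $H$, the subspace $H^0(X,L^{\ten r})^{H_u}$ is an $H$-submodule of $H^0(X,L^{\ten r})$, on which the $H$-action is locally finite; hence by Lemma \ref{lem:TrRe4} the $H$-submodule generated by the finite set of all $p_i,q_j,f_l$ together with all these numerators $g$ is a finite-dimensional $H$-stable subspace $V\subseteq H^0(X,L^{\ten r})^{H_u}$. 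Setting $\mc{S}=\{q_j\}\cup\{f_l\}$ and $\mc{S}^{\ssUfg}=\{q_j\}$, I would then check the conditions: \ref{itm:Co1Re3-1} holds since $\{p_i\}\subseteq V^{H}$ cover $X^{\nss}$ while $X_f\subseteq X^{\nss}$ for every $f\in V^H$; \ref{itm:Co1Re3-2} holds since the $X_{q_j}$ cover $X^{\ssUfg}$ and, for each $f\in\mc{S}$, the generators $g/f$ of $(S^{H_u})_{(f)}$ have numerators in $V=V^{H_u}$, so $V$ is an enveloping system adapted to $\mc{S}$ (a fortiori to $\mc{S}^{\ssUfg}$) for $H_u\act L\to X$; and \ref{itm:Co1Re3-3} holds since the $f_l\in\mc{S}$ satisfy $(\GUX)_{F_l}$ affine. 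Enlarging $V$ beyond the chosen numerators only enlarges the generating set $\{\tilde f/f:\tilde f\in V\}$, so all finite-generation statements persist.

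For the product statement, let $V'$ be the image of $V^{\ten n}\to H^0(X,L^{\ten mn})$, with $\mc{S}'=\{f^n:f\in\mc{S}\}$ and $\mc{S}'^{\ssUfg}=\{f^n:f\in\mc{S}^{\ssUfg}\}$. Then $V'$ is a finite-dimensional $H$-stable subspace of $H^0(X,L^{\ten mn})^{H_u}$, and by Proposition \ref{prop:GiFi4.1}, \ref{itm:GiFi4.1-3} applied to $H_u$ it is an enveloping system adapted to $\mc{S}'$ for $H_u\act L\to X$, the key point being that $\tilde f/f=(\tilde f f^{n-1})/f^n$ realises each generator through the $n$-fold product $\tilde f f^{n-1}\in V'$. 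The three conditions transfer once one observes $X_{f^n}=X_f$ and $(\GUX)_{F^n}=(\GUX)_{F}$: the powers $\{f^n:f\in V^H\}\subseteq (V')^{H}$ still cover $X^{\nss}$, the set $\{f^n:f\in\mc{S}^{\ssUfg}\}$ still covers $X^{\ssUfg}$, and the affine loci detecting $X^{\rms}$ are unchanged.

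The main obstacle is the bookkeeping in the existence part: one must simultaneously land every chosen section and every chosen numerator at a single degree $r$ while preserving both the non-vanishing loci $X_f$ and the finite generation of the localised rings $(S^{H_u})_{(f)}$, and one must guarantee that the enlarged $H$-stable space $V$ stays inside $H^0(X,L^{\ten r})^{H_u}$. The latter is exactly where the normality of $H_u$ in $H$ enters, making $H^0(X,L^{\ten r})^{H_u}$ an $H$-submodule to which local finiteness (Lemma \ref{lem:TrRe4}) applies. Verifying \ref{itm:Co1Re3-3} further depends on the translation in Remark \ref{rmk:GiSt3} between affineness of $(\GUX)_F$ and the affine principal-$H_u$-bundle condition built into Definition \ref{def:GiSt1.1}, together with the fact that this property is unaffected by passing to powers $f^n$.
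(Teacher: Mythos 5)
Your overall strategy (collect finitely many witnesses for each condition, normalise degrees, enlarge to an $H$-stable subspace of $H^0(X,L^{\ten r})^{H_u}$ via Lemma \ref{lem:TrRe4}, and treat products via Proposition \ref{prop:GiFi4.1}, \ref{itm:GiFi4.1-3}) is the same as the paper's, and the product half of your argument is fine. But there is a genuine gap in the existence part: Definition \ref{def:Co1Re3} asks for an \emph{enveloping system} that is fully separating, and by Definition \ref{def:GiFi4} this means $V$ must be adapted to a finite set $\mc{S}$ with $X^{\ssfg}=\bigcup_{f\in\mc{S}}X_f$ and with each $(S^H)_{(f)}$, $f\in\mc{S}$, finitely generated \emph{with generating set} $\{\tilde f/f \mid \tilde f\in V^H\}$ (this is exactly how the notion is used later, e.g.\ in the proof of Proposition \ref{prop:Co1Re10}). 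Your $\mc{S}=\{q_j\}\cup\{f_l\}$ only covers $X^{\ssUfg}$, which may be strictly smaller than $X^{\ssfg}$, and you never place generators of the $H$-invariant algebras $(S^H)_{(f)}$ into $V$: you only collect $H_u$-invariant numerators generating the $(S^{H_u})_{(f)}$. Enlarging $V$ to an $H$-stable subspace enlarges $V=V^{H_u}$, so the $H_u$-level generation persists, but it does not enlarge $V^H$ in any controlled way, so the required generation of $(S^H)_{(f)}$ by $V^H/f$ is simply not established. (Finite generation of $(S^H)_{(f)}=((S^{H_u})_{(f)})^{H_r}$ follows from reductivity of $H_r$ when $X$ is irreducible, but that is weaker than generation by the specific set $\{\tilde f/f \mid \tilde f\in V^H\}$.)

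The fix is the first step of the paper's proof, which your opening sentence announces but your argument then omits: begin by invoking Proposition \ref{prop:GiFi4.1}, \ref{itm:GiFi4.1-1} for the $H$-linearisation to obtain an enveloping system $V'\subseteq H^0(X,L^{\ten r})^H$ adapted to a finite $\mc{S}$ with $X^{\ssfg}=\bigcup_{f\in\mc{S}}X_f$ and $(S^H)_{(f)}$ generated by $\{\tilde f/f\mid\tilde f\in (V')^H\}$, then adjoin your sections $p_i$, $q_j$, $f_l$ and the $H_u$-invariant numerators, normalise degrees by multiplication maps, and finally enlarge to an $H$-stable $V\subseteq H^0(X,L^{\ten r})^{H_u}$ containing $V'$. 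With $V'\subseteq V^H$ the $H$-level adapted condition survives the enlargement, and the rest of your verification of \ref{itm:Co1Re3-1}--\ref{itm:Co1Re3-3} then goes through as you wrote it.
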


\begin{proof}
By Proposition \ref{prop:GiFi4.1}, \ref{itm:GiFi4.1-1} there is an
enveloping system $V^{\prime} \subseteq H^0(X,L^{\ten r})^{H}$, for some $r>0$, 
adapted to
a finite subset $\mc{S}$ with $X^{\ssfg}=\bigcup_{f \in \mc{S}} X_f$. We will
augment this enveloping system by taking a suitably large multiple of $r$ and
replacing $V^{\prime}$ and $\mc{S}$ by their images under the natural multiplication 
map of sections,
as in Proposition \ref{prop:GiFi4.1}, \ref{itm:GiFi4.1-3}. We repeatedly use the
fact that $X_f=X_{f^n}$ for any section $f$ over $X$ and integer $n>0$, as well
as the equalities $(S^H)_{(f)}=(S^H)_{(f^n)}$ and
$(S^{H_u})_{(f)}=(S^{H_u})_{(f^n)}$ for invariant sections $f$.    

Because $X$ is quasi-compact, by taking a large multiple of $r$ and replacing
$V^{\prime}$ and $\mc{S}$ appropriately 
we may assume there are subsets $\mc{S}^{\rms}$ and $
\mc{S}^{\ssUfg}$ of $\mc{S}$ such that $X^{\rms}$ and
$X^{\ssUfg}$ are covered by open subsets of
the form $X_f$ with $f \in \mc{S}^{\rms}$ and $f \in
\mc{S}^{\ssUfg}$, respectively. We may also assume that $r$ is chosen so that
$H^0(X,L^{\ten r})^H$ contains sections $f_1,\dots,f_n$ 
with $X^{\nss}=\bigcup_{i=1}^n X_{f_i}$, and furthermore we can choose
$\mc{S}^{\rms}$ so that each $f \in 
\mc{S}^{\rms}$ extends to $F$ over $\GUX$ such that 
$(\GUX)_F$ is affine. Following an argument 
similar to the construction of the subset 
$A$ in the proof of Proposition \ref{prop:GiFi4.1}, \ref{itm:GiFi4.1-1}, by
taking another suitably large multiple of $r$ and replacing $V^{\prime}$ and the sets
$\mc{S}$ and $\{f_1,\dots,f_n\}$ by their images under the appropriate multiplication 
map on
sections we may assume there is a subset $A^{\ssUfg}
\subseteq H^0(X,L^{\ten r})^{H_u}$,
containing $\mc{S}^{\ssUfg}$, such
that $(S^{H_u})_{(f)}$ is generated by $\{\tilde{f}/f \mid \tilde{f} \in
A^{\ssUfg}\}$ for each $f \in \mc{S}^{\ssUfg}$. By Lemma \ref{lem:TrRe4} there
is a finite dimensional rational $H$-module $V \subseteq H^0(X,L^{\ten
  r})^{H_u}$ for the
natural $H$-action that contains $V^{\prime} \cup A^{\ssUfg} \cup
\{f_1,\dots,f_n\}$. Then $V$ 
is an enveloping system adapted to $\mc{S}$ such that properties
\ref{itm:Co1Re3-1}--\ref{itm:Co1Re3-3} of Definition \ref{def:Co1Re3} are satisfied.   

The statement about images of fully separating enveloping systems under
multiplication maps follows immediately from
Proposition \ref{prop:GiFi4.1}, \ref{itm:GiFi4.1-3} and the equalities
$X_{f^n}=X_f$, $(S^H)_{(f)}=(S^H)_{(f^n)}$ and
$(S^{H_u})_{(f)}=(S^{H_u})_{(f^n)}$ for any $H$-invariant $f$ and $n>0$.    
\end{proof}      

\begin{eg} \label{ex:Co1Re1}
If $X$ is an irreducible projective $H$-variety and $L \to X$ an ample
linearisation, then each space of sections $H^0(X,L^{\ten r})$, where $r>0$, is a
finite dimensional vector space. Then given an $H_u$-faithful homomorphism $H
\to G$, an easy consequence of Lemma
\ref{lem:Co1Re4} is that the space
$V=H^0(X,L^{\ten r})$ defines a fully separating enveloping system for
sufficiently divisible $r>0$.      
\end{eg}

We now turn to the definition of a reductive envelope. Given an
$H_u$-faithful homomorphism $H \to G$ with $G$ reductive, the idea is to
extend the linearisation $G \times H_r$ on $G
\times^{H_u} L \to \GUX$ over a suitable equivariant projective completion
$\olGUX$ of $\GUX$. A key condition for obtaining the diagram \eqref{eq:Co10} is to
ensure enough invariants over $X$ (or equivalently over $\GUX$) extend to
invariants over $\olGUX$. 

\begin{defn} \label{def:Co1Re5} Let $H \act L \to X$ be a linearisation of a
  non-reductive group $H$ and suppose $H \to G$ is an $H_u$-faithful homomorphism
  into a reductive group $G$, where $H_u$ is the unipotent radical of $H$ and $H_r = H/H_u$. Let $\olGUX$ be a
  projective $G \times 
  H_r$-variety with $G \times H_r$-equivariant dominant open immersion $\beta:\GUX
  \hookrightarrow \olGUX$ and 
$L^{\prime} \to \olGUX$ a $G \times H_r$-linearisation that restricts to some
positive tensor power of $L
\to \GUX$ under $\beta$. We call $(\olGUX,\beta,L^{\prime})$ a \emph{reductive
  envelope} for the linearisation $H \act L \to X$ if there is fully separating  \index{reductive envelope}
enveloping system $V$ for $H \act L \to X$ such that   
\begin{enumerate}
\item \label{itm:Co1Re1-1} each section in $V^{H_u}$ extends under $\beta \circ
  \alpha$ to a $G$-invariant section of some tensor power of $L^{\prime}$ over $\olGUX
$;
\item \label{itm:Co1Re1-2} each section in $V^H$ extends under $\beta \circ
  \alpha$ to a $G \times H_r$-invariant section of some tensor power of
  $L^{\prime}$ over $\olGUX$; and
\item \label{itm:Co1Re1-3} for $f \in V^{H_u}$ 
  with extension to $F$ over $\olGUX$, the open subset $(\olGUX)_{F}$ is affine.  
\end{enumerate} 
If the line bundle $L^{\prime}$ is ample, then we call
$(\olGUX,\beta,L^{\prime})$ an \emph{ample reductive envelope}. \index{ample reductive envelope}
\end{defn}
\begin{rmk} \label{rmk:Co1Re6} In the case where $H$ is unipotent, our notion of
  reductive envelope corresponds to 
  that of a \emph{fine} reductive envelope in \cite{dk07}; cf. Definition   \index{fine reductive envelope}
  \ref{def:BgUn12}. 
\end{rmk}
\begin{rmk} \label{rmk:Co1Re6.1}
The case where $L^{\prime}$ is ample is of most interest to us, because
it ensures the GIT quotient $\olGUX \dblslash_{L^{\prime}} (G \times H_r)$ is
a projective variety. It also means that condition \ref{itm:Co1Re1-3} of
Definition \ref{def:Co1Re5} is automatically 
satisfied, so verifying that the data $(\olGUX,\beta,L^{\prime})$ defines a
reductive envelope 
reduces to checking that invariant sections from a fully separating
enveloping system extend to sections over
$\olGUX$.
\end{rmk}

The next proposition asserts the existence of an ample reductive envelope for
any given \emph{ample} $H$-linearisation $L \to X$, under the 
standing assumption that stabilisers of general points in $X$ are finite (see 
Remark \ref{rmk:TrRe0}). 
  
\begin{propn} \label{prop:Co1Re6} Let $H$ be a linear algebraic group acting on
  an irreducible quasi-projective variety $X$ such that stabilisers of general
  points in $X$ are finite and $L \to X$ an ample
  linearisation for the action. Then 
  $H \act L \to X$ possesses an ample reductive envelope for some
  reductive group $G$ containing $H$ as a closed subgroup.
\end{propn}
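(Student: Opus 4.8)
The plan is to build an \emph{explicit} ample reductive envelope by realising $\GUX$ as a locally closed subvariety of a projective bundle over the base $G/H_u$, completing that base equivariantly so that an ample linearisation becomes available, and then using a boundary-twist (as in Proposition \ref{prop:BgUn14.1}) to force the distinguished invariants to extend.

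First I would fix the reductive group and a good system of invariants. Since $H$ is a linear algebraic group it admits a closed embedding $H \hookrightarrow \GL(n,\kk)=:G$ for some $n$; this is in particular an $H_u$-faithful homomorphism, and $G$ is reductive and contains $H$ as a closed subgroup, as required. By Lemma \ref{lem:Co1Re4}, after replacing $L$ by a sufficiently divisible positive tensor power (harmless, as the whole theory is unchanged under such a replacement), there is a fully separating enveloping system $V \subseteq H^0(X,L)^{H_u}$; note that $V^{H_u}=V$. Enlarging $V$ by Lemma \ref{lem:TrRe4} to a finite-dimensional rational $H$-submodule $V_0 \subseteq H^0(X,L)$ and taking a further power of $L$ so that $L$ is very ample, I obtain (cf. Example \ref{ex:TrRe3.2}) an $H$-equivariant locally closed immersion $X \hookrightarrow \PP(V_0^*)$ with $\OO(1)|_X=L$ and $V \subseteq V_0$.

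Next I would pass to the fibre space. Forming the $(G \times H_r)$-equivariant vector bundle $\mathcal{E}:=G \times^{H_u} V_0$ on $G/H_u$, the immersion of $X$ induces a $(G \times H_r)$-equivariant locally closed immersion $\GUX \hookrightarrow \PP(\mathcal{E}^*)$ over $G/H_u$, under which $L=G\times^{H_u}L$ is identified with the restriction of the relative hyperplane bundle $\OO_{\mathcal{E}}(1)$; the sections in $V$, being $H_u$-invariant, correspond under $\alpha^*$ to $G$-invariant sections of $L$ over $\GUX$ that are constant along the base and linear along the fibres. The structural point I would emphasise is that $G/H_u$ is \emph{quasi-affine}, since the unipotent group $H_u$ is an observable subgroup of $G$ (cf. \cite{bbhm63}); consequently $L$ is in general \emph{not} ample on $\GUX$, and ampleness must be manufactured by completing the base. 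The group $G \times H_r$ is reductive and $G/H_u$ is smooth, hence normal, so by \cite{su74,su75} it admits a $(G\times H_r)$-equivariant locally closed immersion into some $\PP(A \oplus \kk)$ with $A$ a $(G \times H_r)$-module and $\kk$ the trivial summand, realising $G/H_u$ in the affine chart where the last coordinate is nonzero. Let $Z:=\overline{G/H_u}$ be its closure and $\OO_Z(1)$ the ample $(G\times H_r)$-linearised bundle inherited from $\OO_{\PP(A\oplus \kk)}(1)$, whose restriction to $G/H_u$ is trivial. Extending $\mathcal{E}$ to a $(G \times H_r)$-equivariant coherent sheaf $\overline{\mathcal{E}}$ on $Z$, I set $\olGUX$ to be the closure of $\GUX$ in $\PP(\overline{\mathcal{E}}^*)$, with $\beta$ the resulting dominant $(G\times H_r)$-equivariant open immersion and $\pi:\olGUX \to Z$ the structure map, and define
\[
L^{\prime}:=\OO_{\overline{\mathcal{E}}}(1) \ten \pi^*\OO_Z(N) \to \olGUX .
\]
Because $\OO_{\overline{\mathcal{E}}}(1)$ is relatively ample over the projective base $Z$ and $\OO_Z(1)$ is ample, $L^{\prime}$ is ample for $N \gg 0$, while $L^{\prime}|_{\GUX}=L$ since $\OO_Z(1)|_{G/H_u}$ is trivial.

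Finally I would extend the finitely many distinguished invariants. Let $s_0 \in H^0(Z,\OO_Z(N))$ be the $N$-th power of the section cutting out the hyperplane at infinity; it is $(G \times H_r)$-invariant, restricts to the constant $1$ on $G/H_u$, and vanishes to order $N$ along $Z \setminus (G/H_u)$. For $f$ in a basis of $V$, with associated $G$-invariant section $\hat{f}$ of $L$ over $\GUX$, the section $\hat f$ is globally defined on the bundle over $G/H_u$, so as a rational section of $L^{\prime}$ on $\olGUX$ it has poles only along $\pi^{-1}(Z \setminus G/H_u)$; hence $\hat f \ten \pi^* s_0$ extends to a genuine $G$-invariant global section $F$ of $L^{\prime}$ once $N$ exceeds the pole order, and $F$ restricts to $\hat f$ on $\GUX$. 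When $f \in V^H$ the section $\hat f$ is $(G\times H_r)$-invariant, so the same $F$ is $(G\times H_r)$-invariant. Taking $N$ larger than all the finitely many relevant pole orders (and large enough for ampleness) then verifies conditions \ref{itm:Co1Re1-1} and \ref{itm:Co1Re1-2} of Definition \ref{def:Co1Re5}, while condition \ref{itm:Co1Re1-3} is automatic since $L^{\prime}$ is ample (Remark \ref{rmk:Co1Re6.1}); thus $(\olGUX,\beta,L^{\prime})$ is an ample reductive envelope. The main obstacle is precisely this interplay forced by the non-ampleness of $L$ on $\GUX$: one must complete the quasi-affine base so that an ample bundle exists on $Z$, extend the fibrewise structure there equivariantly, and arrange the boundary twist so that it both produces ampleness and clears the poles of the chosen invariants without disturbing their restrictions to $\GUX$ or their $G$- and $(G\times H_r)$-invariance.
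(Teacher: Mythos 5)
Your overall strategy --- choose a reductive $G \supseteq H$, realise $\GUX$ inside a projective bundle over $G/H_u$, complete the quasi-affine base equivariantly in $\PP(A\oplus\kk)$, and twist by the boundary to extend the distinguished invariants --- is close in spirit to the paper's, but your choice of a \emph{generic} closed embedding $H \hookrightarrow \GL(n,\kk)$ creates a gap at the extension step that the paper deliberately engineers away. The paper takes $G=\GL(V)$ for the enveloping system $V$ itself, so that the $H_u$-action on $V$ extends to $G$ and the fibre space untwists as $G\times^{H_u}\PP(V^*)\cong (G/H_u)\times\PP(V^*)$; the invariant to be extended is then $1\ten f$ with $f$ already a global section on the \emph{projective} factor $\PP(V^*)$, and only the function $1$ on $G/H_u$ needs extending, which is done by the coordinate $\epsilon$ on $\PP(W^*\oplus\kk)$. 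No pole analysis and no normality of $\olGUX$ is ever needed.

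In your twisted-bundle version the section $\hat f$ lives only over $G/H_u$, and here is the concrete problem: the boundary $Z\setminus(G/H_u)$ is not just the hyperplane at infinity. Unless $H_u$ happens to be a Grosshans subgroup of your $G$ (which a generic $\GL(n,\kk)$ gives you no right to assume), the closure of $G/H_u$ inside the affine chart $\A(A^*)$ has boundary components of codimension one, and on these your section $s_0=\epsilon^N$ restricts to a nonzero constant. Multiplying by $\pi^*s_0$ therefore does nothing to kill whatever failure of regularity $\hat f$ has there, so ``$N$ exceeds the pole order'' cannot be arranged along those components. Moreover the phrase ``pole order'' presupposes that $\olGUX$ is normal (so that regularity is detected in codimension one) and that $\olGUX\setminus\GUX$ is supported on a Cartier divisor; neither is established for the closure of $\GUX$ in $\PP(\overline{\mathcal{E}}^*)$, and the existence of the equivariant coherent extension $\overline{\mathcal{E}}$ is itself asserted without justification. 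The argument can be repaired --- e.g.\ by taking $\overline{\mathcal{E}}$ to be the $G\times H_r$-equivariant coherent subsheaf of $j_*\mathcal{E}$ generated by a coherent extension together with the finitely many sections $\hat f$ (and their translates), so that the invariants extend by construction rather than by a pole computation --- but as written the proof does not go through; the cleanest fix is simply the paper's: let $G$ be $\GL(V)$ acting through the enveloping system, so that extension in the fibre direction is automatic.
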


\begin{proof} 
Begin by using Lemma \ref{lem:Co1Re4} to find $r>0$ with a fully separating
enveloping system $V \subseteq H^0(X,L^{\ten r})$. (Note that by Remark
\ref{rmk:GiSt3} it makes sense to talk about fully separating enveloping systems
without reference to any reductive group $G$ and $H_u$-faithful homomorphism $H
\to G$.) The line bundle $L$ is ample, so by taking a 
sufficiently large multiple of $r$, replacing $V$
by its image under the natural multiplication map of sections
and enlarging the resulting $V$ using Lemma \ref{lem:TrRe4} if
necessary, we may assume that $V$ is a rational $H$-module and defines an
$H$-equivariant locally 
closed immersion $X \hookrightarrow 
\PP(V^*)$. The action of $H$ on
$V$ defines a homomorphism $\rho: H \to G:=\GL(V)$ and there is a canonical
$G$-linearisation on 
$\OO_{\PP(V^*)}(1) \to \PP(V^*)$ extending the $H$-linearisation $L^{\ten r} \to
X$ via $\rho$. Note that any element of the kernel of $\rho$ must fix every
point in $X$, so $\ker \rho$ is a finite group and cannot contain any
unipotent elements. Thus $\rho:H \to G$ is $H_u$-faithful. Consider
the fibre bundle $G \times^{H_u} \PP(V^*)$, together with its $G 
\times H_r$-linearisation $G \times^{H_u}\OO_{\PP(V^*)}(1)$. Then there is an
isomorphism of $G \times H_r$-varieties
\begin{align*}
G \times^{H_u} \PP(V^*) &\cong (G/H_u) \times \PP(V^*), \\
[g,y] &\mapsto (gH_u,gy),
\end{align*}              
and the corresponding $G \times H_r$-linearisation over $(G/H_u) \times
\PP(V^*)$ has underlying line bundle $\OO_{G/H_u} \boxtimes
\OO_{\PP(V^*)}(1)$.   

Now because $H_u$ is unipotent the homogeneous space $G/H_u$ is quasi-affine
\cite[Corollary 2.8 and Theorem 2.1]{gro97}, so there is a finite dimensional
vector subspace $W \subseteq \OO(G/H_u)$ defining a locally closed immersion of
$G/H_u$ into the affine space
$\A=\spec(\symdot W^*)$. Right multiplication by $H$
on $G$ via $\rho$ descends to define an $H_r$-action on $G/H_u$, and $G$ acts by
left 
multiplication on $G/H_u$. By \cite[Proposition 1.9]{bor91} we may assume that
$W$ is invariant under  
the corresponding actions of $H_r$ and $G$ on $\OO(G/H_u)$. This induces
an action of $G \times H_r$ on $\A$ together with a linearisation on
the trivial line bundle $\OO_{\A} \to \A$ which restricts to the canonical $G
\times H_r$-linearisation $\OO_{G/H_u} \to G/H_u$ under $G/H_u \hookrightarrow
\A$. Let $\kk$ be a copy of the ground field
equipped with the trivial $G \times H_r$-representation, set $\PP:=\PP(W^*
\oplus \kk)$, and let $\beta_1:G/H_u \hookrightarrow \ol{G/H_u}$ be the
projective completion of $G/H_u$ resulting from the embedding $G/H_u
\hookrightarrow \A$ and the standard open immersion $\A
\hookrightarrow \PP$. Then the restriction
$\OO_{\ol{G/H_u}}(1)=\OO_{\PP}(1)|_{\ol{G/H_u}}$ of the canonical $G \times H_r$-
linearisation $\OO_{\PP}(1) \to
\PP$ to $\ol{G/H_u}$ pulls back to
the $G \times H_r$-linearisation $\OO_{G/H_u} \to G/H_u$ under
$\beta_1$. Consider the linearisation
\[
G \times H_r \act \OO_{\ol{G/H_u}}(1) \boxtimes \OO_{\PP(V^*)}(1) \to \ol{G/H_u}
\times \PP(V^*)
\]
obtained by taking the product of $\OO_{\ol{G/H_u}}(1)$ with the $G \times
H_r$-linearisation on $\OO_{\PP(V^*)}(1)$ defined by $G$ and the trivial
$H_r$-action on $\PP(V^*)$. Let $\beta:\GUX \hookrightarrow \olGUX \subseteq \ol{G/
H_u} \times \PP(V^*)$
be the projective completion
of $\GUX$ obtained by the composition of the embedding $\GUX \hookrightarrow G
\times^{H_u} \PP(V^*) \cong (G/H_u) \times \PP(V^*)$ with the open immersion
$\beta_1 \times \id_{\PP(V^*)}$, and let $L^{\prime}$ be the restriction of 
$\OO_{\ol{G/H_u}}(1) \boxtimes \OO_{\PP(V^*)}(1)$ to $\olGUX$. Then
$\beta^*L^{\prime}=L^{\ten r} \to \GUX$ as $G \times H_r$-linearisations.   

To conclude we observe that the required extension properties
\ref{itm:Co1Re1-1}--\ref{itm:Co1Re1-3} of Definition \ref{def:Co1Re5} hold for
$(\olGUX,\beta,L^{\prime})$, as 
follows. Any invariant section $f \in V \subseteq H^0(X,L^{\ten
  r})$ extends to an invariant (which we also call $f$) of $\OO_{\PP(V^*)}(1) \to 
\PP(V^*)$ by
construction, and if $f$ is $H_u$-invariant (respectively, $H$-invariant) over
$\PP(V^*)$ then it extends to the $G$-invariant (respectively, $G \times
H_r$-invariant) section 
\[
1 \ten f \in \OO(G/H_u) \ten H^0(\PP(V^*),\OO(1)) = H^0((G/H_u) \times
\PP(V^*),\OO_{G/H_u} \boxtimes \OO_{\PP(V^*)}(1)).
\] 
(Here we have used the K\"{u}nneth formula \cite[Tag 02KE]{stacks-project}.) But now 
if $\epsilon \in H^0(\PP,\OO_{\PP}(1))$ is the
homogeneous coordinate of $\PP=\PP(W^* \oplus \kk)$ corresponding to the trivial
$G \times H_r$-summand $\kk$ then $1 \ten f$ extends to $\epsilon \ten f$ under
$\beta$, which is $G$ or $G \times H_r$-invariant if $1 \ten
f$ is $G$ or $G \times H_r$-invariant, respectively. Thus $f \in V^{H_u}$
(respectively, $f \in V^H$) extends to the $G$-invariant (respectfully, $G \times
H_r$-invariant) $(\epsilon \ten f)|_{\olGUX}$ of $L^{\prime} \to \olGUX$. This
shows properties \ref{itm:Co1Re1-1} and \ref{itm:Co1Re1-2}. Finally, property 
\ref{itm:Co1Re1-3} holds because $L^{\prime}$ is (very) ample.      
\end{proof} 

\begin{rmk} \label{rmk:Co1Re6.3} In practice, the group $G=\GL(V)$ containing
  $H$ constructed in the proof of Proposition \ref{prop:Co1Re6} is too large to
  be computationally useful. In Section \ref{subsec:Co1StImportant} we will look
  at a class of reductive envelopes where the reductive group $G$ 
  contains $H_u$ as a \emph{Grosshans subgroup}, in which case the geometry of
  the 
  homogeneous space $G/H_u$ lends itself to more explicit calculations.       
\end{rmk}

Associated to any reductive envelope $(\olGUX,\beta,L^{\prime})$ are open
subsets of $X$ obtained by pulling back the stable and semistable loci for the
$G \times H_r$-linearisation $L^{\prime} \to \olGUX$. As we will see shortly,
one of the key properties 
of these sets is that they `bookend' the intrinsically defined
notions of stability and semistability considered in Section
\ref{sec:gitNonReductive}. In analogy to
\cite[Definition 5.2.11]{dk07} (see the statement of Theorem \ref{thm:BgUn13}),
we make the following definition. 

\begin{defn} \label{def:Co1Re7}
Let $H \act L \to X$ be a linearisation of a linear algebraic group $H$ with unipotent radical $H_u$, let $H_r = H/H_u$ and let $H
\to G$ be an $H_u$-faithful homomorphism into a reductive group $G$. Suppose 
$(\olGUX,\beta,L^{\prime})$ is a reductive envelope.  The \emph{completely
  semistable locus} is the set \index{completely semistable locus} 
\[ 
X^{\ol{\rmss}}:=(\beta \circ \alpha)^{-1}((\olGUX)^{\rmss(L^{\prime})})  
\]
and the \emph{completely stable locus} is the set \index{completely stable locus}
\[
X^{\ol{\rms}}:=(\beta\circ \alpha)^{-1}((\olGUX)^{\rms(L^{\prime})}),
\]  
where $\olGUX^{\rmss(L^{\prime})}$ and $\olGUX^{\rms(L^{\prime})}$ are the
semistable and stable loci, respectively, for the reductive linearisation $G
\times H_r \act L^{\prime} \to \olGUX$.
\end{defn}

\begin{propn} \label{prop:Co1Re8} Let $H$ be a linear algebraic group with an
  $H_u$-faithful morphism $H \to G$, with $G$ reductive, and let $X$ be an
  irreducible quasi-projective $H$-variety with an ample linearisation $L \to
  X$. If $(\olGUX,\beta,L^{\prime})$ is a
  reductive envelope for the linearisation, then
  $X^{\ol{\rmss}}=X^{\nss}$ and $X^{\ol{\rms}} \subseteq X^{\rms}$.
\end{propn}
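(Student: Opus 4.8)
The plan is to prove the two statements separately, treating the equality $X^{\ol{\rmss}}=X^{\nss}$ first and the inclusion $X^{\ol{\rms}}\subseteq X^{\rms}$ second, since the latter is where the real difficulty lies.

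For $X^{\ol{\rmss}}=X^{\nss}$ the inclusion $X^{\ol{\rmss}}\subseteq X^{\nss}$ is essentially formal: given $x\in X^{\ol{\rmss}}$, semistability of $y=(\beta\circ\alpha)(x)$ produces a $G\times H_r$-invariant section $F$ of a power of $L^{\prime}$ with $F(y)\neq 0$, and its restriction $f=(\beta\circ\alpha)^*F$ is an $H$-invariant section of a positive power of $L$ (as $L^{\prime}$ restricts to a power of $L$ under $\beta$) with $f(x)\neq 0$, so $x\in X_f\subseteq X^{\nss}$. No envelope hypotheses are needed here beyond the restriction property of $L^{\prime}$. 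For the reverse inclusion I would use the fully separating enveloping system $V$: by property \ref{itm:Co1Re3-1} of Definition \ref{def:Co1Re3} we may write $X^{\nss}=\bigcup_{g\in V^H}X_g$, so any $x\in X^{\nss}$ lies in some $X_g$ with $g\in V^H\subseteq V^{H_u}$. Property \ref{itm:Co1Re1-2} of Definition \ref{def:Co1Re5} extends $g$ to a $G\times H_r$-invariant section over $\olGUX$, and since this extension is unique (because $\beta(\GUX)$ is dense in the reduced scheme $\olGUX$) it coincides with the $G$-invariant extension $F$ furnished by property \ref{itm:Co1Re1-1}, whose non-vanishing locus $(\olGUX)_F$ is affine by property \ref{itm:Co1Re1-3}. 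Thus $y$ lies in the affine $(\olGUX)_F$ cut out by a $G\times H_r$-invariant section, so $y$ is semistable and $x\in X^{\ol{\rmss}}$.

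For $X^{\ol{\rms}}\subseteq X^{\rms}$ I would reduce to showing $\alpha(x)\in(\GUX)^{\rms(L)}$ and then invoke Proposition \ref{prop:GiSt3}, which identifies $X^{\rms}=\alpha^{-1}((\GUX)^{\rms(L)})$. Fix $x\in X^{\ol{\rms}}$ and $y=(\beta\circ\alpha)(x)\in(\olGUX)^{\rms(L^{\prime})}$. Mumford stability gives a $G\times H_r$-invariant section $F_0$ of a power of $L^{\prime}$ with $(\olGUX)_{F_0}$ affine, on which all orbits are closed and all stabilisers are finite; in particular the orbit $O=(G\times H_r)\cdot y$ is closed in $(\olGUX)_{F_0}$ and, being invariant with $y\in\GUX$, is contained in $\GUX$.

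The main obstacle is exactly the familiar one: the restriction $(\olGUX)_{F_0}\cap\GUX$ of an affine stable witness to the open subvariety $\GUX$ need not be affine, so $F_0$ itself need not witness stability over $\GUX$. To get around this I would separate the orbit from the boundary. Writing $\partial=\olGUX\setminus\GUX$, a closed $G\times H_r$-invariant set, the sets $O$ and $(\olGUX)_{F_0}\cap\partial$ are disjoint $G\times H_r$-invariant closed subsets of the affine variety $(\olGUX)_{F_0}$. Applying the good quotient $(\olGUX)_{F_0}\to(\olGUX)_{F_0}\dblslash(G\times H_r)$ of Theorem \ref{thm:BgRe1} together with the separation property \ref{itm:TrRe1-2c} of good quotients, I obtain a $G\times H_r$-invariant function $\phi$ on $(\olGUX)_{F_0}$ equal to $1$ on $O$ and $0$ on $(\olGUX)_{F_0}\cap\partial$. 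The basic open set $\{\phi\neq 0\}$ is then affine, $G\times H_r$-invariant, contains $O$, and avoids $\partial$, hence is contained in $\GUX$. Clearing denominators realises $\{\phi\neq0\}$ as $(\olGUX)_s$ for a $G\times H_r$-invariant section $s$ of a power of $L^{\prime}$ with $s(y)\neq0$; since $(\olGUX)_s\subseteq\GUX$ we have $(\GUX)_s=(\olGUX)_s$ affine, and its orbits (being closed in $(\olGUX)_{F_0}$) remain closed while stabilisers stay finite. Therefore $s$ witnesses $\alpha(x)=y\in(\GUX)^{\rms(L)}$, and Proposition \ref{prop:GiSt3} yields $x\in X^{\rms}$. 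I expect the separation-by-invariants step to be the crux; the remaining verifications — closedness of orbits and finiteness of stabilisers passing to the affine basic open, and the bookkeeping of tensor powers — are routine.
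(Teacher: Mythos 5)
Your proof is correct, and its first half (the equality $X^{\ol{\rmss}}=X^{\nss}$) is the same as the paper's: one inclusion is the formal restriction of invariants, the other uses property \ref{itm:Co1Re3-1} of a fully separating enveloping system together with the extension and affineness properties of Definition \ref{def:Co1Re5}. (Your aside about identifying the $G$-invariant and $G\times H_r$-invariant extensions is the right way to justify applying \ref{itm:Co1Re1-3} to sections of $V^H$; the paper simply asserts the combined statement, and in any case only the non-vanishing loci matter, which agree after raising to powers since $\olGUX$ is irreducible and reduced.)

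For the inclusion $X^{\ol{\rms}}\subseteq X^{\rms}$ you take a mildly different route to the same destination. Both you and the paper reduce via Proposition \ref{prop:GiSt3} to exhibiting Mumford stability inside $\GUX$, and both work inside the affine stable witness $(\olGUX)_{F}$ using its geometric quotient by $G\times H_r$. The paper observes that $(\GUX)_F$ is saturated, covers its open image in the affine quotient by basic affine opens, and pulls these back to get affine invariant opens $(\GUX)_{FF_i}$ covering all of $(\GUX)_F$ — thereby proving the slightly stronger statement $(\GUX)_F\subseteq(\GUX)^{\rms(L)}$. You instead separate the single closed orbit of $y$ from the closed invariant boundary $(\olGUX)_{F_0}\cap\partial$ by an invariant function $\phi$ and use the basic open $\{\phi\neq0\}$ as one affine witness; this yields only $y\in(\GUX)^{\rms(L)}$, which is all that is needed. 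The two devices are two faces of the same reductive-GIT fact (good quotients separate disjoint invariant closed sets, equivalently images of invariant opens are open and covered by basic affines), so neither buys anything essential over the other; yours is marginally more economical for the pointwise statement, the paper's gives the saturated set. One cosmetic slip: $\{\phi\neq 0\}$ is $(\olGUX)_{F_0 s}$ for the product of $F_0$ with the numerator $s$ of $\phi$ (both invariant), rather than $(\olGUX)_s$ itself — but since this is still the non-vanishing locus of an invariant section of a power of $L'$, and a basic open of the affine $(\olGUX)_{F_0}$, the argument goes through unchanged.
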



 
\begin{proof}
Let $V$ be a fully separating enveloping system adapted to a finite subset
$\mc{S} \subseteq V$ satisfying properties
\ref{itm:Co1Re3-1}--\ref{itm:Co1Re3-3} of Definition \ref{def:Co1Re3}, and
suppose $V$ satisfies the extension properties
\ref{itm:Co1Re1-1}--\ref{itm:Co1Re1-3} of Definition \ref{def:Co1Re5} for the
reductive envelope 
$(\olGUX,\beta,L^{\prime})$. Then there is a basis $f_1,\dots,f_n$ of $V^H$ such
that $X^{\nss} = \bigcup_{i=1}^n X_{f_i}$ and each $f_i$
extends to a $G \times H_r$-invariant $F_i$ of some positive tensor power of
$L^{\prime} \to \olGUX$ such that $(\olGUX)_{F_i}$ is affine, so
$X^{\nss} \subseteq X^{\ol{\rmss}}$. On the other hand, any $G \times H_r$-invariant
of a tensor power of $L^{\prime} \to \olGUX$ restricts to a $G \times
H_r$-invariant over $\GUX$ under $\beta$, which in turn corresponds to an
$H$-invariant over $X$ via $\alpha$. Hence $X^{\ol{\rmss}} \subseteq
X^{\nss}$ also. 


Now suppose $x\in X^{\ol{s}}$. Then there is a $G\times H_r$-invariant $F$ of some
positive tensor power of $L^{\prime} \to \olGUX$ such that $(\olGUX)_F$ is
an affine open subset containing $(\beta \circ \alpha)(x)$, and the $G\times
H_r$-action on 
$(\olGUX)_F$ is closed with all stabilisers finite. By abuse of notation, write
$F$ for the section $\beta^*F$ over $\GUX$. Invoking
Proposition \ref{prop:GiSt3}, to prove $x \in X^{\rms}$ it suffices to show that
$(\GUX)_F \subseteq (\GUX)^{\rms(L)}$, where stability is with respect to the
canonical $G \times H_r$-linearisation $L \to \GUX$. Note that $(\GUX)_F$ is a
$G \times H_r$-invariant open subset of $(\olGUX)_F$, and $(\olGUX)_F$ has a geometric
$G \times H_r$-quotient $\pi:(\olGUX)_F \to
(\olGUX)_F/(G \times H_r)$ with affine base (Theorem
\ref{thm:BgRe1}). The image $\pi((\GUX)_F)$ of $(\GUX)_F$ is an open
subset of $(\olGUX)_F/(G \times H_r) $, thus we may cover $\pi((\GUX)_F)$ with
basic affine 
open subsets of $\pi((\olGUX)_F)$. Each of these takes the form
$\pi((\olGUX)_{F\tilde{F}})=(\olGUX)_{F\tilde{F}}/(G 
\times H_r)$, for a $G \times H_r$-invariant section $\tilde{F}$ over $\olGUX$,
by virtue of the canonical isomorphism   
\[
\OO(\pi((\GUX)_F)) \overset{\pi^{\#}}{\longrightarrow} (\OO((\GUX)_F))^{G \times
  H_r}=(\kk[\olGUX,L^{\prime}]^{G \times H_r})_{(F)}.
\]
(In the final equality we have used the fact that $\olGUX$ is 
irreducible, which is necessarily the case because $\GUX$ is irreducible and
$\beta$ is dominant.) Thus, for suitable $G \times H_r$-invariant sections $F_i$
over $\olGUX$, we have 
\[
(\GUX)_F = \bigcup_{i}
\pi^{-1}(\pi((\olGUX)_{FF_i}))=\bigcup_{i} (\GUX)_{FF_i}
\]
and
by affineness of $\pi$ each $\pi^{-1}(\pi((\olGUX)_{FF_i}))
=(\GUX)_{FF_i}$ is an affine open subset of
$\GUX$. By restriction the $G \times H_r$-action on each
$(\GUX)_{FF_i}$ is closed with finite stabilisers, hence
$(\GUX)_{FF_i} \subseteq (\GUX)^{\rms(L)}$ for each $i$. Therefore $(\GUX)_F
\subseteq (\GUX)^{\rms(L)}$, as desired.         
\end{proof}

\begin{cor} \label{cor:Co1Re8.1} For a linearisation $H \act L \to X$ with $X$ 
projective and
  $L$ ample, the restriction of the enveloping quotient map to the completely
  stable locus $X^{\ol{\rms}}$ for 
  a reductive envelope $(\olGUX,\beta,L^{\prime})$ defines a geometric
  quotient $q_H:X^{\ol{\rms}} \to X^{\ol{\rms}}/H$, and the composition 
\[
\begin{diagram}
X^{\ol{\rms}} & \rInto^{\beta \circ \alpha} &
\olGUX^{\rms(L^{\prime})} & \rTo & \olGUX^{\rms(L^{\prime})}/(G \times H_r) 
\end{diagram}
\]
induces a natural open immersion $X^{\ol{\rms}}/H \hookrightarrow
\olGUX^{\rms(L^{\prime})}/(G \times H_r)$.   
\end{cor}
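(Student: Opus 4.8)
The plan is to deduce both assertions from the intrinsic theory of Section~\ref{sec:GiStability}, principally Proposition~\ref{prop:GiSt3} and the identifications following it, together with the inclusion $X^{\ol{\rms}} \subseteq X^{\rms}$ supplied by Proposition~\ref{prop:Co1Re8}.

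First I would establish the geometric quotient by restriction. Since $X$ is projective and $L$ ample, Theorem~\ref{thm:GiSu2},~\ref{itm:GiSu2-1} (see also Section~\ref{sec:GiStIntrinsic}) gives that $q_H:X^{\rms} \to X^{\rms}/H$ is a geometric quotient. The set $X^{\ol{\rms}}=(\beta\circ\alpha)^{-1}(\olGUX^{\rms(L^{\prime})})$ is an $H$-invariant open subset of $X$, contained in $X^{\rms}$ by Proposition~\ref{prop:Co1Re8}; being the preimage of a $G\times H_r$-invariant set under the $H$-equivariant map $\beta\circ\alpha$, it is saturated for $q_H$. As a geometric quotient is submersive, $q_H(X^{\ol{\rms}})$ is open and $q_H^{-1}(q_H(X^{\ol{\rms}}))=X^{\ol{\rms}}$, so by the locality of geometric quotients (Section~\ref{subsec:BgLinear}) the restriction $q_H:X^{\ol{\rms}} \to X^{\ol{\rms}}/H:=q_H(X^{\ol{\rms}})$ is again a geometric quotient.

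For the open immersion I would pass to the $G\times H_r$-invariant open subset $U:=G\times^{H_u}(X^{\ol{\rms}})\subseteq \GUX$. Because $\beta$ is a $G\times H_r$-equivariant open immersion and $\olGUX^{\rms(L^{\prime})}$ is $G\times H_r$-invariant, $\beta(U)$ is exactly the $G\times H_r$-sweep of $(\beta\circ\alpha)(X^{\ol{\rms}})$ and is contained in $\olGUX^{\rms(L^{\prime})}$: every point of $U$ has the form $[g,x]=g\cdot\alpha(x)$ with $x\in X^{\ol{\rms}}$, and $\beta(\alpha(x))\in\olGUX^{\rms(L^{\prime})}$ by definition of $X^{\ol{\rms}}$. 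Let $\pi:\olGUX^{\rms(L^{\prime})} \to \olGUX^{\rms(L^{\prime})}/(G\times H_r)$ be the geometric quotient of Theorem~\ref{thm:BgRe2},~\ref{itm:BgRe2-2}. Since $\beta(U)$ is invariant and open it is $\pi$-saturated, hence $\pi(\beta(U))$ is open and $\pi$ restricts to a geometric quotient $\beta(U)\to\pi(\beta(U))$. The composite $\pi\circ\beta\circ\alpha$ is $H$-invariant, so it factors uniquely through the categorical quotient $q_H$, yielding the morphism $X^{\ol{\rms}}/H \to \olGUX^{\rms(L^{\prime})}/(G\times H_r)$ of the statement, with image $\pi(\beta(U))$.

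It then remains to see that this factored morphism is an isomorphism onto the open subset $\pi(\beta(U))$. Transporting along the isomorphism $\beta|_U:U\to\beta(U)$, the quotient $\pi(\beta(U))$ is the $(G\times H_r)$-quotient of $U=G\times^{H_u}(X^{\ol{\rms}})$; applying the quotient-in-stages argument from the discussion following Proposition~\ref{prop:GiSt3}---valid because $X^{\ol{\rms}}\subseteq X^{\rms}$ forces $X^{\ol{\rms}}\to X^{\ol{\rms}}/H_u$ to be a principal $H_u$-bundle---gives $U/G\cong X^{\ol{\rms}}/H_u$ and then $U/(G\times H_r)\cong X^{\ol{\rms}}/H$, the isomorphism being induced by $\alpha$. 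Unwinding the definitions identifies this composite isomorphism with the factored morphism above, so $X^{\ol{\rms}}/H \hookrightarrow \olGUX^{\rms(L^{\prime})}/(G\times H_r)$ is an open immersion. The main obstacle I anticipate is the bookkeeping of the second and third paragraphs: checking carefully that the sweep of $(\beta\circ\alpha)(X^{\ol{\rms}})$ equals $\beta(U)$, lies in the stable locus, and is $\pi$-saturated, so that the quotient-in-stages identification can be applied and matched against the canonical factorization through $q_H$.
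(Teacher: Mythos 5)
Your argument is correct and follows essentially the same route as the paper's (much terser) proof: restrict the geometric quotient $q_H:X^{\rms}\to X^{\rms}/H$ to the $H$-invariant open subset $X^{\ol{\rms}}\subseteq X^{\rms}$, then observe that $G\times^{H_u}(X^{\ol{\rms}})$ is a $G\times H_r$-invariant open subset of $\olGUX^{\rms(L^{\prime})}$ via $\beta$ and apply the identification $X^{\ol{\rms}}/H\cong G\times^{H_u}(X^{\ol{\rms}})/(G\times H_r)$ coming from Proposition \ref{prop:GiSt3}. The extra bookkeeping you carry out (saturation, the sweep of $(\beta\circ\alpha)(X^{\ol{\rms}})$, matching the factored morphism) is exactly what the paper leaves implicit.
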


\begin{proof}
Since $X^{\ol{\rms}}$ is an $H$-invariant open subset of $X^s$ the map $q_H:X^s
\to X^s/H$ restricts to define a geometric quotient $q_H:X^{\ol{\rms}} \to
q_H(X^{\ol{\rms}}) \subseteq X^s / H$. By definition $G
\times^{H_u}(X^{\ol{\rms}})$ is an open subset of
$\olGUX^{\rms(L^{\prime})}$ via $\beta$ and hence 
\[
X^{\ol{\rms}}/H= G \times^{H_u}(X^{\ol{\rms}})/(G \times H_r) \subseteq 
\olGUX^{\rms(L^{\prime})}/(G \times H_r)
\]  
with the inclusion open.
\end{proof}

Suppose we have a reductive envelope $(\olGUX,\beta,L^{\prime})$ for the
linearisation $H \act L \to X$. Then we may
consider the reductive GIT quotients
\begin{align*} 
\pi_G&:\olGUX^{\rmss(G)} \to \olGUX \dblslash G, \\
\pi_{G \times H_r}&:\olGUX^{\rmss(G\times H_r)} \to \olGUX \dblslash (G\times
H_r) 
\end{align*}
for the $G$ and $G \times H_r$-linearisations on $L^{\prime}$,
respectively. According to Proposition \ref{prop:Ap1} of the Appendix, there is
an induced ample $H_r$-linearisation $M^{\prime} \to \olGUX \dblslash G$ such
that $\pi_G$ maps $\olGUX^{\rmss(G\times H_r)}$ into $(\olGUX \dblslash
G)^{\rmss (M^{\prime})}$, and if 
\[
\ol{\pi}_{H_r}:(\olGUX \dblslash G)^{\rmss (M^{\prime})} \to (\olGUX \dblslash G)
  \dblslash_{M^{\prime}} H_r
\]
is the reductive GIT quotient for the linearisation $H_r \act M^{\prime} \to
\olGUX \dblslash G$, then there is a canonical open immersion 
\[
\psi: \olGUX \dblslash(G \times H_r) \hookrightarrow (\olGUX \dblslash G)
\dblslash_{M^{\prime}} H_r  
\]
such that the diagram 
\begin{equation}  \label{eq:Co1Re1}
\begin{diagram}
 & & \olGUX^{\rmss(G \times H_r)} \\
 & \ldTo(2,4)^{\pi_{G \times H_r}} & \dTo_{\pi_G} \\
 & & (\olGUX \dblslash G)^{\rmss(M^{\prime})} \\
 & &  \dTo_{\ol{\pi}_{H_r}} \\
\olGUX \dblslash (G \times H_r) & \rInto^{\psi} & (\olGUX \dblslash G) \dblslash_{M^{\prime}} H_r  \\
\end{diagram}
\end{equation}
commutes.
\begin{propn} \label{prop:Co1Re10} Let $H$ be a linear algebraic group acting on
  an irreducible variety $X$ with linearisation $L \to X$, let $H \to G$ be an
  $H_u$-faithful homomorphism with $G$ reductive and suppose
  $(\olGUX,\beta,L^{\prime})$ is a 
reductive envelope for $H \act L \to X$. Retain the notation above.
\begin{enumerate}
\item \label{itm:Co1Re10-1} There is an inner enveloping quotient $\mc{V} \subseteq X 
\env H$,
together with an open immersion 
\[
\theta_H: \mc{V} \hookrightarrow \olGUX \dblslash(G
\times H_r) 
\]
such that $\theta_H^*\psi^*N^{\prime}=\OO_{\mc{V}}(n)$ for some $n>0$, where
$N^{\prime} \to (\olGUX \dblslash G) \dblslash_{M^{\prime}} H_r$ is a very
ample line bundle pulling back to a 
positive tensor power of the line bundle $M^{\prime} \to (\olGUX \dblslash 
G)^{\rmss(M^{\prime})}$.
\item \label{itm:Co1Re10-2} There is an inner enveloping quotient $\mc{U}
  \subseteq X \env H_u$ of $X^{\ssUfg}$ 
that is stable under the canonical $H_r$-action on $X \env H_u$ of Proposition
\ref{prop:GiNa7} and an 
$H_r$-equivariant open immersion  
\[
\theta_{H_u}:\mc{U} \hookrightarrow (\olGUX \dblslash G)^{\rmss(M^{\prime})}
\]
such that $\theta_{H_u}^*M^{\prime}$ defines the same linearised polarisation
over $\mc{U}$ as the natural one on $\OO_{\mc{U}}(n)$, for $n$ as in
\ref{itm:Co1Re10-1}. Furthermore, $\mc{U}$ is such that the natural rational map
$q_{H_r}:\proj(S^{H_u}) 
\dashrightarrow \proj(S^H)$ of \eqref{eq:GiSt1} restricts to define a good categorical 
quotient
$q_{H_r}:\mc{U} \to \mc{U} \dblslash H_r$ for the $H_r$-action on $\mc{U}$, with
$\mc{U} \dblslash H_r$ contained in $\mc{V}$ as an open subscheme.  
\item \label{itm:Co1Re10-3} The following diagram commutes (where all unmarked 
inclusions are
natural open immersions): 
\[
\begin{diagram}
                            &               &  X^{\ssfg}       &            &  \rInto                 &                   &   X^{\ol{\rmss}}   \\
                            &   \ldInto  &  \vLine           &            &                            &  \ruEqual    &                            \\
X^{\ssUfg}           &  \rInto  &  \HonV            &              &  X^{\ol{\rmss}}    &                   &                             \\
\dTo^{q_{H_u}}   &               &  \vLine^{q_H} &            &  \dTo^{\pi_{G} \circ \beta \circ \alpha} & & \dTo^{\pi_{G \times H_r}\circ \beta \circ \alpha}\\
\mc{U}                 &  \rInto^{\theta_{H_u}} &  \HonV   &                  & (\olGUX \dblslash G)^{\rmss(M^{\prime})} & & \\
                           &               &  \dTo            &               &  \dTo^{\bar{\pi}_{H_r}}  &           &  \\  
\dTo^{q_{H_r}}   &               & \mc{V}           &  \rHook^{\theta_H} &  \VonH & \rTo &   \olGUX \dblslash (G
\times H_r) \\
                           &  \ruInto &                      &             &            &  \ldInto^{\psi} &   \\
\mc{U} \dblslash H_r &        &  \rInto           &             &  (\olGUX \dblslash G)\dblslash_{M^{\prime}} H_r & &\\   
  \end{diagram}
\]
\end{enumerate}
\end{propn}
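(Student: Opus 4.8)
The plan is to use the extension properties of the reductive envelope to transport the intrinsic ``quotienting in stages'' picture of Theorem \ref{thm:GiSu2}, \ref{itm:GiSu2-1} onto the reductive GIT quotients of $\olGUX$, reducing every comparison to a statement about graded invariant rings and the $\proj$ construction. First I would fix a fully separating enveloping system $V \subseteq H^0(X,L^{\ten r})^{H_u}$ adapted to $\mc{S}$ as in Definition \ref{def:Co1Re3}, together with the $G$-invariant extensions over $\olGUX$ of the sections of $V=V^{H_u}$ and the $G\times H_r$-invariant extensions of the sections of $V^H$ furnished by Definition \ref{def:Co1Re5}. Recalling from Sections \ref{subsec:GiNaEnveloping} and \ref{subsec:GiStRelation} the isomorphisms $\alpha^*:\kk[\GUX,L]^{G}\cong S^{H_u}$ and $\alpha^*:\kk[\GUX,L]^{G\times H_r}\cong S^{H}$, restriction along the dominant open immersion $\beta$ gives graded ring homomorphisms $\kk[\olGUX,L']^{G}\to S^{H_u}$ and $\kk[\olGUX,L']^{G\times H_r}\to S^{H}$ carrying each extended invariant back to the section it extends. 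These, together with the good quotient maps $\pi_G$ and $\pi_{G\times H_r}$ and the basic affine opens cut out by the extended invariants, will produce the candidate morphisms $\theta_{H_u}$ and $\theta_H$.

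For part \ref{itm:Co1Re10-2}, each $f\in \mc{S}^{\ssUfg}$ has a $G$-invariant extension $F$ with $(\olGUX)_F$ affine (Definition \ref{def:Co1Re5}, \ref{itm:Co1Re1-3}); hence $\alpha(X^{\ssUfg})\subseteq \olGUX^{\rmss(G)}$, and since $\GUX$ is a $G$-invariant open of $\olGUX$ the good quotient $\pi_G$ carries $\GUX\cap\olGUX^{\rmss(G)}$ onto an open subset of $\olGUX\dblslash G$. Transporting through $\alpha^*$ and the induced isomorphism $\ol{\alpha}$ on $\proj$, this open subset receives an inner enveloping quotient $\mc{U}$ of $X^{\ssUfg}$ as an open subscheme, giving an open immersion $\theta_{H_u}$; that it is an immersion follows the template of Proposition \ref{prop:GiFi4.1}, \ref{itm:GiFi4.1-1} applied to $\olGUX \dblslash G$. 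To land inside $(\olGUX\dblslash G)^{\rmss(M')}$ I would use that $X^{\ssUfg}\subseteq X^{\nss}=\bigcup_{f'\in V^H}X_{f'}$: each $f'$ has a $G\times H_r$-invariant extension descending under $\pi_G$ to an $H_r$-invariant section of a power of $M'$ with affine non-vanishing locus meeting the relevant image, so $q_{H_u}(X^{\ssUfg})$ maps into the $M'$-semistable locus and $\mc{U}$ may be chosen there. Equivariance of $\theta_{H_u}$ for the canonical $H_r$-action on $X \env H_u$ of Proposition \ref{prop:GiNa7} follows from compatibility of the descended $H_r$-actions on the two sides, and the equality of $\theta_{H_u}^*M'$ with the natural linearised polarisation on $\OO_{\mc{U}}(n)$ is then forced by the uniqueness in Proposition \ref{prop:GiNa8}, \ref{itm:GiNa8-2}. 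Finally, irreducibility of $X$ gives $(S^H)_{(f)}=\big((S^{H_u})_{(f)}\big)^{H_r}$ (Remark \ref{rmk:GiFi2.5}), so reductive GIT for the ample $H_r$-linearisation $\OO_{\mc{U}}(n)\to\mc{U}$ (Theorem \ref{thm:BgRe1}) yields a good categorical quotient $q_{H_r}:\mc{U}\to\mc{U}\dblslash H_r=\bigcup_{f\in\mc{S}^{\ssUfg}}\spec((S^H)_{(f)})$, an open subscheme of $X \env H$.

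For part \ref{itm:Co1Re10-1}, applying the same construction to the $G\times H_r$-invariant extensions of $V^H$ and the ring map $\kk[\olGUX,L']^{G\times H_r}\to S^H$ produces an inner enveloping quotient $\mc{V}$ of $X^{\ssfg}$, realised as an open subscheme of $\olGUX\dblslash (G\times H_r)$, giving the open immersion $\theta_H$; one checks that $\mc{U}\dblslash H_r$ sits inside $\mc{V}$ as an open subscheme. The polarisation identity $\theta_H^*\psi^*N'=\OO_{\mc{V}}(n)$ is immediate once the very ample $N'$ (a power of $M'$) is pulled back through $\psi$ and $\theta_H$, since all three pullbacks are governed by the same graded pieces of the invariant rings and the composite recovers a power of the twisting sheaf. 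It then remains to assemble the diagram of part \ref{itm:Co1Re10-3}: the maps $\pi_G$, $\pi_{G\times H_r}$, $\bar{\pi}_{H_r}$, $\psi$ and their commuting square \eqref{eq:Co1Re1} are supplied by Proposition \ref{prop:Ap1}; the intrinsic maps $q_{H_u}$, $q_{H_r}$, $q_H$ and their compatibilities come from Theorem \ref{thm:GiSu2}, \ref{itm:GiSu2-1}; and the newly built $\theta_{H_u}$, $\theta_H$ slot in. Each remaining cell commutes because it is the image under $\proj$ of a commuting square of restriction maps, the decisive one being the ``in stages'' identity $\kk[\olGUX,L']^{G\times H_r}=(\kk[\olGUX,L']^{G})^{H_r}$ matched with $S^H=(S^{H_u})^{H_r}$.

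I expect the main obstacle to be the second paragraph: proving that $\theta_{H_u}$, and likewise $\theta_H$, is genuinely an open immersion with image in the correct semistable locus, rather than merely a dominant morphism. The crux is comparing the localised invariant rings $(\kk[\olGUX,L']^{G})_{(F)}$ with $(S^{H_u})_{(f)}$ across the open immersion $\beta$ and the good quotient $\pi_G$, which is exactly where the full strength of the fully separating enveloping system (Definition \ref{def:Co1Re3}) and the affineness condition \ref{itm:Co1Re1-3} of Definition \ref{def:Co1Re5} enter; this parallels the delicate quasi-projectivity and separation arguments underlying Theorem \ref{thm:BgUn13} in \cite{dk07}. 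Coordinating the $H_r$-linearisations through Proposition \ref{prop:GiNa8} so that the polarisations match precisely is a secondary, more bookkeeping-heavy difficulty.
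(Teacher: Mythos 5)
Your plan is correct and follows essentially the same route as the paper: defining $\mc{V}=\bigcup_{f\in\mc{S}}\spec((S^H)_{(f)})$ and $\mc{U}=\bigcup_{f\in\mc{S}^{\ssUfg}}\spec((S^{H_u})_{(f)})$, identifying each piece with the basic affine open $\pi_{G\times H_r}((\olGUX)_F)$ (resp.\ $\pi_G((\olGUX)_F)$) via the localised invariant rings, gluing, and using Propositions \ref{prop:GiNa7}, \ref{prop:GiNa8} and density/separatedness for equivariance, the polarisation match and the final commutativity. You also correctly locate the crux in the surjectivity/injectivity of the comparison $(\kk[\olGUX,L']^{G})_{(F)}\to(S^{H_u})_{(f)}$, which the paper settles exactly as you indicate, via the generating property of the enveloping system and dominance of $\beta$.
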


\begin{proof} 
We begin by fixing some notation. Suppose $L^{\prime} \to \olGUX$ pulls back to
$L^{\ten r} \to X$ under $\beta \circ \alpha$, with $r>0$. Let $V$ be a
fully separating enveloping system associated to $(\olGUX,\beta,L^{\prime})$ with $V
\subseteq H^0(X,L^{\ten r_1})$ for some positive integral multiple $r_1$ of
$r$. Let $\mc{S} \subseteq V$ be a finite subset to which $V$ is adapted and
such that properties
\ref{itm:Co1Re3-1}--\ref{itm:Co1Re3-3} of Definition \ref{def:Co1Re3} are
satisfied. By Theorem \ref{thm:BgRe2}, \ref{itm:BgRe2-1} there is an ample line bundle
\[
N^{\prime} \to (\olGUX \dblslash G) \dblslash_{M^{\prime}} H_r
\]
that pulls back to $(M^{\prime})^{\ten m} \to (\olGUX \dblslash
G)^{\rmss(M^{\prime})}$ under $\ol{\pi}_{H_r}$, for some $m>0$. Similarly,
$M^{\prime}$ pulls back to $(L^{\prime})^{\ten l} \to \olGUX^{\rmss(G)}$ under $\pi_G
$, for
some $l>0$. By replacing
$N^{\prime}$ by a sufficiently positive tensor power of itself, we may assume
the following: $N^{\prime} \to (\olGUX \dblslash G) \dblslash_{M^{\prime}} H_r$
and $(M^{\prime})^{\ten m} \to \olGUX \dblslash G$ are very ample; and there is
$r_2>0$ such that $r_1r_2=n:=lmr$. Using this second assumption and Lemma
\ref{lem:Co1Re4}, we may use the multiplication map $V^{\ten r_2} \to
H^0(X,L^{\ten n})$ to further assume that $\mc{S} \subseteq V \subseteq
H^0(X,L^{\ten n})$.   

(Proof of \ref{itm:Co1Re10-1}.) We now construct the inner enveloping quotient $\mc{V}
$ and open immersion
$\theta_H$. Let 
\[
\mc{V}=\bigcup_{f \in \mc{S}} \spec((S^H)_{(f)}) \subseteq X \env H.
\]
Recall from Definition \ref{def:GiFi4} of an enveloping system that $\mc{S}$
satisfies $X^{\ssfg} = \bigcup_{f \in \mc{S}} X_f$ and $(S^H)_{(f)}$ has
generating set $\{ \tilde{f}/f \mid \tilde{f} \in V^H\}$ for each $f \in
\mc{S}$. Given $f \in \mc{S}$, by Definition \ref{def:Co1Re5}, \ref{itm:Co1Re1-3} of a
reductive  
envelope, there is an extension
$F \in H^0(\olGUX,(L^{\prime})^{\ten lm})^{G \times H_r}$ of $f$ under $\beta
\circ \alpha$ such that $(\olGUX)_F$ is affine. Pulling back along $\pi_{G
  \times H_r}$ identifies the ring of
regular functions on the affine open subset $\pi_{G \times H_r}((\olGUX)_F)
\subseteq \olGUX \dblslash (G \times H_r)$ with $\OO((\olGUX)_F)^{G \times
  H_r}$, and because $X$ is irreducible
$q_H^{\#}:(S^H)_{(f)} \hookrightarrow \OO(X_f)^H$ is an isomorphism by Lemma
\ref{lem:GiFi3.1}, \ref{itm:GiFi3.1-1}. Therefore there is a unique ring
homomorphism $\Theta_f:\OO(\pi_{G \times H_r}((\olGUX)_F)) \to (S^H)_{(f)}$
making the diagram      
\[
\begin{diagram}
\OO(\pi_{G \times H_r}((\olGUX)_F)) & \rTo^{(\pi_{G \times
    H_r})^{\#}}_{\cong} &  \OO((\olGUX)_F)^{G \times
  H_r}  \\ 
\dTo_{\Theta_f}  & & \dTo_{(\beta \circ \alpha)^{\#}} \\
(S^H)_{(f)} &  \rTo^{q_H^{\#}}_{\cong} & \OO(X_f)^H\\
\end{diagram}
\]
commute. In fact, $\Theta_f$ is an isomorphism. Indeed, by Definition
\ref{def:GiFi4}, \ref{itm:GiFi4-2} of 
an enveloping system $\OO(X_f)^H=(S^H)_{(f)}$ is generated by the regular functions
$q_H^{\#}(\tilde{f}/f)$, where $\tilde{f} \in V^H$. Each such $\tilde{f}$
extends to some 
$\tilde{F} \in H^0(\olGUX,(L^{\prime})^{\ten lm})^{G \times H_r}$ under $\beta
\circ \alpha$ by Definition \ref{def:Co1Re5}, \ref{itm:Co1Re1-2} of a
reductive envelope, and the regular function in $\OO((\olGUX)_F)^{G \times H_r}$
defined by $\tilde{F}/F$ pulls back to $q_H^{\#}(\tilde{f}/f)$ under $\beta \circ
\alpha$. It follows that $\Theta_f$
is surjective. On the other hand, because $\beta:(\GUX)_F \hookrightarrow
(\olGUX)_F$ is a dominant morphism and
$\alpha^{\#}$ identifies $\OO((\GUX)_F)^{G \times H_r}$ with $\OO(X_f)^H$
the map $(\beta \circ \alpha)^{\#}$ is injective, hence $\Theta_f$ is injective also.

It follows that $\Theta_f$ defines an isomorphism of affine varieties 
\[
(\theta_H)_f:\spec((S^H)_{(f)}) \overset{\cong}{\longrightarrow} \pi_{G \times
  H_r}((\olGUX)_F)
\]
with $(\theta_H)_f \circ q_H|_{X_f} = \pi_{G \times H_r} \circ \beta \circ
\alpha|_{X_f}$. Because $\Theta_f$ is defined in terms of compositions of
(inverses of) sheaf homomorphisms and taking invariants is natural with respect
to equivariant inclusions, it can easily be shown that the maps $(\theta_H)_f$
glue over the $\spec((S^H)_{(f)})$ with $f \in \mc{S}$ to define an open immersion
\[
\theta_H:\mc{V} \hookrightarrow \olGUX \dblslash(G \times H_r)
\]     
such that $\theta_H \circ q_H=\pi_{G \times H_r} \circ \beta \circ \alpha$
on $X^{\ssfg}$. We can see $\theta_H^*\psi^*N^{\prime}=\OO_{\mc{V}}(n)$ as
follows. Because of the extension property \ref{itm:Co1Re1-2} of Definition
\ref{def:Co1Re5} of a reductive envelope, the space $V^H$ extends isomorphically under 
$\beta
\circ \alpha$ to a subspace $W$ of $H^0(\olGUX,(L^{\prime})^{\ten
  lm})^{G \times H_r}$. Each section in $W$ descends through the GIT quotient map $
\pi_{G
  \times H_r}$ to a section in $H^0(\olGUX \dblslash(G
\times H_r),\psi^*N^{\prime})$, thus defining a rational map 
\[
\gamma_H:\olGUX \dblslash (G \times H_r) \dashrightarrow \PP(W^*)
\]
that defines a morphism on the image of $\mc{V}$ under $\theta_H$. There is a
natural isomorphism $\PP((V^H)^*) \cong
\PP(W^*)$ induced by $(\beta \circ \alpha)^*$ and, by inspection, one sees
that the composition $\gamma_H \circ \theta_H:\mc{V} \to
\PP(W^*)$ corresponds to the natural locally closed immersion $\mc{V} \hookrightarrow
\PP((V^H)^*)$ defined by $V^H$ in Proposition \ref{prop:GiFi4.1}. Hence
$\theta_H^*\psi^*N^{\prime}=\OO_{\mc{V}}(n)$.

(Proof of \ref{itm:Co1Re10-2}.) The map $\theta_{H_u}$ is constructed in a
similar way to $\theta_H$. By Definition
\ref{def:Co1Re3}, \ref{itm:Co1Re3-2} there is $\mc{S}^{\ssUfg} \subseteq \mc{S}$
such that $V$ defines an enveloping system adapted to $\mc{S}^{\ssUfg}$ for the
restricted linearisation $H_u \act L \to X$. Letting 
\[
\mc{U}=\bigcup_{f \in \mc{S}^{\ssUfg}} \spec((S^{H_u})_{(f)}) \subseteq X \env H_u,
\]
it follows from property \ref{itm:Co1Re1-1} of Definition \ref{def:Co1Re5} that,
for $f \in \mc{S}^{\ssUfg}$ with extension $F$ over $\olGUX$, 
there are natural isomorphisms $\spec((S^{H_u})_{(f)}) \cong
\OO(\pi_G((\olGUX)_F))$ which patch to define an open immersion
\[
\theta_{H_u}:\mc{U} \hookrightarrow (\olGUX \dblslash G)^{\rmss(M^{\prime})}
\] 
such that $\theta_{H_u} \circ q_{H_u} = \pi_G \circ \beta \circ \alpha$ on
$X^{\ssUfg}$, and $\theta_{H_u}^*(M^{\prime})^{\ten m} = \OO_{\mc{U}}(n)$ as
line bundles. The arguments are analogous to those for the construction of $\theta_H:
\mc{V}
\hookrightarrow \olGUX \dblslash (G \times H_r)$. Notice that each section in
$\mc{S}^{\ssUfg}$ is fixed by the 
$H$-action on $H^0(X,L^{\ten n})^H$, so by Proposition \ref{prop:GiNa7} $\mc{U}$
is stable under the $H_r$-action on $X \env H_u$. Furthermore, the
equality $\theta_{H_u} \circ q_{H_u} = \pi_G \circ \beta \circ \alpha$
implies that $\theta_{H_u}$ is $H_r$-equivariant on the image
$q_{H_u}(X^{\ssUfg})$. The interior $q_{H_u}(X^{\ssUfg})^{\circ}$ inside
$\mc{U}$ is therefore a dense open subset of $\mc{U}$ on which $\theta_{H_u}$ is
equivariant, so it follows from the separatedness of $(\olGUX \dblslash
G)^{\rmss(M^{\prime})}$ that $\theta_{H_u}$ is equivariant on the whole of
$\mc{U}$. Because $\theta_{H_u} \circ q_{H_u} = \pi_G \circ \beta \circ
\alpha$, there is a naturally induced map $L^{\ten n}|_{X^{\rmss,H_u \dashfg}} \to
\theta_{H_u}^*(M^{\prime})^{\ten m}$ which is equivariant with respect to $H \to H_r$,
and since $\theta_{H_u}^*(M^{\prime})^{\ten m} \cong \OO_{\mc{U}}(n)$ as line
bundles it follows from Proposition \ref{prop:GiNa8}, \ref{itm:GiNa8-2} that the
$H_r$-linearisation on 
$\theta_{H_u}^*(M^{\prime})^{\ten m}$ defines the same linearisation 
as the natural one on $\OO_{\mc{U}}(n) \to \mc{U}$. 

Because $\mc{S}^{\ssUfg}
\subseteq \mc{S}$, the rational map $q_{H_r}:\proj(S^{H_u})
\dashrightarrow \proj(S^H)$ defines an $H_r$-invariant morphism $q_{H_r}:\mc{U} \to
\mc{V}$, whose restriction to $\spec((S^{H_u})_{(f)})$ for $f \in
\mc{S}^{\ssUfg}$ is the map 
\[
\spec((S^{H_u})_{(f)}) \to \spec((S^{H})_{(f)}) = \spec(((S^{H_u})_{(f)})^{H_r})
\]
induced by the inclusion $((S^{H_u})_{(f)})^{H_r} \hookrightarrow
(S^{H_u})_{(f)}$. By reductive GIT for affine varieties (Theorem
\ref{thm:BgRe1}), each of these restrictions is a good categorical quotient for the
$H_r$-action on $\spec((S^{H_u})_{(f)})$, and since good categorical quotients
are local on the base it follows that $q_{H_r}:\mc{U} \to \mc{U}
\dblslash H_r=q_{H_r}(\mc{U})$ 
is a good categorical quotient for the $H_r$-action on $\mc{U}$.              


(Proof of \ref{itm:Co1Re10-3}.) It remains to prove the commutativity of the
diagram in \ref{itm:Co1Re10-3}. Most of this follows from the construction of $
\theta_H$ and
$\theta_{H_u}$---all that is left is to show is the equality 
\[
\ol{\pi}_{H_r} \circ \theta_{H_u}=\psi \circ \theta_H \circ q_{H_r}:\mc{U} \to (\olGUX
  \dblslash G) \dblslash_{M^{\prime}} H_r.  
\]
Note first that both of these morphisms are indeed well defined on $\mc{U}$. By
construction of $\theta_H$ and diagram \eqref{eq:Co1Re1} we have 
\[
\psi \circ \theta_H \circ q_H = \psi \circ \pi_{G \times H_r} \circ \beta \circ \alpha
= \ol{\pi}_{H_r} \circ \pi_G \circ \beta \circ \alpha
\]
on $X^{\ssfg}$. Since $\pi_G \circ \beta \circ \alpha=\theta_{H_u} \circ
q_{H_u}$ and $q_H=q_{H_r} \circ q_{H_u}$ on $X^{\ssUfg}$, it follows that
\[
\psi \circ \theta_H \circ q_{H_r} \circ q_{H_u}= \ol{\pi}_{H_r} \circ
\theta_{H_u} \circ q_{H_u}:X^{\ssUfg} \to (\olGUX
  \dblslash G) \dblslash_{M^{\prime}} H_r.   
\]
Applying Proposition \ref{prop:GiFi3.2}, \ref{itm:GiFi3.2-2} to this morphism, we 
conclude the
desired equality $\ol{\pi}_{H_r} \circ \theta_{H_u}=\psi \circ \theta_H \circ
q_{H_r}:\mc{U} \to (\olGUX \dblslash G) \dblslash_{M^{\prime}} H_r$.   
\end{proof}




By appealing to Proposition \ref{prop:Ap1}, \ref{itm:Ap1-3} in the Appendix, we
obtain a corollary which is particularly relevant for the aims of
constructing projective completions of the enveloped quotient.
  
\begin{cor} \label{cor:Co1Re10.1}
If $H \act L \to X$ is an ample linearisation and $(\olGUX,\beta,L^{\prime})$
is an ample reductive envelope, then $\olGUX \dblslash (G \times H_r)=(\olGUX
\dblslash G) \dblslash_{M^{\prime}} H_r$ is projective and $\theta_H:\mc{V}
\hookrightarrow \olGUX \dblslash(G \times H_r)$ defines a projective completion of
the enveloped quotient $q(X^{\ssfg})$, as in Definition \ref{def:Co10}. Moreover,
$\theta_{H_u}:\mc{U} \hookrightarrow \olGUX \dblslash G$ defines an
$H_r$-equivariant projective completion of the inner enveloping quotient
$q_{H_u}:X^{\ssUfg} \to \mc{U}$ of $X^{\ssUfg}$.    
\end{cor}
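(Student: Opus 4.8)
The plan is to treat this corollary as an assembly of Proposition \ref{prop:Co1Re10} together with two projectivity inputs coming from the ampleness hypothesis. First I would record that, since $\olGUX$ is projective and $L^{\prime}$ is ample, the rings $\kk[\olGUX,L^{\prime}]^{G}$ and $\kk[\olGUX,L^{\prime}]^{G \times H_r}$ are finitely generated $\kk$-algebras by Nagata's theorem \cite{nag64}, so that both GIT quotients $\olGUX \dblslash G = \proj(\kk[\olGUX,L^{\prime}]^{G})$ and $\olGUX \dblslash (G \times H_r) = \proj(\kk[\olGUX,L^{\prime}]^{G \times H_r})$ are \emph{projective} varieties (Theorem \ref{thm:BgRe2}), with $M^{\prime} \to \olGUX \dblslash G$ ample. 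The next step is to invoke Proposition \ref{prop:Ap1}, \ref{itm:Ap1-3} of the Appendix, which in the ample case upgrades the canonical open immersion $\psi$ of diagram \eqref{eq:Co1Re1} to an isomorphism; this yields the asserted identification $\olGUX \dblslash (G \times H_r) = (\olGUX \dblslash G) \dblslash_{M^{\prime}} H_r$.

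With projectivity in hand, the first claim is immediate. By Proposition \ref{prop:Co1Re10}, \ref{itm:Co1Re10-1} the map $\theta_H : \mc{V} \hookrightarrow \olGUX \dblslash (G \times H_r)$ is an open immersion whose source $\mc{V} \subseteq X \env H$ is an inner enveloping quotient of $X^{\ssfg}$; since its target is now known to be projective, the pair $(\olGUX \dblslash (G \times H_r), \theta_H)$ is exactly a projective completion of the enveloped quotient $q(X^{\ssfg})$ in the sense of Definition \ref{def:Co10} (recall $q(X^{\ssfg})$ is dense in any inner enveloping quotient).

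For the second claim I would compose the $H_r$-equivariant open immersion $\theta_{H_u} : \mc{U} \hookrightarrow (\olGUX \dblslash G)^{\rmss(M^{\prime})}$ of Proposition \ref{prop:Co1Re10}, \ref{itm:Co1Re10-2} with the open inclusion $(\olGUX \dblslash G)^{\rmss(M^{\prime})} \hookrightarrow \olGUX \dblslash G$ of the semistable locus for the $H_r$-linearisation $M^{\prime}$. This produces an $H_r$-equivariant open immersion of $\mc{U}$ into the projective variety $\olGUX \dblslash G$; since $\mc{U} = \bigcup_{f \in \mc{S}^{\ssUfg}} \spec((S^{H_u})_{(f)})$ is a quasi-compact $H_r$-stable open subscheme of $X \env H_u$ containing $q_{H_u}(X^{\ssUfg})$, it is an inner enveloping quotient of $X^{\ssUfg}$ for the restricted linearisation $H_u \act L \to X$, and the construction exhibits the desired $H_r$-equivariant projective completion of $q_{H_u}:X^{\ssUfg} \to \mc{U}$.

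There is essentially no genuine obstacle here, as the hard content was already carried out in Proposition \ref{prop:Co1Re10}; the only real input specific to the corollary is the ``GIT in stages is exact'' statement, namely that $\psi$ becomes an isomorphism when $L^{\prime}$ is ample (Proposition \ref{prop:Ap1}, \ref{itm:Ap1-3}). The one point requiring mild care is bookkeeping: checking that $\mc{V}$ and $\mc{U}$ meet the definitions of inner enveloping quotient of $X^{\ssfg}$ and of $X^{\ssUfg}$ respectively, and that passing to the open semistable locus does not disturb the $H_r$-equivariance recorded in Proposition \ref{prop:Co1Re10}, \ref{itm:Co1Re10-2} --- both of which follow directly from the constructions given there.
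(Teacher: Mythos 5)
Your proposal is correct and follows essentially the same route as the paper: the corollary is obtained by combining Proposition \ref{prop:Co1Re10} with Proposition \ref{prop:Ap1}, \ref{itm:Ap1-3} and the standard fact that GIT quotients of projective varieties with ample linearisations are projective. The only difference is that you spell out the bookkeeping (finite generation via Nagata, density of the enveloped quotient, preservation of $H_r$-equivariance under restriction to the semistable locus) which the paper leaves implicit.
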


We now come to the main theorem of this section, which is the \emph{raison
  d'\^{e}tre} of reductive envelopes within non-reductive GIT. 

\begin{thm} \label{thm:Co1Re11}
Let $H$ be a linear algebraic group with unipotent radical $H_u$ and $H_r = H/H_u$, let $L
\to X$ be an ample linearisation of a quasi-projective 
$H$-variety $X$ and suppose $(\olGUX,\beta,L^{\prime})$ is a reductive envelope
for the linearisation, formed with respect to an $H_u$-faithful homomorphism $H
\to G$ with $G$ reductive. Let 
\begin{align*}
\pi: \olGUX^{\rmss(L^{\prime})} \to \olGUX \dblslash (G
\times H_r) \\
\pi: \olGUX^{\rms(L^{\prime})} \to \olGUX^{\rms(L^{\prime})} /(G
\times H_r)
\end{align*}
be the GIT quotient and geometric quotient of the semistable and stable locus,
respectively, for the $G \times H_r$-linearisation $L^{\prime} \to\olGUX$. 
\begin{enumerate}
\item \label{itm:Co1Re11-1} There is an 
inner enveloping quotient $q_{H_u}:X^{\ssUfg} \to \mc{U}$ of $X^{\ssUfg}$, with
$\mc{U}$ an $H_r$-invariant open subset of $X \env H_u$ with a good categorical
$H_r$-quotient $q_{H_r}:\mc{U} \to \mc{U} \dblslash H_r$, and an inner
enveloping quotient $q_H:X^{\ssfg} \to \mc{V}$ making the diagram  
\[
\begin{diagram}
X^{\ol{\rms}}  & \rEmpty~\subseteq & X^{\rms} & \rEmpty~\subseteq & X^{\ssUfg} 
& \rEmpty~\subseteq & X^{\ssfg} &  \rEmpty~\subseteq & X^{\ol{\rmss}}=X^{\nss}  \\
\dTo_{q_H}^{\text{\upshape{geo}}} & &  \dTo_{q_H}^{\text{\upshape{geo}}}   &&\dTo_{q_H} & & \dTo_{q_H} && \dTo_{\pi \circ \beta \circ \alpha} \\
X^{\ol{s}}/H & \rEmpty~\subseteq & X^{\rms}/H & \rEmpty~\subseteq & \mc{U} \dblslash H_r &
\rEmpty~\subseteq & \mc{V} & \rEmpty~\subseteq & \olGUX 
\dblslash (G \times H_r)    
\end{diagram}
\]
commute, where all the inclusions are natural open immersions and the two
left-most vertical arrows are geometric 
quotients.
\item \label{itm:Co1Re11-2} The inclusion $\beta \circ \alpha:X^{\ol{\rms}}
\hookrightarrow \olGUX^{\rms(L^{\prime})}$ induces a natural open immersion 
\[
 X^{\ol{\rms}}/H \hookrightarrow \olGUX^{\rms(L^{\prime})}/(G \times H_r).
\]
\item \label{itm:Co1Re11-3} If moreover the reductive envelope
  $(\olGUX,\beta,L^{\prime})$ is ample, then
$\olGUX \dblslash (G \times H_r)$ is projective and thus gives a
projective completion of the enveloped 
quotient, as in Definition \ref{def:Co10}.
\end{enumerate}
\end{thm}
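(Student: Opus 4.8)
The plan is to assemble Theorem~\ref{thm:Co1Re11} from results already in hand: the displayed diagram of part~\ref{itm:Co1Re11-1} is obtained by gluing the intrinsic ``quotienting in stages'' diagram of Theorem~\ref{thm:GiSu2}, \ref{itm:GiSu2-1} along its right-hand edge to the reductive-envelope diagram of Proposition~\ref{prop:Co1Re10}, \ref{itm:Co1Re10-3}. First I would fix once and for all a fully separating enveloping system $V$ associated to $(\olGUX,\beta,L^{\prime})$ together with the finite subsets $\mc{S},\mc{S}^{\ssUfg}\subseteq V$ of Definition~\ref{def:Co1Re3}, and set $\mc{U}=\bigcup_{f\in\mc{S}^{\ssUfg}}\spec((S^{H_u})_{(f)})$ and $\mc{V}=\bigcup_{f\in\mc{S}}\spec((S^H)_{(f)})$ exactly as in Proposition~\ref{prop:Co1Re10}. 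These are inner enveloping quotients of $X^{\ssUfg}$ and $X^{\ssfg}$ respectively, and Proposition~\ref{prop:Co1Re10}, \ref{itm:Co1Re10-2} already supplies the good categorical $H_r$-quotient $q_{H_r}:\mc{U}\to\mc{U}\dblslash H_r$ with $\mc{U}\dblslash H_r$ open in $\mc{V}$, which is the bulk of what part~\ref{itm:Co1Re11-1} asks for.

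Next I would pin down the top-row inclusions. The outer equality $X^{\ol{\rmss}}=X^{\nss}$ and the inclusion $X^{\ol{\rms}}\subseteq X^{\rms}$ are precisely Proposition~\ref{prop:Co1Re8}; the interior inclusions $X^{\rms}\subseteq X^{\ssUfg}\subseteq X^{\ssfg}\subseteq X^{\nss}$ are the intrinsic containments recorded just after Definition~\ref{def:GiSt4} (with the trivial $X^{\ssfg}\subseteq X^{\nss}$). For the two left-most vertical arrows, $q_H:X^{\rms}\to X^{\rms}/H$ is a geometric quotient by Theorem~\ref{thm:GiSu2}, and since $X^{\ol{\rms}}=(\beta\circ\alpha)^{-1}(\olGUX^{\rms(L^{\prime})})$ is an $H$-invariant \emph{open} subset of $X^{\rms}$ and geometric quotients restrict to invariant opens, $q_H:X^{\ol{\rms}}\to X^{\ol{\rms}}/H$ is again geometric. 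That $X^{\ol{\rms}}/H$ and $X^{\rms}/H$ sit in $\mc{U}\dblslash H_r$ as open subvarieties follows because $X^{\rms}\subseteq X^{\ssUfg}$ forces $q_{H_u}(X^{\rms})\subseteq\mc{U}$, whence $q_{H_r}$ identifies $X^{\rms}/H=(X^{\rms}/H_u)/H_r$ with an open subvariety of $\mc{U}\dblslash H_r$, by Remark~\ref{rmk:GiSt2.2}.

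For the commutativity of the full diagram I would observe that its right-hand columns are literally the diagram of Proposition~\ref{prop:Co1Re10}, \ref{itm:Co1Re10-3}, whose construction yields the key identities $\theta_H\circ q_H=\pi\circ\beta\circ\alpha$ on $X^{\ssfg}$ and $\theta_{H_u}\circ q_{H_u}=\pi_G\circ\beta\circ\alpha$ on $X^{\ssUfg}$, while the left-hand square relating the stable loci commutes by the intrinsic factorisation $q_H=q_{H_r}\circ q_{H_u}$ of Theorem~\ref{thm:GiSu2} restricted to the invariant opens $X^{\ol{\rms}}\subseteq X^{\rms}$. Part~\ref{itm:Co1Re11-2} is then Corollary~\ref{cor:Co1Re8.1}: its proof uses only that $X^{\ol{\rms}}$ is an invariant open in $X^{\rms}$ (guaranteed by Proposition~\ref{prop:Co1Re8}) and that $G\times^{H_u}(X^{\ol{\rms}})$ is open in $\olGUX^{\rms(L^{\prime})}$ via $\beta$, so the argument carries over verbatim from projective to quasi-projective $X$. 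Finally, part~\ref{itm:Co1Re11-3} is exactly Corollary~\ref{cor:Co1Re10.1}: ampleness of $L^{\prime}$ makes $\olGUX\dblslash(G\times H_r)$ projective, and $\theta_H$ then realises it as a projective completion of the enveloped quotient in the sense of Definition~\ref{def:Co10}.

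The hard part is not any single computation but the bookkeeping of the gluing: I must ensure that a \emph{single} choice of $V$ (hence of $\mc{S}$, $\mc{S}^{\ssUfg}$, $\mc{U}$, $\mc{V}$) simultaneously produces the inner enveloping quotients demanded by the intrinsic diagram and the section extensions demanded by the reductive envelope, so that the two diagrams genuinely share their common edge with no compatibility left to chance. The separatedness of $\olGUX\dblslash(G\times H_r)$ together with the density of the enveloped-quotient images $q_{H_u}(X^{\ssUfg})$ and $q_H(X^{\ssfg})$ is what lets me promote commutativity from a dense open locus to the whole of each $\mc{U}$ and $\mc{V}$, exactly as in the proof of Proposition~\ref{prop:Co1Re10}.
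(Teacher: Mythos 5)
Your proposal is correct and follows essentially the same route as the paper, whose proof of Theorem~\ref{thm:Co1Re11} is precisely the assembly of Theorem~\ref{thm:GiSu2}, Proposition~\ref{prop:Co1Re8}, Corollary~\ref{cor:Co1Re8.1}, Proposition~\ref{prop:Co1Re10} and Corollary~\ref{cor:Co1Re10.1}. Your additional care about using a single fully separating enveloping system for both diagrams, and about extending Corollary~\ref{cor:Co1Re8.1} to quasi-projective $X$, fills in bookkeeping the paper leaves implicit but does not change the argument.
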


\begin{proof}
This follows immediately by combining Theorem \ref{thm:GiSu2} with Proposition
\ref{prop:Co1Re8}, Corollary \ref{cor:Co1Re8.1}, Proposition 
\ref{prop:Co1Re10} and Corollary \ref{cor:Co1Re10.1}.
\end{proof}

We make a couple of remarks regarding this result.

\begin{rmk} \label{rmk:Co1Re11.1}
Notice from Proposition \ref{prop:Co1Re10} that the inner enveloping quotient
$\mc{U}$ of $X^{\rmss,H_u \dashfg}$ embeds naturally into $(\olGUX \dblslash
G)^{\rmss(M^{\prime})}$, where $M^{\prime} \to \olGUX \dblslash G$ is the
  naturally induced ample linearisation on the GIT quotient of the linearisation
  $G \act L^{\prime} \to \olGUX$. When
  $(\olGUX,\beta,L^{\prime})$ is an ample reductive envelope then $\olGUX
  \dblslash G$ provides an $H_r$-equivariant projective completion of $\mc{U}$, and
  moreover the composition 
\[
X^{\rmss,H_u \dashfg} \overset{q_{H_u}}{\longrightarrow} \mc{U}
\overset{q_{H_r}}{\longrightarrow} \mc{U} \dblslash H_r
\]
can be studied by by doing reductive GIT on $\olGUX$ in stages---first by $G$,
then by $H_r$ (see Corollary \ref{cor:Co1Re10.1}.        
\end{rmk}

\begin{rmk} \label{rmk:Co1Re12}
If one happens to know that semistability and stability for a
reductive envelope $(\olGUX,\beta,L^{\prime})$ coincide, then $X^{\ol{\rms}} =
X^{\ol{\rmss}}$ and we have a string of equalities
\[
X^{\ol{\rms}} = X^{\rms} = X^{\ssUfg} = X^{\rmss} = X^{\nss}=X^{\ol{\rmss}}.
\]   
\end{rmk}

From the point of view of constructing projective completions of
enveloped quotients, the most important application of Theorem
\ref{thm:Co1Re11} is to the case where $(\olGUX,\beta,L^{\prime})$ is an ample
reductive envelope, as assumed in statement \ref{itm:Co1Re11-3}. In this
case, the 
associated completely semistable and stable loci can be computed using the
Hilbert-Mumford criteria and the GIT quotient $\olGUX \dblslash (G
\times H_r)$ can be described set-theoretically as the quotient space of
$\olGUX^{\rmss(L^{\prime})}$ modulo the S-equivalence relation (see Section
\ref{sec:BgReductive}).

\subsection{Strong Reductive Envelopes}
\label{sec:Co1Strong}

We saw in Proposition \ref{prop:Co1Re8} that, given a linear algebraic group $H$ with unipotent radical $H_u$
acting on an irreducible variety $X$ with 
ample linearisation $L \to X$, the
completely stable and completely
semistable loci associated to a reductive
envelope $(\olGUX,\beta,L^{\prime})$ provide approximations of the intrinsically
defined stable locus $X^{\rms}$ and the finitely generated semistable locus
$X^{\ssfg}$: one has $X^{\ol{\rms}} \subseteq X^{\rms}$ and $X^{\ssfg} \subseteq
X^{\nss}=X^{\ol{\rmss}}$. In this section we
discuss particular kinds of reductive envelope $(\olGUX,\beta,L^{\prime})$, which
give equalities $X^{\ol{\rms}}=X^{\rms}$ and $X^{\ol{\rmss}}=X^{\ssfg}$,
thus providing potential ways to compute the finitely generated stable set and
stable set for the original linearisation $H \act L \to X$ using methods from
reductive GIT. In light of Theorem \ref{thm:Co1Re11} and Proposition
\ref{prop:GiSt3}, to obtain these equalities we need to make sure that  
\begin{itemize}
\item no points inside $(\GUX)^{\rms(L)}$ suddenly become
unstable with respect to $L^{\prime}$ as points in $\olGUX$; and
\item any point in $X \subseteq \olGUX$ that is semistable for $L^{\prime}$ must
  lie in $X_f$ for some invariant $f$ over $X$ with $(S^H)_{(f)}$ finitely
  generated over $\kk$.
\end{itemize}
Following the ideas used in \cite[\S 5.2]{dk07}, we adopt the strategy of
effectively forcing out any complications arising from the codimension 1
boundary components of $\olGUX \setminus (\GUX)$, by demanding that extensions
of appropriate invariants over $\GUX$ vanish on these components. This, together with 
a normality
assumption on $\olGUX$, turns out to be enough to get the desired equalities
$X^{\ol{\rms}} = X^{\rms}$ and $X^{\ssfg} = X^{\ol{\rmss}}$. 

\begin{defn} \label{def:Co1St1} Let $H$ be a linear algebraic group acting on an
  irreducible variety $X$ with linearisation $L \to X$. Let $H \to G$ be an
  $H_u$-faithful homomorphism with $G$ a reductive
  group and let $(\olGUX,\beta,L^{\prime})$ be a reductive
  envelope. We call
$(\olGUX,\beta,L^{\prime})$ a \emph{strong reductive envelope} if there is a   \index{strong reductive envelope}
fully separating enveloping system $V$ for $H \act L \to X$ satisfying the
extension properties
\ref{itm:Co1Re1-1}--\ref{itm:Co1Re1-3} of Definition \ref{def:Co1Re5} and the
further property that every $f\in V^H$ extends to
a $G\times H_r$-invariant over $\olGUX$ that vanishes on each codimension 1
component of the boundary of $\GUX$ inside $\olGUX$.
\end{defn}

\begin{propn} \label{prop:Co1St2} Let $H$ be a linear algebraic group acting on
  an irreducible variety $X$ with ample linearisation $L \to X$ and $H \to G$ an
  $H_u$-faithful homomorphism into a reductive group $G$. Suppose
  $(\olGUX,\beta,L^{\prime})$ is a strong
  reductive envelope with $\olGUX$ a normal variety. Then
$X^{\rms}=X^{\overline{\rms}}$ and $X^{\ssUfg}=X^{\ol{\rmss}}$.
\end{propn}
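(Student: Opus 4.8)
The plan is to prove the two non-trivial inclusions $X^{\rms} \subseteq X^{\ol{\rms}}$ and $X^{\nss} \subseteq X^{\ssUfg}$, since the reverse containments are essentially free: Proposition \ref{prop:Co1Re8} already supplies $X^{\ol{\rms}} \subseteq X^{\rms}$ together with $X^{\ol{\rmss}} = X^{\nss}$, while $X^{\ssUfg} \subseteq X^{\ssfg} \subseteq X^{\nss}$ is immediate from the definitions. Both inclusions will rest on a single geometric observation combining normality of $\olGUX$ with the strong vanishing hypothesis. First I would fix a fully separating enveloping system $V$ realising the strong reductive envelope, so that every $f \in V^H$ extends to a $G \times H_r$-invariant $F$ over $\olGUX$ vanishing on each codimension $1$ component of the boundary $\partial := \olGUX \setminus (\GUX)$, with $(\olGUX)_F$ affine (Definition \ref{def:Co1Re5}, \ref{itm:Co1Re1-3}).

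The key lemma I would isolate is that for such $f$ and $F$ the restriction map $\OO((\olGUX)_F) \to \OO((\GUX)_F)$ is an isomorphism. Indeed, because $F$ vanishes on every codimension $1$ boundary component, the closed set $Z := (\olGUX)_F \cap \partial = (\olGUX)_F \setminus (\GUX)_F$ meets no such component and hence has codimension $\geq 2$ in the normal affine variety $(\olGUX)_F$; algebraic Hartogs then forces every regular function on $(\GUX)_F$ to extend uniquely across $Z$. When $(\GUX)_F$ is in addition affine, this isomorphism of coordinate rings promotes the open immersion $(\GUX)_F \hookrightarrow (\olGUX)_F$ to an equality $(\GUX)_F = (\olGUX)_F$ of $G \times H_r$-varieties.

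For $X^{\nss} \subseteq X^{\ssUfg}$ I would take $x \in X^{\nss}$ and choose $f \in V^H$ with $f(x) \neq 0$ (Definition \ref{def:Co1Re3}, \ref{itm:Co1Re3-1}). Since $(\olGUX)_F$ is affine and $G$ is reductive, $\OO((\olGUX)_F)^G$ is finitely generated by Nagata's theorem; the lemma identifies it with $\OO((\GUX)_F)^G \cong (S^{H_u})_{(f)}$, using the isomorphism $\alpha^* \colon \kk[\GUX,L]^G \cong \kk[X,L]^{H_u}$ of Section \ref{subsec:GiStRelation} after localisation. Hence $(S^{H_u})_{(f)}$ is finitely generated and $x \in X_f \subseteq X^{\ssUfg}$.

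The inclusion $X^{\rms} \subseteq X^{\ol{\rms}}$ carries the real content and I expect it to be the main obstacle. Given $x \in X^{\rms}$, Proposition \ref{prop:GiSt3} puts $\alpha(x)$ in $(\GUX)^{\rms(L)}$, and condition \ref{itm:Co1Re3-3} supplies $f \in \mc{S}^{\rms}$ with $x \in X_f$ and $(\GUX)_F$ affine; the lemma then gives $(\olGUX)_F = (\GUX)_F = G \times^{H_u} X_f$, a principal $G$-bundle over the affine base $A := \spec((S^{H_u})_{(f)})$ carrying the residual $H_r$-action. The delicate point is that $(\GUX)_F$ is in general \emph{not} contained in the stable locus: the free $G$-action is harmless, but the $H_r$-action on $A$ may have non-closed orbits, so $F$ alone does not certify $L'$-stability of $\beta\alpha(x)$. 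To repair this I would pass to the image $\bar y \in A$ of $x$, which lies in the Mumford $H_r$-stable locus of $A$ (closed orbit and finite stabiliser, transported from $\alpha(x)$ through Lemma \ref{lem:GiSt1}), pick a basic open $A_{\bar F'}$ lying inside that stable locus with $\bar F' \in (S^H)_{(f)}$ (Theorem \ref{thm:BgRe1}), and then lift $\bar F'$ to $\olGUX$: writing $\bar F'$ as a polynomial in the generators $\{\tilde f/f : \tilde f \in V^H\}$ of $(S^H)_{(f)}$ (Definition \ref{def:GiFi4}) and clearing denominators yields a $G \times H_r$-invariant $Q = \bar F' f^N$ which, being a polynomial in sections of $V^H$, extends to a $G \times H_r$-invariant $\hat Q$ over $\olGUX$ by the strong envelope hypothesis. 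Setting $F' := F\hat Q$, the open set $(\olGUX)_{F'}$ is the $G$-saturated preimage of $A_{\bar F'}$ inside the affine $(\olGUX)_F$, hence affine with all $G \times H_r$-orbits closed and all stabilisers finite (again via the free $G$-action and Lemma \ref{lem:GiSt1}), so $\beta\alpha(x) \in (\olGUX)^{\rms(L')}$ and $x \in X^{\ol{\rms}}$. Assembling the four inclusions gives $X^{\rms} = X^{\ol{\rms}}$ and $X^{\ssUfg} = X^{\ol{\rmss}}$.
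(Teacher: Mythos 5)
Your proposal reproduces the paper's proof in its essentials: the reduction via Proposition \ref{prop:Co1Re8} to the two inclusions $X^{\rms} \subseteq X^{\ol{\rms}}$ and $X^{\ol{\rmss}} \subseteq X^{\ssUfg}$, the key Hartogs-type observation that normality of $(\olGUX)_F$ together with the vanishing of $F$ on the codimension~$1$ boundary components forces $\OO((\olGUX)_F) \cong \OO((\GUX)_F)$ (hence equality of these affine opens when $(\GUX)_F$ is affine), and the use of reductivity of $G$ and $G \times H_r$ to obtain finite generation of $(S^{H_u})_{(f)}$ and $(S^H)_{(f)}$ for the semistable inclusion, are all exactly the paper's argument.

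The one place you depart is the ``repair'' in the stable inclusion, and there I would push back on both its necessity and its effectiveness. The sections $f$ witnessing condition \ref{itm:Co1Re3-3} are produced in Lemma \ref{lem:Co1Re4} so that $X^{\rms} = \bigcup_{f \in \mc{S}^{\rms}} X_f$; in particular $X_f \subseteq X^{\rms}$, so the $H$-action on $X_f$ is closed with finite stabilisers, and Proposition \ref{prop:GiSt3} (together with the equality $(\GUX)^{\rms(L)} = G \times^{H_u}(X^{\rms})$ noted after it) gives $(\GUX)_F = G \times^{H_u}(X_f) \subseteq (\GUX)^{\rms(L)}$ outright --- this is how the paper concludes, and no auxiliary section is needed. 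More importantly, your repair does not actually close the gap you flag: its pivot is the claim that $\bar{y} = q_{H_u}(x)$ lies in the Mumford $H_r$-stable locus of $A = \spec((S^{H_u})_{(f)})$, i.e.\ that $H_r \cdot \bar{y}$ is closed in \emph{all of} $A$. By Lemma \ref{lem:GiSt1}, \ref{itm:GiSt1-1} this is equivalent to $H \cdot x$ being closed in $X_f$; but $x \in X^{\rms}$ only guarantees that $H \cdot x$ is closed in some \emph{other} $X_g$ with $g \in I^{\rms}$, and closedness of an orbit does not transfer between the distinct open sets $X_g$ and $X_f$ unless one already knows $X_f \subseteq X^{\rms}$. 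So in precisely the scenario your extra step is designed to handle, its key hypothesis is unavailable, while in the scenario where the hypothesis holds the step is redundant. I would delete the repair and instead record explicitly that $X_f \subseteq X^{\rms}$ for the witnessing sections, which is the implicit content of the paper's construction of fully separating enveloping systems.
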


\begin{proof} 
By Proposition \ref{prop:Co1Re8} it suffices to
show $X^{\rms}\subseteq X^{\ol{\rms}}$ and $X^{\ol{\rmss}} \subseteq
X^{\ssUfg}$. Throughout the proof we let $V$ be a fully separating enveloping
system associated to $(\olGUX,\beta,L^{\prime})$ satisfying the conditions in
Definition \ref{def:Co1St1} and let $D_1,\dots,D_m$ be the codimension 1
components of $\olGUX \setminus \beta(\GUX)$.  

(Proof of $X^{\rms} \subseteq X^{\ol{\rms}}$.) Let $x\in X^{\rms}$. By
Definition \ref{def:Co1Re3}, \ref{itm:Co1Re3-3} of a 
fully separating enveloping system there is $f \in V^H$ with extension to a $G
\times H_r$-invariant section $F$
over $\GUX$ such that $f(x) \neq
0$ and $G \times^{H_u}(X_f)=(\GUX)_F$ is affine. By \ref{itm:Co1Re1-3} of Definition
\ref{def:Co1Re5} and the definition 
of a strong reductive envelope, there is a section of some positive tensor
power of $L^{\prime} \to \olGUX$, which we also call $F$, such that $(\beta
\circ \alpha)^*F=f$, with the open set $(\olGUX)_F$ affine and $F$ vanishing on
$\bigcup_{i}D_i\subseteq 
\olGUX$. Thus the complement of $G \times^{H_u}(X_f) = \beta^{-1}((\olGUX)_F)$
inside $(\olGUX)_F$ has codimension at least 2. Because $(\olGUX)_F$ is normal,
pullback along $\beta$ yields an isomorphism $\OO((\olGUX)_F) \cong
\OO(G\times^{H_u}(X_f))$ and, since both $(\olGUX)_F$ and $G
\times^{H_u}(X_f)$ are affine, the open inclusion $\beta:G
\times^{H_u}(X_f) \hookrightarrow (\olGUX)_F$ is therefore an isomorphism. But $G 
\times^{H_u}(X_f)=(\olGUX)_F$ is contained in the stable locus for the $G \times
H_r$-linearisation $L \to \GUX$ by Proposition \ref{prop:GiSt3}, so the $G \times H_r
$-action on
$(\olGUX)_F$ is closed with all stabilisers finite. It follows that $(\olGUX)_F
\subseteq \olGUX^{\rms(L^{\prime})}$ and thus $x \in X^{\ol{\rms}}$.  

(Proof $X^{\ol{\rmss}} \subseteq X^{\rmss,H_u \dashfg}$.) Now suppose $x \in
X^{\ol{\rmss}}$. By Definition \ref{def:Co1Re3}, 
\ref{itm:Co1Re3-1} of a fully separating enveloping system we have
$X^{\ol{\rmss}} = X^{\nss}=\bigcup_{f \in V^H}X_f$, 
so by Definition \ref{def:Co1St1} there is $f \in V^H$ with extension to a $G \times 
H_r$-invariant
section $F$ of some positive tensor power of $L^{\prime} \to \olGUX$, with
$(\olGUX)_F$ affine, such that
$x \in X_f$ and $F$ vanishes on $\bigcup_iD_i$. As
above, the complement of $G \times^{H_u}(X_f)$ is therefore of codimension at least
2 in $(\olGUX)_F$, so from the normality of $(\olGUX)_F$ it follows that the
pullback map 
$\beta^{\#}:\OO((\olGUX)_F) \to \OO(G \times^{H_u}(X_f))$ is a $G \times
H_r$-equivariant isomorphism. Thus $(\beta \circ \alpha)^{\#}$ yields isomorphisms
\begin{align*} 
\OO((\olGUX)_F)^{G} &\overset{\cong}{\longrightarrow}
\OO(X_f)^{H_u}=(S^{H_u})_{(f)}, \\
\OO((\olGUX)_F)^{G \times H_r} &\overset{\cong}{\longrightarrow}
\OO(X_f)^{H}=(S^{H})_{(f)}.
\end{align*} 
Since $(\olGUX)_F$ is affine and $G$ and $G \times H_r$ are reductive, the
$\kk$-algebras $(S^{H_u})_{(f)}$ and $(S^H)_{(f)}$ are therefore finitely
generated and thus
$x \in X_f \subseteq X^{\ssUfg}$. 
\end{proof}

\begin{rmk} \label{rmk:Co1St3} Observe that as a corollary to Proposition
  \ref{prop:Co1St2}, for any given linearisation $H \act L \to X$ a necessary
  condition for the existence of a strong 
  reductive envelope with normal $\olGUX$ is that
  $X^{\ssUfg}=X^{\ssfg}=X^{\nss}$ (cf.  Remark
\ref{rem:gentleunipotent}). 
\end{rmk}
 
For the remainder of this section we will consider ways to try to construct strong
reductive envelopes. Recall that this means (1) choosing an
equivariant completion $\olGUX$ of $\GUX$, together with (2) an extension
$L^{\prime} \to \olGUX$ of some positive tensor power of the linearisation $L
\to \GUX$ such that (3) (roughly stated) enough invariant sections over $\GUX$
extend to sections over $\olGUX$ vanishing on the boundary divisors. In
general (2) depends heavily on the singularities of the completion in (1).
We don't wish to explore in depth here; instead it suffices for us to note that if
$\olGUX$ 
is $\mb{Q}$-factorial then some positive tensor power of $L \to \GUX$ extends
over the boundary, and if $\olGUX$ is even factorial (for example, smooth) then
$L$ itself extends. Moreover, if $X$ is irreducible and $H$ and $G$ are
\emph{connected} linear algebraic groups such that the action of $G \times H_r$
on $\GUX$ extends to one on $\olGUX$, then any line bundle $L^{\prime} \to
\olGUX$ extending $L^{\ten r} \to \GUX$ (with $r>0$)
  has a unique linearisation extending $L^{\ten r} \to \GUX$ (see the proof of
  \cite[Converse 1.13]{mfk94}). 

On the other hand, given any extension $L^{\prime} \to \olGUX$ of (a power of)
the $G \times H_r$-linearisation $L \to \GUX$, then as in \cite{dk07} the
question of (3) can be 
approached by making a fairly mild assumption on the nature of the completion
$\olGUX$: namely, that it is a \emph{gentle} completion of $\GUX$, in the sense
of Definition \ref{def:Co1St3.1}: recall this means that $\olGUX$ is a normal
projective variety such that every
  codimension 1 component of the boundary of $\GUX$ in $\olGUX$ is a
  $\mb{Q}$-Cartier divisor. What follows is a
  generalisation of the constructions in \cite[\S\S 5.3.1--5.3.2]{dk07}
  (described in Section \ref{subsec:BgUnExtending}) that
  applies to our current setting. 


So let us now assume that $H$ and $G$ are \emph{connected} linear algebraic groups,
with $G$ reductive, and $\rho:H \to G$ is a fixed $H_u$-faithful homomorphism. 
Let $\olGUX$ be a
gentle completion of $\GUX$ and $L^{\prime} \to \olGUX$ a 
$G \times H_r$-linearisation extending $L^{\ten r} \to
\GUX$, for some $r>0$. Also let $D_1,\dots,D_m \subseteq \olGUX$ be the
codimension 1 irreducible components of the 
complement of $\GUX$ in $\olGUX$ and define the $\mb{Q}$-Cartier divisor 
\[
D:=\sum_{i=1}^m D_i.
\]
Then for any sufficiently divisible integer
$N>0$ the divisor $ND$ is Cartier and defines a line bundle
$\OO(ND)$ on $\olGUX$ which restricts to the trivial bundle on
$\GUX$. As in Section \ref{subsec:BgUnExtending}, given any line bundle $M \to
\olGUX$, define 
\begin{equation} \label{eq:Co1St0.1}
M_N:=M \ten \OO(ND) \to \olGUX.
\end{equation}
In the case $M=L^{\prime}$ then, because $G \times H_r$ is connected, the $G \times
H_r$-linearisation on $L^{\ten r} \to \GUX$ extends uniquely to a $G\times
H_r$-linearisation on $L^{\prime}_N$.  

The next lemma consists of an expanded version of the argument found in
\cite[Proposition 5.3.10]{dk07}.

\begin{lem} \label{lem:Co1St4}
Let $\beta:\GUX \hookrightarrow \olGUX$ be a gentle completion and $L^{\prime}
\to \olGUX$ a line bundle extending $L$. Retain the preceding notation.  
\begin{enumerate}
\item \label{itm:Co1St3-1} Let $f$ be a section of $L^{\ten rr^{\prime}} \to
  \GUX$ for some 
  $r^{\prime}>0$. Then for $N>0$ a sufficiently divisible integer, $f$ extends
  to a section of $(L^{\prime}_N)^{\ten r^{\prime}} \to \olGUX$ under $\beta$.
\item \label{itm:Co1St3-2} For any $N>0$ such that $ND$ is Cartier and for any
  integers $r^{\prime},m>0$, there is a natural 
  inclusion $H^0(\olGUX,(L^{\prime}_N)^{\ten r^{\prime}})
\hookrightarrow H^0(\olGUX,(L^{\prime}_{mN})^{\ten r^{\prime}})$
whose image consists of sections of $(L^{\prime}_N)^{\ten r^{\prime}}$
that vanish on each of $D_1,\dots,D_m$.
\end{enumerate}
\end{lem}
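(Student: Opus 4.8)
The plan is to treat both parts as statements about divisors of rational sections on the normal irreducible variety $\olGUX$, exploiting that $\beta^{*}L^{\prime}\cong L^{\ten r}$ and that $\OO(ND)$ restricts to the trivial bundle on $\GUX$ (so that its canonical section is nowhere zero off the boundary).

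For \ref{itm:Co1St3-1}, I would first transport $f$ across the isomorphism $\beta^{*}(L^{\prime})^{\ten r^{\prime}}\cong L^{\ten rr^{\prime}}$, so that it becomes a regular section of $(L^{\prime})^{\ten r^{\prime}}$ over the dense open subset $\GUX$. Since $\olGUX$ is normal and irreducible, this section extends uniquely to a rational section $\tilde{f}$ of the line bundle $(L^{\prime})^{\ten r^{\prime}}$ on all of $\olGUX$, and by normality its associated Weil divisor $\mathrm{div}(\tilde{f})$ is well defined. Because $f$ is genuinely regular on $\GUX$, every prime divisor whose generic point lies in $\GUX$ occurs in $\mathrm{div}(\tilde{f})$ with non-negative coefficient; hence the only prime divisors that can occur with a negative coefficient are the codimension $1$ boundary components $D_{1},\dots,D_{m}$, say with pole orders $a_{1},\dots,a_{m}\ge 0$. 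Fixing $N_{0}>0$ with $N_{0}D$ Cartier and taking $N$ to be any multiple of $N_{0}$ with $r^{\prime}N\ge\max_{i}a_{i}$, the divisor $\mathrm{div}(\tilde{f})+r^{\prime}ND$ is effective, so $\tilde{f}$ is a global section of $(L^{\prime})^{\ten r^{\prime}}\ten\OO(r^{\prime}ND)=(L^{\prime}_{N})^{\ten r^{\prime}}$. Finally I would note that, since $\OO(r^{\prime}ND)$ is canonically trivial on $\GUX$, this extension restricts to $f$ under $\beta$, as required.

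For \ref{itm:Co1St3-2}, the key identification is the canonical isomorphism $(L^{\prime}_{mN})^{\ten r^{\prime}}=(L^{\prime}_{N})^{\ten r^{\prime}}\ten\OO\bigl(r^{\prime}(m-1)ND\bigr)$. As $D=\sum_{i}D_{i}$ is effective and $ND$ is Cartier, the line bundle $\OO\bigl(r^{\prime}(m-1)ND\bigr)$ carries its canonical global section $\sigma$ with $\mathrm{div}(\sigma)=r^{\prime}(m-1)ND$. I would define the asserted map by $s\mapsto s\ten\sigma$; it lands in $H^{0}(\olGUX,(L^{\prime}_{mN})^{\ten r^{\prime}})$ and is injective because $\olGUX$ is integral and $\sigma\neq 0$, so multiplication by $\sigma$ annihilates no nonzero section. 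Every section in the image has divisor $\ge r^{\prime}(m-1)ND$ and hence vanishes along each $D_{i}$ (to order at least $r^{\prime}(m-1)N\ge 1$); conversely, any section $t$ of $(L^{\prime}_{mN})^{\ten r^{\prime}}$ with $\mathrm{div}(t)\ge r^{\prime}(m-1)ND$ has $t/\sigma$ regular in codimension $1$, hence regular on the normal variety $\olGUX$, so $t$ lies in the image. Thus the image is precisely the subspace of sections vanishing along the boundary divisors $D_{1},\dots,D_{m}$ (to the appropriate order), which gives the stated characterisation.

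I expect the main obstacle to be the bookkeeping in \ref{itm:Co1St3-1}: making precise that the polar part of the extended rational section is supported on the $D_{i}$ and has bounded coefficients, so that a single sufficiently divisible $N$ clears all poles simultaneously. This rests on the normality of $\olGUX$ (to obtain a well-defined Weil divisor and to run the "regular in codimension one implies regular" argument) and on the fact that $\olGUX\setminus\GUX$ has exactly $D_{1},\dots,D_{m}$ as its codimension $1$ components; both are guaranteed by the hypothesis that $\beta:\GUX\hookrightarrow\olGUX$ is a gentle completion. Once this divisor-theoretic picture is in place, the statements in \ref{itm:Co1St3-2} follow routinely.
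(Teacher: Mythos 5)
Your proof is correct and follows essentially the same route as the paper's: in both parts the key point is that the polar/vanishing behaviour of the relevant sections is concentrated on the boundary divisors $D_1,\dots,D_m$ and is absorbed by twisting with a sufficiently divisible multiple of $D$, with normality supplying the extension over the codimension $\geq 2$ part of the boundary. The only difference is presentational: you track the Weil divisor of the extended rational section and multiply by the canonical section of $\OO(r^{\prime}(m-1)ND)$, whereas the paper manipulates explicit local Cartier data $(U_j,t_j)$ and $(V_k,s_k)$ and patches.
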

\begin{proof} 
Throughout the proof we denote the field of rational functions on an irreducible
variety $Y$ by $\kk(Y)$.  

(Proof of \ref{itm:Co1St3-1}.) 
It suffices to show that for a suitable $N>0$ the section $f$ extends to a
section of $L'_N$ over codimension 1 boundary components---it will canonically
extend over components of codimension at least 2 because $\olGUX$ is normal.
We use the fact that $L^{\prime}$
corresponds to a Cartier divisor \cite[Proposition 6.15]{har77}; that is, there is a 
finite
collection of pairs $(U_{j},t_{j})$, with $U_j \subseteq \olGUX$ open and $t_j
\in \kk(U_j)=\kk(\olGUX)$, such that $L^{\prime}$ is represented by a Weil
divisor whose restriction to $U_j$ is the principal divisor defined by
$t_j$. Note that $(L^{\prime})^{\ten r^{\prime}}$ is then 
represented by the collection
$(U_{j},t_{j}^{r^{\prime}})$. There is a positive integer $N$ such that
the order of the vanishing of $t_{j}$ along each $D_i$ is less than $N$ for each
$j$. Thinking of sections of $L^{\ten rr^{\prime}}$ as sections of the constant
sheaf of rational functions on $\GUX$ in the standard way \cite[Chapter 2, \S
6]{har77}, we can write
$f|_{U_{j}\cap(\GUX)}=b_{j}/t_{j}^{r^{\prime}}$ with $b_{j}$ a
regular function on $U_{j}\cap(\GUX)$. We can also
assume without loss of generality that $ND$ is a Cartier divisor of $\olGUX$, 
since the completion $\olGUX$ is gentle. Let $(V_{k},s_{k})$
represent $ND$, with $V_k \subseteq \olGUX$ open and $s_k \in
\kk(V_k)=\kk(\olGUX)$, again with the index set for
$k$ being finite, and set $a_{jk}=b_{j}s_{k}^{r^{\prime}} \in
\kk(\olGUX)$. Now $b_j$ may have poles along $U_{j} \cap
V_{k} \cap (\bigcup_i D_i)$; on the other hand each $s_{k}^{r^{\prime}}$
vanishes on $U_{j} \cap
V_{k} \cap (\bigcup_i D_i)$ with order $r^{\prime}N$, so by further increasing
$N$ if necessary we
may assume each $a_{jk}$ defines a regular function on $U_{j} \cap
V_{k}$. Thus each $a_{jk}/(t_j s_k)^{r^{\prime}}$ is a section of $(L^{\prime}
_N)^{\ten
r^{\prime}}$ over $U_j \cap V_k$, whose restriction to $(\GUX) \cap
U_{j} \cap V_{k}$ is defined by $b_{j}/t_{j}^{r^{\prime}}$. One can check that
all the $a_{jk}/(t_js_k)^{r^{\prime}}$ (for all $j$ and $k$) agree on overlaps,
so they patch together to give a global section of $(L^{\prime}_N)^{\ten
  r^{\prime}}$ which extends $f$.


(Proof of \ref{itm:Co1St3-2}.) Assume $N$ is large enough so that $ND$ is Cartier, and
let $r^{\prime}>0$. Continuing with the notation used above to prove
\ref{itm:Co1St3-1}, the sheaf $\OO(ND)$ is represented by the
collection $(V_k,s_k)$, with $s_k \in \OO(V_k)$ such that
$\OO(ND)|_{V_k}=\OO_{V_k} \langle 1/s_k \rangle$ as sheaves of
$\OO_{V_k}$-modules. Then for each $m>0$ there is a
well-defined inclusion $\OO(r^{\prime}ND) 
\hookrightarrow \OO(mr^{\prime}ND)$, whose restriction to $V_k$ corresponds to
the multiplication-by-$s_k^{r^{\prime}(m-1)}$ map $\OO_{V_k} \to
\OO_{V_k}$. Note that sections in the image of this map vanish of each of the
$D_i$. Because
$(L^{\prime})^{\ten r^{\prime}}$ is locally free the natural map of sheaves
\[
(\underline{L^{\prime}})^{\ten r^{\prime}} \ten \OO(r^{\prime}ND) \to
(\underline{L^{\prime}})^{\ten r^{\prime}} \ten \OO(mr^{\prime}ND)
\]
is again injective \cite[Chapter 3, Proposition 9.2]{har77}, and since taking global
sections is left-exact we see that 
this yields an injection
\[
H^0(\olGUX,(L^{\prime}_N)^{\ten r^{\prime}})
\hookrightarrow H^0(\olGUX,(L^{\prime}_{mN})^{\ten r^{\prime}}).
\]         
It is immediate that any section in the image of this map vanishes on each of
the $D_i$. 
\end{proof}

Lemma \ref{lem:Co1St4} says that given any finite collection of sections of some power
of $L^{\ten r} \to \GUX$ we can always modify the extension $L^{\prime}$
so that these sections extend to sections of the resulting linearisation which
vanish on the boundary of $\olGUX$. With additional
assumptions on $L^{\prime}$ we can use this fact to produce a strong reductive
envelope.           

\begin{propn} \label{prop:Co1St5} Let $H$ be a connected linear algebraic group
  acting on an irreducible variety $X$ with linearisation $L \to X$ and let $H
  \to G$ be an $H_u$-faithful homomorphism into a connected reductive group $G$ where $H_u$ is the unipotent radical of $H$.
  Suppose $\beta:\GUX \hookrightarrow \olGUX$ is a gentle $G \times
  H_r$-equivariant 
  projective completion and let
  $L^{\prime} \to \olGUX$ be any $G\times H_r$-linearisation extending
  some positive tensor power of the
  $H$-linearisation $L \to X$. If either 
\begin{itemize} 
\item $(\olGUX,\beta,L^{\prime})$ defines a reductive
  envelope for $H \act L \to X$; or 
\item the line bundle $L^{\prime}_n$ of \eqref{eq:Co1St0.1} is ample, for
  sufficiently divisible integers $n>0$;
\end{itemize} 
then for sufficiently divisible integers $N>0$ the triple
$(\olGUX,\beta,L^{\prime}_{N})$ defines a strong reductive
envelope.
\end{propn}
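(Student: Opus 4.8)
The plan is to exhibit $(\olGUX,\beta,L'_N)$ as a strong reductive envelope using the \emph{same} fully separating enveloping system that is available in each of the two cases, and to obtain the extra vanishing required by Definition \ref{def:Co1St1} by multiplying invariant sections by the canonical boundary section. Write $r>0$ for the integer with $\beta^*L'=L^{\ten r}$, and recall from \eqref{eq:Co1St0.1} that $L'_N=L'\ten\OO(ND)$, where $\OO(ND)$ restricts to the trivial bundle on $\GUX$; for $N$ with $ND$ Cartier it carries a canonical global section $s_{ND}$ with divisor $ND=\sum_i ND_i$, vanishing to order $N$ along each $D_i$ and restricting to the trivialising section on $\GUX$. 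In particular $\beta^*L'_N=L^{\ten r}$, and since $G\times H_r$ is connected $L'_N$ carries a unique $G\times H_r$-linearisation extending that of $L^{\ten r}\to\GUX$ (as noted after \eqref{eq:Co1St0.1}). Because $\olGUX$ is irreducible with $\GUX$ dense, a section of a line bundle on $\olGUX$ is determined by its restriction to $\GUX$; hence extensions of a given section are unique, and the extension of an invariant section is automatically invariant (any group element carries it to another extension of the same invariant restriction). This applies to $s_{ND}$ itself, which is therefore $G\times H_r$-invariant. Fixing a fully separating enveloping system $V$ adapted to some finite $\mc{S}$ (the one witnessing the reductive envelope in the first case, or one produced by Lemma \ref{lem:Co1Re4} in the second --- the property is intrinsic to $H\act L\to X$ and $H\to G$, so the same $V$ serves), it remains to verify for $(\olGUX,\beta,L'_N)$ the extension properties \ref{itm:Co1Re1-1}--\ref{itm:Co1Re1-3} of Definition \ref{def:Co1Re5} and the boundary-vanishing of Definition \ref{def:Co1St1}; note $V^H\subseteq V^{H_u}$.

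In the first case I would argue directly. For $f\in V^{H_u}$ (respectively $f\in V^H$) let $F\in H^0(\olGUX,(L')^{\ten r'})$, for some $r'=r'(f)>0$, be the $G$- (respectively $G\times H_r$-) invariant extension supplied by the reductive envelope, and put $F_N:=F\ten s_{ND}^{\ten r'}\in H^0(\olGUX,(L'_N)^{\ten r'})$. Since $F_N$ restricts to $f$ on $\GUX$ it is the unique, hence invariant, extension of $f$ to $(L'_N)^{\ten r'}$, which gives \ref{itm:Co1Re1-1} and \ref{itm:Co1Re1-2}; and as $s_{ND}$ vanishes along every $D_i$, the section $F_N$ vanishes on each codimension $1$ boundary component, which is exactly the condition of Definition \ref{def:Co1St1}. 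For \ref{itm:Co1Re1-3} I would observe that $(\olGUX)_{F_N}=(\olGUX)_F\cap\{s_{ND}\neq0\}$ is the non-vanishing locus of the section $s_{ND}^{\ten r'}$ of $\OO(r'ND)$ restricted to the affine variety $(\olGUX)_F$ (affine by the reductive-envelope hypothesis), and the non-vanishing locus of a global section of a line bundle over an affine variety $W$ is affine, being the distinguished open $D_+(s_{ND}^{\ten r'})$ in $\proj\big(\bigoplus_{n\geq0}\OO(nr'ND)|_W\big)\cong W$.

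In the second case, where $L'_n$ is ample for sufficiently divisible $n$, I would instead invoke Lemma \ref{lem:Co1St4}: part \ref{itm:Co1St3-1} extends each section of $V$, viewed over $\GUX$, to a section of a power of $L'_{N_0}$ for a suitable divisible $N_0$, and part \ref{itm:Co1St3-2}, applied to a multiple $N=mN_0$, replaces these by extensions that vanish along every $D_i$ while still restricting to the original sections on $\GUX$; uniqueness makes them invariant, yielding \ref{itm:Co1Re1-1}, \ref{itm:Co1Re1-2} and the boundary-vanishing. Choosing $N$ in addition sufficiently divisible that $L'_N$ is ample, property \ref{itm:Co1Re1-3} holds automatically, since the non-vanishing locus of a section of a power of an ample line bundle on the projective variety $\olGUX$ is affine (cf. Remark \ref{rmk:Co1Re6.1}). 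The finitely many divisibility requirements --- $ND$ Cartier, the extensions of Lemma \ref{lem:Co1St4}, and (in the second case) ampleness of $L'_N$ --- are then intersected to give the statement for all sufficiently divisible $N$.

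The step I expect to be the main obstacle is property \ref{itm:Co1Re1-3} in the first case: multiplying by the boundary section $s_{ND}$ is precisely what forces the vanishing demanded by a \emph{strong} envelope, and one must check this does not destroy the affineness of the relevant open subset. This rests on the standard but slightly delicate fact that the non-vanishing locus of a line-bundle section over an affine base is affine, which I would record through the relative $\proj$ description above; in the ample case the difficulty disappears because complements of ample divisors are automatically affine. The remaining points --- invariance of extensions via uniqueness over the dense open $\GUX$, and the bookkeeping of the divisibility of $N$ --- are routine.
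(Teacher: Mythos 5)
Your proposal is correct and follows essentially the same route as the paper: twist by $\OO(ND)$ to force vanishing along the boundary divisors, deduce invariance of the extensions from their restriction to the dense open $\GUX$, preserve affineness of $(\olGUX)_F$ in the first case because removing the support of a Cartier divisor from an affine variety leaves an affine variety, and use ampleness of $L'_N$ in the second case. The only (harmless) variation is that in the first case you build the extension directly as $F\ten s_{ND}^{\ten r'}$ from the reductive-envelope extension, whereas the paper produces it via Lemma \ref{lem:Co1St4} and then identifies it with the reductive-envelope extension off the boundary using normality; by uniqueness of extensions over the irreducible $\olGUX$ these are the same section, so the two arguments coincide.
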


\begin{proof} 
Suppose $L^{\prime} \to \olGUX$ pulls back to $L^{\ten r} \to X$ under $\beta
\circ \alpha$ and let $D_1,\dots,D_m$ be the irreducible codimension 1
components of the boundary of the gentle completion $\beta:\GUX \hookrightarrow
\olGUX$. We first show the following: given $r^{\prime}>0$ and an 
enveloping system $V \subseteq H^0(X,L^{\ten rr^{\prime}})$, for sufficiently
divisible integers $N>0$ (depending on $V$) each section $f \in V^{H_u}$
(respectively, $f\in V^H$) extends to a $G$-invariant (respectively, $G \times
H_r$-invariant) section $F$ of $(L^{\prime}_N)^{\ten r^{\prime}} \to \olGUX$ under $
\beta
\circ \alpha$ which vanishes on each $D_i$. To this end, let $f\in V^{H_u}$ 
(respectively,
$f \in V^{H}$). By Lemma 
\ref{lem:Co1St4} there is an integer $N_f>0$ such that $f$ extends to a
section $F$ of $(L^{\prime}_{N_f})^{\ten r^{\prime}}$ over $\olGUX$
which vanishes on the codimension 1 boundary components of $\GUX$ inside
$\olGUX$. Note that $F$ must be $G$-invariant (respectively,
$G \times H_r$-invariant): by the normality of $\olGUX$
the section $f$ extends canonically to an invariant over the boundary components of
codimension at least 2 in $\olGUX$, and since $F$ vanishes on the
remaining boundary components it too must be invariant. Now take bases
$\mc{B}_{H_u}$ of $V^{H_u}$ and $\mc{B}_H$ of $V^H$ and let $N>0$ 
be any positive integer which is properly divisible by 
all the $N_{f}$ for $f \in \mc{B}_{H_u} \cup \mc{B}_{H}$. Then by Lemma
\ref{lem:Co1St4}, \ref{itm:Co1St3-2} any $f$ in $V^{H_u}$ or $V^H$
extends to a section $F$ of $(L^{\prime}_N)^{\ten r^{\prime}}$, invariant in the
appropriate sense, which vanishes on the codimension 1 boundary components of
$\GUX$ in $\olGUX$, which was to be shown.   

Now suppose $(\olGUX,\beta,L^{\prime})$ defines
a reductive envelope for $H \act L \to X$ and let $V$ be an associated fully 
separating
enveloping system satisfying \ref{itm:Co1Re1-1}--\ref{itm:Co1Re1-3} of
Definition \ref{def:Co1Re5} of a reductive envelope. As above, for sufficiently
divisible $N>0$ each $f$ in $V^{H_u}$ or $V^H$ extends to an invariant $F$ (in the
appropriate sense) of some positive tensor power of $L^{\prime}_N \to
\olGUX$ which vanishes on the codimension 1 complement $\bigcup_iD_i$ of $\GUX$
in $\olGUX$. To show that $(\olGUX,\beta,L^{\prime}_N)$ is a strong reductive
envelope, we are left to show that each such $(\olGUX)_F$ is affine (see
Definition \ref{def:Co1Re5}, \ref{itm:Co1Re1-3}). But by Definition
\ref{def:Co1Re5} of a reductive envelope applied to $L^{\prime}$, the section $f$ does
extend to an
invariant $F^{\prime}$ of some positive tensor power of $L^{\prime} \to
\olGUX$ with $(\olGUX)_{F^{\prime}}$ affine. Observe that the restrictions of
$L^{\prime}_N$ and $L^{\prime}$ 
to $\olGUX \setminus (\bigcup_i D_i)$ are equal, and $\GUX$ and $\olGUX \setminus
(\bigcup_i D_i)$ differ only in codimension at least 2, so by normality of $\olGUX$
the sections $F$ and $F^{\prime}$ are equal over $\olGUX \setminus
(\bigcup_iD_i)$. It follows that
\[
(\olGUX)_F=(\olGUX)_{F^{\prime}} \setminus (\textstyle{\bigcup_i} D_i).
\]          
Because $(\olGUX)_{F^{\prime}}$
is affine and the complement of the support of a Cartier divisor on an affine
variety is again affine \cite[Tag 01WQ]{stacks-project}, $(\olGUX)_F$ is
therefore also affine. Hence, 
$(\olGUX,\beta,L^{\prime}_N)$ is a strong reductive envelope, for sufficiently
divisible $N>0$.
   
Finally, consider the case where $L^{\prime}_n$ is ample for sufficiently divisible
$n>0$. Appealing to Lemma \ref{lem:Co1Re4} there is an integer $r^{\prime}>0$ 
such that $H^0(X,L^{\ten rr^{\prime}})$ contains a fully
separating enveloping system $V$, and for sufficiently
divisible $N>0$ each $f$ in $V^{H_u}$ or $V^H$ extends to an invariant $F$ (in the
appropriate sense) of some positive tensor power of $L^{\prime}_N \to
\olGUX$ which vanishes on each $D_i$. We can choose $N>0$ sufficiently divisible
so that that $L^{\prime}_N$ is ample and thus each
$(\olGUX)_F$ affine. Then
$(\olGUX,\beta,L^{\prime}_N)$ is a strong ample reductive envelope.  
\end{proof}

\begin{cor} \label{cor:Co1St5.1} 
In the setting of Proposition \ref{prop:Co1St5}, suppose we are in the situation
that $X$ is projective and $L^{\prime}_n$ is ample for sufficiently divisible
integers $n>0$. If 
$S^H=\kk[X,L]^H$ is a finitely generated $\kk$-algebra, then for sufficiently
divisible integers $N>0$ the inclusion $\beta \circ \alpha:X^{\ssfg} \hookrightarrow
(\olGUX)^{\rmss(L^{\prime}_N)}$ induces a natural isomorphism
\[
X \env H = \proj(S^H) \cong \olGUX \dblslash_{L^{\prime}_N} (G \times H_r).
\]    
\end{cor}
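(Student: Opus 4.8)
The plan is to manufacture an \emph{open immersion} of the projective variety $\proj(S^H)$ into the projective GIT quotient $\olGUX \dblslash_{L^{\prime}_N}(G\times H_r)$ and then use completeness to promote it to an isomorphism. First I would apply Proposition \ref{prop:Co1St5} in its ampleness case: since $L^{\prime}_n$ is ample for suitably divisible $n$, for sufficiently divisible $N>0$ the triple $(\olGUX,\beta,L^{\prime}_N)$ is a strong ample reductive envelope, with $\olGUX$ normal because the completion is gentle. Because $S^H$ is finitely generated, every degree-zero localisation $(S^H)_{(f)}$ is again finitely generated (apply Nagata's theorem to the $\GG_m$-action defining the grading), so $I^{\ssfg}=I^{\nss}$ and hence $X \env H=\bigcup_{f\in I^{\nss}}\spec((S^H)_{(f)})=\proj(S^H)$ is a projective variety with $X^{\ssfg}=X^{\nss}$ (cf. Remark \ref{rmk:GiFi3.1}).

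The heart of the argument is to choose the fully separating enveloping system so that the induced morphism is defined on \emph{all} of $\proj(S^H)$, not merely on a possibly proper inner enveloping quotient. Since $\proj(S^H)$ is quasi-compact I can select finitely many homogeneous $g_1,\dots,g_k\in I^{\nss}$ with $\proj(S^H)=\bigcup_i D_+(g_i)$; then, using Lemma \ref{lem:Co1Re4} together with the local finiteness of Lemma \ref{lem:TrRe4}, I would enlarge a fully separating enveloping system to one, call it $V$, whose invariant part $V^H$ contains all the $g_i$, so that $V^H$ is base-point-free on $\proj(S^H)$, i.e. $\bigcup_{f\in V^H}D_+(f)=\proj(S^H)$. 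Feeding this $V$ into Proposition \ref{prop:Co1St5} keeps $(\olGUX,\beta,L^{\prime}_N)$ a strong ample reductive envelope adapted to $V$. Now for each $f\in V^H$, with its boundary-vanishing affine extension $F$ over $\olGUX$, the normality argument from the proof of Proposition \ref{prop:Co1St2} shows that $(\olGUX)_F\subseteq \olGUX\setminus\bigcup_i D_i$ meets $\GUX$ in a subset of codimension at least two, so pullback along $\beta$ gives $\OO((\olGUX)_F)\cong\OO(G\times^{H_u}(X_f))$ and, after taking $G\times H_r$-invariants and using $\alpha^*\colon \kk[\GUX,L]^{G\times H_r}\cong S^H$, a ring isomorphism $(\kk[\olGUX,L^{\prime}_N]^{G\times H_r})_{(F)}\cong (S^H)_{(f)}$. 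These are exactly the local isomorphisms underlying the open immersion $\theta_H$ of Proposition \ref{prop:Co1Re10}; since they all descend from the single injective restriction map $(\beta\circ\alpha)^*$ they agree on overlaps and glue to an open immersion $\theta_H\colon \proj(S^H)=\bigcup_{f\in V^H}D_+(f)\hookrightarrow \olGUX\dblslash_{L^{\prime}_N}(G\times H_r)$ satisfying $\theta_H\circ q_H=\pi\circ\beta\circ\alpha$ on $X^{\ssfg}$.

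Finally I would conclude with the completeness argument already used in Corollary \ref{cor:GiFi5}: the space $\olGUX$ is irreducible (it is the image of the irreducible $G\times X$, with $\beta$ dominant), hence so is $Q:=\olGUX\dblslash_{L^{\prime}_N}(G\times H_r)$, while $\proj(S^H)$ is complete. An open immersion from a complete scheme into a separated scheme is also closed \cite[Tag 01W0]{stacks-project}, so $\theta_H(\proj(S^H))$ is a nonempty open-and-closed subset of the irreducible $Q$ and therefore equals $Q$; a surjective open immersion is an isomorphism, giving the desired $\proj(S^H)\cong\olGUX\dblslash_{L^{\prime}_N}(G\times H_r)$, compatibly with $\beta\circ\alpha$. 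The main obstacle is the middle step: the intrinsic machinery only yields an immersion of an inner enveloping quotient, which can be a proper (though dense) open of $\proj(S^H)$, so the genuine work lies in securing base-point-freeness of $V^H$ on the whole of $\proj(S^H)$. This is precisely where finite generation (through quasi-compactness of $\proj(S^H)$) and the freedom to enlarge the separating system are indispensable, and it is what makes the completeness argument applicable.
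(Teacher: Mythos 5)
Your proposal is correct and follows essentially the same route as the paper: establish that $X \env H = \proj(S^H)$ is projective from finite generation, enlarge the fully separating enveloping system so that $V^H$ is base-point-free on all of $\proj(S^H)$ (the paper does this by adjoining ring generators of $\kk[X,L^{\ten rr'}]^H$, which is the same covering device), glue the local isomorphisms into the open immersion $\theta_H$ of Proposition \ref{prop:Co1Re10}, and conclude by properness of the source and separatedness of the target via \cite[Tag 01W0]{stacks-project}. The only cosmetic difference is that you justify the local ring isomorphisms through the strong-envelope codimension-two argument of Proposition \ref{prop:Co1St2}, whereas the paper gets them directly from the extension properties of the reductive envelope; both are valid here.
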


\begin{proof}
Because $X$ is projective and $L \to X$ is necessarily ample, the invariant ring
$S^H$ is finitely generated with degree 0 piece equal to the ground field $\kk$,
hence $X \env H=\proj(S^H)$ is a projective variety. Thus $X \env H$ is an inner
enveloping quotient of $X^{\ssfg} = X^{\nss}$. As
in the proof of Proposition \ref{prop:Co1St5}, by appealing to Lemma 
\ref{lem:Co1Re4} we may find $r^{\prime}>0$ such that $H^0(X,L^{\ten rr^{\prime}})$ 
contains a fully
separating enveloping system $V$. By taking the image of $V$
under a suitable multiplication map (see Lemma \ref{lem:Co1Re4}) and enlarging
using Lemma \ref{lem:TrRe4} if
necessary, we can assume that
$V$ is a fully separating system that also contains a (finite) collection of
generators $f_1,\dots,f_m$ of the ring $\kk[X,L^{\ten rr^{\prime}}]^H$. Then 
\[
X \env H \cong \proj(\kk[X,L^{\ten rr^{\prime}}]^H)= \bigcup_{j=1}^m
\spec((S^H)_{(f_j)}) 
\] 
and each $(S^H)_{(f_j)}$ is generated by $\{ \tilde{f}/f_j \mid \tilde{f} \in
V^H\}$. Without loss of generality we may therefore view $V$ as a fully
separating enveloping system which is adapted to to a subset $\mc{S}$ 
containing $\{f_1,\dots,f_m\}$. For sufficiently
divisible $N>0$ we have $L^{\prime}_N$ ample and, as
in the proof of Proposition \ref{prop:Co1St5}, each $f \in V^H$ extends to a
$G \times H_r$-invariant $F$ of some positive tensor power of $L^{\prime}_N \to 
\olGUX$. The inclusion $\beta \circ \alpha:X^{\ssfg} \hookrightarrow
(\olGUX)^{\rmss(L^{\prime}_N)}$ is therefore well-defined, and defines a
dominant open immersion  
\[
\theta_H:X \env H \hookrightarrow \olGUX \dblslash_{L^{\prime}_N} (G \times H_r),
\]
as in the proof of Proposition \ref{prop:Co1Re10}, \ref{itm:Co1Re10-1}.
Since $X \env H$ is proper and $\olGUX \dblslash_{L^{\prime}_N} (G \times H_r)$
is separated, the image of $\theta_H$ is also closed in $\olGUX
\dblslash_{L^{\prime}_N} (G \times H_r)$ \cite[Tag 01W0]{stacks-project} and
thus is the whole of $\olGUX 
\dblslash_{L^{\prime}_N} (G \times H_r)$. Therefore $\theta_H$ defines an
isomorphism $X \env H \cong \olGUX \dblslash_{L^{\prime}_N} (G \times H_r)$.        
\end{proof}

The situation from Proposition \ref{prop:Co1St5} where
the linearisation $L^{\prime}_N \to \olGUX$ is ample for sufficiently divisible
$N$ is potentially the most useful in applications, because it does not require
any verification that $(\olGUX,\beta,L^{\prime})$ forms a reductive envelope
(in particular, that enough invariants extend to sections $F$ with $(\olGUX)_F$
affine). For the proposition to apply however, one needs to know that $\olGUX$
provides a gentle completion of $\GUX$ and this imposes restrictions on the
linearisation $H \act L \to X$ (cf. Remarks \ref{rem:gentleunipotent} and \ref{rmk:Co1St3}). For example, if $X$ is affine and $L=\OO_X \to
X$ with the canonical linearisation, then without loss of generality we may
assume that $N>0$ is such that $L'_N \to \olGUX$ forms an ample, strong
reductive envelope with respect to an enveloping system inside $H^0(X,L)=\OO(X)$
that includes a nonzero 
constant function $f$; let $F$ denote the extension of $f$ to 
$L'_N \to \olGUX$. Then $(\olGUX)_F$ is an affine variety containing $(\GUX)_F=X_f=X$
as a codimension 2 complement, so if $\olGUX$ is gentle then
$\OO(X)^H=\kk[(\olGUX)_F,L'_N]^G$ is a finitely generated $\kk$-algebra. 
    
In the next section we consider a special case which allows us to explicitly construct
strong ample reductive envelopes $(\olGUX,\beta,L^{\prime})$ with gentle
completions of $\GUX$.  
 
\subsubsection{Special Case: Extension to a $G$-Linearisation and Grosshans
  Subgroups} 
\label{subsec:Co1StImportant}

For this section we suppose that $X$ is a
\emph{projective} 
irreducible $H$-variety. If $\rho:H \to G$ is an $H_u$-faithful homomorphism into a
reductive group $G$ and the linearisation $H \act L \to X$ can be partially
extended to a $G$-linearisation, one can reduce the task of constructing ample strong 
reductive envelopes to understanding the geometry of the homogeneous space
$G/H_u$ by `untwisting' the $G$-action on $\GUX$. More precisely, suppose there is a
$G$-linearisation on the line bundle $L \to X$ satisfying the following condition:
\begin{description}\nonumber
\item[(C1)] \label{itm:Co1St0-1} The linearisation $H_u \act L \to X$ arising from
 restricting the $H$-linearisation extends to the $G$-linearisation
 through $\rho|_{H_u}$.   
\end{description}

The extension condition (C1) yields an isomorphism of
$G \times H_r$-linearisations:    

\begin{equation} \label{eq:Co1St1} 
  \begin{diagram}
G\times^{H_u} L & \rTo^{\cong} & (G/H_u) \times L & \rEmpty &
\text{$[g,l] \mapsto (gH_u,gl)$} \\
\dTo & & \dTo & & \\
\GUX & \rTo^{\cong} & (G/H_u) \times X & \rEmpty & \text{$[g,x] \mapsto (gH_u,gx)$} \\
  \end{diagram}
\end{equation} 
The corresponding $G$-linearisation on the right hand side of this diagram is the one
given by taking the product of the linearisation on $L \to X$ and left multiplication 
on
$G/H_u$. The $H_r$-linearisation is more complicated and in general cannot be
expressed as the product of linearisations over $G/H_u$ and $X$. We make
an additional assumption to demand this:
\begin{description}\nonumber
\item[(C2)] \label{itm:Co1St0-2} There is a linearisation $G \times H_r \act \tilde{L}
  \to X$, with $\tilde{L}=L$ as line bundles, such that the $G \times
  H_r$-linearisation $(G/H_u) \times L
  \to (G/H_u) \times X$ arising from \eqref{eq:Co1St1} coincides with the
  product of $\tilde{L} \to X$ and the $G\times H_r$-action on $G/H_u$ given by
  left 
  multiplication by $G$ and right multiplication by $H_r$\footnote{That is,
    $(g,\ol{h}) \cdot g_0H_u=gg_0\rho(h)^{-1}H_u$ for all $\ol{h}=hH_u \in
    H_r=H/H_u$, $g\in G$ and all $g_0H_u \in G/H_u$.}.   
\end{description}

\begin{eg} \label{ex:Co1St0}
Suppose the $H_u$-faithful homomorphism $\rho$ is just a closed embedding $H
\hookrightarrow G$. Then the $H_r$-linearisation on $(G/H_u) \times L \to
(G/H_u) \times X$ under the isomorphism \eqref{eq:Co1St1} is simply the product
of the right multiplication action on $G/H_u$ together with $\tilde{L} \to X$,
where $\tilde{L} \to X$ is the line bundle $L \to X$ equipped with the trivial
$H_r$-linearisation.    
\end{eg}

Assuming $X$ is projective and conditions (C1) and
(C2) are satisfied, then a
natural way to complete $\GUX \cong (G/H_u) \times X$ is to study $G \times
H_r$-equivariant projective
completions $\ol{G/H_u}$ of $G/H_u$. For example, if $L$ is ample and it
is known that $\ol{G/H_u}$ can be chosen to be normal with $\mb{Q}$-Cartier
prime boundary divisors $E_1,\dots,E_m$ such that $\OO(N\sum_i E_i) \to \ol{G/H_u}$ is
ample for sufficiently divisible $N>0$, then the codimension 1 components of the
boundary $\ol{G/H_u}
\times X$ are precisely the $E_i \times X$. These are $\mb{Q}$-Cartier divisors,
so if $X$ is
further assumed to be normal then $\beta:\GUX \hookrightarrow \olGUX$ is a gentle
completion and Proposition \ref{prop:Co1St5} applies to
$L^{\prime}=\OO(N\sum_i E_i) \boxtimes \tilde{L}$ (with $\tilde{L} \to X$ the $G
\times H_r$-linearisation of (C2)).        

This works out particularly well in the case where $H_u$ is a \emph{Grosshans
  subgroup} of the reductive group $G$. Recall from \cite[\S 4]{gro97} that this means 
the pair $H_u
\subseteq G$ satisfies the following equivalent conditions:
\begin{itemize}
\item $\OO(G/H_u)$ is a finitely generated $\kk$-algebra; 
\item there is a finite dimensional $G$-module $W$ and a vector $w \in W$ such
  that $H_u=\stab_G(w)$ and $G/H_u$ embeds into the closure $\ol{G \cdot w}
  \subseteq W$ with complement having codimension at least 2, via the natural map
  $G/H_u \to G \cdot w$.   
\end{itemize}
In this case, since
$\OO(G/H_u)=\OO(G)^{H_u}$ is a normal ring
$\spec(\OO(G/H_u))=\spec(\OO(G)^{H_u})$ is a normal affine variety upon which $G
\times H_r$ naturally acts, and there is a canonical open immersion 
\[
G/H_u \hookrightarrow \spec(\OO(G/H_u)) 
\]
that is $G \times H_r$-equivariant. It follows from \cite[Theorem
4.3]{gro97} that the boundary of $G/H_u$ in fact sits inside $\spec(\OO(G/H_u))$
with a complement of codimension at least 2.    

\begin{rmk}\label{rmk:Co1St6}
In the situation where (C1) holds we have
\[
\kk[X,L]^{H_u}=\kk[\GUX,L]^G=\kk[G/H_u \times X,L]^G \cong (\OO(G/H_u) \ten
\kk[X,L])^G,
\]
where the last isomorphism follows from the K\"{u}nneth formula \cite[Tag
02KE]{stacks-project}. If
$H_u$ is a Grosshans subgroup of $G$ and $H \act L \to X$ is an ample
linearisation of a projective variety, then by Nagata's theorem \cite{nag64} it
follows that $\kk[X,L]^{H_u}$ is
finitely generated. Then $\kk[X,L]^H=(\kk[X,L]^{H_u})^{H_r}$ is also finitely
generated, hence $X^{\ssUfg}=X^{\ssfg}=X^{\nss}$ by definition and  $X \env
H=\proj(\kk[X,L]^H)$ is a projective variety.      
\end{rmk}

Now, any normal affine completion $\ol{G/H_u}^{\aff}$ of $G/H_u$ admits an equivariant 
closed
immersion into some $G \times H_r$-module $W$. As in the proof of
Proposition \ref{prop:Co1Re6}, let $\kk$ be a copy of the ground field equipped
with the trivial $G \times H_r$-action and consider the closure $\ol{G/H_u}$ of
$\ol{G/H_u}^{\aff}$ under the natural open immersion
$W \hookrightarrow \PP:=\PP(W \oplus \kk)$. The complement
$D_{\infty}:=\ol{G/H_u} \setminus \ol{G/H_u}^{\aff}$ is an effective Cartier
divisor corresponding to the hyperplane line bundle
$\OO_{\ol{G/H_u}}(1)=\OO_{\PP}(1)|_{\ol{G/H_u}}$. Let $\nu:\widetilde{G/H_u}
\to \ol{G/H_u}$ be the normalisation of $\ol{G/H_u}$. Then $\widetilde{G/H_u}$
naturally contains $\ol{G/H_u}^{\aff}$ as an open subset with $\widetilde{G/H_u}
\setminus \ol{G/H_u}^{\aff} = \nu^{-1}(D_{\infty})$, and because $\nu$ is a
finite map $\nu^{-1}(D_{\infty})$ is a divisor corresponding to the ample line
bundle $\OO_{\widetilde{G/H_u}}(1)=\nu^*\OO_{\ol{G/H_u}}(1)$. The naturally induced $G 
\times
H_r$-linearisation on $\OO_{\ol{G/H_u}}(1) \to \ol{G/H_u}$ canonically defines a
linearisation on the normalisation $\OO_{\widetilde{G/H_u}}(1) \to
\widetilde{G/H_u}$ \cite[Chapter 1]{ses62}, which pulls back to the canonical $G
\times H_r$-linearisation on $\OO_{\ol{G/H_u}^{\aff}} \to \ol{G/H_u}^{\aff}$. 

\begin{eg} \label{ex:Co1St6.1}
As a simple example, consider the situation where $H$ is a linear algebraic
group with $H_u \cong \GG_a$ and $\rho:H \to G=\SL(2,\kk)$ is an $H_u$-faithful
homomorphism. Without loss of generality we may assume $\rho$ maps $H_u$ onto the 
subgroup of strictly upper-triangular matrices of $G$ and, as is well known,
$H_u$ is a Grosshans subgroup of
$G$. Indeed, consider the defining
representation of $G$ on 
$\kk^2$. The orbit of $\left(\begin{smallmatrix} 1 \\
    0 \end{smallmatrix}\right)$ is $\kk^2 
\setminus \{0\}$ and has stabiliser equal to $H_u$, so that $G/H_u \cong \kk^2
\setminus \{0\}$. This has a $G \times H_r$-equivariant normal affine
completion $\ol{G/H_u}^{\aff}:= \kk^2$, containing $G/H_u$ with codimension 2
complement. By adding a line $D_{\infty}$ at infinity, we obtain an
equivariant normal projective completion $\ol{G/H_u}=\PP^2$ and then
$\OO(D_{\infty})=\OO_{\PP^2}(1)$ has a natural $G \times H_r$-linearisation
extending the one on $\OO \to \kk^2 \setminus \{0\}$.      
\end{eg}

Pulling the previous two observations together, we conclude that when $H_u$ is a
Grosshans subgroup of $G$ we may find a gentle, $G \times H_r$-equivariant projective completion $\ol{G/H_u}$ of $G/H_u$ such that the codimension 1 part of the boundary is an effective Cartier divisor $D_{\infty}$ corresponding to an ample, $G
\times H_r$-linearised line bundle $\OO_{\ol{G/H_u}}(1) \to \ol{G/H_u}$ which
restricts to the canonical $G \times H_r$-linearisation on $\OO_{G/H_u} \to
G/H_u$. Given this, and assuming condition
(C2), let
\[
\beta:\GUX \cong (G/H_u) \times X \hookrightarrow \ol{G/H_u} \times X
\]
be the resulting open immersion and let $L^{\prime}=\OO_{\ol{G/H_u}}
\boxtimes \tilde{L} \to \ol{G/H_u} \times X$ be the $G \times H_r$-linearisation
required by (C2). Let $E_1,\dots,E_m$ be prime divisors such that $D_{\infty} = \sum_i E_i$. Then $\sum_i E_i \times X$ is the Cartier divisor corresponding to the codimension 1 part of the boundary of $\GUX$ inside $\ol{G/H_u} \times X$, so for integers $N>0$ 
\eqref{eq:Co1St0.1} yields the linearisation $L^{\prime}_N=\OO(ND_{\infty})
\boxtimes \tilde{L}$ and we obtain a triple
\begin{equation} \label{eq:Co1St3}
(\ol{G/H_u} \times X,\beta,L'_N=\OO(ND_{\infty}) \boxtimes \tilde{L})
\end{equation}     
such that $\ol{G/H_u} \times X$ is a gentle completion of $\GUX$ (assuming $X$ is 
normal) and the $G \times H_r$-linearisation
$L^{\prime}_N \to \ol{G/H_u} \times X$ extends the $H$-linearisation $L \to X$
under $\beta \circ \alpha$. In the case where $L \to X$ is ample, we
obtain the following corollary to Proposition \ref{prop:Co1St5} and Corollary
\ref{cor:Co1St5.1}.    

\begin{cor} \label{cor:Co1St7}
Let $H$ be a connected linear algebraic group acting on a normal, irreducible
projective variety $X$ with ample linearisation $L \to X$. Suppose there is a
connected reductive group $G$ with an $H_u$-faithful homomorphism $H \to G$ such
that the unipotent radical $H_u$ of $H$ embeds as a Grosshans subgroup of $G$ and conditions
(C1) and
(C2) hold. Then there is a gentle
$G \times H_r$-equivariant projective completion $\ol{G/H_u}$ of $G/H_u$ (where $H_r = H/H_u$) whose boundary divisor $D_{\infty}$ is ample, effective and Cartier. Furthermore, given any such completion $\ol{G/H_u}$ of $G/H_u$ and associated $D_{\infty}$, for
sufficiently divisible $N>0$ the triple
$(\ol{G/H_u} \times X,\beta,L^{\prime}_N=\OO(ND_{\infty}) \boxtimes \tilde{L})$ of
\eqref{eq:Co1St3} 
is an ample strong reductive envelope for $H \act L \to X$, and 
\[
X \env H \cong (\ol{G/H_u} \times X) \dblslash_{L^{\prime}_N} (G \times H_r).
\]       
\end{cor}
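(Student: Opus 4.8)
The plan is to assemble this corollary from the construction of the gentle completion of $G/H_u$ carried out in the discussion immediately preceding the statement, together with Proposition \ref{prop:Co1St5} and Corollary \ref{cor:Co1St5.1}. The first assertion---existence of a gentle, $G \times H_r$-equivariant projective completion $\ol{G/H_u}$ of $G/H_u$ whose codimension one boundary $D_{\infty}$ is an ample, effective, Cartier divisor---requires no new argument. Because $H_u$ is a Grosshans subgroup of $G$ the ring $\OO(G/H_u) = \OO(G)^{H_u}$ is finitely generated and normal, so $G/H_u$ embeds $G \times H_r$-equivariantly in a normal affine variety with codimension $\geq 2$ complement; taking the closure in $\PP(W \oplus \kk)$ and normalising produces the required completion with ample Cartier boundary divisor $\OO_{\ol{G/H_u}}(1) = \OO(D_{\infty})$, exactly as explained above.

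Given such a completion, I would form the triple \eqref{eq:Co1St3} and verify the hypotheses of Proposition \ref{prop:Co1St5}. The groups $H$ and $G$ are connected and $\rho$ is $H_u$-faithful by assumption. Using the untwisting isomorphism \eqref{eq:Co1St1} supplied by (C1) together with the linearisation $\tilde{L} \to X$ from (C2), the line bundle $L^{\prime} = \OO_{\ol{G/H_u}}(1) \boxtimes \tilde{L}$ is a $G \times H_r$-linearisation over $\ol{G/H_u} \times X$ restricting to a positive tensor power of $L$ under $\beta \circ \alpha$. Since $\ol{G/H_u}$ and $X$ are normal, the product $\ol{G/H_u} \times X$ is normal, and because $X$ is complete the codimension one components of the boundary of $\GUX$ are exactly the divisors $E_i \times X$ with $D_{\infty} = \sum_i E_i$; these are Cartier, so $\beta:\GUX \hookrightarrow \ol{G/H_u} \times X$ is a gentle completion in the sense of Definition \ref{def:Co1St3.1}. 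Finally $\OO(ND_{\infty})$ is ample on $\ol{G/H_u}$ and $\tilde{L} = L$ is ample on $X$, so the exterior tensor product $L^{\prime}_N = \OO(ND_{\infty}) \boxtimes \tilde{L}$ is ample on the product for sufficiently divisible $N > 0$. This places us in the second bullet of Proposition \ref{prop:Co1St5}, yielding that $(\ol{G/H_u} \times X, \beta, L^{\prime}_N)$ is an ample strong reductive envelope for $H \act L \to X$.

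For the concluding isomorphism I would invoke Corollary \ref{cor:Co1St5.1}, whose remaining hypothesis is the finite generation of $S^H = \kk[X,L]^H$. This is precisely the content of Remark \ref{rmk:Co1St6}: with $H_u$ Grosshans in $G$, Nagata's theorem gives finite generation of $\kk[X,L]^{H_u} \cong (\OO(G/H_u) \ten \kk[X,L])^G$, and a second application of Nagata to the reductive quotient $H_r$ shows $\kk[X,L]^H = (\kk[X,L]^{H_u})^{H_r}$ is finitely generated, whence $X \env H = \proj(S^H)$ is projective. Corollary \ref{cor:Co1St5.1} then delivers the natural isomorphism $X \env H \cong (\ol{G/H_u} \times X) \dblslash_{L^{\prime}_N} (G \times H_r)$.

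The routine inputs---normality of the product, the shape of its boundary, and ampleness of a box product of ample bundles---are standard. The one point that genuinely needs care, and which I regard as the main obstacle, is checking that the $G \times H_r$-linearisation on $L^{\prime}_N$ really does agree with the intrinsic one on $G \times^{H_u} L$ under $\beta \circ \alpha$, so that the hypotheses of Proposition \ref{prop:Co1St5} and Corollary \ref{cor:Co1St5.1} are met verbatim. This compatibility rests on conditions (C1) and (C2) and the explicit identification \eqref{eq:Co1St1}, and must be tracked carefully through the twist by $\OO(ND_{\infty})$, using the connectedness of $G \times H_r$ to guarantee that the linearisation extends uniquely across the boundary.
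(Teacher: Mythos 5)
Your proposal is correct and follows exactly the route the paper intends: the corollary is presented there as an immediate consequence of the preceding Grosshans discussion (which supplies the gentle completion $\ol{G/H_u}$ with ample Cartier boundary $D_{\infty}$), the second alternative of Proposition \ref{prop:Co1St5} (ampleness of $L^{\prime}_N$ giving a strong ample reductive envelope), and Corollary \ref{cor:Co1St5.1} combined with the finite generation of $\kk[X,L]^H$ from Remark \ref{rmk:Co1St6}. Your closing remark about tracking the compatibility of the $G \times H_r$-linearisations through (C1), (C2) and the uniqueness of extensions for connected groups is exactly the point the paper relies on implicitly, so nothing is missing.
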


%
%

\begin{rmk} \label{rmk:Co1St8}
One
can use arguments analogous to 
those found in the proof of \cite[Corollary 5.3.19]{dk07} to also show
that the ring of invariants  $\kk[X,L]^H$ is a finitely generated $\kk$-algebra,
and $X \env H \cong (\ol{G/H_u} \times X) \dblslash_{L^{\prime}_N} (G \times
H_r)$ for sufficiently divisible $N>0$. (Note that \cite[Corollary 5.3.19]{dk07}
appears as a corollary to \cite[Theorem 5.3.18]{dk07}, whose proof contains an
error---see \cite[Remark 2.2]{bk15}. However, since \cite[Corollary
5.3.19]{dk07} includes additional hypotheses of ampleness, its validity is
unaffected by this error.)           
\end{rmk}

\begin{rmk} \label{rmk:Co1St9}
Because the various intrinsic notions of conventional reductive GIT and
non-reductive GIT may be defined for rational
linearisations, one can work with rational linearisations in
the setting of reductive envelopes. For example, in Corollary \ref{cor:Co1St7}
if one assumes $H \act L \to X$ and $\tilde{L} \to X$ are rational linearisations
satisfying the natural rational versions of
(C1) and
(C2), then for $N\gg 0$ some positive
integral multiple of the rational
linearisation $L^{\prime}_N=\OO(ND_{\infty}) \boxtimes \tilde{L} \to 
\ol{G/H_u} \times X$ will define a strong reductive envelope for the
corresponding multiple of $L \to X$. The stable locus, finitely generated 
semistable locus and enveloping quotient for $H \act L \to X$ can thus still be
computed using the rational linearisation
$L^{\prime}_N$, which is often more convenient to work with in
computations.
\end{rmk}

Corollary \ref{cor:Co1St7} is a useful result that has the potential
to be applied to the study of a number of interesting examples of non-reductive
group actions. We will make use of it during the extended example in the upcoming
Section \ref{sec:example}.


\section{An Example: $n$ Unordered Points on $\PP^1$}
\label{sec:example}

In this section we undertake a detailed study of an example to demonstrate the
use of strong reductive 
envelopes for computing the semistable and stable loci and constructing
projective completions of the enveloped quotient, in a non-reductive GIT
set-up. 
We will follow this in the final section with an outline of ongoing research applying non-reductive GIT to study moduli spaces occurring naturally in algebraic geometry.

For our detailed study we consider
the space of $n$ unordered points on $\PP^1$ up to compositions of translations
and dilations (that is, under the action of the standard Borel subgroup of
$\SL(2,\kk)$). This extends Example \ref{ex:BgUn16} from \cite[\S 6]{dk07},
which only looked at the translation actions. Including the dilations gives a somewhat 
richer picture, due to the possibility of variation of linearisations and the
associated birational transformations on the quotients (as we shall explore in
Section \ref{subsec:Co1ExVariation}).      

We first fix some notation. Let $G:=\SL(2,\kk)$ and consider its action on
$\PP^1$ via M\"{o}bius transformations:
\[
\begin{pmatrix} a & b \\ c &
    d \end{pmatrix}: \PP^1 \to \PP^1, \quad 
[z_0:z_1] \mapsto [az_0+bz_1:cz_0+dz_1], \quad 
\begin{pmatrix} a & b \\ c &
    d \end{pmatrix} \in G
\]
Fix an integer $n>0$. Then $G$ acts dually on the complete linear system
associated to $\OO_{\PP^1}(n)$, which we identify as $X:=\PP(V)$ with
$V:=\kk[x,y]_n$ the vector space of homogeneous polynomials of degree $n$ in two
indeterminates $x$ and $y$. We write points of $X$ as $[\sigma(x,y)]$ with
$\sigma(x,y) \in V$; note that $[\sigma(x,y)]$ defines an effective divisor of
zeros on 
$\PP^1$, which can be thought of as a collection of $n$ unordered points on
$\PP^1$ with multiplicities. 

We consider the action of the subgroup
\[
H:=\left\{
\begin{array}{c|c} 
\begin{pmatrix} t & a \\ 0 & t^{-1}
\end{pmatrix} \in G & t\in \kk^{\times}, \ a\in \kk 
\end{array}
\right\} 
\] 
on $X$. Geometrically $H$ corresponds to the M\"{o}bius
transformations on $\PP^1$ that are compositions of scalings and translations,
all of which fix $\infty=[1:0] \in \PP^1$. The unipotent
radical of $H$ is
\[
H_u=\left\{
\begin{array}{c|c} 
\begin{pmatrix} 1 & a \\ 0 & 1
\end{pmatrix} & a\in \kk 
\end{array}
\right\} \cong \kk^+
\]     
while the quotient $H_r=H/H_u$ is isomorphic to the torus
\[
T:=\left\{
\begin{array}{c|c} 
\begin{pmatrix} t & 0 \\ 0 & t^{-1}
\end{pmatrix} & t\in \kk^{\times} 
\end{array}
\right\} \cong \kk^{\times}
\] 
via the composition $T \hookrightarrow H \to H_r$. We will typically identify
$H_r$ with $T$ in this way throughout this section.

We next consider the possible linearisations $L$ of $H$ over $X$. As line
bundles, any linearisation is isomorphic to $\OO_{X}(m)$ for $m \in \mb{Z}$
\cite[Corollary 6.17]{har77}. The action of $H$ on $V$ defines, for each $m \in
\mb{Z}$, a canonical 
linearisation $\OO_X(m)^{\mathrm{can}} \to X$ on the line bundle
$\OO_X(m)$. Because $X$ is irreducible and proper over $\kk$, any other
linearisation on $\OO_X(m)$ is 
obtained by twisting the canonical linearisation by a character of $H$
\cite[Corollary 7.1]{dol03}. We take a moment to recall our conventions here: if
$\chi: H \to \kk^{\times}$ is a 
character of $H$, then let $\OO_X^{(\chi)}$ denote the linearisation on the
trivial bundle $\OO_X=X \times \kk$ defined by the character $\chi^{-1}$:
\[
H \times X \times \kk \to X \times \kk, \quad (h,x,z) \mapsto (hx,\chi^{-1}(h)z).
\]
Then any other linearisation on $\OO_X(m)$ is of the form
$\OO_X(m)^{(\chi)}:=\OO_X(m)^{\mathrm{can}} \ten \OO_X^{(\chi)}$. Because unipotent
groups have no nontrivial characters, the inclusion $T 
\hookrightarrow H$ induces an identification between the groups of characters of
$H$ and $T$, so that $\chi$ is of the form
\[
\chi:\begin{pmatrix} t & a \\ 0 & t^{-1} \end{pmatrix} \mapsto t^r,
\quad \begin{pmatrix} t & a \\ 0 & t^{-1} \end{pmatrix} \in H
\]
for some weight $r \in \mb{Z}$. For each $m,r \in \mb{Z}$ we therefore define
linearisations 
\[
L_{m,r}:=\OO_X(m)^{(\chi)}, \quad \text{where $\chi$ has weight $r \in \mb{Z}$}.
\]
of $H$ over $X$. It is these linearisations that we will study in this example.



When $m \leq 0$ the sets $X^{\rms(L_{m,r})}$,
$X^{\nss(L_{m,r})}$, $X^{\ssfg(L_{m,r})}$, $X^{\ssUfg(L_{m,r})}$ and the
enveloping quotient $X \env_{\! \! L_{m,r}} H$ for the
linearisations $L_{m,r}$ can be easily described by inspection. Indeed,
if $m<0$ then $\OO_X(m)$ has no nonzero
global sections as a line bundle, so that the stable and naively semistable
locus, and enveloping 
quotient, are all empty. On the other hand, if $m=0$, then the ring of invariants
$\kk[X,L_{0,r}]^H$ is isomorphic to $\symdot \kk$ if $r=0$,
and $\kk$ (in degree $0$) otherwise. Thus when $r=0$, we have
$X^{\rms(L_{0,0})}=\emptyset$ and
$X^{\ssUfg(L_{0,0})}=X^{\ssfg(L_{0,0})}=X^{\nss(L_{0,0})}=X$, while $X
\env_{\! \! L_{0,0}} H = \text{pt}$; on the other hand, if $r \neq 0$ then all
of $X^{\rms(L_{0,r})}$, 
$X^{\ssUfg(L_{0,r})}$, $X^{\ssfg(L_{0,r})}$, $X^{\nss(L_{0,r})}$ and $X
\env_{\! \! L_{0,r}} H$ are empty.      

In what follows we therefore consider the linearisations $L_{m,r}$ with
$m>0$. In the next 
section we shall use the methods of Corollary \ref{cor:Co1St7} of Section
\ref{subsec:Co1StImportant} to 
construct strong ample reductive envelopes for these linearisations,
which will allow us to compute the stable locus $X^{\rms(L_{m,r})}$, the
 various semistable loci $X^{\ssUfg(L_{m,r})}$, $X^{\ssfg(L_{m,r})}$,
 $X^{\nss(L_{m,r})}$ (which will all be equal) and a projective completion of the
 enveloped quotient; cf. Proposition \ref{prop:Co1St2} and Theorem
 \ref{thm:Co1Re11}.   

\subsection{The Strong Reductive Envelopes}
\label{subsec:Co1ExReductive}

Fix $m,r \in \mb{Z}$, with $m>0$, and let $\chi:H \to \kk^{\times}$ be the
character of $H$ of weight $r$. Consider the inclusion $H \hookrightarrow
G$, which is clearly an $H_u$-faithful homomorphism. By construction, the
restricted linearisation of 
the unipotent radical
$H_u \act L_{m,r} \to X$ extends to a 
linearisation of $G=\SL(2,\kk)$, and we are in the setting of Example
\ref{ex:Co1St0}. Furthermore, as we saw in Example \ref{ex:Co1St6.1}, $H_u$ is a
Grosshans subgroup of $G$, and $G/H_u \cong \kk^2
\setminus \{0\}$ via the defining representation of $G$ on $\kk^2$. This has a
$G \times H_r$-equivariant normal affine 
completion $\ol{G/H_u}^{\aff}:= \kk^2$, containing $G/H_u$ with codimension 2
complement. 
\begin{rmk} \label{rmk:Co1Ex0}
Because the restricted linearisation $H_u \act L_{m,r} \to X$ extends to one of
$G=\SL(2,\kk)$ 
and $H_u$ is a Grosshans subgroup of $G$, by Remark \ref{rmk:Co1St6} we know
that $\kk[X,L_{m,r}]^{H_u}$ and $\kk[X,L_{m,r}]^H$ are both finitely generated
$\kk$-algebras. Therefore $X \env_{\! \! L_{m,r}} H=\proj(\kk[X,L_{m,r}]^H)$ is a
projective variety and $X^{\ssUfg(L_{m,r})}=X^{\ssfg(L_{m,r})}=X^{\nss(L_{m,r})}$.   
\end{rmk}

In what follows we regard elements of $\kk^3$ as column vectors. As in Example
\ref{ex:Co1St6.1}, by adding a hyperplane at infinity
we obtain a normal (in fact,
smooth) $G \times H_r$-equivariant projective completion $\PP^2$ of
$\ol{G/H_u}^{\aff}$. Here we write $\PP^2=\{[v_0:v_1:v_2] \mid 0 \neq
(v_0,v_1,v_2)^{\mathrm{t}} \in 
\kk^3\}$ with the hyperplane at infinity defined by $v_0=0$. The
action of $G \times H_r=G \times T$ on $\PP^2=\PP(\kk^3)$ is the one induced by
the representation given in block form
\[
G \times T \to \GL(3,\kk) , \quad (g,\left(\begin{smallmatrix} t & 0 \\ 0 & t^{-1} 
\end{smallmatrix} \right))
\mapsto \left(\begin{array}{c|c} 1 & 0 \\ \hline 0 &
    g\left(\begin{smallmatrix}t^{-1} & 0 \\ 0 &
        t^{-1} \end{smallmatrix}\right)  \end{array}\right) 
\]
where $\GL(3,\kk)$ acts on $\kk^3$ by left multiplication. For each $N>0$ this 
representation
canonically defines a $G \times H_r$-linearisation $\OO_{\PP^2}(N) \to \PP^2$ which 
restricts
to the canonical linearisation on $\OO_{G/H_u} \to G/H_u$. 

Let $\beta:\GUX \cong (\kk^2 \setminus \{0\}) \times X \hookrightarrow \PP^2
\times X$ be the naturally induced open immersion and for integers $N>0$ let  
\[
L^{\prime}_{m,r,N}:= \OO_{\PP^2}(N) \boxtimes L_{m,r} \to \PP^2 \times X,
\]
equipped with its natural $G \times H_r$-linearisation. By Corollary
\ref{cor:Co1St7}, for $N>0$
sufficiently divisible depending on $m$ and $r$, the triple  
\[
(\PP^2 \times X, \beta,L^{\prime}_{m,r,N})
\]
defines a strong ample reductive envelope for $H \act L_{m,r} \to X$, and 
\[
X \env_{\! \! L_{m,r}} H = (\PP^2 \times X) \dblslash_{L^{\prime}_{m,r,N}} (G \times 
H_r). 
\]
The stable locus $X^{\rms(L_{m,r})}$ and finitely generated 
semistable locus $X^{\ssfg(L_{m,r})}$ for the linearisation $L_{m,r}$ may
therefore be computed as the
completely stable and completely semistable loci, respectively, associated to
the $G \times H_r$-linearisation $L^{\prime}_{m,r,N}$ by Proposition
\ref{prop:Co1St2}. We therefore next compute the semistable and stable loci for
$L^{\prime}_{m,r,N} \to \PP^2 \times X$.

\subsection{Semistability and Stability for the $G \times H_r$-Linearisations
  $L^{\prime}_{m,r,N}$}

In order to compute semistability and stability for the $G \times
H_r$-linearisations $L^{\prime}_{m,r,N} \to \PP^2 \times X$ we will use the
Hilbert-Mumford criteria as stated in Theorem \ref{thm:BgRe4}. To do this we
use the maximal torus $T_1 \times T_2 ,\subseteq G  
\times H_r$ where $T_1$ is the maximal torus $T$ of
$G=\SL(2,\kk)$ and $T_2:= H_r$, also identified with $T$. The group of characters of 
$T_1 \times
T_2$ is then identified with $\mb{Z} \times \mb{Z}$ in the natural way.

The set of fixed points for the action $T_1 \times T_2$-action on $\PP^2
\times X$ is
\[
\{([1:0:0],[x^{n-i}y^{i}]), \, ([0:1:0],[x^{n-i}y^{i}]), \, ([0:0:1],[x^{n-i}y^{i}]) 
\mid
i=0,\dots,n\}. 
\]
Table \ref{tab:Co1Ex1} gives the weights for each fixed point with respect to
the linearisation $L^{\prime}_{m,r,N}$ and a general plot of
these weights is given in Figure
\ref{fig:Co1Ex2}.



\begin{table}[h]
\begin{center}
\begin{tabular}{c|c} Fixed point ($i=0,\dots,n$) & Weight in
$\Hom(T_1 \times T_2,\kk^{\times})=\mb{Z} \times \mb{Z}$ \\ \hline 
$([1:0:0],[x^{n-i}y^{i}])$ & $(m(2i-n),r)$ \\
$([0:1:0],[x^{n-i}y^{i}])$ & $(N+m(2i-n),-N+r)$ \\ 
$([0:0:1],[x^{n-i}y^{i}])$ & $(-N+m(2i-n),-N+r)$
\end{tabular}
\end{center}
\caption{Weights of the fixed points of $T_1 \times T_2 \act \PP^2 \times X$
  with respect to the linearisation $L^{\prime}_{m,r,N}$.}
\label{tab:Co1Ex1}
\end{table}

\begin{center}
\begin{figure}[h]
\begin{tikzpicture}[scale=0.7] \draw[->] (0,-4) -- (0,4)
node[anchor=south]{$\mb{Z} \cong \Hom(T_2,\GGm)$}; \draw (-2pt,2.5) -- (2pt,2.5); 
\foreach \x
in {-0.9,-0.6,...,0.9}{ \fill (\x,2.5) circle (1.5pt); 
}; 
\node at (1.1,2.7) [anchor=south west]{$(m(2i-n),r)$}; 
\node at (8,2.7) [anchor=south west]{$(i=0,\dots,n)$};
\foreach \x in
{4.1,4.4,...,5.9}{ 
\fill (\x,-3.5) circle (1.5pt); 
}; 
\node at (5,-3.7) [anchor=north]{$(N+m(2i-n),-N+r)$};
\foreach \x in {-5.9,-5.6,...,-4.1}{ 
\fill (\x,-3.5) circle (1.5pt); 
}; 
\node at (-5,-3.7)
[anchor=north]{$(-N+m(2i-n),-N+r)$}; \draw (-2pt,-3.5) -- (2pt,-3.5); \draw[->]
(-7,0) -- (7,0) node[anchor=west]{$\mb{Z} \cong \Hom(T_1,\GGm)$};
\end{tikzpicture}
\caption{The weight diagram for $T_1 \times T_2 \act L^{\prime}_{m,r,N} \to \PP^2 
\times
X$.}
\label{fig:Co1Ex2}
\end{figure}
\end{center}

By the Hilbert-Mumford criteria Theorem \ref{thm:BgRe4}, \ref{itm:BgRe4-2}, a
point $p=([v_0:v_1:v_2],[\sigma(x,y)]) \in \PP^2 \times X$ is semistable 
(respectively, stable) for
the restricted linearisation $T_1 \times T_2 \act L^{\prime}_{m,r,N} \to \PP^2
\times X$ if, and only if, the origin of $\mb{R} \ten_{\mb{Z}}(\mb{Z} \times
\mb{Z}) = \mb{R} \times \mb{R}$ is contained in the weight polytope
$\Delta_p \subseteq \mb{R} \times \mb{R}$ (respectively, the interior
$\Delta_p^{\circ}$) associated to $p$. When 
$N$ is taken to be very large with respect to 
$m$ and $r$, we find that $p$ can only be $T_1 \times T_2$-(semi)stable by
satisfying one the following criteria, split into four cases: 
\begin{description}
\item[Case $v_0v_1v_2 \neq 0$:] 
\[ 
    \begin{array}{rl} 
    \multirow{2}{*}{$0 \in \Delta_p \iff$} & \text{$[1:0]$ and $[0:1]$ are both zeros
      of $\sigma(x,y)$} \\
    & \text{with multiplicity $\leq (n+\tfrac{r}{m})/2$} \\
    \\ 
    \multirow{2}{*}{$0 \in \Delta_p^{\circ} \iff$} & \text{$[1:0]$ and $[0:1]$ are 
both zeros
      of $\sigma(x,y)$} \\
    & \text{with multiplicity $< (n+\tfrac{r}{m})/2$}
    \end{array}
\]
\item[Case $v_0v_1 \neq 0$, $v_2=0$:]
\[ 
    \begin{array}{rl} 
    \multirow{2}{*}{$0 \in \Delta_p \iff$} & \text{$[1:0]$ is a zero of
      $\sigma(x,y)$ with multiplicity $\leq (n-\tfrac{r}{m})/2$ and} \\
    & \text{$[0:1]$ is a zero of
      $\sigma(x,y)$ with multiplicity $\leq (n+\tfrac{r}{m})/2$} \\
    \\ 
    \multirow{2}{*}{$0 \in \Delta_p^{\circ} \iff$} & \text{$[1:0]$ is a zero of
      $\sigma(x,y)$ with multiplicity $< (n-\tfrac{r}{m})/2$ and} \\
    & \text{$[0:1]$ is a zero of
      $\sigma(x,y)$ with multiplicity $< (n+\tfrac{r}{m})/2$}
    \end{array}
\]
\item[Case $v_0v_2 \neq 0$, $v_1=0$:]
\[ 
    \begin{array}{rl} 
    \multirow{2}{*}{$0 \in \Delta_p \iff$} & \text{$[1:0]$ is a zero of
      $\sigma(x,y)$ with multiplicity $\leq (n+\tfrac{r}{m})/2$ and} \\
    & \text{$[0:1]$ is a zero of
      $\sigma(x,y)$ with multiplicity $\leq (n-\tfrac{r}{m})/2$} \\
    \\ 
    \multirow{2}{*}{$0 \in \Delta_p^{\circ} \iff$} & \text{$[1:0]$ is a zero of
      $\sigma(x,y)$ with multiplicity $< (n+\tfrac{r}{m})/2$ and} \\
    & \text{$[0:1]$ is a zero of
      $\sigma(x,y)$ with multiplicity $< (n-\tfrac{r}{m})/2$}
    \end{array}
\]
\item[Case $v_0 \neq 0$, $v_1=v_2=0$ and $r=0$:]
\[ 
    \begin{array}{rl} 
    \multirow{2}{*}{$0 \in \Delta_p \iff$} & \text{$[1:0]$ and $[0:1]$ are both zeros
      of $\sigma(x,y)$} \\
    & \text{with multiplicity $\leq n/2$} \\
    \\ 
    \multirow{2}{*}{$0 \in \Delta_p^{\circ} \iff$} & \text{$[1:0]$ and $[0:1]$ are 
both zeros
      of $\sigma(x,y)$} \\
    & \text{with multiplicity $< n/2$}
    \end{array}
\]
\end{description}
By Theorem \ref{thm:BgRe4}, \ref{itm:BgRe4-1}, the point $p$ is
(semi)stable for the whole $G
\times H_r$-linearisation if, and only if, $(g,\ol{h}) \cdot p$ is $T_1 \times
T_2$-(semi)stable for each $(g,\ol{h}) \in G \times H_r$. Using this, one
deduces the following:

\begin{propn} \label{prop:Co1Ex1}
Let $m>0$ and $r$ be integers. Then for sufficiently large $N>0$ (depending on $m$
and $r$) the semistable and stable loci for the 
$G \times H_r$-linearisations $L^{\prime}_{m,r,N} \to \PP^2 \times X$
are as follows:
\begin{description}
\item[(Case $r<0$ or $\tfrac{r}{m}>n$:)] Then $(\PP^2 
\times
  X)^{\rms(L^{\prime}_{m,r,N})} = (\PP^2 \times X)^{\rmss(L^{\prime}_{m,r,N})} =
  \emptyset$.
\item[(Case $r=0$):] Then $(\PP^2 \times
  X)^{\rms(L^{\prime}_{m,r,N})} = \emptyset$ and 
\[
(\PP^2 \times X)^{\rmss(L^{\prime}_{m,r,N})} =
\left\{
\begin{array}{c|c}
\multirow{2}{*}{$([1:v_1:v_2],[\sigma(x,y)])$} & \text{$\sigma(x,y)$
  has no zeros} \\
 & \text{of multiplicity $>\frac{n}{2}$}
\end{array}
\right\}.
\]
\item[(Case $0 < \tfrac{r}{m} < n$):] Then 
{\footnotesize 
\begin{align*} 
\text{\normalsize $(\PP^2 \times X)^{\rms(L^{\prime}_{m,r,N})}$} &= 
\left\{
\begin{array}{c|c}
\multirow{4}{*}{\normalsize $([1:v_1:v_2],[\sigma(x,y)])$} &
(v_1,v_2) \neq (0,0), [v_1:v_2] \text{ is a zero of} \\ 
 & \text{$\sigma(x,y)$ of multiplicity $< (n-\tfrac{r}{m})/2$} \\
 & \text{and all zeros of $\sigma(x,y)$ have} \\
 & \text{multiplicity $<(n+\tfrac{r}{m})/2$}
\end{array}
\right\},
& \\ 
\\
\text{\normalsize $(\PP^2 \times X)^{\rmss(L^{\prime}_{m,r,N})}$} &= 
\left\{
\begin{array}{c|c}
\multirow{4}{*}{\normalsize$([1:v_1:v_2],[\sigma(x,y)])$} &
(v_1,v_2) \neq (0,0), [v_1:v_2] \text{ is a zero of} \\ 
 & \text{$\sigma(x,y)$ of multiplicity $\leq (n-\tfrac{r}{m})/2$} \\
 & \text{and all zeros of $\sigma(x,y)$ have} \\
 & \text{multiplicity $\leq (n+\tfrac{r}{m})/2$}
\end{array}
\right\}.
\end{align*}
} 
\item[(Case $\tfrac{r}{m}=n$):] Then $(\PP^2 \times
  X)^{\rms(L^{\prime}_{m,r,N})} = \emptyset$ and 
\[
(\PP^2 \times X)^{\rmss(L^{\prime}_{m,r,N})} =
\left\{
\begin{array}{c|c}
\multirow{2}{*}{$([1:v_1:v_2],[\sigma(x,y)])$} & \text{$[v_1:v_2]$ is not a} \\ 
 & \text{zero of $\sigma(x,y)$} 
\end{array}
\right\}.
\]
\end{description}
\end{propn}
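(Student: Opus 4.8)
The proposition is a direct application of the Hilbert–Mumford criterion (Theorem \ref{thm:BgRe4}) to the reductive $G \times H_r$-linearisation $L'_{m,r,N} \to \PP^2 \times X$, using the computation of fixed-point weights already recorded in Table \ref{tab:Co1Ex1} and the case-by-case weight-polytope analysis displayed immediately before the statement. The essential content is: for each point $p = ([v_0:v_1:v_2],[\sigma(x,y)])$, determine the weight polytope $\Delta_{gp}$ for every $g \in G \times H_r$ and check whether $0 \in \Delta_{gp}$ (semistability) or $0 \in \Delta_{gp}^\circ$ (stability), as required by Theorem \ref{thm:BgRe4}, \ref{itm:BgRe4-1}. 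First I would observe that by the $G\times H_r$-action one may move any $p$ into a convenient form: using the $G$-action on $\PP^1$, the zeros of $\sigma(x,y)$ and the point $[v_1:v_2]$ (when $(v_1,v_2)\neq 0$) can be placed at $[1:0]=\infty$ and $[0:1]=0$; and since $v_0$ is fixed only up to scale by the torus action on the first $\PP^2$-coordinate, the condition $v_0\neq 0$ versus $v_0=0$ is $G\times H_r$-invariant. Thus it suffices to analyse $T_1\times T_2$-(semi)stability of the sweep of $p$, which is exactly what the four displayed cases do.

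\textbf{Reduction via the torus criterion.} The four displayed cases enumerate the possibilities for the support of the homogeneous coordinates $(v_0,v_1,v_2)$, and for each give the precise containment condition for $0$ in $\Delta_p$ and $\Delta_p^\circ$ in terms of the multiplicities of $\sigma$ at $[1:0]$ and $[0:1]$. The step I would carry out is to translate these torus conditions, via the ``$gx$ is $T$-(semi)stable for all $g$'' criterion of Theorem \ref{thm:BgRe4}, \ref{itm:BgRe4-1}, into intrinsic conditions on the $G\times H_r$-orbit of $p$. The key point is that semistability for the full group requires the \emph{weakest} torus condition to hold after sweeping by all group elements, so the binding constraint comes from the most restrictive placement of zeros. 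Concretely, the condition ``$[v_1:v_2]$ is a zero of $\sigma$ of multiplicity $< (n - \tfrac{r}{m})/2$ and all zeros have multiplicity $< (n+\tfrac{r}{m})/2$'' emerges because, for stability, one must verify $0\in\Delta_{gp}^\circ$ for the worst-case $g$, which aligns a high-multiplicity zero with the appropriate vertex of the weight diagram in Figure \ref{fig:Co1Ex2}. I would organise this by the sign and size of $\tfrac{r}{m}$ relative to $0$ and $n$, matching the five regimes in the statement; the degenerate regimes ($r<0$ or $\tfrac{r}{m}>n$ giving emptiness, and $r=0$ or $\tfrac{r}{m}=n$ giving no stable points) follow because the weight polytopes either never contain the origin or only contain it on their boundary.

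\textbf{The main obstacle.} The genuinely delicate part is the bookkeeping in the regime $0 < \tfrac{r}{m} < n$, where I must verify that the asymmetry introduced by the character twist (the weight $r$ shifts the $T_2$-coordinate of every fixed point, and the $\PP^2$-weights break the symmetry between $[1:0]$ and $[0:1]$) produces exactly the stated asymmetric bounds $(n \mp \tfrac{r}{m})/2$. I would need to confirm that sweeping $p$ by $G \times H_r$ and testing all maximal tori is equivalent to testing the single torus $T_1 \times T_2$ on the whole orbit, and that the extremal configurations controlling (semi)stability are precisely those where a zero of maximal multiplicity sits at $\infty$ or at the marked point $[v_1:v_2]$. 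The claim that $N$ sufficiently large (depending on $m,r$) forces the ``Case'' analysis to reduce to these four configurations is where the choice of $N \gg 0$ is used: for large $N$ the contribution of the $\PP^2$-weights dominates, so the weight polytope degenerates into the regions drawn in Figure \ref{fig:Co1Ex2} and the origin's membership is governed entirely by the $\sigma$-multiplicities at the two torus-fixed points of $\PP^1$. Once these reductions are justified, the remaining verifications are the routine polytope-membership computations summarised in the four displayed cases, and the proposition follows. Finally, I note that by Proposition \ref{prop:Co1St2} and Remark \ref{rmk:Co1Ex0} these loci compute the intrinsic $X^{\rms(L_{m,r})}$ and $X^{\ssfg(L_{m,r})}$, which is the motivation for the calculation.
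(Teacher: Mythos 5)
Your proposal is correct and follows essentially the same route as the paper: the paper likewise computes the $T_1\times T_2$-fixed-point weights (Table \ref{tab:Co1Ex1}), reads off the four weight-polytope conditions according to which of $v_0,v_1,v_2$ vanish, and then deduces the proposition by sweeping over $G\times H_r$ via Theorem \ref{thm:BgRe4}, with $N\gg 0$ ensuring the $\PP^2$-weights dominate so that only the listed configurations can be (semi)stable. The paper's own argument is in fact no more detailed than yours at the final "one deduces" step, so there is nothing to add.
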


For fixed $m,r,N$ the completely semistable and completely stable loci are by
Definition \ref{def:Co1Re7} the intersections of the semistable and stable loci
for $G \times H_r \act L^{\prime}_{m,r,N} \to \PP^2 \times X$ under the
inclusion
\[
\beta \circ \alpha:X \hookrightarrow \PP^2 \times X, \quad [\sigma(x,y)]
\mapsto ([1:1:0],[\sigma(x,y)]).
\]
Using Corollary \ref{cor:Co1St7}, we therefore deduce

\begin{cor} \label{cor:Co1Ex2}
For integers $m>0$ and $r$, the semistable and stable loci for the
linearisations $H \act L_{m,r} \to X$ are as follows:
\begin{description}
\item[(Case $r<0$ or $\tfrac{r}{m}>n$):] Then
  $X^{\rms(L_{m,r})} = X^{\ssfg(L_{m,r})} = \emptyset$.
\item[(Case $r=0$):] Then $X^{\rms(L_{m,r})} =
  \emptyset$ and 
\[
X^{\ssfg(L_{m,r})} =
\left\{
[\sigma(x,y)] \in X \mid \text{$\sigma(x,y)$ has no zeros of multiplicity
  $>n/2$} \right\}
\] 
\item[(Case $0 < \tfrac{r}{m} < n$):] Then 
\begin{align*} 
X^{\rms(L_{m,r})} &= 
{
\small
\left\{
\begin{array}{c|c}
\multirow{3}{*}{\normalsize $[\sigma(x,y)] \in X$} & \text{$[1:0]$ is a zero of
  $\sigma(x,y)$ with multiplicity} \\
 & \text{$< (n-\tfrac{r}{m})/2$ and all other zeros} \\
 & \text{of $\sigma(x,y)$ have multiplicity $< (n+\tfrac{r}{m})/2$}
\end{array}
\right\}
}, \\ 
& \\
X^{\ssfg(L_{m,r})} &= 
{
\small
\left\{
\begin{array}{c|c}
\multirow{3}{*}{\normalsize $[\sigma(x,y)] \in X$} & \text{$[1:0]$ is a zero of
  $\sigma(x,y)$ with multiplicity} \\
 & \text{$\leq (n-\tfrac{r}{m})/2$ and all other zeros} \\
 & \text{of $\sigma(x,y)$ have multiplicity $\leq (n+\tfrac{r}{m})/2$}
\end{array}
\right\}
}. 
\end{align*}
\item[(Case $\tfrac{r}{m}=n$):] Then $X^{\rms(L_{m,r})} = 
\emptyset$ and 
\[
X^{\ssfg(L_{m,r})} =\{[\sigma(x,y)]) \in X \mid \text{$[1:0]$ is not a zero of
  $\sigma(x,y)$}\}.  
\]
\end{description}    
\end{cor}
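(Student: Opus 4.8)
The plan is to deduce Corollary \ref{cor:Co1Ex2} directly from Proposition \ref{prop:Co1Ex1} by specialising the completely stable and completely semistable loci of the reductive envelope to $X$ via the inclusion $\beta \circ \alpha$. The key bridge is Proposition \ref{prop:Co1St2}, which (given that $(\PP^2 \times X, \beta, L^{\prime}_{m,r,N})$ is a strong ample reductive envelope with normal $\olGUX = \PP^2 \times X$, as established in Section \ref{subsec:Co1ExReductive} via Corollary \ref{cor:Co1St7}) gives the equalities $X^{\rms(L_{m,r})} = X^{\overline{\rms}}$ and $X^{\ssfg(L_{m,r})} = X^{\ssUfg(L_{m,r})} = X^{\overline{\rmss}}$. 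Here we also use Remark \ref{rmk:Co1Ex0}, which tells us that $X^{\ssUfg(L_{m,r})} = X^{\ssfg(L_{m,r})} = X^{\nss(L_{m,r})}$ since $H_u$ is a Grosshans subgroup of $G$ and the $H_u$-linearisation extends to $G$. Thus the whole computation reduces to identifying $X^{\overline{\rms}} = (\beta \circ \alpha)^{-1}((\PP^2 \times X)^{\rms(L^{\prime}_{m,r,N})})$ and similarly for the semistable locus, using the explicit description of the $G \times H_r$-(semi)stable loci from Proposition \ref{prop:Co1Ex1}.

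The concrete step I would carry out is to substitute the specific point $([1:1:0],[\sigma(x,y)])$ into each case of Proposition \ref{prop:Co1Ex1}. Since $v_0 = v_1 = 1 \neq 0$ and $v_2 = 0$, every point in the image of $\beta \circ \alpha$ lands in the stratum ``Case $v_0 v_1 \neq 0$, $v_2 = 0$'' of the weight analysis. In that stratum the condition for stability is that $[1:0]$ is a zero of $\sigma(x,y)$ of multiplicity $< (n - \tfrac{r}{m})/2$ and $[0:1]$ is a zero of multiplicity $< (n + \tfrac{r}{m})/2$, with the analogous non-strict inequalities for semistability. I would then translate the $G \times H_r$-invariant description of $(\PP^2 \times X)^{\rms(L^{\prime}_{m,r,N})}$ from Proposition \ref{prop:Co1Ex1} back through the requirement that $(g,\bar h) \cdot (\beta \circ \alpha)(x)$ be $T_1 \times T_2$-(semi)stable for all $(g,\bar h)$, noting that sweeping $[v_1 : v_2] = [1 : 0]$ around $\PP^1$ by $G$ reproduces the ``$[1:0]$ is a zero of specified low multiplicity, all other zeros of specified bounded multiplicity'' conditions stated in the corollary. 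This matching is essentially a bookkeeping exercise: the distinguished point $[1:0] \in \PP^1$ corresponds to the $H$-fixed point $\infty$, which is singled out precisely because $v_2 = 0$ breaks the symmetry between $[1:0]$ and $[0:1]$.

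The main obstacle — and the only genuinely delicate point — is verifying that restricting the $G \times H_r$-invariant loci of Proposition \ref{prop:Co1Ex1} to the single orbit-type slice $v_2 = 0$ really produces the asymmetric multiplicity conditions of Corollary \ref{cor:Co1Ex2}, rather than the symmetric ones. The subtlety is that semistability for the full group $G \times H_r$ is tested over the entire $G \times H_r$-sweep of the point (Theorem \ref{thm:BgRe4}, \ref{itm:BgRe4-1}), so I must check that translating $[\sigma(x,y)]$ by elements of $G = \SL(2,\kk)$ moves $[1:1:0]$ to other boundary configurations and confirm that the binding constraint is exactly the one attributed to the point $[1:0]$. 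I would argue this by tracking how the $G$-action on $\PP^1$ permutes the roles of the two torus-fixed points $[1:0], [0:1]$ and how the fixed choice $v_2 = 0$ in the $\PP^2$-factor pins down which multiplicity bound applies to the $H$-fixed point $\infty = [1:0]$ versus the remaining zeros. Once this correspondence is pinned down, the four cases $r < 0$ or $\tfrac{r}{m} > n$, $r = 0$, $0 < \tfrac{r}{m} < n$, and $\tfrac{r}{m} = n$ follow case-by-case by direct comparison with Proposition \ref{prop:Co1Ex1}, completing the proof.
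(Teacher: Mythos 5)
Your proposal is correct and follows essentially the same route as the paper: Corollary \ref{cor:Co1St7} makes $(\PP^2\times X,\beta,L^{\prime}_{m,r,N})$ a strong ample reductive envelope with normal completion, so Proposition \ref{prop:Co1St2} (together with Remark \ref{rmk:Co1Ex0}) identifies $X^{\rms(L_{m,r})}$ and $X^{\ssfg(L_{m,r})}$ with the completely stable and completely semistable loci, which are then read off by pulling back Proposition \ref{prop:Co1Ex1} along $[\sigma(x,y)]\mapsto([1:1:0],[\sigma(x,y)])$. The only inefficiency is your plan to re-run the $G\times H_r$-sweep through the torus weight analysis: Proposition \ref{prop:Co1Ex1} already records the $G\times H_r$-invariant loci, so one need only substitute $[v_1:v_2]=[1:0]$ (and note that ``all zeros'' versus ``all other zeros'' give the same set because $(n-\tfrac{r}{m})/2\le(n+\tfrac{r}{m})/2$).
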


\begin{rmk} \label{UhatHilbertMumford}
Notice that in each case $x \in X^{\rms(L_{m,r})}$ (respectively $x \in X^{\ssfg(L_{m,r})}$) if and only if $x$ is stable (respectively semistable) for every one-parameter subgroup $\lambda: \mathbb{G}_m \to H$, or equivalently if and only if $hx$ is stable (respectively semistable) with respect to the action of the standard maximal torus of $\SL(2,\kk)$ for every $h \in H$. Thus the analogues of the Hilbert--Mumford criteria for reductive GIT hold in these examples for the action of the non-reductive group $H$; they fail, of course, for the action of its unipotent radical $H_u$, since there are no one-parameter subgroups $\lambda: \mathbb{G}_m \to H_u$. 

When $r \neq 0$ then, again just as for reductive GIT but  in contrast to the unipotent case,
the quotient morphism 
$q_{m,r} : X^{\ssfg(L_{m,r})} \to X \env_{\! \! L_{m,r}} H
$
from the semistable locus to the enveloping quotient is surjective, and the projective variety $X \env_{\! \! L_{m,r}} H$ is a categorical quotient of the semistable locus, with $q_{m,r}(x) = q_{m,r} (y)$ for $x,y \in  X^{\ssfg(L_{m,r})}$ if and only if the closures of the $H$-orbits of $x$ and $y$ meet in $X^{\ssfg(L_{m,r})}$.
\end{rmk}

\subsection{Variation of the Enveloping Quotients}
\label{subsec:Co1ExVariation} 

We conclude this example by studying how the enveloping quotients
$X^{\ssfg(L_{m,r})} \to X \env_{\! L_{m,r}} H$ and geometric quotients
$X^{\rms(L_{m,r})} \to X^{\rms(L_{m,r})}/H$ therein change as we range over the
different possible ample linearisations 
$L_{m,r} \to X$. This
can be done by examining the variation of the reductive GIT quotients (in
the sense of the VGIT of \cite{tha96,dh98,res00}) of
the linearisations $L^{\prime}_{m,r,N} \to \PP^2 \times X$. In order to keep the
exposition brief we suppress the details of the VGIT analysis on the reductive
envelopes and instead concentrate on the consequences for the $H$-linearisations,
referencing relevant VGIT results from \cite{tha96}.  
 
From now on we assume $N$ is sufficiently divisible with respect to $m$ and $r$ so
as to satisfy the conclusions of 
  Corollary \ref{cor:Co1St7} and Proposition \ref{prop:Co1Ex1}. Note that two
  linearsations $L_{m,r}$ and $L_{m',r'}$ will have the same stable locus,
  finitely generated semistable locus and enveloping quotient if
  $\frac{r}{m} = \frac{r'}{m'}$ (see Remarks \ref{rmk:TrRe3.2} and
  \ref{rmk:GiSu3}). By inspecting Corollary
\ref{cor:Co1Ex2} we see that the changes in stability and finitely generated
semistability occur when $\frac{r}{m} = 0$, or $n-\frac{r}{m} \in 2\mb{Z}$, or
$\frac{r}{m}=n$, and clearly we only need consider the cases where $\frac{r}{m} \in
\mb{Q} \cap [0,n]$. It makes 
sense therefore to consider four cases: (1) when $r=0$; (2) when $0 <
\frac{r}{m} < n$ and 
$n-\frac{r}{m} \in \mb{Q} \setminus 2\mb{Z}$; (3) when $0 < \frac{r}{m} < n$ and
$n-\frac{r}{m} \in 2\mb{Z}$; and (4) when $\frac{r}{m} = n$. 

\subsubsection{Case $r=0$}

Observe from Corollary \ref{cor:Co1Ex2} that 
  $X^{\ssfg(L_{m,0})}$ is precisely the semistable locus for the canonical
  $G=\SL(2,\kk)$-linearisation on $\OO_X(1) \to X=\PP(V)$ \cite[\S
  10.2]{dol03} (which is the
  classical reductive GIT problem of configurations 
  of $n$ unordered points on $\PP^1$ up to M\"{o}bius transformations). Notice
  also that there is a $G$-equivariant retraction  
\[
(\PP^2 \times X)^{\rmss(L^{\prime}_{m,0,N})} = \kk^2 \times X^{\ssfg(L_{m,0})} \to 
\{0\}
  \times X^{\ssfg(L_{m,0})} \cong  X^{\ssfg(L_{m,0})}
\]
(where $\kk^2 \subseteq \PP^2$ corresponds to the gentle affine completion
$\ol{G/H_u}^{\aff}$ of $G/H_u$), defined by taking a limit along the flow of $t
\in T_2 \cong \kk^{\times}$ as $t \to \infty$. Thus two points $[\sigma(x,y)],
[\tilde{\sigma}(x,y)]$ from $X^{\ssfg(L_{m,0})}$ get identified in $X \env_{\!
  \! L_{m,0}} H=(\PP^2 \times X) 
\dblslash_{L^{\prime}_{m,0,N}}(G \times H_r)$ if, and only if,
  $[\sigma(x,y)]$ and $[\tilde{\sigma}(x,y)]$ are S-equivalent for the
  standard action of $G$ on $X$ (see Section \ref{sec:BgReductive}). Writing $X
  \dblslash G$ for the GIT quotient of 
  the canonical linearisation $G \act \OO_X(1) \to X$, it follows that the
  inclusion $X^{\ssfg(L_{m,0})} \hookrightarrow 
  (\PP^2 \times X)^{\rmss(L^{\prime}_{m,0,N})}$ induces an isomorphism       
\[ 
X \env_{\! \! L_{m,0}} H \cong X \dblslash G.
\] 
In particular, we see that the dimension of $X \env_{\! L_{m,0}} H$
is one less than the anticipated dimension. 

\subsubsection{Case $0 < \frac{r}{m} < n$ and $n-\frac{r}{m} \in \mb{Q} \setminus 
2\mb{Z}$}
\label{subsubsec:ExVaCase2}

In this case we see from Proposition \ref{prop:Co1Ex1} and Corollary
\ref{cor:Co1Ex2} that  $(\PP^2
\times X)^{\rms(L^{\prime}_{m,r,N})}=(\PP^2 \times
X)^{\rmss(L^{\prime}_{m,r,N})}$ and
$X^{\ssfg(L_{m,r})}=X^{\rms(L_{m,r})}$. Moreover, by inspection
\[
(\PP^2 \times X)^{\rmss(L^{\prime}_{m,r,N})} \subseteq (\kk^2 \setminus\{0\})
\times X \cong \GUX,
\] 
thus $(\PP^2 \times X)^{\rmss(L^{\prime}_{m,r,N})} \cong G \times^{H_u}
(X^{\ssfg(L_{m,r})})$ and $(\PP^2 \times X)^{\rms(L^{\prime}_{m,r,N})} \cong G
\times^{H_u} (X^{\rms(L_{m,r})})$. From Corollary \ref{cor:Co1St7} we thus have
\[
X \env_{\! \! L_{m,r}} H=(\PP^2 \times X) \dblslash_{L^{\prime}_{m,r,N}} (G
\times H_r) = (\PP^2 \times X)^{\rms(L^{\prime}_{m,r,N})}/(G \times 
H_r)=X^{\rms(L_{m,r})}/H. 
\]
In particular,
the enveloping quotient map $X^{\ssfg(L_{m,r})} \to X \env_{\! \! L_{m,r}} H$ is a
geometric quotient of $X^{\ssfg(L_{m,r})}$ and the enveloped quotient is equal
to the enveloping quotient, 
which itself is the canonical choice of inner enveloping quotient (Definition
\ref{def:GiFi3.1}). Indeed, the quotient of $X^{\ssfg(L_{m,r})}$ by $H$ is even
projective. So in this case we obtain the best possible geometric picture we
could hope for.  

\subsubsection{Case $0 < \frac{r}{m} < n$ and $n-\frac{r}{m} \in 2\mb{Z}$}

Now $\frac{r}{m}$ lies on a `wall' (in the sense of Thaddeus \cite[Theorem
2.3]{tha96}) and $X^{\rms(L_{m,r})}$ is a proper
subset of $X^{\ssfg(L_{m,r})}$. As in the above case $n-\frac{r}{m}
\notin 2\mb{Z}$ we still have 
\[
(\PP^2 \times X)^{\rmss(L^{\prime}_{m,r,N})} \cong G \times^{H_u}
(X^{\ssfg(L_{m,r})}), \quad (\PP^2 \times X)^{\rms(L^{\prime}_{m,r,N})} \cong G
\times^{H_u} (X^{\rms(L_{m,r})})
\]
and $X \env_{\! \! L_{m,r}} H=(\PP^2 \times X) \dblslash_{L^{\prime}_{m,r,N}} (G
\times H_r)$. In particular, $X^{\rms(L_{m,r})}/H \cong (\PP^2 \times
X)^{\rms(L^{\prime}_{m,r,N})}/(G \times H_r)$ and the enveloping quotient map
$q:X^{\ssfg(L_{m,r})} 
\to X \env_{\! \! L_{m,r}} H$ is surjective by Theorem \ref{thm:Co1Re11} (which
means that again the notions of enveloped quotient, inner enveloping quotient and
enveloping quotient all coincide in this case). We next compute the complement of
$X^{\rms(L_{m,r})}/H$ inside $X \env_{\! L_{m,r}} H$. Let
$[\sigma(x,y)] \in X^{\ssfg(L_{m,r})} \setminus X^{\rms(L_{m,r})}$. We claim
that 
\[
[x^{(n+\frac{r}{m})/2}y^{(n-\frac{r}{m})/2}] \in \ol{H \cdot [\sigma(x,y)]} \cap
X^{\ssfg(L_{m,r})} \quad \text{(closure taken in $X$)}. 
\]
Indeed, by inspection of Corollary \ref{cor:Co1Ex2} either 
(1) $\sigma(x,y)$ has 
$[1:0]$ as a root of multiplicity $(n-\tfrac{r}{m})/2$; or (2) $\sigma(x,y)$ has
a root 
$[u_1:u_2] \neq [1:0]$ of multiplicity $(n+\tfrac{r}{m})/2$. In the first case,
the limit of any zero of $\sigma(x,y)$ different to $[1:0]$ under the flow of $
\left(\begin{smallmatrix} t_1 & 0 \\ 0 &
  t_1^{-1} \end{smallmatrix} \right) \in T_1 \subseteq G$ as $t_1 \to 0$ is
equal to $[0:1]$, so $[x^{(n+\frac{r}{m})/2}y^{(n-\frac{r}{m})/2}]
\in \ol{H \cdot [\sigma(x,y)]} \cap X^{\ssfg(L_{m,r})}$. In the
second case, there is $h \in H_u \cong \kk^+$ taking $[u_1:u_2]$ to
 $[0:1]$, and any other zero of $\sigma(x,y)$ is taken to a point of the form
$[v_1:v_2]$ with $v_1 \neq 0$. Any such $[v_1:v_2]$ flows to $[1:0]$ under
$\left(\begin{smallmatrix} t_1 & 0 \\ 0 & t_1^{-1} \end{smallmatrix} \right)$
as $t_1 \to \infty$, so that $\ol{H \cdot [\sigma(x,y)]} \cap
X^{\ssfg(L_{m,r})}$ also contains 
$[x^{(n+\frac{r}{m})/2}y^{(n-\frac{r}{m})/2}]$. This proves our claim. 

It follows that any
two points of $X^{\ssfg(L_{m,r})} \setminus X^{\rms(L_{m,r})}$ 
are S-equivalent inside $(\PP^2 \times X)^{\rmss(L^{\prime}_{m,r,N})}$ and so
\[ 
X \env_{\! \! L_{m,r}} H = (X^{\rms(L_{m,r})} / H) \amalg \pt,
\]
where $\pt$ is the image of 
$X^{\ssfg(L_{m,r})} \setminus X^{\rms(L_{m,r})}$ under the enveloping
quotient $q:X^{\ssfg(L_{m,r})} \to X \env_{\! \! L_{m,r}} H$. Note that multiple
orbits get collapsed to $\pt$, so the enveloping quotient fails to be a
geometric quotient.

\subsubsection{Case $\tfrac{r}{m}=n$}

In this case $X^{\rms(L_{m,r})} = \emptyset$, while for $X^{\ssfg(L_{m,r})}$ a
similar analysis to the above case $0 < \frac{r}{m} < n$ and $n-\frac{r}{m} \in
2\mb{Z}$ holds: again we have $(\PP^2 \times X)^{\rmss(L^{\prime}_{m,r,N})}
\cong G \times^{H_u}(X^{\ssfg(L_{m,r})})$, 
and any $[\sigma(x,y)]\in X^{\rmss(L_{m,r})}$ has limit point
equal to $[x^n]$ inside $X^{\rmss(L_{m,r})}$ under the action of $T_1$. So we
see that any two points in 
$X^{\ssfg(L_{m,r})}$ are S-equivalent inside $(\PP^2 \times
X)^{\rmss(L^{\prime}_{m,r,N})}$. Since $X \env_{\! \! L_{m,r}} H = (\PP^2 \times
X) \dblslash_{L^{\prime}_{m,r,N}} (G \times H_r)$, we deduce that 
\[
X \env_{\! \! L_{m,r}} H =\pt.
\]

\subsubsection{Birational Transformations of $X^{\rms(L_{m,r})}/H$}  

Finally, we examine how the geometric
quotients $X^{\rms(L_{m,r})}/H$ transform birationally as $\tfrac{r}{m}$ crosses the 
`walls' of integers
congruent to $n$ modulo 2 between $0$ and $n$, or else equal to $0$ or $n$. 


First consider the case where $\tfrac{r}{m} \in \mb{Z} \cap (0,n)$ and
$\frac{r}{m} \equiv n \mod 2$, with $n \geq 3$. Let
$0<\epsilon<1$ be a small rational 
number and let $L_{m,r}^+ \to X$ and $L_{m,r}^-
\to X$ be the perturbed $H$-linearisations, corresponding to the rational numbers
$\frac{r}{m}+ \epsilon$ and $\frac{r}{m}- \epsilon$, respectively. By inspecting
Corollary \ref{cor:Co1Ex2} we see there are inclusions 
\[
X^{\rms(L_{m,r}^-)} \subseteq X^{\ssfg(L_{m,r})} \supseteq
X^{\rms(L_{m,r}^+)}, \quad
X^{\rms(L_{m,r}^-)} \supseteq X^{\rms(L_{m,r})} \subseteq
X^{\rms(L_{m,r}^+)},
\]    
from which we obtain proper birational morphisms 
\begin{align*}
\psi_-:X \env_{\! \! L_{m,r}^-} H &\to X \env_{\! \! L_{m,r}} H, \\
\psi_+:X \env_{\! \! L_{m,r}^+} H &\to X \env_{\! \! L_{m,r}} H
\end{align*}
fitting into the following
commutative diagram (with all unmarked 
inclusions natural open immersions):
\[
\begin{diagram}
X^{\rms(L_{m,r}^-)} & \rEmpty~\subseteq & X^{\ssfg(L_{m,r})}
& \rEmpty~\supseteq &
X^{\rms(L_{m,r}^+)} \\
\dTo && \dTo_q && \dTo \\
X \env_{\! \! L_{m,r}^-} H & \rOnto^{\psi_-} & X \env_{\! \!
  L_{m,r}} H & \lOnto^{\psi_+} & 
X \env_{\! \! L_{m,r}^+} H \\ 
& \luInto & \uInto & \ruInto & \\
 & & X^{\rms(L_{m,r})}/H & & \\   
\end{diagram}
\]
(Cf. \cite[Theorem 3.3]{tha96}.) If $n=3$ then in fact 
\[
X \env_{\! \! L_{m,r}^-} H \overset{\psi_-}{\cong} X \env_{\! \! L_{m,r}} H
\overset{\psi_+}{\cong} X \env_{\! \! L_{m,r}^+} H
\]
are all isomorphic to $\PP^1$.\footnote{To see this, note that $X \env_{ \!
    L_{m,r}} H = (\PP^2 \times X) \dblslash_{L^{\prime}_{m,r,N}}(G \times H_r)$
  is a one dimensional reductive GIT quotient of a smooth variety, hence is isomorphic 
to $\PP^1$ \cite{kem80}; similarly for the linearisations $L_{m,r}^{\pm}$. Alternatively observe that 
$X \env_{ \!
    L_{m,r}} H$ is a normal projective unirational curve, and hence is isomorphic to $\PP^1$. } Otherwise 
$\psi_-$ and $\psi_+$ are
both small contractions, and the induced birational morphism 
\[
X \env_{\! \! L^-_{m,r}} H \dashrightarrow X \env_{\! \! L^+_{m,r}} H
\]
is a blow-down of
$E_-:=\psi_-^{-1}(\pt)$ followed by a blow-up of
$E_+:=\psi_+^{-1}(\pt)$, where here $\pt= (X\env_{\! \! L_{m,r}}H) \setminus
(X^{\rms(L_{m,r})}/H)$ \cite[Theorem 3.5]{tha96}.



In the case where $n \geq 4$ we claim that $E_+$ and $E_-$ are
isomorphic to the following weighted projective spaces\footnote{The fact that
  $E_+$ and $E_-$ 
  are weighted projective spaces also follows from \cite[Theorem 5.6]{tha96}.}:
$E_+ \cong \PP(1,2,\dots,s)$ and $E_- \cong
\PP(1,2,\dots,n-s)$, where $s=(n-\frac{r}{m})/2$. Indeed, recall that 
\[
E_-=(X^{\rms(L_{m,r}^-)} \setminus X^{\rms(L_{m,r})})/H, \quad
E_+=(X^{\rms(L_{m,r}^+)} \setminus X^{\rms(L_{m,r})})/H. 
\] 
In the case of $E_+$,
any $H_u \cong \kk^+$-orbit 
in $X^{\rms(L_{m,r}^+)} \setminus X^{\rms(L_{m,r})}$ contains a unique point
$[\sigma(x,y)]$ such that $[0:1]$ is a zero of 
$\sigma(x,y)$ of multiplicity $n-s$ and $[1:0]$ is a zero of
multiplicity $0 \leq l<s$. Thus the locally closed subset
\[
Z_+=\{[a_0x^n+a_1x^{n-1}y+\dots +a_sx^{n-s}y^s] \in X \mid \text{$a_s \neq 0$ and
  $a_i \neq 0$ for some $0 \leq i <s$}\} \\
\]    
of $X$ provides a slice to the $H_u$-action on $X^{\rms(L_{m,r}^+)} \setminus
X^{\rms(L_{m,r})}$ which is stable under the 
$T_1$-action. Now $Z_+ \cong \kk^s \setminus \{0\}$ in the obvious way
and one can check that the corresponding $T_1$-action on $\kk^s \setminus \{0\}$
has weights $-2s,-2s-2,\dots,-2$. It follows that $E_+=Z_+/T_1$ is isomorphic to
$\PP(1,2,\dots,s)$. The proof that $E_- \cong \PP(1,2,\dots,n-s)$ is similar.

In the case where $r=0$, we of course have $X^{\ssfg(L_{m,0}^-)}=X^{\rms(L_{m,0}^-)}=
\emptyset$ and
$X^{\rms(L_{m,0}^+)} \subseteq X^{\ssfg(L_{m,0})}=X^{\rmss(G)}$, where recall
$X^{\rmss(G)}$ is the semistable locus for the canonical linearisation $G \act
\OO_X(1) \to X$. In the case where $n$ is even, the boundary
of the stable locus $X^{\rms(G)}$ inside $X^{\rmss(G)}$ is precisely the
  $G$-orbit of the point $[x^{n/2}y^{n/2}] \in X$ \cite[\S 10.2]{dol03}, so by
  inspection 
  $X^{\rms(L_{m,0}^+)} \subseteq X^{\rms(G)}$. Thus there is a commuting diagram
\[
\begin{diagram}
X^{\rms(L_{m,0}^+)} & \rEmpty~\subseteq & X^{\rms(G)} &\rEmpty~\subseteq & X^{\ssfg(L_{m,0})} & \rEmpty~= &
X^{\rmss(G)} \\
\dTo_{q} && \dTo && \dTo_{q} && \dTo_{\text{S-equivalence}} \\
X^{\rms(L_{m,0}^+)}/H & \rTo^{\psi} & X^{\rms(G)}/G &
\rEmpty~\subseteq & X \env_{\! \! L_{m,0}} H & 
\rEmpty~= & X \dblslash G  \\
\end{diagram}
\]
where $\psi:X^{\rms(L_{m,0}^+)}/H \to X^{\rms(G)}/G$ is a fibration. (Indeed, it
is a geometric quotient for the unipotent subgroup $(H_u)^{\mathrm{opp}}$ of
strictly lower triangular matrices in $G$
opposite to $H_u$.) When $n$ is odd, a similar diagram holds, except now
$X^{\rms(G)}=X^{\rmss(G)}$ and $X^{\rms(G)}/G=X \dblslash G$.

Lastly, the case where $\tfrac{r}{m}=n$ is trivial: we now have
$X^{\ssfg(L_{m,r}^+)}=X^{\rms(L_{m,r}^+)}=\emptyset$ and
$X^{\rms(L_{m,r}^-)} \subseteq X^{\ssfg(L_{m,r})}$ induces the unique map
$X \env_{\! \! L_{m,r}^-} H \to 
X \env_{\! \! L_{m,r}} H=\pt$.


\section{Applications}
\label{sec:applications}
In the example studied in the previous section, where $H$ is the standard Borel subgroup of $\SL(2,\kk)$ and $X = \PP(V)$ with $V$ an irreducible representation of $\SL(2,\kk)$, the quotient morphism 
$$q_{m,r} : X^{\ssfg(L_{m,r})} \to X \env_{\! \! L_{m,r}} H
$$
from the semistable locus to the enveloping quotient is surjective whenever $r \neq 0$, even though the corresponding morphism when $H$ is replaced with its unipotent radical $H_u$ is not surjective (see Remark \ref{UhatHilbertMumford}). Furthermore   the analogues of the Hilbert--Mumford criteria for reductive GIT hold in these examples for the action of the non-reductive group $H$; that is, $x \in X^{\rms(L_{m,r})}$ (respectively $x \in X^{\ssfg(L_{m,r})}$) if and only if $x$ is stable (respectively semistable) for every one-parameter subgroup $\lambda: \mathbb{G}_m \to H$. In addition when $r \neq 0$ the enveloping quotient is a categorical quotient of the semistable locus, with $q_{m,r}(x) = q_{m,r} (y)$ for $x,y \in  X^{\ssfg(L_{m,r})}$ if and only if the closures of the $H$-orbits of $x$ and $y$ meet in $X^{\ssfg(L_{m,r})}$, just as for reductive GIT.  Indeed for generic choice of $r/m \in \mathbb{Q} \setminus\{0\}$ the enveloping quotient is a projective variety which is a geometric quotient of $X^{\ssfg(L_{m,r})} = X^{\rms(L_{m,r})}$. In this final section we will describe without proof some ongoing research which generalises these observations and has applications to moduli spaces occurring naturally in algebraic geometry.

\subsection{Graded unipotent group actions}
\label{subsec:graded unipotent group actions}

In \cite{bk15, bedhk16a, bedhk16b} the situation is studied when the unipotent radical $H_u$ of a linear algebraic group $H$ has  a semi-direct product $\hat{{H_u}} = {H_u} \rtimes \GG_m$ by the multiplicative group $\GG_m$ of $\kk$
such that the weights of the action of $\GG_m$ on the Lie algebra of $H_u$ are all  strictly positive;  such a unipotent group is called \emph{graded unipotent}. 
Given any action of ${\hat{H_u}}$ on a projective variety $X$ which is linear with respect to an ample line bundle $L$ on $X$, it is shown in \cite{bedhk16a, bedhk16b} that {provided} two conditions are satisfied:

(i) that we are willing to replace $L$ with a suitable tensor power and to twist the linearisation of the action of ${\hat{H_u}}$ by a suitable (rational) character of ${\hat{H_u}}$, and 

(ii)  roughly speaking, that \lq semistability coincides with stability' for the action of $\hat{H_u}$,

\noindent  then
 the ${\hat{H_u}}$-invariants form a finitely generated algebra. Moreover in this situation the natural quotient morphism $q$ from the semistable locus $X^{\ssfg,{\hat{H_u}}}$ to the enveloping quotient $X\env {\hat{H_u}}$ is surjective, and indeed expresses the projective variety $X\env {\hat{H_u}}$ as a geometric quotient of $X^{\ssfg,{\hat{H_u}}}$, and this locus $X^{\ssfg,{\hat{H_u}}} = X^{\rms,{\hat{H_u}}}$  can be described using Hilbert--Mumford-like criteria.

Suppose that $H$ acts linearly on a projective variety $X$, and that $H_r = H/H_u$ itself contains a central one-parameter subgroup whose conjugation action on the Lie algebra of $H_u$ has all weights strictly positive. Then  the corresponding semi-direct product $\hat{H_u}$ is a subgroup of $H$, and provided that the condition (ii) that  \lq semistability coincides with stability' for the action of $\hat{H_u}$ is satisfied, $X$ can be quotiented first by $\hat{H_u}$ for a suitably twisted linearisation as above,  and then by the induced action of the reductive group ${H}/\hat{H_u} \cong H_r/\GG_m$, to obtain   a projective variety $X \env {H}$ which is a categorical quotient by ${H}$ of $X^{\ssfg,{{H}}}$.
More generally suppose that  the linear action of $H$ on $X$  extends to a linear action of a semi-direct product $\hat{H}$ of $H$ by $\GG_m$ acting by conjugation on the Lie algebra of $H_u$ with all weights strictly positive, and whose induced conjugation action on $H_r=H/H_u$ is trivial. Then if the condition (ii)  is satisfied, we can  quotient first by $\hat{H_u}$ for a suitably twisted linearisation as above and then by the induced action of the reductive group $\hat{H}/\hat{H_u}$, to obtain   a projective variety $X \env \hat{H}$ which is a categorical quotient by $\hat{H}$ of the $\hat{H}$-invariant open subset $X^{\ssfg,{\hat{H}}}$ of $X$.

In order to describe the condition (ii),  that  \lq semistability coincides with stability' for the action of $\hat{H_u}$, more precisely,
let $L \to X$ be a very ample linearisation
of  the action of $\hat{H}$ on an irreducible projective variety $X$. 
Let $\chi: \hat{H} \to \GG_m$ be a character of $\hat{H}$ with kernel containing $H$;  such characters $\chi $ can be identified with integers so that the integer 1 corresponds to the character which fits into the exact sequence $H \to \hat{H} \to \GG_m$. Let $\omega_{\min}$ be the minimal weight for the $\GG_m$-action on
$V:=H^0(X,L)^*$ and let $V_{\min}$ be the weight space of weight $\omega_{\min}$ in
$V$. Suppose that $\omega_{\min} < \omega_{\min +1} < 
\cdots < \omega_{\max} $ are the weights with which the one-parameter subgroup $\GG_m \leq {\hat{H_u}} \leq \hat{H}$ acts on the fibres of the tautological line bundle $\mathcal{O}_{\PP((H^0(X,L)^*)}(-1)$ over points of the connected components of the fixed point set $\PP((H^0(X,L)^*)^{\GG_m}$ for the action of $\GG_m$ on $\PP((H^0(X,L)^*)$; since $L$ is very ample $X$ embeds in $\PP((H^0(X,L)^*)$ and the line bundle $L$ extends to the dual $\mathcal{O}_{\PP((H^0(X,L)^*)}(1)$ of the tautological line bundle $\mathcal{O}_{\PP((H^0(X,L)^*)}(-1)$.
Without loss of generality we may assume that there exist at least two distinct such weights, since otherwise the action of the unipotent radical $H_u$ of $H$ on $X$ is trivial, and so the action of $H$ is via an action of the reductive group $H_r=H/H_u$ and reductive GIT can be applied.
Let $c$ be a positive integer such that 
$$ 
\frac{\chi}{c} = \omega_{\min} + \epsilon$$
where $\epsilon >0$ is sufficiently small; we will call rational characters $\chi/c$  with this property {\emph well adapted } to the linear action of $\hat{H}$, and we will call the linearisation well adapted if $\omega_{\min} <0\leq \omega_{\min} + \epsilon$ for sufficiently small $\epsilon >0$.
 The linearisation of the action of $\hat{H}$ on $X$ with respect to the ample line bundle $L^{\otimes c}$ can be twisted by the character $\chi$ so that the weights $\omega_j$ are replaced with $\omega_jc-\chi$;
let $L_\chi^{\otimes c}$ denote this twisted linearisation. 
Let $X^{s,\GG_m}_{\min+}$ denote the stable subset of $X$ for the linear action of $\GG_m$ with respect to the linearisation $L_\chi^{\otimes c}$; by the theory of variation of (classical) GIT \cite{dh98,tha96}, if $L$ is very ample then  $X^{s,\GG_m}_{\min+}$ is the stable set for the action of $\GG_m$ with respect to any rational character $\chi/c$ such that 
$\omega_{\min} < \chi/c < \omega_{\min + 1}$.
Let $$X^{s,{\hat{H_u}}}_{\min+} = X \setminus {\hat{H_u}} (X \setminus X^{s,\GG_m}_{\min+}) = \bigcap_{u \in H_u} u X^{s,\GG_m}_{\min+}$$ be the complement of the ${\hat{H_u}}$-sweep (or equivalently the $H_u$-sweep) of the complement of $X^{s,\GG_m}_{\min+}$,
let
\[
Z_{\min}:=X \cap \PP(V_{\min})=\left\{
\begin{array}{c|c}
\multirow{2}{*}{$x \in X$} & \text{$x$ is a $\GG_m$-fixed point and} \\ 
 & \text{$\GG_m$ acts on $L^*|_x$ with weight $\omega_{\min}$} 
\end{array}
\right\}
\]
and
\[
X^0_{\min}:=\{x\in X \mid \lim_{t \to 0, \,\, t \in \GG_m} t \cdot x \in Z_{\min}\}.
\]   
Then $X^0_{\min}$ is $\hat{H_u}$-invariant and $X^{s,{\hat{H_u}}}_{\min+}  = X^0_{\min} \setminus H_u Z_{\min}$.

The condition that \lq semistability coincides with stability' for the linear action of ${\hat{H_u}}$ required in \cite{bedhk16a} is slightly stronger than that required in \cite{bedhk16b}, where the hypothesis needed for the
$\hat{H_u}$-linearisaton $L \to X$ is that 

\begin{equation}  
\text{$\stab_{H_u}(z)) = \{ e \} $ for every $z \in Z_{\min}$} \tag{$\mf{C}^*$}
\end{equation}
(note that this condition is satisfied in the examples studied in $\S$5) 
and the following result is proved.

\begin{thm} \label{mainthm}  {\rm \cite{bedhk16b}}
Let $H$ be a linear algebraic group over $\kk$ with unipotent radical $H_u$.
 Let $\hat{H} = H \rtimes \GG_m$ be a semidirect product of $H$ by $\GG_m$ with subgroup $\hat{H_u} = H_u \rtimes \GG_m$,
where the conjugation action of $\GG_m$ on $H_u$ is such that all the weights
of the induced $\GG_m$-action on the Lie algebra of $H_u$ are strictly positive, while the induced conjugation action of $\GG_m$ on $H_r=H/H_u$ is trivial.
Suppose that ${\hat{H}}$  acts linearly on an irreducible projective variety $X$ with respect to an  ample line bundle $L$, and that $c$ is a sufficiently divisible positive integer and $\chi: {\hat{H}} \to \GG_m$ is a character of $\hat{H}$ with kernel containing $H$  such that the rational character $\chi/c$ is well adapted for the linear action of ${\hat{H_u}}$.
Suppose also that the linear action of ${\hat{H_u}}$ on $X$ satisfies the condition  ($\mf{C}^*$) above. 
Then the algebras of invariants $\oplus_{m=0}^\infty H^0(X,L_{m\chi}^{\otimes cm})^{\hat{H_u}}$ and $$\oplus_{m=0}^\infty H^0(X,L_{m\chi}^{\otimes cm})^{\hat{H}} = (\oplus_{m=0}^\infty H^0(X,L_{m\chi}^{\otimes cm})^{\hat{H_u}})^{H_r}$$ are 
finitely generated.   
Moreover the enveloping quotient $X\env \hat{H_u}$ is the projective variety associated to the algebra of invariants $\oplus_{m=0}^\infty H^0(X,L_{m\chi}^{\otimes cm})^{\hat{H_u}}$ and is a geometric quotient of the open subset $X^{s,{\hat{H_u}}}_{\min+}$ of $X$ by $\hat{H_u}$, while the enveloping quotient $X\env \hat{H}$ is the projective variety associated to the algebra of invariants $\oplus_{m=0}^\infty H^0(X,L_{m\chi}^{\otimes cm})^{\hat{H}}$ and is the reductive GIT quotient of $X\env \hat{H_u}$ by the induced action of the reductive group ${\hat{H}}/\hat{H_u} \cong H_r$ with respect to the linearisation induced by a sufficiently divisible tensor power of $L$. 
\end{thm}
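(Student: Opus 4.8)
```latex
\textbf{Proof proposal.} The plan is to reduce the entire statement to an application of the graded unipotent quotient machinery of \cite{bedhk16a,bedhk16b} (summarised just above) to the subgroup $\hat{H_u} = H_u \rtimes \GG_m$, followed by a reductive GIT quotient in stages by the residual reductive group $H_r$. The key observation is that the hypotheses have been arranged precisely so that the two conditions (i) and (ii) needed for the graded unipotent theory hold: condition (i) is guaranteed by allowing passage to the tensor power $L^{\otimes c}$ with $c$ sufficiently divisible and the twist by $\chi$ making $\chi/c$ well adapted, while condition (ii), that \lq semistability coincides with stability', is exactly the content of the hypothesis $(\mf{C}^*)$ that $\stab_{H_u}(z) = \{e\}$ for all $z \in Z_{\min}$.

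First I would invoke the main result of \cite{bedhk16b} for the graded unipotent group $\hat{H_u}$ acting linearly on $X$ with respect to the well-adapted twisted linearisation $L_\chi^{\otimes c}$. This immediately yields finite generation of $\bigoplus_{m \geq 0} H^0(X,L_{m\chi}^{\otimes cm})^{\hat{H_u}}$, identifies the enveloping quotient $X \env \hat{H_u}$ with the associated projective variety $\proj$ of this algebra, and exhibits it as a geometric quotient of the open subset $X^{s,\hat{H_u}}_{\min+} = X^0_{\min} \setminus H_u Z_{\min}$ by $\hat{H_u}$, described via the stated Hilbert--Mumford-like criterion. Second, since $H_u$ is normal in $H$ and the conjugation action of $\GG_m$ on $H_r$ is trivial, the residual reductive group $\hat{H}/\hat{H_u} \cong H_r$ acts on the projective variety $X \env \hat{H_u}$; here I would apply Proposition \ref{prop:GiNa7} to obtain the canonical $H_r$-action and Proposition \ref{prop:GiNa8} to obtain the induced $H_r$-linearisation on a sufficiently divisible power of $\OO(1)$ over $X \env \hat{H_u}$. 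Because $X \env \hat{H_u}$ is now a genuine projective variety and $H_r$ is reductive, Nagata's theorem \cite{nag64} gives finite generation of the ring of $H_r$-invariants, which under the identification $S^{\hat{H}} = (S^{\hat{H_u}})^{H_r}$ coming from the normality of $H_u$ yields finite generation of $\bigoplus_{m \geq 0} H^0(X,L_{m\chi}^{\otimes cm})^{\hat{H}}$.

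Third, I would identify $X \env \hat{H}$ with the reductive GIT quotient $(X \env \hat{H_u}) \dblslash H_r$. This is where the \lq quotienting in stages' framework of Section \ref{sec:GiStability} and the summary Theorem \ref{thm:GiSu2} do the work: the composite of the geometric quotient $q_{\hat{H_u}}$ with the good categorical quotient $q_{H_r}$ realises $X \env \hat{H}$ as $\proj$ of the invariant algebra $(S^{\hat{H_u}})^{H_r} = S^{\hat{H}}$, and by Theorem \ref{thm:BgRe2} this is precisely the Mumford GIT quotient of the projective variety $X \env \hat{H_u}$ by $H_r$ with respect to the induced linearisation. The claim that $X \env \hat{H}$ is a categorical quotient of $X^{\ssfg,\hat{H}}$ then follows by composing the universal properties: the geometric quotient by $\hat{H_u}$ is in particular categorical, and the reductive GIT quotient by $H_r$ is categorical by Theorem \ref{thm:BgRe2}, so the two-stage composite is categorical by the standard composition of categorical quotients (cf. the discussion following Example \ref{ex:TrRe3}).

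The main obstacle, and the part that cannot simply be quoted, is the compatibility of the linearisations and invariant algebras across the two stages: one must verify that the $H_r$-linearisation produced on $X \env \hat{H_u}$ via Proposition \ref{prop:GiNa8} is genuinely the one \emph{induced by a sufficiently divisible tensor power of $L$}, and that the Veronese identifications $\proj((S^{\hat{H_u}})^{H_r}) = \proj(S^{\hat{H}})$ are natural enough that the two-stage quotient agrees on the nose with the intrinsically defined enveloping quotient $X \env \hat{H}$ of Definition \ref{def:GiFi3}. Concretely the subtlety is that the grading used for $\hat{H_u}$-invariants involves the twist by $\chi$ and the power $c$, so I would need to track carefully how the character twist interacts with the residual $H_r$-action (which is trivial under conjugation by $\GG_m$, so the twist descends), and confirm that the good categorical $H_r$-quotient of the geometric $\hat{H_u}$-quotient is exactly the $\proj$ of the full invariant ring rather than merely birational to it. This is essentially the normality and finite-generation bookkeeping already present in the proofs of Theorem \ref{thm:GiSu2} and Proposition \ref{prop:Co1Re10}, extended to the graded setting, so I expect it to be routine but technically involved rather than conceptually hard.
```
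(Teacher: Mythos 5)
There is no proof of this statement in the paper for you to be compared against: Theorem \ref{mainthm} appears in the final section, which the authors introduce with the words ``we will describe \emph{without proof} some ongoing research'', and the theorem is simply imported from \cite{bedhk16b}. That said, your proposal has a genuine logical gap, and it is a circularity: your first and decisive step is to ``invoke the main result of \cite{bedhk16b} for the graded unipotent group $\hat{H_u}$'' in order to obtain finite generation of $\bigoplus_m H^0(X,L_{m\chi}^{\otimes cm})^{\hat{H_u}}$ and the identification of $X \env \hat{H_u}$ with the geometric quotient $X^{s,\hat{H_u}}_{\min+}/\hat{H_u}$. But that \emph{is} the core assertion of the theorem you are asked to prove (the theorem is attributed to \cite{bedhk16b} precisely because this is its content). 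Nothing in your proposal engages with why well-adaptedness of $\chi/c$ together with condition $(\mf{C}^*)$ forces the $\hat{H_u}$-invariants to be finitely generated --- which is exactly the part that cannot be handled by the general non-reductive machinery of Sections 3--4 of this paper, since for a unipotent (or graded unipotent) group finite generation can fail and the enveloping quotient map can fail to be surjective (Remark \ref{rmk:BgUn4}, Example \ref{ex:BgUn16}). A non-circular argument would have to use the $\GG_m$-flow to $Z_{\min}$ and the triviality of the $H_u$-stabilisers on $Z_{\min}$ to show, roughly, that the $\hat{H_u}$-sweep of a suitable affine open set controls everything, and none of that appears here. A small additional inaccuracy: condition $(\mf{C}^*)$ is not ``exactly the content'' of condition (ii); the paper states explicitly that the ``semistability coincides with stability'' hypothesis of \cite{bedhk16a} is \emph{slightly stronger} than $(\mf{C}^*)$, which is the weaker hypothesis used in \cite{bedhk16b}.

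Your second stage --- passing from $X \env \hat{H_u}$ to $X \env \hat{H}$ by a reductive GIT quotient by $H_r \cong \hat{H}/\hat{H_u}$ --- is sound and is consistent with the framework the paper does develop: once $S^{\hat{H_u}}$ is known to be finitely generated and $X \env \hat{H_u} = \proj(S^{\hat{H_u}})$ is projective, Propositions \ref{prop:GiNa7} and \ref{prop:GiNa8} give the residual $H_r$-action and linearisation, Nagata's theorem gives finite generation of $(S^{\hat{H_u}})^{H_r} = S^{\hat{H}}$, and Remark \ref{rmk:GiFi3.1} together with the quotienting-in-stages picture of Theorem \ref{thm:GiSu2} identifies $\proj(S^{\hat{H}})$ with $X \env \hat{H}$ and with the reductive GIT quotient of $X \env \hat{H_u}$ by $H_r$. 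Your worry about the character twist interacting with the $H_r$-action is also correctly resolved by the hypothesis that $\GG_m$ acts trivially on $H_r$ by conjugation and that $\chi$ has kernel containing $H$. So the proposal correctly organises the deduction of the $\hat{H}$-statements from the $\hat{H_u}$-statements, but it does not prove the $\hat{H_u}$-statements at all.
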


Applying this result with $X$ replaced by $X \times \PP^1$, with respect to the tensor power of the linearisation $L$ (over $X$) with $\mathcal{O}_{\PP^1}(M)$ (over $\PP^1$) for $M>>1$, gives us a  projective variety $(X \times \PP^1) \env \hat{H}$ which is a categorical quotient by $\hat{H}$ of an $\hat{H}$-invariant open subset of $X \times \kk$. This open subset is the inverse image in $(X \times \PP^1)^{s,{\hat{H_u}}}_{\min+}$ of the $H_r$-semistable subset $((X\times \PP^1) \env \hat{H_u})^{ss,H_r}$ of $(X\times \PP^1) \env \hat{H_u} = (X \times \PP^1)^{s,{\hat{H_u}}}_{\min+}/{\hat{H_u}}$,
and contains as an open subset a geometric quotient by $H$ of an $H$-invariant open subset $X^{\hat{s},H}$ of $X$. Here $X^{\hat{s},H}$ can be identified in the obvious way with $X^{\hat{s},H}  \times \{ [1:1]\} $ which is the intersection with $X \times \{ [1:1]\}$ of the inverse image in $(X \times \PP^1)^{\rms,{\hat{H_u}}}_{\min+} = (X \times \PP^1)^{\ssfg,{\hat{H_u}}}_{\min+}$ of the $H_r$-stable subset $((X\times \PP^1) \env \hat{H_u})^{s,H_r}$ of 
$$(X\times \PP^1) \env \hat{H_u} =  ((X^0_{\min} \times \kk^*) \sqcup (X^{s,{\hat{H_u}}}_{\min+}  \times \{0\}))/{\hat{H_u}} \cong 
(X^0_{\min}/H_u) \sqcup (X^{s,{\hat{H_u}}}_{\min+}  /{\hat{H_u}}).$$
Furthermore the geometric quotient $X^{\hat{s},H}/H$ and its projective completion $(X \times \PP^1) \env \hat{H}$ can be described using Hilbert--Mumford-like criteria, by combining the description of 
$(X\times \PP^1) \env \hat{H_u}$ as the geometric quotient $ (X \times \PP^1)^{s,{\hat{H_u}}}_{\min+}/{\hat{H_u}}$ with reductive GIT for the induced linear action of the reductive group $H_r=H/H_u$ on $(X\times \PP^1) \env \hat{U}$.

In  \cite{bedhk16b} it is also shown that when the condition ($\mf{C}^{*}$) is not satisfied, but is replaced with the much weaker condition
\begin{equation}  
\text{$\min_{x \in X} \, \dim(\stab_{H_u}(x)) = 0,$} \tag{$\mf{C}^{**}$}
\end{equation}
then
there is a  sequence of blow-ups of $X$ along $\hat{H}$-invariant subvarieties  (analogous to that of \cite{kir85} in the reductive case) resulting in a projective variety $\hat{X}$ with an induced linear action of $\hat{H}$ satisfying the  condition ($\mf{C}^{*}$). In this way we obtain a projective variety $\widehat{X \times \PP^1} \env \hat{H}$ which is a categorical quotient by $\hat{H}$ of a $\hat{H}$-invariant open subset of a blow-up of ${X} \times \kk$ and contains as an open subset a geometric quotient of an $H$-invariant open subset $X^{\hat{s},H}$ of $X$ by $H$, where the geometric quotient $X^{\hat{s},H}/H$ and its projective completion $\widehat{X \times \PP^1} \env \hat{H}$ have descriptions in terms of Hilbert--Mumford-like criteria and  the explicit blow-up construction. 

\begin{rmk}
It is observed in \cite{bedhk16b} that, at least when $H_u$ is abelian, most of these conclusions hold even when the condition ($\mf{C}^{**}$) is not satisfied.
The case when \\ $\min_{x \in X} \,\, \dim(\stab_{H_u}(x))>0$ is studied in \cite{bejk16, behjk16}.
\end{rmk}

\subsection{Automorphism groups of complete simplicial toric varieties}
\label{subsec:automorphism groups}

The automorphism group of the weighted projective plane $\PP(1,1,2) = (\kk^3 \setminus \{ 0 \})/ \GG_m$, for $\GG_m$ acting linearly on $\kk^3$ with weights $1,1,2$,  is given by 
$$\mbox{Aut}(\PP(1,1,2)) \cong R \ltimes U$$
where $R \cong \GL(2,\kk)$ is reductive and 
$U \cong (\kk)^3$  is unipotent. Here   $(\lambda,\mu,\nu) \in (\kk)^3$ acts on $\PP(1,1,2)$ as 
$$[x,y,z]  \mapsto [x,y,z+\lambda x^2 + \mu xy + \nu y^2].$$
The central one-parameter subgroup $\GG_m$ of $R \cong \GL(2,\kk)$ acts on the Lie algebra of 
$H_u$ with all positive weights, and the
associated semi-direct product
$$\hat{U} = U \rtimes \GG_m$$
can be identified with a subgroup of $\mbox{Aut}(\PP(1,1,2))$. Thus the results discussed in $\S$\ref{subsec:graded unipotent group actions} have an immediate application to linear actions of $\mbox{Aut}(\PP(1,1,2))$.

\begin{cor}
 Suppose that $H=\mbox{Aut}(\PP(1,1,2))$ acts linearly on a projective
variety $X$. If the linearisation is replaced with a suitable positive tensor power and twisted by an appropriate character of $H$, then when condition ($\mf{C}^*$) holds the enveloping quotient
$X\env H$
is the projective variety associated to the $H$-invariants on $X$, and is a categorical quotient by $H$ of an $H$-invariant open subset of $X$ which can be described using Hilbert--Mumford-like criteria. Even if ($\mf{C}^*$) fails, provided that the weaker condition ($\mf{C}^{**}$) holds there is a geometric quotient by $H$ of an open subset of $X$ described by Hilbert--Mumford-like criteria, with a projective completion  which is a categorical quotient  of an open subset  of an $H$-equivariant blow-up $\tilde{X}$ of $X$ and coincides with $X\env H$  when  ($\mf{C}^*$) holds.
\end{cor}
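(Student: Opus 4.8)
The plan is to recognise $H = \mathrm{Aut}(\PP(1,1,2))$ as an instance of the graded-unipotent framework of $\S\ref{subsec:graded unipotent group actions}$ and then to quotient in two stages, invoking Theorem \ref{mainthm} for the unipotent part and Mumford's reductive GIT for the remainder. First I would record the group theory. Writing $H = R \ltimes U$ with $R \cong \GL(2,\kk)$ reductive and unipotent radical $H_u = U \cong \kk^3$, the connected centre of $R$ is a one-parameter subgroup $\GG_m \hookrightarrow \GL(2,\kk)$ which is central in $H_r = H/H_u \cong \GL(2,\kk)$, so its conjugation action on $H_r$ is trivial. Since $\GG_m$ lies in $R$ and $R$ splits the extension, the semidirect product $\hat{H_u} = U \rtimes \GG_m =: \hat{U}$ is a subgroup of $H$; moreover $\hat{U}$ is precisely the preimage in $H$ of the central $\GG_m \subseteq R = H/U$, hence is normal in $H$ with $H/\hat{U} \cong R/\GG_m \cong \PGL(2,\kk)$ reductive. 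This places $H$ exactly in the situation described before Theorem \ref{mainthm}, where $H_r$ itself contains the requisite central one-parameter subgroup.

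Under condition $(\mf{C}^*)$ I would then apply Theorem \ref{mainthm} to the graded unipotent group $\hat{U}$ (the case of that theorem with trivial reductive factor). After replacing $L$ by a sufficiently divisible tensor power and twisting by a well-adapted rational character $\chi/c$ of $\hat{U}$, this yields that $\bigoplus_m H^0(X,L_{m\chi}^{\otimes cm})^{\hat{U}}$ is finitely generated, that $X \env \hat{U}$ is its $\proj$, and that $X \env \hat{U}$ is a geometric quotient of the open subset $X^{s,\hat{U}}_{\min+}$, the latter described by the Hilbert--Mumford-like criteria of \cite{bedhk16a,bedhk16b} in terms of $Z_{\min}$, $X^0_{\min}$ and the $H_u$-sweep. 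The residual reductive group $H/\hat{U} \cong \PGL(2,\kk)$ acts on the projective variety $X \env \hat{U}$ with an induced ample linearisation, so Theorem \ref{thm:BgRe2} applies: taking $\PGL(2,\kk)$-invariants of the (finitely generated, by Nagata \cite{nag64}) algebra above gives $\bigoplus_m H^0(X,L_{m\chi}^{\otimes cm})^{H} = (\bigoplus_m H^0(X,L_{m\chi}^{\otimes cm})^{\hat{U}})^{\PGL(2,\kk)}$, again finitely generated, with $X \env H = (X \env \hat{U}) \dblslash \PGL(2,\kk)$ the $\proj$ of the $H$-invariants and a categorical quotient by $H$ of $X^{\ssfg,H}$. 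The Hilbert--Mumford-like description of $X^{\ssfg,H}$ (and of the geometric stable locus) comes from composing the $\hat{U}$-criterion with the ordinary Hilbert--Mumford criterion (Theorem \ref{thm:BgRe3}) for the residual $\PGL(2,\kk)$-action, while the geometric-quotient and categorical-completion statements follow from the $X \times \PP^1$ construction of $\S\ref{subsec:graded unipotent group actions}$.

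When $(\mf{C}^*)$ fails but $(\mf{C}^{**})$ holds, I would invoke the equivariant partial-desingularisation of \cite{bedhk16b} (modelled on \cite{kir85}): a sequence of blow-ups of $X$ along $H$-invariant centres produces a projective variety $\tilde{X}$ with an induced linear $H$-action for which $(\mf{C}^*)$ now holds. Applying the previous paragraph to $\tilde{X}$ then produces a projective completion which is a categorical quotient of an open subset of $\tilde{X}$ and which contains, as an open subset, a geometric quotient by $H$ of an $H$-invariant open subset of $X$, once more cut out by Hilbert--Mumford-like criteria (now combined with the explicit blow-up data). When $(\mf{C}^*)$ already holds the blow-up is trivial and this completion coincides with $X \env H$.

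The genuine content lies in verifying the hypotheses of Theorem \ref{mainthm} rather than in its application, and this is the step I would treat most carefully. The crux is the strict positivity of the weights of the central $\GG_m \subseteq R$ on $\Lie(U)$: from the explicit action $(\lambda,\mu,\nu)\colon [x,y,z]\mapsto[x,y,z+\lambda x^2+\mu xy+\nu y^2]$ one computes the conjugation weights of $tI \in R$ on the three coordinates of $U$ and checks that they are all positive, so that $\hat{U}$ is genuinely graded unipotent. One must also confirm the existence of a well-adapted character and that a single tensor power and twist can be chosen to serve both stages of the quotient; these are routine given the structure theory, but the transport of the linearisation through the two-stage quotient and the identification $H/\hat{U}\cong\PGL(2,\kk)$ should be checked explicitly.
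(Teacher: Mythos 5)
Your proposal is correct and follows essentially the same route as the paper, whose own justification is simply the observation that the central $\GG_m$ of $R\cong\GL(2,\kk)$ grades $\Lie(U)$ with positive weights so that $\hat{U}=U\rtimes\GG_m$ sits inside $H$ and the two-stage quotient machinery of Theorem \ref{mainthm} (first by $\hat{U}$, then by the residual reductive group $H/\hat{U}\cong\PGL(2,\kk)$, with the blow-up procedure under ($\mf{C}^{**}$)) applies verbatim. The extra detail you supply — normality of $\hat{U}$ as the preimage of the central $\GG_m$, the explicit weight computation, and the transport of the linearisation through the two stages — is exactly the verification the paper leaves implicit.
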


In fact the same is true for the automorphism group of { any} complete simplicial toric variety.
For it was observed in \cite{bedhk16a} using the description in \cite{c95} that
the automorphism group $H$ of
any complete simplicial toric variety is a linear algebraic group with a graded unipotent radical $U$ such that the grading is defined by a one parameter subgroup $\GG_m$ of $H$ acting by conjugation on the Lie algebra of $U$ with all weights strictly positive, and inducing a central one-parameter subgroup of $R=H/U$. Thus the results of  $\S$\ref{subsec:graded unipotent group actions} can be applied to any linear action of $H$ on an irreducible projective variety with respect to an ample linearisation.

\subsection{Groups of $k$-jets of holomorphic reparametrisations of $(\CC^p,0)$}
\label{subsec:reparametrisation groups}

Suppose now that $\kk = \CC$ and consider $k$-jets at 0 of holomorphic maps from $\CC^p$ to a complex manifold $Y$  for any $k, p \geq 1$. It was observed in \cite{bk15} that 
 the group $\GG_{k,p}$ of $k$-jets of holomorphic reparametrisations of $(\CC^p,0)$  has 
a graded unipotent radical $\UU_{k,p}$ such that the grading is defined by a one-parameter subgroup of $\GG_{k,p}$ acting by conjugation on the Lie algebra of $\UU_{k,p}$ with all weights strictly positive, and inducing a central one-parameter subgroup of the reductive group $\GG_{k,p}/\UU_{k,p}$. 
So the results  discussed in $\S$\ref{subsec:graded unipotent group actions} apply
to any linear action of the
reparametrisation group  $\GG_{k,p}$.

Here $\GG_k = \GG_{k,1}$ is the group of $k$-jets of germs of biholomorphisms of $(\CC,0)$
 given by
$$ t \mapsto \phi(t) = a_1 t + a_2 t^2 + \ldots + a_k t^k, $$ 
for $a_1,\ldots , a_k \in \CC$ and $ a_1 \neq 0$, 
under composition modulo $t^{k+1}$. It is isomorphic to the group of matrices  
 $$ \GG_k \cong \left\{ \left( \begin{array}{cccc} a_1 & a_2 & \ldots & a_k\\
 0 & a_1^2 & \ldots & \\
 & & \ldots & \\
 0 & 0 & \ldots & a_1^k \end{array} \right) : a_1 \in \CC^*, a_2,\ldots a_k \in \CC \right\}$$
and hence is a linear algebraic group.
$\GG_k$ has a subgroup $\CC^*$ (represented by $\phi(t)=a_1 t$) and a unipotent subgroup
$\UU_k$ (represented by $\phi(t)= t + a_2 t^2 + \ldots + a_k t^k$) which is its unipotent radical, with
$$ \GG_k \cong \UU_k \rtimes \CC^*.$$ 
If $Y$ is a complex manifold 
 then $\GG_k$ acts fibrewise on the bundle 
$J_k  \to Y$  of $k$-jets at $0$ of holomorphic curves $f:\CC \to Y$ 
by reparametrising $k$-jets.
 Similarly the group $\GG_{k,p}$ of $k$-jets of germs of biholomorphisms of $(\CC^p,0)$ acts fibrewise on the bundle $J_{k,p} \to Y$ of $k$-jets at the origin of holomorphic maps $f:\CC^p \to X$, and
$$ \GG_{k,p} \cong \UU_{k,p} \rtimes \GL(p,\CC)$$
where $\UU_{k,p}$ is the unipotent radical of $\GG_{k,p}$, and the central
one-parameter subgroup $\CC^*$ of $\GL(p,\CC)$ acts on the Lie algebra of $\UU_{k,p}$ with
all weights strictly positive. Thus $\GG_{k,p}$ has the structure required in $\S$\ref{subsec:graded unipotent group actions}.

\subsection{Unstable strata for linear actions of reductive groups}
\label{subsec:unstable strata}

Now let $G$ be a reductive group over an algebraically closed field $\kk$ of characteristic zero, acting linearly on a projective variety $X$ with respect to an ample line bundle $L$. Associated to this linear $G$-action and an invariant inner product on the Lie algebra of $G$, there is a stratification 
$$ X = \bigsqcup_{\beta \in \mathcal{B}} S_\beta$$ of $X$ by locally closed subvarieties $S_\beta$, 
indexed by a partially ordered finite subset $\mathcal{B}$ of a positive Weyl chamber for the reductive group $G$,  such that 

 (i) $S_0 = X^{ss}$, 

\noindent and for each $\beta \in \mathcal{B}$

 (ii) the closure of $S_\beta$ is contained in $\bigcup_{\gamma \geqslant \beta} S_\gamma$, and

 (iii) $S_\beta \cong G \times_{P_\beta} Y_\beta^{ss}$

\noindent where
$P_\beta$ is a parabolic subgroup of $G$ acting on  a projective subvariety $\overline{Y}_\beta$ of $X$ with an open subset $Y_\beta^{ss}$ which is determined by the action of the Levi subgroup of $P_\beta$ with respect to a suitably twisted linearisation \cite{kir84}.

Here the original linearisation for the action of $G$ on $L \to X$ is restricted to the action of the parabolic subgroup $P_\beta$ over $\overline{Y}_\beta$, and then twisted by a rational character of $P_\beta$ which is well adapted in the sense of  $\S$\ref{subsec:graded unipotent group actions} for a central one-parameter subgroup of the Levi subgroup of $P_\beta$ acting with all weights strictly positive on the Lie algebra of the unipotent radical of $P_\beta$.
Thus to construct a quotient by $G$ of (an open subset of) an unstable stratum $S_\beta$, we can study the linear action on $\overline{Y}_\beta$ of the parabolic subgroup $P_\beta$, twisted appropriately, and apply the results discussed in  $\S$\ref{subsec:graded unipotent group actions}.

In particular we can consider moduli spaces of sheaves of fixed Harder--Narasimhan type over a nonsingular projective variety $W$ (cf.  \cite{hok12}). 
There are  well known constructions going back to Simpson \cite{s94}
of the  moduli spaces of semistable pure sheaves on $W$ of fixed  Hilbert polynomial as GIT
quotients of linear actions of suitable  special linear groups $G$ on  schemes $Q$ (closely related to 
quot-schemes) which are $G$-equivariantly embedded in projective spaces.
 These constructions can
be chosen so that elements of $Q$ which parametrise sheaves of a fixed Harder--Narasimhan
type  form a stratum in the stratification of $Q$ associated to the linear action of $G$ (at least modulo
taking connected components of strata) \cite{hok12}. $\S$\ref{subsec:graded unipotent group actions} can be applied to  the associated linear actions of  parabolic subgroups of these special linear groups $G$, appropriately twisted, to construct and study moduli spaces of  sheaves
of fixed Harder--Narasimhan type over $W$ \cite{behjk16}.
The simplest non-trivial case is that of unstable vector bundles of rank 2 and fixed Harder--Narasimhan type over a nonsingular projective curve $W$ (cf. \cite{bramn09}).


\appendix


\section{Appendix: Linearisations of Products of Reductive Groups}
\label{subsec:ApLiProducts}

We discuss GIT quotients of direct products
of reductive groups. For this section, suppose $G_1$ and
$G_2$ are reductive groups and $X$ is a $G_1 \times
G_2$-variety equipped with a $G_1 \times G_2$-linearisation $L \to
X$. Via the natural embeddings $G_i \hookrightarrow G_1 \times G_2$, $i=1,2$,
this data is equivalent to saying that the variety $X$ and the line bundle $L$ are
equipped with two commuting linearisations $G_i \act L \to X$. In
particular, it makes sense to consider the semistable loci $X^{\rmss(G_1)}$ and
$X^{\rmss(G_1 \times G_2)}$ with
respect to the linearisations $G_1 \act L \to X$ and $G_1 \times G_2 \act L \to
X$ respectively, together with their reductive GIT quotients
\begin{align*}
  \pi_{G_1}&:X^{\rmss(G_1)} \to X \dblslash G_1, \\
\pi_{G_1 \times G_2}&: X^{\rmss(G_1 \times G_2)} \to X \dblslash (G_1 \times G_2).
\end{align*}

In the case where $L \to X$ is ample and
$X$ is projective over an affine variety the following result is well known
(cf. \cite{ost99} and \cite[Section 1.5.3]{sch08} for the case $X=\PP^n$ and
also \cite{tha96}), though proofs in the general case are hard to come by. For
the reader's convenience, we
include here a proof for the 
more general case of when $X$ is any variety and $L \to X$ is any linearisation.  

\begin{propn} \label{prop:Ap1} Retain the notation above. 
\begin{enumerate}
\item \label{itm:Ap1-1} The set $X^{\rmss(G_1)}$ is stable under the $G_2$-action on 
$X$ and there is
  a canonical action of $G_2$ on $X \dblslash G_1$ such that $\pi_{G_1}$ is
  $G_2$-equivariant.
\item \label{itm:Ap1-2} There is a natural ample $G_2$-linearisation $M \to X
  \dblslash G_1$ 
  such that, for some $n>0$, we have $\pi_{G_1}^*M = L^{\ten n}|_{X^{\rmss(G_1)}}$ as
  $G_2$-linearisations and $X^{\rmss(G_1 \times G_2)} \subseteq
  \pi_{G_1}^{-1}((X \dblslash G_1)^{\rmss(M)})$. Letting  
\[
\ol{\pi}_{G_2}:(X \dblslash G_1)^{\rmss(M)} \to (X \dblslash G_1)
\dblslash_{M} G_2
\]
denote the reductive GIT quotient with respect to this linearisation, there is a
canonical open immersion $\psi:X\dblslash (G_1 \times G_2) \hookrightarrow (X
\dblslash G_1) \dblslash_{M} G_2$ such that the following diagram commutes:
\[
\begin{diagram}
 & & X^{\rmss(G_1 \times G_2)} \\
 & \ldTo(2,4)^{pi_{G_1 \times G_2}} & \dTo_{\pi_{G_1}} \\
 & & (X \dblslash G_1)^{\rmss(M)} \\
 & & \dTo_{\ol{\pi}_{G_2}} \\
X \dblslash (G_1 \times G_2) & \rInto^{\psi} & (X \dblslash G_1) \dblslash_{M} G_2 \\ 
\end{diagram}
\]
\item \label{itm:Ap1-3} If $X$ is further assumed to be projective, then
  $X^{\rmss(G_1 \times G_2)}=\pi^{-1}((X \dblslash G_1)^{\rmss(M)})$ and $\psi$ is an
  isomorphism. 
\end{enumerate}   
\end{propn}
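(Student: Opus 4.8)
The plan is to prove part \ref{itm:Ap1-3} in two stages. First I would establish the reverse inclusion $\pi_{G_1}^{-1}((X \dblslash G_1)^{\rmss(M)}) \subseteq X^{\rmss(G_1 \times G_2)}$; combined with the inclusion already proved in part \ref{itm:Ap1-2}, this gives the asserted equality of semistable loci. Then I would deduce that the open immersion $\psi$ of part \ref{itm:Ap1-2} is surjective, hence an isomorphism, by a short diagram chase.

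For the reverse inclusion, take $y \in (X \dblslash G_1)^{\rmss(M)}$ and a lift $x \in X^{\rmss(G_1)}$ with $\pi_{G_1}(x) = y$. By definition of $M$-semistability there is a $G_2$-invariant section $s \in H^0(X \dblslash G_1, M^{\ten k})^{G_2}$, for some $k>0$, with $s(y) \neq 0$ and $(X \dblslash G_1)_s$ affine. The crux is to lift $s$ to a genuine $G_1 \times G_2$-invariant section over all of $X$. Here I would use that, since $X$ is projective and $L$ ample, $X \dblslash G_1 = \proj(R)$ with $R = \bigoplus_r H^0(X, L^{\ten r})^{G_1}$ finitely generated by Nagata's theorem \cite{nag64}, and that $M$ is a Veronese power of the tautological ample bundle $\OO_{\proj R}(n)$ coming from part \ref{itm:Ap1-2}. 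Thus for suitably divisible $k$ one has $H^0(X \dblslash G_1, M^{\ten k}) = R_{nk} = H^0(X, L^{\ten nk})^{G_1}$, and taking $G_2$-invariants identifies $s$ with a global section $\tilde{s} \in H^0(X, L^{\ten nk})^{G_1 \times G_2}$ satisfying $\pi_{G_1}^* s = \tilde{s}|_{X^{\rmss(G_1)}}$. Then $\tilde{s}(x) \neq 0$ since $s(y) \neq 0$, and $X_{\tilde{s}}$ is affine because it is the complement of the support of an effective ample divisor in the projective variety $X$. By Definition \ref{def:TrRe5}, \ref{itm:TrRe5-1} this shows $x \in X^{\rmss(G_1 \times G_2)}$, giving the equality $X^{\rmss(G_1 \times G_2)} = \pi_{G_1}^{-1}((X \dblslash G_1)^{\rmss(M)})$.

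With the equality in hand, surjectivity of $\psi$ follows by chasing the commutative diagram of part \ref{itm:Ap1-2}. Given $z \in (X \dblslash G_1) \dblslash_M G_2$, surjectivity of the GIT quotient map $\ol{\pi}_{G_2}$ (Theorem \ref{thm:BgRe2}) yields $y \in (X \dblslash G_1)^{\rmss(M)}$ with $\ol{\pi}_{G_2}(y) = z$; surjectivity of $\pi_{G_1}$ onto $X \dblslash G_1$ yields $x \in X^{\rmss(G_1)}$ with $\pi_{G_1}(x) = y$; and by the equality just proved, $x \in X^{\rmss(G_1 \times G_2)}$. Commutativity then gives $z = \ol{\pi}_{G_2}(\pi_{G_1}(x)) = \psi(\pi_{G_1 \times G_2}(x))$, so $z$ lies in the image of $\psi$. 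Thus $\psi$ is a surjective open immersion, hence an isomorphism. (Alternatively, once the semistable loci agree, both $\pi_{G_1 \times G_2}$ and $\ol{\pi}_{G_2} \circ \pi_{G_1}$ are good quotients of $X^{\rmss(G_1 \times G_2)}$ by $G_1 \times G_2$, the latter being a composition of good quotients by the two commuting factors, and one may invoke uniqueness of categorical quotients.)

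The main obstacle is the section-lifting step in the second paragraph: producing, from a $G_2$-invariant section of $M$ on $X \dblslash G_1$, a genuine $G_1 \times G_2$-invariant global section of a power of $L$ on $X$ whose non-vanishing locus is affine. This is precisely where the projectivity of $X$, together with ampleness of $L$ (so that $X \dblslash G_1 = \proj R$ with $R$ finitely generated and $M$ a power of the tautological ample bundle), is essential: without it the lift need only be defined over $X^{\rmss(G_1)}$, and its non-vanishing locus need not be affine, so the general inclusion of part \ref{itm:Ap1-2} may remain strict.
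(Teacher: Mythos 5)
Your proposal only addresses part \ref{itm:Ap1-3}; parts \ref{itm:Ap1-1} and \ref{itm:Ap1-2} are taken as given. Since the statement to be proved is the whole proposition, this is the main gap: the construction of the induced $G_2$-action on $X \dblslash G_1$ (via the categorical quotient property of $\pi_{G_1}$), of the descended ample $G_2$-linearisation $M$, of the inclusion $X^{\rmss(G_1 \times G_2)} \subseteq \pi_{G_1}^{-1}((X \dblslash G_1)^{\rmss(M)})$, and of $\psi$ as an open immersion (by matching the basic affine opens $\spec(\OO(X_f)^{G_1 \times G_2})$ with $\spec(\OO((X\dblslash G_1)_F)^{G_2})$) all require genuine arguments, which the paper spends most of its proof on.

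For part \ref{itm:Ap1-3} itself your route is essentially the paper's: identify $X \dblslash G_1$ with $\proj$ of the finitely generated invariant ring (Nagata), use a Serre-vanishing argument to lift a $G_2$-invariant section of $M^{\ten k}$ to a genuine $G_1 \times G_2$-invariant global section of $L^{\ten nk}$, use ampleness of $L$ to get affineness of its non-vanishing locus, and conclude. Your primary argument for $\psi$ being an isomorphism (a surjective open immersion) is valid, and your parenthetical alternative via uniqueness of categorical quotients is exactly what the paper does. One small imprecision: you assert the \emph{equality} $H^0(X \dblslash G_1, M^{\ten k}) = R_{nk}$ for suitably divisible $k$, but all that is needed (and all that Serre vanishing directly gives) is surjectivity of the natural map $H^0(X,L^{\ten nk})^{G_1} \to H^0(X \dblslash G_1, M^{\ten k})$ for $k \gg 0$; one should also remark that this surjection restricts to a surjection on $G_2$-invariants because $G_2$ is reductive, which is the step that actually produces $\tilde{s}$.
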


\begin{proof}
(Proof of \ref{itm:Ap1-1}.) Suppose $f \in H^0(X,L^{\ten r})^{G_1}$, for some
$r>0$, such that $X_f$ is affine. For any $g_2 \in G_2$ the section $g_2 \cdot
f$ is again $G_1$-invariant, 
and acting on $X$ by $g_2$ induces an isomorphism (with inverse given by
$g_2^{-1}$) $X_f \to X_{g_2 \cdot f}$, so that $X_{g_2 \cdot f}$ is also
affine. Hence the $G_2$-action on $X$ restricts to define an action $\sigma:G_2 \times
X^{\rmss(G_1)} \to X^{\rmss(G_1)}$. Recall that the GIT quotient $
\pi_{G_1}:X^{\rmss(G_1)} \to X
\dblslash G_1$ is a categorical quotient for the action of $G_1$ on
$X^{\rmss(G_1)}$. Let $G_1$ act on $G_2 \times X^{\rmss(G_1)}$ by demanding that $G_1$ 
acts
trivially on $G_2$. Then the composition 
\[
G_2 \times X^{\rmss(G_1)} \overset{\sigma}{\longrightarrow} X^{\rmss(G_1)}
\overset{\pi_{G_1}}{\longrightarrow} X \dblslash G_1 
\]
is $G_1$-invariant by virtue of the fact that $G_1$ is normal in $G_1 \times
G_2$, so there is a canonical map $\ol{\sigma}:G_2 \times X
\dblslash G_1 \to X \dblslash G_1$ such that the diagram 
\[
\begin{diagram}
G_2 \times X^{\rmss(G_1)} & \rTo^{\sigma}  &
X^{\rmss(G_1)} \\
\dTo_{\id_{G_2} \times \pi_{G_1}} & & \dTo_{\pi_{G_1}} \\
G_2 \times (X \dblslash G_1) &\rTo^{\ol{\sigma}} & X \dblslash G_1 \\
\end{diagram}
\]   
commutes. Using the universal property of
categorical quotients it is easy to verify that $\ol{\sigma}$ defines an action
of $G_2$ on $X \dblslash G_1$---we omit the details. 

(Proof of \ref{itm:Ap1-2}.) The construction of the GIT quotient $X \dblslash
G_1$ comes with an ample line bundle $M \to X
\dblslash G_1$ such that $\pi_{G_1}^*M = L^{\ten n}|_{X^{\rmss(G_1)}}$, for some
$n>0$ \cite[Theorem 1.10]{mfk94}. In fact, the natural map $L^{\ten
  n}|_{X^{\rmss(G_1)}} \to M$ thus arising is a good categorical quotient of the
action of $G_1$ on $L^{\ten n}|_{X^{\rmss(G_1)}}$. (This can be shown by following
through the proof of the following more general statement \cite[Proposition
3.12]{new78}: if $G$ is a reductive group acting on varieties $X$ and $Y$, if $X \to
Y$ is an affine $G$-equivariant morphism and $Y$ possesses a good categorical
quotient by $G$, then so too does $X$.) Following an argument similar to that in
the proof of \ref{itm:Ap1-1}, one sees that there is a canonical
$G_2$-action on $M$ such that $L^{\ten n}|_{X^{\rmss(G_1)}} \to M$ is
$G_2$-equivariant and the line bundle projection $M \to X \dblslash G_1$ is
equivariant. 

We next show that $X^{\rmss(G_1 \times G_2)} \subseteq \pi_{G_1}^{-1}((X \dblslash
G_1)^{\rmss(M)})$. Let $x \in X^{\rmss(G_1 \times G_2)}$. Then without loss of
generality there is an invariant section $f \in H^0(X,L^{\ten mn})^{G_1 \times
  G_2}$ with $m>0$ such that $x \in X_f$ and $X_f$ is affine. Clearly 
$\pi_{G_1}$ is defined at $x$. Because both $\pi_{G_1}:X^{\rmss(G_1)} \to X \dblslash
G_1$ and $L^{\ten mn}|_{X^{\rmss(G_1)}} \to M^{\ten m}$ are $G_2$-equivariant maps 
that are
categorical quotients for the $G_1$-actions, pulling back along $\pi_{G_1}$ defines a 
canonical
$G_2$-equivariant isomorphism 
\[
\pi_{G_1}^*:H^0(X \dblslash G_1,M^{\ten m}) \overset{\cong}{\longrightarrow}
H^0(X^{\rmss(G_1)},L^{\ten mn})^{G_1}.
\]
Hence there is $F \in H^0(X \dblslash G_1,M^{\ten m})^{G_2}$ such that
$\pi_{G_1}^{-1}((X \dblslash G_1)_F)=X_f$. The map $\pi_{G_1}$ restricts to a
good categorical quotient $\pi_{G_1}:X_f \to (X \dblslash G_1)_F$ for the
$G_1$-action on $X_f$, and since $X_f$ is affine so too is $(X \dblslash
G_1)_F$ by Theorem \ref{thm:BgRe1}, \ref{itm:BgRe1-1}. Thus $(X \dblslash G_1)_F 
\subseteq (X \dblslash G_1)^{\rmss(M)}$ and
$\pi_{G_1}(x) \in (X \dblslash G_1)^{\rmss(M)}$. 

The composition $\ol{\pi}_{G_2} \circ \pi_{G_1}:X^{\rmss(G_1 \times G_2)} \to (X
\dblslash G_1) \dblslash_M G_2$ is $G_1 \times G_2$-invariant, so induces a
unique morphism $\psi:X \dblslash(G_1 \times G_2) \to (X \dblslash G_1)
\dblslash_M G_2$ making the required diagram commute. Recall from the construction of 
the GIT
quotient that $X \dblslash (G_1 \times G_2)$ is covered by affine open subsets
$\pi_{G_1 \times G_2}(X_f) = \spec(\OO(X_f)^{G_1 \times G_2})$, for $f \in
H^0(X,L^{\ten mn})^{G_1 \times G_2}$ with $m>0$. The morphism $\psi$ 
maps $\pi_{G_1 \times G_2}(X_f)$ to the affine open subset $\ol{\pi}_{G_2}((X
\dblslash G_1)_F)$ of $(X \dblslash G_1)
\dblslash_M G_2$, where as above $F$ is a
$G_2$-invariant section such that $\pi_{G_1}^*F=f|_{X^{\rmss(G_1)}}$; this map
corresponds to the isomorphism of rings 
\[
\OO(\ol{\pi}_{G_2}((X \dblslash G_1)_F))
\overset{\ol{\pi}_{G_2}^*}{\longrightarrow} \OO((X \dblslash G_1)_F)^{G_2}
\overset{\pi_{G_1}^*}{\longrightarrow} \OO(X_f)^{G_1 \times G_2}.
\]
Hence $\psi$ restricts to an isomorphism $\pi_{G_1 \times G_2}(X_f) \to
\ol{\pi}_{G_2}((X \dblslash G_1)_F)$. Patching over all such $\pi_{G_1 \times
  G_2}(X_f)$ shows that $\psi$ is an open immersion.

(Proof of \ref{itm:Ap1-3}.) Suppose now that $X$ is projective and $L$ is ample. Then
the GIT quotient $X \dblslash G_1$ is canonically isomorphic to
$\proj(\kk[X,L^{\ten n}]^{G_1})$, 
with $\kk[X,L^{\ten n}]^{G_1}$ finitely generated and $M \to X \dblslash G_1$
corresponding to the twisting sheaf $\OO(1)$ on $\proj(\kk[X,L^{\ten
  n}]^{G_1})$ \cite[Page 40]{mfk94}. The GIT quotient $\pi_{G_1}:X^{\rmss(G_1)} \to X
\dblslash G_1$ is the morphism defined by the inclusion $\kk[X,L^{\ten n}]^{G_1}
\hookrightarrow \kk[X,L^{\ten n}]$. Moreover, by Serre vanishing \cite[Chapter
3, Proposition 5.3]{har77}, for sufficiently large $m>0$ the natural map
$H^0(X,L^{\ten mn})^{G_1} \to H^0(X \dblslash G_1,M^{\ten m})$ is
surjective. Now suppose $x \in X^{\rmss(G_1)}$ maps to $(X \dblslash G_1)^{\rmss(M)}$
under $\pi_{G_1}$. Then there is $F \in H^0(X \dblslash G_1,M^{\ten m})^{G_2}$
such that $F(\pi_{G_1}(x)) \neq 0$, with $m$ sufficiently large so that
$\pi_{G_1}^*F = f|_{X^{\rmss(G_1)}}$ for some global invariant section $f \in
H^0(X,L^{\ten mn})^{G_1 \times G_2}$, so that $x \in X_f \subseteq X^{\rmss(G_1 \times
  G_2)}$. Thus $X^{\rmss(G_1 \times G_2)}=\pi_{G_1}^{-1}((X \dblslash
G_1))$. The induced map $\pi_{G_1}:X^{\rmss(G_1 \times G_2)} \to (X \dblslash
G_1)^{\rmss(M)}$ is therefore a categorical quotient for the $G_1$-action on
$X^{\rmss(G_1 \times G_2)}$, and so its composition with the categorical
$G_2$-quotient $\ol{\pi}_{G_2}: (X \dblslash G_1)^{\rmss(M)} \to (X \dblslash G_1)
\dblslash_M G_2$ is a categorical quotient for the full $G_1 \times G_2$-action
on $X^{\rmss(G_1 \times G_2)}$. It follows that the canonically induced map $\psi:X
\dblslash(G_1 \times G_2) \to (X \dblslash G_1) \dblslash_M G_2$ is an
isomorphism.                               
\end{proof}


\bibliographystyle{alpha}  

\end{document}